\date{Wroc{\l}aw, \today}
\theoremstyle{plain}
\newtheorem{thm}{Theorem}[section]
\newtheorem*{thm*}{Theorem}
\newtheorem{lem}[thm]{Lemma}
\newtheorem{prop}[thm]{Proposition}
\newtheorem{clm}[thm]{Claim}
\newtheorem{cor}[thm]{Corollary}
\newtheorem{conj}[thm]{Conjecture}
\theoremstyle{definition}
\newtheorem{df}[thm]{Definition}
\newtheorem{denot1}[thm]{Notation}
\newtheorem{denot}[thm]{Notations}
\newtheorem{assm}[thm]{Assumption}
\newtheorem{obs}[thm]{Observation}
\newtheorem{case}{Case}
\theoremstyle{remark}
\newtheorem{rem}[thm]{Remark}
\newtheorem{exmp}[thm]{Example}
\numberwithin{equation}{section}
\renewcommand{\Pr}{\mathrm {P}}
\newcommand{\Est}{\mathrm {E}}
\newcommand{\ind}{{\mathbf {1}}}
\newcommand{\im}{{\mathrm{im}}}
\renewcommand{\SS}{{\mathbb{S}^2}}
\newcommand{\HH}{{\mathbb{H}^2}}
\newcommand{\HHH}{{\mathbb{H}^3}}
\newcommand{\g}{\gamma}
\newcommand{\al}{\alpha}
\renewcommand{\b}{\beta}
\newcommand{\s}{\sigma}
\renewcommand{\phi}{\varphi}
\renewcommand{\rho}{\varrho}
\newcommand{\e}{\varepsilon}
\renewcommand{\d}{\delta}
\newcommand{\N}{{\mathbb{N}}}
\newcommand{\Z}{{\mathbb{Z}}}
\newcommand{\ud}{\mathrm{d}}
\newcommand{\bd}{\partial\,}
\newcommand{\bdt}{\mathrm{bd}\,}
\newcommand{\bdi}{\partial\,}
\newcommand{\Hd}{{\mathbb{H}^d}}
\newcommand{\cX}{{\hat{X}}}
\newcommand{\cl}[1]{{\overline{#1}}}
\renewcommand{\c}[1]{\widehat{#1}}
\newcommand{\clcX}[1]{{\overline{#1}^\cX}}
\newcommand{\cHHH}{{\widehat{\mathbb{H}}^3}}
\newcommand{\sm}{\setminus}
\newcommand{\sint}{\mathrm{int}\,}
\newcommand{\Isom}{\mathrm {Isom}}
\newcommand{\id}{\mathrm {Id}}
\newcommand{\pc}{{p_\mathrm c}}
\newcommand{\pu}{{p_\mathrm u}}
\newcommand{\pbb}{probability}
\newcommand{\Bbp}{Bernoulli bond percolation}
\renewcommand{\underline}[1]{#1}
\newcommand{\uwaga}[1]{}%{\textbf{(#1)}}
\newcommand{\uwagap}[1]{}%{\textbf{(#1)}} % pytania do Jasia
\newcommand{\uwagaj}[1]{}%{\textbf{(#1)}}  % jęz. itd.
\newcommand{\uwagajp}[1]{}%{\textbf{(#1)}} % pytania do Jasia - jęz. itd.
\newcommand{\emd}[1]{\emph{#1}}
\newcommand{\work}{dissertation}%(?)/...
\newcommand{\partw}{chapter}% of the dissertation\uwaga{?}}%/part? of ...
\renewcommand{\(}{\left(}
\renewcommand{\)}{\right)}
\newlength{\bigcupdotwidth}
\newcommand{\bigcupdotsymb}{\makebox[\bigcupdotwidth]
  {\makebox[0pt]{$\displaystyle{\bigcup}$}\makebox[0pt]{$\cdot$}}}
\DeclareMathOperator*{\bigcupdot}{\bigcupdotsymb}
\newlength{\cupdotwidth}
\newcommand{\cupdotsymb}{\makebox[\cupdotwidth]
  {\makebox[0pt]{$\displaystyle{\cup}$}\makebox[0pt]{$\cdot$}}}
\DeclareMathOperator*{\cupdot}{\cupdotsymb}
\newcommand{\voidindop}[2]{#1_{\makebox[0pt]{$\scriptstyle#2$}}}
\newlength{\rvoidindopOPWD}
\newlength{\rvoidindopAUX}
\newcommand{\rvoidindop}[2]{%
\settowidth{\rvoidindopOPWD}{$#1$}%
\settowidth{\rvoidindopAUX}{$\scriptstyle#2$}%
%\addtolength{\rvoidindopAUX}{-1ex}%
\addtolength{\rvoidindopAUX}{-\rvoidindopOPWD}%
\rule{0.5\rvoidindopAUX}{0mm}%
#1_{\makebox[0pt]{$\scriptstyle#2$}}}
\newcommand{\lvoidindop}[2]{%
\settowidth{\rvoidindopOPWD}{$#1$}%
\settowidth{\rvoidindopAUX}{$\scriptstyle#2$}%
%\addtolength{\rvoidindopAUX}{-1ex}%
\addtolength{\rvoidindopAUX}{-\rvoidindopOPWD}%
#1_{\makebox[0pt]{$\scriptstyle#2$}}%
\rule{0.5\rvoidindopAUX}{0mm}}
\newcommand{\voidindunderbrace}[2]{\underbrace{#1}_{\makebox[0pt]{$\scriptstyle#2$}}}
\begin{document}

\frontmatter

\begin{titlepage}
\begin{center}
\Large{{\sc Uniwersytet Wrocławski}}\\
\large{{\sc Wydział Matematyki i Informatyki}}\\
\vspace{3cm}

\LARGE{{\sc Jan Czajkowski}}\\
\vspace{1cm}
\huge{ Perkolacja na przestrzeni hiperbolicznej:}\\
%\vspace{-3mm}
\huge{ faza niejednoznaczności i włókniste klastry}\\
\vspace{5mm}
\large{\sc rozprawa doktorska}
\end{center}

\vspace{65mm}

\begin{flushright}
\parbox{8cm}{\Large {\sc Promotor rozprawy\\dr~hab.\ Jan Dymara}}\\
\vspace{2cm}
\normalsize{{\sc Wrocław, czerwiec 2013}}
\end{flushright}
\end{titlepage}

%\newpage
%\pagestyle{empty}
\begin{titlepage}\setcounter{page}{3}
\begin{center}
\Large{{\sc University of Wrocław}}\\
\large{{\sc Faculty of Mathematics and Computer Science}}\\
\vspace{3cm}

\LARGE{{\sc Jan Czajkowski}}\\
\vspace{1cm}
\huge{ Percolation in~the~hyperbolic~space:}\\  % title
%\vspace{-3mm}
\huge{ non-uniqueness~phase and~fibrous~clusters}\\
\vspace{5mm}
\large{\sc PhD dissertation}
\end{center}

\vspace{65mm}

\begin{flushright}
\parbox{8cm}{\Large {\sc Supervisor\\dr~hab.\ Jan Dymara}}\\
\vspace{2cm}
\normalsize{{\sc Wrocław, June 2013}}
\end{flushright}
\end{titlepage}

%\pagestyle{empty}
%\
%\newpage
%%%%%%%%%%%%%%%%%%%%%%%%%%%%%%%%%%%
\begin{comment}
\author{Jan Czajkowski\\
\small Instytut Matematyczny, Uniwersytet Wroc{\l}awski\\
\small pl. Grunwaldzki 2/4, 50-384 Wroc{\l}aw\\
\small E-mail: {\tt czajkow@math.uni.wroc.pl}}
\title{Perkolacja na przestrzeni hiperbolicznej:\\faza niejednoznaczności i włókniste klastry}
\maketitle
%\thispagestyle{empty}

\author{Jan Czajkowski\\
\small Mathematical Institute, University of Wroc{\l}aw\\
\small pl. Grunwaldzki 2/4, 50-384 Wroc{\l}aw, Poland\\
\small E-mail: {\tt czajkow@math.uni.wroc.pl}}
\title{Percolation in~the~hyperbolic~space:\\non-uniqueness~phase and~fibrous~clusters}
\maketitle
%\thispagestyle{empty}
\end{comment}

\tableofcontents

\chapter*{Acknowledgements}
I express my gratitude to my advisor, Jan Dymara, for his supervision, for many important and helpful remarks and words of advice. I am also grateful to Itai Benjamini and Ruth Kellerhals for helpful correspondence.
\par I thank my family for support and patience.% with me being very busy during this work.

\mainmatter
%auto-ignore
\newcommand{\Bp}{Bernoulli percolation}
\newcommand{\Bsp}{Bernoulli site percolation}

\chapter{Introduction}

\section{Overview of the dissertation contents}

In this \work, I am going to consider \Bp\ on graphs admitting vertex-transitive actions of groups of isometries of $d$-dimensional hyperbolic spaces $\Hd$, where $d\ge2$\uwaga{d2}.
%The main part, where it happens, is constituted by Chapters \ref{ch3ph} and \ref{ch1ptbd}.
\par In Chapter \ref{ch3ph}, I prove the existence of a non-trivial non-uniqueness phase of \Bp\ on Cayley graphs for a wide class of Coxeter reflection groups of finite type polyhedra in $\HHH$.
\par In Chapter \ref{ch1ptbd}, I consider some geometric property of the clusters in \Bbp\ in the non-uniqueness phase on a class of connected, transitive, locally finite graphs in $\Hd$, much wider than in Chapter \ref{ch3ph}, for any $d\ge2$\uwaga{d2}.
\par In this general introduction I give an overview concerning percolation and its terminology. In the separate introduction to each of the Chapters \ref{ch3ph} and \ref{ch1ptbd}, I explain its contents more precisely, giving also some preliminaries needed in the given chapter.

\section{\Bp}

To explain what \Bp\ is, I start with a motivation from physics. Suppose we are given different kinds of porous materials. For example: a ceramic roof tile and a stick of chalk. When we drip some water on each of them, it will percolate through the chalk and not through the tile. What matters here is, roughly speaking, the size of the void spaces in either of the materials.
\par This situation can be modelled by a random subgraph of a given graph, called percolation process. Namely, we start with a graph (e.g. the standard cubic lattice $\Z^3$ or its fragment) which represents the whole space occupied by the piece of the material. We choose at random a subgraph of it that plays the role of the set of locations of the void spaces. A sensible way to do it is to fix a probability $p\in[0;1]$ and declare each edge of the original graph independently to belong to the random subgraph with \pbb\ $p$. The phenomenon which can be observed here is that the greater is $p$, the greater (in some sense) are the connected components of the random subgraph, which represent the void spaces, and the more likely is the water to percolate through the material modelled this way. Roughly speaking, this simple model relates the ability of the material to soak and the fraction of the void spaces in the material, represented by $p$.
\par Obviously, the procedure of choosing a random subgraph described above can be applied to any graph $G$. The resulting random subgraph $\omega$ is called $p$-\Bbp\ on $G$ and the parameter $p$---the (Bernoulli) percolation parameter.
\par One can perform an analogous process, removing vertices from the original graph independently with \pbb\ $1-p$ and taking the subgraph induced by the remaining vertices. This process is called $p$-\Bsp\ and often exhibit the same properties as the bond version.

\section{Threshold parameters $\lowercase\pc$ and $\lowercase\pu$}\label{secpcpu}

Of the main interest in the percolation theory are the connected components of $\omega$, called \emd{clusters}, and their ``size''. For example, one may ask when there are infinite components in the random subgraph $\omega$ with positive \pbb. If $G$ is a connected, locally finite graph, it turns out that, due to Kolmogorov's 0-1 law, the \pbb\ of that event is always $0$ or $1$. On the other hand, it is an increasing function of $p$ because the event in question in an {increasing} event---see e.g.\ Sections 1.4 and 2.1 of \cite{Grim}. Hence, there exists $\pc=\pc(G)\in[0;1]$, called the \emd{critical \pbb}, such that for $p<\pc$ a.s.\ there is no infinite cluster in $\omega$ and for $p>\pc$ a.s.\ there is some. (Due to this behaviour, we can view $\pc$ as a phase transition, where $p$ plays the role of the temperature.)
\par Percolation theory seems particularly interesting in the case of {transitive} or {quasi-transitive} graphs---maybe because, while they usually have simple description, still the percolation problems (e.g.\ of finding the value of $\pc$) are often hard. Below I define those graph classes.
\begin{df}
Unless indicated otherwise, I call a graph $G$ \emd{vertex-transitive}, or \emd{transitive} for short, if its automorphism group acts transitively on the set of vertices of $G$. If, instead, there are just finitely many orbits of vertices of $G$ under the action of its automorphism group, then I call $G$ \emd{quasi-transitive}.
\end{df}
If the graph $G$ is connected, locally finite and transitive, then it is known by \cite[Thm.~1]{NewmSchul} that the number of infinite clusters in $\omega$ is a.s.~constant and equal to $0$, $1$ or $\infty$ (see also \cite[Thm.~7.6]{LP}). Let us focus on the question when this number is $\infty$. It turns out that the number $\infty$ here is possible for some $p$ only for those connected, transitive, locally finite graphs which are {non-amenable} (see \cite[Thm.\ 7.7]{LP} and also an original paper \cite{BK89}), as defined below.
\begin{df}
We call a locally finite graph $G$ \emd{non-amenable} if
$$\inf \frac{|\bd K|}{|K|}>0,$$
where the infimum is taken over all non-empty finite sets of vertices of $G$ and where $\bd K$ is the set of the edges of $G$ having exactly one vertex in $K$. We call $G$ \emd{amenable} if the infimum equals $0$.
\end{df}
%The condtion necessary for infinitely many infinite cluster in $\omega$ is
Actually, there are many examples of connected, transitive, non-amenable graphs such that for $p>\pc$ closer to $\pc$ there are a.s.\ infinitely many infinite components in the $p$-\Bp\ and for $p$ closer to $1$ there is exactly one infinite component there. In this situation, it is natural to define the next percolation threshold: \emd{unification probability} $\pu=\pu(G)$. It is defined as the infimum of $p\in[0;1]$ such that there is a.s.\ a unique infinite cluster in $\omega$. By this definition, for $p\in(\pc,\pu)$, a.s.~$\omega$ has infinitely many components. This range of $p$ is called the \emd{middle phase} or the \emd{non-uniqueness phase} of \Bp.
\par Mathematicians are interested when this interval of $p$ is non-degenerate, i.e.~when $\pc<\pu$. It is conjectured that non-amenability is also a sufficient condition for this in the case of quasi-transitive graphs:
\begin{conj}[\cite{BS96}]\label{conjBS96}
If $G$ is a non-amenable, quasi-transitive\footnote{This notion is referred to as ``almost transitive'' in \cite{BS96}.} graph, then
$$\pc(G)<\pu(G).$$
\end{conj}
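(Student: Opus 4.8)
The statement to be addressed is Benjamini and Schramm's Conjecture, which remains \emph{open} in full generality; what follows is therefore a proposal for a line of attack, with emphasis on where the essential difficulty lies, rather than a complete argument.

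The first step is to reduce from arbitrary quasi-transitive non-amenable graphs to Cayley graphs of finitely generated non-amenable groups: a quasi-transitive graph is quasi-isometric to such a Cayley graph (passing through its automorphism group and a finite-index/covering construction), and for \emph{unimodular} transitive graphs one expects the strict inequality $\pc<\pu$ to be a quasi-isometry-robust feature, so nothing is lost by working with a Cayley graph $G$ of a non-amenable group $\Gamma$. Within this setting I would pursue two complementary mechanisms. The first is the spectral-radius/tree-comparison route of Pak and Smirnova-Nagnibeda: using that non-amenability forces the spectral radius of $G$ to be bounded away from $1$ (sufficiently so, after possibly enlarging the generating set), one compares the $p$-cluster of the origin with a supercritical branching process, obtains exponential growth of the cluster by a second-moment argument, and deduces $\pu(G)<1$; combined with an independent estimate that $\pc(G)$ is bounded away from $1$ this yields the gap. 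The second is the invariant-percolation route via the mass-transport principle: one argues that for $p$ slightly above $\pc$ the infinite clusters are ``non-amenable in the mean'' (positive anchored expansion / positive speed of the cluster random walk), invokes the Lyons--Schramm indistinguishability theorem to conclude all infinite clusters look alike, and then derives a contradiction with uniqueness at that $p$ by a mass-transport/ergodic argument — the cost and $\ell^2$-Betti-number machinery of Gaboriau providing the natural quantitative backbone (e.g.\ if $\beta_1^{(2)}(\Gamma)>0$ one gets extra leverage toward producing infinitely many heavy clusters just above $\pc$).

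The main obstacle — and precisely the reason the conjecture is still open — is the transfer of non-amenability from the ambient graph $G$ to its \emph{slightly supercritical} clusters. There is no known general argument forbidding that $\pc(G)$ sits exactly at the parameter where the isoperimetric profile of the incipient cluster degenerates; if it did, the cluster at $p\downarrow\pc$ could be ``amenable-looking'' even though $G$ is not, and then neither the branching comparison nor the mass-transport contradiction goes through. Every successful partial result circumvents this rather than solving it: Pak--Smirnova-Nagnibeda only handle \emph{some} generating set (not every Cayley graph, let alone every quasi-transitive graph); Benjamini--Lyons--Peres--Schramm, Häggström--Peres--Schonmann and others exploit planarity, product structure, or high degree; and the hyperbolic families treated in this dissertation use the ambient geometry of $\Hd$ directly. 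A genuine proof of the conjecture would need a uniform lower bound on the expansion of near-critical clusters in terms of the expansion of $G$ alone, and supplying that bound is the hard part I do not expect to resolve here.
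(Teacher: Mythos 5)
You correctly identify that the labelled statement is a \emph{conjecture}, not a theorem: the paper states it as Conjecture~\ref{conjBS96} precisely because it is open, and the dissertation does not claim to prove it in full. There is therefore no ``paper's own proof'' to compare against; what the paper does instead is establish the inequality $\pc<\pu$ for specific families of Cayley graphs (reflection Coxeter groups of polyhedra in $\HHH$, Theorems~\ref{3phgen}, \ref{3phRACpt}, \ref{3phbasic}), and your proposal is rightly framed as a sketch of lines of attack rather than a proof.

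One substantive comparison worth making: the paper's actual mechanism for the cases it does settle is closely aligned with the first of your two routes, but realized concretely. It combines Lyons's bound $\pc(\CG)\le 1/\gr(\CG)$ (Theorem~\ref{bdpc}) with Schramm's bound $\pu(\CG)\ge 1/\g(\CG)$ (Theorem~\ref{bdpu}), then controls $\g$ (and the related quantities $\gs$, $\rt$) via Gabber's lemma for the spectral radius — a direct descendant of the Nagnibeda and Pak--Smirnova-Nagnibeda ideas you cite — and controls $\gr$ from below via Steinberg's formula for the growth series of the Coxeter group (Theorem~\ref{gr>=}). This makes the inequality $\rt(\CG)<\gr(\CG)$ (or $\gs(\CG)<\gr(\CG)$) a concrete numerical target, which is exactly what Sections~\ref{secgen} and~\ref{secRACpt} verify. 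Your second route (mass-transport, indistinguishability, cost/$\ell^2$-Betti numbers) is orthogonal to what this dissertation does and would be a genuinely different approach. Your diagnosis of the core obstruction — transferring non-amenability from the ambient graph to slightly supercritical clusters in a way uniform over all quasi-transitive graphs — is also fair, and it explains why both this paper and Pak--Smirnova-Nagnibeda only obtain the conclusion for particular generating sets or particular geometric families. One small caveat to the reduction in your first paragraph: $\pc<\pu$ is \emph{not} known to be a quasi-isometry invariant, and for non-unimodular transitive graphs the mass-transport machinery degrades, so the reduction to Cayley graphs is not as clean as you suggest; this is in fact part of why the conjecture remains open even in the transitive case.
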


\section{The motivation for phase transitions}

As I mentioned above, there are several classes of graphs for which Conjecture \ref{conjBS96} has been established. The first such result is due to \cite{GrimNewm} for the product of $\mathbb{Z}^d$ and an infinite regular tree of sufficiently high degree; then, among others, there is the paper \cite{Lal} establishing it for Cayley graphs of a wide class of Fuchsian groups and the paper \cite{BS}---for transitive, non-amenable, planar graphs with one end\footnote{An infinite connected graph has \emd{one end} if after throwing out any finite set of vertices it still has exactly one infinite component.}. There is also a quite general result in \cite{PSN}---any finitely-generated non-amenable group has a Cayley graph $G$ with $\pc(G)<\pu(G)$---and an analogous result for a continuous model of percolation in \cite{Tyk}. (Many of those results are obtained for Bernoulli site percolation as well.)
\par In the context of the results above concerning graphs arising naturally from the hyperbolic plane $\HH$, there is interest in seeking such concrete examples of graphs of natural tilings of the $d$-dimensional hyperbolic space $\Hd$ for $d\ge 3$ that $\pc<\pu$ for the Bernoulli bond or site percolation. (It is only known that such examples exist when, roughly speaking, the degree of the graph is sufficiently high.) Motivated by that, in Chapter \ref{ch3ph}, I find such concrete examples arising from reflection groups in $\HHH$.
\par Simultaneously, my advisor proposed that I seek for yet another phase transition in \Bp\ in the $3$-dimensional case. It is motivated as follows. Visualise $\HHH$ as the Poincar\'e disc model (cf.~\cite[Chapter I.6]{BH}). Consider the three phases of Bernoulli percolation on the graph of natural tiling of $\HHH$ with compact tiles: $[0;\pc)$, $(\pc,\pu)$ and $(\pu;1]$ (the middle one known e.g. by one of the above results or by Chapter \ref{ch3ph}; for the first and the last phase, see Remark \ref{remph123}). In the first phase, a.s.\ all the clusters are finite. Thus, viewed in large scale, they resemble points, hence $0$-dimensional objects. In the last phase there is a unique infinite cluster. It is known that then this cluster has one end and its closure in the Poincar\'e disc model contains the whole boundary sphere of the model. Such an infinite cluster deserves calling it a $3$-dimensional object. The phenomenon that my advisor suspects is that in the middle phase we can have ``fibrous'', or ``$1$-dimensional'', infinite clusters for $p$ close to $\pc$ and ``fan-shaped'', or ``$2$-dimensional'', infinite clusters for $p$ close to $\pu$.
\par In Chapter \ref{ch1ptbd}, I formalise the notion of a ``fibrous'' infinite cluster: I define it as a cluster with (only) one-point boundaries of ends (see Definition \ref{dfbd}). As the main result there, I find a sufficient condition  for $p$-\Bbp\ to exhibit this behaviour.
\par According to the intuition of $0$-, $1$-, $2$- and $3$-dimensional clusters for $\HHH$, for $\HH$ we should have only three phases arising from the ``dimension'' of the infinite clusters. It turns out that the phase with $1$-dimensional infinite clusters (in the sense of boundaries of ends) and the non-uniqueness phase, for vertex-transitive graphs of tilings of $\HH$ by bounded hyperbolic polygons, exactly agree, as the sets of respective $p\in[0;1]$. It is established in \cite{CzBCP}, but is also easily implied for Cayley graphs of a wide class of Fuchsian groups by much earlier paper \cite{Lal}.

\section{Technical preliminaries}

Here I am going to give some precise definitions and useful terminology which are used throughout this \work. It may be also helpful to consult e.g.\ \cite{Grim} and \cite{LP}, which give quite wide introduction to percolation theory.
\par The \Bp\ process on a graph $G$ can be formalised as follows.  By $V(G)$ and $E(G)$ I denote the sets of vertices and edges of $G$, respectively. For $p\in[0;1]$ the $p$-\Bbp\ process is defined as the probability measure $\Pr_p$ on the space $\Omega=2^{E(G)}$ of all subsets of $E(G)$, called (\emd{percolation}) \emd{configurations}, such that:
\begin{itemize}
\item the underlying $\s$-algebra is generated by all the cylindrical subsets of $2^{E(G)}$, i.e.\ the sets of configurations of the form
$$\{\omega\subseteq E(G): \omega\cap F = F'\}$$
for finite $F,F'$ such that\ $F'\subseteq F \subseteq E(G)$;
\item for any $e\in E(G)$
$$\Pr_p(\{\omega:e\in\omega\})=p;$$
\item the sets (random events) $\{\omega:e\in\omega\}$ for $e\in E(G)$ are independent with regard to $\Pr_p$.
\end{itemize}
In other words, if we identify $2^{E(G)}$ in the canonical way with the space $\{0;1\}^{E(G)}$ of all 0-1 functions on $E(G)$, $\Pr_p$ arises as the product measure
\begin{equation}
\Pr_p = \prod_{e\in E(G)} ((1-p)\d_0 + p\d_1)
\end{equation}
(where $\d_x$ is the Dirac delta).
\par For $\omega\in 2^{E(G)}$ I think of $\omega$ also as the subgraph of $G$ with the vertices set $V(G)$ and the edges set $\omega$.
\par For the site version of $p$-\Bp, I just take $\Omega=2^{V(G)}$ and then a \emd{configuration} $\omega\in\Omega$ means a subset of $V(G)$ or, interchangeably, the subgraph of $G$ induced by this set of vertices. The $\s$-algebra being the domain of $\Pr_p$ is again generated by the cylindrical subsets of $2^{V(G)}$ and $\Pr_p$ itself is the product measure
\begin{equation}
\Pr_p = \prod_{v\in V(G)} ((1-p)\d_0 + p\d_1)
\end{equation}
if we canonically identify $2^{V(G)}$ with $\{0;1\}^{V(G)}$.
\par Whenever I consider Bernoulli bond or site percolation and I use the symbol $\omega$, I mean $\omega\in\Omega$ and I treat it as a graph-valued random variable distributed according to $\Pr_p$. Random events in a percolation process are just measurable sets of configurations, unless some other random process is involved. We call the edges (or vertices for site percolation) belonging to $\omega$ \emd{open} and the other ones we call \emd{closed}. This terminology applies as well to sets of (more) edges (or vertices). By the \emd{state} of an edge (or vertex, respectively) I mean the logical value of the statement that the edge (or the vertex) is open.
\par Now, I complete the definitions of $\pc$ and $\pu$ given in Subsection \ref{secpcpu} by more formal definitions:
\begin{df}
The \emd{critical probability} for Bernoulli bond or site percolation on $G$ is
\begin{equation}
\pc(G):=\inf\{p\in[0;1]:\Pr_p(\omega\textrm{ has some infinite cluster})>0\}
\end{equation}
and the \emd{unification probability} for that percolation model is
\begin{equation}
\pu(G):=\inf\{p\in[0;1]:\textrm{$\Pr_p$-a.s.~there is a unique infinite cluster in $\omega$}\}.
\end{equation}
\end{df}
\par Because I am going to consider percolation in the context of hyperbolic spaces $\Hd$, some geometry and some properties of the models of those spaces are used here. In this \work, I do not introduce the basic concepts of that geometry and those models. Instead, I refer the reader to Chapters I.2 and I.6 in \cite{BH}. (This book contains also an introduction to basic concepts of geometry of metric spaces in Chapter I.1.)

%auto-ignore
\newcommand{\op}{\omega^{(p)}}
\newcommand{\opd}{\omega^{(p)\dag}}
\newcommand{\0}{\varepsilon}
\newcommand{\B}{\mathfrak{B}}

\newcommand{\ot}{{\tilde{o}}}
\newcommand{\rt}{{\tilde\varrho}}
\newcommand{\gs}{\gamma^*}
\newcommand{\gr}{\mathrm{gr}}
\newcommand{\F}{\mathrm{F}}

\newcommand{\oE}{{\vec{E}}}
\newcommand{\CG}{\Gamma}
\renewcommand{\d}{{^\dag}}
\newcommand{\rb}{{^\mathrm{rb}}}
\renewcommand{\r}{{\mathrm{r}}}
\newcommand{\R}{{\mathrm{R}}}

\newcommand{\fL}[1]{{f_{#1}^L}}
\newcommand{\ft}[1]{{f_{#1}^\triangle}}
\newcommand{\dk}{\frac{\ud}{\ud k}}

\newcommand{\appsec}{section}
\newcommand{\appsecs}{Sections \ref{appxgs} and \ref{appxgr}}

\chapter
{Non-uniqueness phase on reflection groups in $\HHH$}\label{ch3ph}
%{Non-uniqueness phase of Bernoulli percolation on reflection groups for some polyhedra in $\HHH$}

\begin{comment}
\begin{abstract}\uwaga{w rozprawie chyba bez abstraktów?}
In the present paper I consider Cayley graphs of reflection groups of finite-sided Coxeter polyhedra in $3$-dimensional hyperbolic space $\HHH$, with standard sets of generators. As the main result, I prove the existence of non-degenerate non-uniqueness phase of Bernoulli bond and site percolation on such graphs, i.e.~that $\pc<\pu$, for two classes of such polyhedra:
\begin{itemize}
\item for any $k$-hedra as above with $k\ge 13$;
\item for any compact right-angled polyhedra as above.
\end{itemize}
I also establish a natural lower bound for the growth rate of such Cayley graphs (when the number of faces of the polyhedron is $\ge 6$; see thm.~\ref{gr>=}) and an upper bound for the growth rate of the sequence $(\#\{\textrm{simple cycles of length $n$ through $o$}\})_n$ for a regular graph of degree $\ge 2$ with a distinguished vertex $o$, depending on its spectral radius (see thm.~\ref{thmgs} and rem.~\ref{g<=gs<=rt}), both used to prove the main result.
\end{abstract}
\end{comment}

\section{Introduction}\label{intr.ch3ph}

As I mentioned in the introduction to this \work, in this chapter I prove the existence of an essential non-uniqueness phase (i.e.\ that $\pc<\pu$) in \Bp\ on Cayley graphs of a very large class of reflection Coxeter groups in $\HHH$. I state this result as the two main theorems of this chapter below. The result is split in this way because I find it convenient to prove them separately.

\begin{thm}\label{3phgen}
For $\CG$ the Cayley graph of the reflection group of a Coxeter polyhedron in $\HHH$ (in the sense of Definition \ref{dfpolyh}) with $k\ge13$ faces, with the standard generating set, we have
$$\pc(\CG)<\pu(\CG)$$
for Bernoulli bond and site percolation on $\CG$.
\end{thm}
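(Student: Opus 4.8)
The plan is to establish $\pc(\CG)<\pu(\CG)$ by bounding the two thresholds separately and showing the bounds are compatible. For the lower bound on $\pu$, I would use the standard criterion (going back to Schonmann, and used by Lalley, Benjamini--Schramm and others) that $\pu > p$ whenever the expected number of open simple paths of length $n$ between two fixed vertices decays; more precisely, I would show that for $p$ below some explicit value, two infinite clusters coexist with positive probability because the "touching" of clusters is too rare. Concretely, the key quantity is $\sum_n c_n(o) \, p^n$ where $c_n(o)$ is the number of simple cycles (or paths through a fixed edge) of length $n$; if this is finite for a given $p$, one can run a Peierls-type argument showing non-uniqueness persists. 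This is exactly why the excerpt advertises an upper bound on the growth rate of $(\#\{\text{simple cycles of length }n\text{ through }o\})_n$ in terms of the spectral radius (Theorem \ref{thmgs}): combined with the Kesten-type bound on the spectral radius of a non-amenable transitive graph, it yields a quantitative $p_1 < \pu(\CG)$ with $p_1$ expressed through the degree and spectral radius.

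For the upper bound on $\pc$, I would use the first-moment / branching comparison: $\pc(\CG) \le 1/\gr(\CG)$ where $\gr(\CG)$ is the exponential growth rate of balls (equivalently, a lower bound on the branching number suffices, but growth is easier here), or more carefully $\pc \le 1/\text{br}(\CG)$. This is where Theorem \ref{gr>=} (the lower bound on the growth rate of the Cayley graph when the polyhedron has $\ge 6$ faces) enters: it gives an explicit $p_0$ with $\pc(\CG) \le p_0$. The hypothesis $k \ge 13$ is presumably calibrated precisely so that the resulting interval is nonempty: the growth-rate lower bound improves with $k$, pushing $p_0$ down, while the spectral-radius-based lower bound on $\pu$ also improves with $k$ (larger degree $\Rightarrow$ smaller spectral radius $\Rightarrow$ larger $\pu$), and at $k=13$ the two estimates finally cross so that $p_0 < p_1$.

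So the steps, in order, are: (1) recall the combinatorial setup of $\CG$ — it is a regular graph whose degree and local structure are determined by the Coxeter data of the polyhedron, and it is non-amenable; (2) invoke Theorem \ref{gr>=} to get $\pc(\CG) \le 1/\gr(\CG) \le p_0(k)$; (3) invoke Theorem \ref{thmgs} together with a spectral-radius estimate to get a function $\psi$ with $\pu(\CG) \ge \psi(\text{spectral radius}) \ge p_1(k)$, using the cycle-counting bound to control where two clusters can merge; (4) verify the arithmetic inequality $p_0(k) < p_1(k)$ for all $k \ge 13$, with the site-percolation case handled by the usual comparison inequalities $\pc^{\text{site}} \le \pc^{\text{bond}}$-style relations adapted to transitive graphs (or by re-running the same estimates on the site model, since both Theorems \ref{gr>=} and \ref{thmgs} are stated graph-theoretically). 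The main obstacle, as usual in this circle of ideas, is step (3): turning a bound on cycle counts into a genuine lower bound on $\pu$ requires a careful two-cluster coexistence argument (controlling the probability that two distinct infinite open clusters ever get connected), and making every constant explicit enough that the final crossover at $k=13$ is provable rather than merely asymptotic. The growth-rate side (step 2) is comparatively routine, but still needs the explicit geometric input that Theorem \ref{gr>=} supplies, since generic lower bounds on $\gr$ would not suffice to reach $k=13$.
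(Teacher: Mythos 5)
Your high-level skeleton matches the paper: bound $\pc$ above by $1/\gr(\CG)$ (Theorem \ref{bdpc}), bound $\pu$ below by $1/\g(\CG)$ via a cycle-counting criterion, relate $\g$ to the spectral radius through Theorem \ref{thmgs}, use Theorem \ref{gr>=} for the growth rate, and check the crossover at $k=13$. But there are two concrete gaps in your step (3), and the first one is where the whole argument would stall.

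First, you propose to get the cycle-count bound from a ``Kesten-type bound on the spectral radius of a non-amenable transitive graph.'' No generic non-amenability input will do here: Kesten's theorem gives $\varrho<1$, and general isoperimetric bounds are far too weak to make the arithmetic close at $k=13$. The paper's key technical ingredient, which you do not mention at all, is Lemma \ref{rhogen}: an \emph{explicit} upper bound $\rt(\CG)\le (k+17)/3$ obtained by applying Gabber's lemma (Corollary \ref{Gabcor}) with a weight function $F$ on oriented edges tuned to the combinatorics of the Coxeter tiling. Getting this to work requires the geometric facts about the polyhedron developed in Subsection \ref{geomPiG}: that $r(v)\le 3$ (Proposition \ref{r<=3}, via Andreev-type arguments), and the quantitative control of $q_3(v)$ in Lemma \ref{q_3gen}. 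Without this specific Gabber-weight computation you have no route to an upper bound on $\rt(\CG)$ (equivalently on the spectral radius) that beats the growth-rate lower bound at $k=13$.

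Second, you describe turning the cycle-count estimate into a lower bound on $\pu$ as requiring you to ``run a Peierls-type argument showing non-uniqueness persists'' and flag this as the main obstacle. In the paper this is not an obstacle at all: it is exactly the content of Schramm's theorem (Theorem \ref{bdpu}), quoted from \cite{Lyo00}, which states $\pu(\CG)\ge 1/\g(\CG)$ for transitive graphs in both bond and site percolation. The reduction via Remark \ref{g<=gs<=rt} then says it suffices to show $\gs(\CG)<\gr(\CG)$, and this is a purely arithmetic comparison once Lemma \ref{rhogen}, Theorem \ref{thmgs} and Theorem \ref{gr>=} are in hand. Relatedly, your treatment of the site case via ``$\pc^{\text{site}}\le\pc^{\text{bond}}$-style comparison inequalities'' is off track: the relevant comparison goes the wrong way for this purpose, and the paper instead relies on Theorems \ref{bdpc} and \ref{bdpu} being stated for both bond and site percolation from the outset, so nothing extra is needed.
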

\begin{thm}\label{3phRACpt}
If $\Pi$ is a compact right-angled polyhedron in $\HHH$, then for $\CG$ the Cayley graph of the reflection group of $\Pi$ with the standard generating set, we have
$$\pc(\CG)<\pu(\CG)$$
for Bernoulli bond and site percolation on $\CG$.
\end{thm}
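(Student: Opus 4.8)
**Plan for proving $\pc(\CG) < \pu(\CG)$ for reflection groups of compact right-angled polyhedra in $\HHH$ (Theorem~\ref{3phRACpt}).**

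The overall strategy is to establish two separate inequalities that together force a gap: an upper bound on $\pc(\CG)$ and a lower bound on $\pu(\CG)$, and to arrange that the two bounds do not overlap. For the upper bound on $\pc$, the natural tool is the branching-number / growth-rate comparison: the critical probability of any graph is at most $1/\gr(\CG)$ in a suitable sense, or more robustly one uses $\pc \le 1/(\text{growth rate})$ type estimates (or a first-moment/second-moment argument counting open self-avoiding paths). So the first task is to get a good lower bound on the exponential growth rate $\gr(\CG)$ of the Cayley graph; the paper flags exactly such a result (``a natural lower bound for the growth rate of such Cayley graphs''), so I would invoke that and deduce $\pc(\CG) \le 1/\gr(\CG)$, with $\gr(\CG)$ large because right-angled polyhedra in $\HHH$ must have many faces (by Andreev's theorem / Gauss–Bonnet constraints a compact right-angled hyperbolic polyhedron has at least $12$ faces — the right-angled dodecahedron being the extremal case), hence the standard generating set is large and the Cayley graph branches fast.

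For the lower bound on $\pu$, the idea is that below $\pu$ one needs two disjoint infinite open clusters to coexist, and this can be obstructed by a Peierls-type / contour argument: if $p$ is small enough, then the dual or complementary structure (closed edges/vertices separating two clusters) percolates in a way that prevents uniqueness from failing to... — more precisely, one shows that for $p$ close enough to $1$ a unique infinite cluster forms, via the standard technique of showing that the complement (closed edges) has only finite clusters, which cannot separate the infinite open cluster into pieces. Quantitatively, uniqueness for $p$ follows once $1-p$ is below the critical threshold of an associated ``separating surface'' process; bounding that threshold from below (equivalently bounding $\pu$ below $1$ away from $1$) uses the isoperimetric/non-amenability constant of $\CG$ together with a counting bound on the number of minimal separating sets of a given size — and here again the growth-rate and cycle-counting estimates of the chapter (the bound on the number of simple cycles of length $n$ through a vertex, controlled by the spectral radius) enter, because separating surfaces in a $2$-complex sense are built out of cycles. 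The right-angled condition is what makes the local combinatorics of these separating sets tractable and lets the spectral-radius bound be applied cleanly.

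Concretely, the steps in order would be: (1) identify the reflection group $W$ of the compact right-angled polyhedron $\Pi$ and its standard Coxeter generating set $S$, noting $|S| = k \ge 12$ with $k$ large; (2) apply the chapter's growth-rate lower bound to get $\gr(\CG) \ge c(k)$ for an explicit increasing $c$, hence $\pc(\CG) \le 1/c(k)$; (3) apply the chapter's cycle-counting bound (via the spectral radius $\rho(\CG) < 1$, which is strictly less than $1$ precisely by non-amenability) to bound the number of candidate separating configurations of size $n$ by $\beta^n$ for an explicit $\beta$; (4) run a Peierls argument: for $1-p$ small the expected number of ``bad'' separating surfaces around a fixed edge is summable, so $\Pr_p$-a.s.\ there is no such surface, forcing a unique infinite cluster, which gives $\pu(\CG) \le 1 - 1/\beta'$ for explicit $\beta'$; (5) verify the arithmetic $1/c(k) < 1 - 1/\beta'$ holds for the relevant $k$ (using that $c(k) \to \infty$ and $\beta'$ stays bounded, or checking the worst case $k = 12$ of the dodecahedron directly). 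The whole thing must also be done for \Bsp\ in parallel, which is routine given the bond case and the standard site-vs-bond comparison.

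The main obstacle I expect is step (4)–(5): making the Peierls/uniqueness argument \emph{quantitative} enough that the resulting upper bound on $\pu$ genuinely falls below the upper bound on $\pc$. Non-amenability alone gives $\pu < 1$ qualitatively, but squeezing out an explicit numerical gap requires the combinatorial bound on separating sets (cycle counts times the number of ways to assemble them into a surface in the Cayley graph) to be sharp enough — and controlling those assemblies in a general right-angled Cayley graph, rather than in a nice planar or tree-like situation, is the delicate point. If the crude bounds are not tight enough for small $k$ (the dodecahedron), one may need to replace the global Peierls estimate by a more careful local analysis of the link structure of $\Pi$, exploiting right-angledness to decompose separating surfaces along the reflection walls.
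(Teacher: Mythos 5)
Your step (1) and your handle on $\pc$ are on track: the paper does use $\pc(\CG)\le 1/\gr(\CG)$ together with the exact growth-rate formula for cocompact right-angled reflection groups (and the fact that $k\ge 12$, with $k=12$ the dodecahedron). But steps (3)–(5) contain a direction error that makes the plan collapse. A Peierls/contour argument of the kind you describe shows that for $p$ close to $1$ there is a unique infinite cluster; that gives an \emph{upper} bound $\pu(\CG)\le 1-1/\beta'$. To separate $\pc$ from $\pu$ you need a \emph{lower} bound on $\pu$, and the inequality you propose to check, $1/c(k)<1-1/\beta'$, compares an upper bound for $\pc$ with an upper bound for $\pu$, which implies nothing about $\pc$ versus $\pu$. (In fact $\pu<1$ for these groups is recorded in the paper only as an aside, via \cite{BB}, and plays no role in the proof of $\pc<\pu$.)

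You did correctly sense that the count of simple cycles through a vertex, controlled by the spectral radius, is the key quantity, but you slotted it into the wrong theorem. What the paper actually uses is Schramm's inequality $\pu(\CG)\ge 1/\gamma(\CG)$, where $\gamma(\CG)$ is the exponential growth rate of the number of simple cycles through a fixed vertex. This is the missing lower bound on $\pu$. The chain of lemmas then is: (a) bound the non-backtracking-cycle growth rate $\gamma^*$ by the return-walk rate $\tilde\rho$ via a universal-cover/Green-function computation; (b) bound $\tilde\rho$ via Gabber's lemma, exploiting the right-angled geometry of the tiling (each vertex of $\CG$ has at most three ``inward'' edges, and the link combinatorics of the polyhedron controls how many neighbours have a given inward degree); (c) compare $\gamma\le\gamma^*$ against the exact growth rate $\gr(\CG)=\tfrac{1}{2}\bigl(k-4+\sqrt{(k-4)^2-4}\bigr)$; and (d) check the arithmetic at $k=12$ (larger $k$ being covered by the general Theorem~\ref{3phgen}). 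Without Schramm's bound, or some other genuine lower bound on $\pu$, your plan cannot close, no matter how sharp the Peierls estimate is made.
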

Proofs of these theorems are completed in Sections \ref{secgen} and \ref{secRACpt}, respectively, although they use some facts established in Section \ref{secbasic} and \appsecs.
\begin{rem}\label{remph123}
In the setting of either Theorem \ref{3phgen} or Theorem \ref{3phRACpt} we have $\pc(\CG)\ge\frac{1}{k-1}>0$, which is well-known (see e.g.~\cite[Thm.~1.33]{Grim} or \cite[Prop.~7.13]{LP}). Also, if $\Pi$ in that setting is compact, then $\CG$ is the Cayley graph of a finitely presented group with one end, so from \cite[Thm.~10]{BB} we have $\pu(\CG)<1$ for bond percolation. Thus, for such a polyhedron $\Pi$, Theorems \ref{3phgen} and \ref{3phRACpt} give $3$ non-degenerate phases of Bernoulli bond percolation on $\CG$, which provides a picture analogous to that from \cite{BS} for $2$-dimensional case (where also $3$ such non-degenerate phases are established).
\end{rem}
In Section \ref{secbasic} an introductory version of Theorems \ref{3phgen} and \ref{3phRACpt} is proved:
\begin{thm}\label{3phbasic}
If $\Pi$ is a compact right-angled polyhedron in $\HHH$ with $k\ge 18$ faces, then for $\CG$ the Cayley graph of the reflection group of $\Pi$ with the standard generating set, we have
$$\pc(\CG)<\pu(\CG)$$
for Bernoulli bond and site percolation on $\CG$.
\end{thm}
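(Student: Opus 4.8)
The plan is to establish $\pc(\CG)<\pu(\CG)$ via the standard strategy for non-amenable graphs: exhibit a value $p$ for which one can simultaneously guarantee (a)~that a $p$-percolation configuration a.s.\ has infinite clusters, so that $p\ge\pc$, and (b)~that a $p$-percolation configuration a.s.\ has \emph{no} unique infinite cluster, so that $p\le\pu$; combined with a little care at the endpoints this yields a non-degenerate interval. For part (a) I would use a first-moment/branching comparison giving an upper bound on $\pc(\CG)$: since $\CG$ is a regular graph of degree $k$ and contains plenty of self-avoiding structure, $\pc(\CG)$ is bounded above by something like the inverse of the connective constant, and more usefully one gets $\pc(\CG)\le 1/\big(\text{growth-type quantity}\big)$. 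The key tool here should be the lower bound on the growth rate of the Cayley graph (the theorem referred to in the commented abstract as \texttt{gr>=}, valid when the polyhedron has $\ge 6$ faces), which for $k\ge 18$ forces the growth rate to be large.

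For part (b) I would bound $\pu(\CG)$ away from $1$ from below — i.e.\ show that for $p$ not too large there are a.s.\ infinitely many infinite clusters. The cleanest route is a second-moment or Peierls-type argument on cycles: a unique infinite cluster at parameter $p$ would force, by insertion tolerance and the structure of the graph, the existence of long open simple cycles surrounding prescribed regions with probability bounded below, and one controls the expected number of such cycles through a fixed vertex $o$ by $\sum_n (\#\{\text{simple cycles of length }n\text{ through }o\})\,p^n$. This is exactly where the upper bound on the growth rate of $(\#\{\text{simple cycles of length }n\text{ through }o\})_n$ in terms of the spectral radius (the theorem referred to as \texttt{thmgs}, together with \texttt{rem g<=gs<=rt}) enters: for a right-angled reflection group in $\HHH$ with many faces the spectral radius of $\CG$ is small (the graph is ``very non-amenable''), so the cycle-counting series converges for $p$ bounded above by a constant strictly less than the reciprocal of the growth rate used in part (a). Choosing $k\ge 18$ is what makes the two bounds compatible, opening a genuine gap between the upper bound on $\pc$ and the lower bound on $\pu$.

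More concretely, I would proceed in the following order. First, record the combinatorial geometry of the Cayley graph $\CG$ of a compact right-angled Coxeter polyhedron $\Pi$ with $k$ faces: it is $k$-regular, the standard relations are $s_i^2=1$ and $(s_is_j)^2=1$ for adjacent faces, so short cycles are understood, and the growth/spectral estimates from Section \ref{secbasic} and the appendix \appsecs\ apply. Second, derive the explicit upper bound $\pc(\CG)\le\alpha(k)$ from the growth lower bound. Third, derive the explicit lower bound $\pu(\CG)\ge\beta(k)$ from the cycle-count upper bound plus a uniqueness-implies-cycles argument (using insertion tolerance and non-amenability in the manner of Burton--Keane type reasoning). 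Fourth, verify the arithmetic inequality $\alpha(k)<\beta(k)$ for all $k\ge 18$, which is a finite-to-check monotonicity statement once the functions $\alpha,\beta$ are written down. Fifth, handle the degenerate possibilities ($\pc=0$ or $\pu=1$) — here $\pc>0$ is immediate from $k$-regularity as in Remark \ref{remph123}, and one need not even rule out $\pu=1$ since $\beta(k)<1$ will be built in.

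The main obstacle I expect is the middle step: turning ``unique infinite cluster'' into ``lower bound on the probability of long open simple cycles through $o$'' quantitatively enough that it can be played off against the cycle-count series. Insertion tolerance gives uniqueness-monotonicity and the standard Burton--Keane argument rules out finitely many $(\ge 2)$ infinite clusters, but converting this into the precise statement ``if $p<\beta(k)$ then a.s.\ not a unique infinite cluster'' requires either a careful encounter-point/trifurcation estimate or an appeal to a tree-like substructure inside $\CG$ on which non-uniqueness is transparent; reconciling that substructure's parameters with the spectral-radius bound is the delicate part. The growth estimates themselves (steps two and the cycle bound) I would treat as already available from Section \ref{secbasic} and the appendix, so the real work is the probabilistic uniqueness argument and the final numerical comparison.
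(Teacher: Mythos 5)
Your overall shape is the right one---bound $\pc$ from above via a growth-rate estimate, bound $\pu$ from below via a simple-cycle-count estimate, and verify the numeric gap for $k\ge18$---but there are two substantive misidentifications of where the work actually lies.

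First, the bound on $\pu$ is not something you should attempt to build from scratch out of insertion tolerance and Burton--Keane trifurcation counting. The paper invokes as a black box Schramm's result (Theorem~\ref{bdpu}, i.e.\ Theorem~3.9 of \cite{Lyo00}): for a transitive graph, $\pu(\CG)\ge 1/\g(\CG)$, where $\g$ is the exponential growth rate of the number of self-avoiding cycles through a fixed vertex. Your ``main obstacle''---\emph{turning uniqueness into a lower bound on the probability of long open simple cycles}---is precisely what Schramm's theorem packages; it is not something the proof of Theorem~\ref{3phbasic} reconstructs, and it is not proved by the Peierls/second-moment route you sketch. Combined with the chain $\g\le\gs\le\rt$ (Remark~\ref{g<=gs<=rt}) and $\pc\le 1/\gr$ (Theorem~\ref{bdpc}), the whole theorem reduces to the purely combinatorial inequality $\rt(\CG)<\gr(\CG)$; there is no probabilistic argument left to make beyond citing these two theorems.

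Second, you gesture at ``the spectral radius of $\CG$ is small'' but give no mechanism. The paper gets the explicit bound $\rt(\CG)\le 2\sqrt{3(k-3)}$ (Lemma~\ref{rhobasic}) via Gabber's lemma (Corollary~\ref{Gabcor}), choosing a concrete edge-weight function $F$ built from the statistic $r(v)$ (the number of edges pointing backward toward $o$), together with the geometric fact $r(v)\le 3$ (Proposition~\ref{r<=3}, a consequence of Andreev's theorem). This construction is the actual content of the proof and is absent from your plan. You also misattribute which appendix tools are used here: the basic Theorem~\ref{3phbasic} uses $\rt$ directly, \emph{not} the sharper $\gs$ estimate (Theorem~\ref{thmgs}), and uses the exact growth formula for right-angled compact reflection groups (Theorem~\ref{gr}, via the Steinberg/Euler-characteristic computation), \emph{not} the general lower bound Theorem~\ref{gr>=}; those enter only in the improved Theorems~\ref{3phgen} and~\ref{3phRACpt}. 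Finally, note that $\pc\le 1/\gr$ is via comparison to a spanning tree of the ball (Lyons' argument), not a ``connective constant'' bound; the distinction matters because the growth rate, not the self-avoiding-walk count, is what the exact formula controls.
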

The proof of this theorem uses relatively narrow set of facts, but the result is less general than Theorems \ref{3phgen}, \ref{3phRACpt}. More precise remarks on how particular tools used in this \partw{} can improve Theorem \ref{3phbasic} are in Section \ref{secrems}.
\par In \appsecs, two of those tools (Theorems \ref{thmgs}, \ref{gr>=}), which are somewhat interesting theorems themselves, are obtained.
\par The basis for proving all three Theorems \ref{3phgen}, \ref{3phRACpt}, \ref{3phbasic} are the following bounds for $\pc$ and $\pu$.
\begin{df}\label{dfgr}
For a transitive graph $\CG$, I define the (\emd{upper}) \emd{growth rate} of $\CG$
$$\gr(\CG) = \limsup_{n\to\infty} \sqrt[n]{\#B(n)},$$
where $B(n)$ is the closed ball of radius $n$ around some fixed vertex of $\CG$.%\footnote{In fact, the limit $\lim_{n\to\infty}(\#B(n))^{1/n}$ exists for transitive $\CG$ because of Fekete lemma applied for $(\log(\#B(n)))_{n=0}^\infty$, as here $\#B(n+m)\le\#B(n)\#B(m)$---but I do not use that in this \partw.}% Also, for any group $G$ with finite generating set $S$ I define its \emd{growth rate} with respect to $S$ just as the growth rate of the Cayley graph of $(G,S)$ and I denote it by $\gr(G,S)$.
\end{df}
\begin{thm}[\protect{\cite[remarks in section 3]{Lyo95}}]\label{bdpc}
For Bernoulli bond and site percolation on any Cayley graph $\CG$,
$$\pc(\CG)\le\frac{1}{\gr(\CG)}.$$
\qed
\end{thm}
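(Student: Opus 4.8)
The plan is to combine a one-line first-moment estimate with the (deep, but standard) sharpness of the percolation phase transition. Concretely, I will show that $\Est_p|C_o|=\infty$ whenever $p>1/\gr(\CG)$, where $C_o$ is the cluster of a fixed vertex $o$, and then invoke the theorem that $\Est_p|C_o|<\infty$ for every $p<\pc(\CG)$; the two facts are incompatible unless $\pc(\CG)\le1/\gr(\CG)$.

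\textbf{A combinatorial reduction.} First I would record that, writing $S(n)$ for the set of vertices at distance exactly $n$ from $o$, one has $\gr(\CG)=\limsup_{n\to\infty}\sqrt[n]{\#S(n)}$. The inequality ``$\ge$'' is clear from $\#S(n)\le\#B(n)$; conversely $\#B(n)\le(n+1)\max_{m\le n}\#S(m)$, so if $1<\beta<\gr(\CG)$ then $\#B(n)>\beta^n$ for infinitely many $n$, for each such $n$ some $m_n\le n$ satisfies $\#S(m_n)>\beta^n/(n+1)$, these $m_n$ necessarily tend to infinity (each $\#S(m)$ being finite, they cannot stay bounded while $\#S(m_n)\to\infty$), and then $\#S(m_n)^{1/m_n}\ge(\beta^n/(n+1))^{1/n}\to\beta$. (If $\gr(\CG)\le1$ the asserted bound is vacuous, so I may assume $\gr(\CG)>1$.)

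\textbf{The first-moment step.} For any vertex $v$ at distance $n$ from $o$ there is a geodesic $o=v_0,\dots,v_n=v$, and the event that all its edges are open --- all its vertices are open, in the site model --- forces $o\leftrightarrow v$; hence $\Pr_p(o\leftrightarrow v)\ge p^{n}$ (respectively $\ge p^{n+1}$). Summing over all vertices, grouped by distance,
$$\Est_p|C_o|=\sum_{v}\Pr_p(o\leftrightarrow v)\ \ge\ \sum_{n\ge0}\#S(n)\,p^{n}$$
(up to the harmless extra factor $p$ in the site case). Picking $n_k\to\infty$ with $\#S(n_k)^{1/n_k}\to\gr(\CG)$, we get $\#S(n_k)\,p^{n_k}=\bigl(\#S(n_k)^{1/n_k}\,p\bigr)^{n_k}\to\infty$, since $\#S(n_k)^{1/n_k}\,p\to\gr(\CG)\,p>1$. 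Thus $\Est_p|C_o|=\infty$ for every $p>1/\gr(\CG)$, in both the bond and the site models.

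\textbf{Conclusion and the hard step.} It then remains to quote the sharpness of the phase transition: for $p<\pc(\CG)$ the cluster-size distribution decays exponentially, in particular $\Est_p|C_o|<\infty$. For bond and site percolation this holds on any quasi-transitive (hence any Cayley) graph, by the Aizenman--Barsky/Menshikov theorem. Combining the two steps: no $p$ can satisfy both $p>1/\gr(\CG)$ and $p<\pc(\CG)$, which is exactly $\pc(\CG)\le1/\gr(\CG)$. The only genuinely non-elementary ingredient --- and hence the ``hard step'' --- is this last one; everything else is bookkeeping. (One could instead attempt a second-moment argument on $\#\{v\in S(n):o\leftrightarrow v\}$ to get positivity of the percolation probability directly, but controlling $\Pr_p(o\leftrightarrow v,\ o\leftrightarrow v')$ through the tree-graph inequality reintroduces the finiteness of $\Est_p|C_o|$, so nothing is saved.)
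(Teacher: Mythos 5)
Your proof is correct, but it takes a genuinely different route from the one the paper points to. The paper (via Lyons~\cite{Lyo95} and the proof of \cite[Theorem 7.21]{LP}) builds a \emph{geodesic spanning tree} $T$ of $\CG$ (each vertex $v\ne o$ retaining one edge to a vertex at distance $l(v)-1$), so that the spheres of $T$ and $\CG$ coincide; then it invokes Lyons' formula $\pc(T)=1/\mathrm{br}(T)$ for trees, and uses the vertex-transitivity of $\CG$ to show that $T$ is subperiodic, which forces $\mathrm{br}(T)=\gr(T)=\gr(\CG)$, giving $\pc(\CG)\le\pc(T)=1/\gr(\CG)$. That argument is intrinsic to percolation on trees and uses the group structure precisely to get subperiodicity; it is the tree formula that makes the bond and site cases uniform. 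You instead combine a first-moment lower bound $\Est_p|C_o|\ge\sum_n\#S(n)p^n$ (diverging for $p>1/\gr(\CG)$) with the Aizenman--Barsky/Menshikov sharpness theorem, which says $\Est_p|C_o|<\infty$ for $p<\pc$. Your bookkeeping steps (passing from balls to spheres in the $\limsup$, and checking divergence along a subsequence realizing the growth rate) are all correct, and the argument works verbatim for site percolation up to the harmless factor of $p$. What you gain is that the argument needs no group structure at all, only quasi-transitivity (so it would also cover quasi-transitive non-Cayley graphs); what you pay is that you outsource the real work to sharpness of the phase transition, a comparably deep input which the Lyons approach avoids entirely. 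Both are legitimate; yours is not the proof the paper has in mind.
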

\begin{rem}
This estimate is established in the proof of  \cite[Theorem 7.21]{LP}. Although that proof is performed for \Bbp, it is valid also for the site version (together with the desired estimate). The reason is that it uses a formula for $\pc$ in \Bp\ on a tree, which is the same for the bond and the site version. Cf.\ also remarks in \cite[Section 3]{Lyo95}.
%This theorem is present in the idea of proof of theorem saying that for Cayley graph of a group with exponential growth, $p_c<1$---see remarks in \cite[section 3]{Lyo95}. Also, a proof of that, where the inequality $\pc(\CG)\le1/\gr(\CG)$ is evident (formally, for Bernoulli bond percolation), is the proof of \cite[theorem 7.19]{LP}. Essentially, the same proof of \cite[theorem 7.19]{LP} is valid for site Bernoulli percolation because in that case, theorem 5.15 from that book (originally theorem 6.2 from \cite{Lyo90}) is valid as well, as the distribution of the cluster containing the root, for both bond and site Bernoulli percolation on a rooted tree, is exactly the same if we only declare the root to be open in the case of site percolation, by an easy exercise.
\end{rem}
\begin{denot}
For any graph $\CG$, not necessarily simple, its vertex $o$ and any $n\in\N$, I define $a_n(\CG,o)$ to be the number of cycles (see the second paragraph of Remark \ref{pathsinnonsmpl}) of length $n$ in $\CG$ starting from $o$, which do not use any vertex more than once. Then I put
$$\g(\CG,o) = \limsup_{n\to\infty}\sqrt[n]{a_n(\CG,o)}$$
(I may drop ``$o$'' when $\CG$ is transitive because then it does not play any role).
\end{denot}
\begin{thm}[O.~Schramm, \protect{\cite[Thm.~3.9]{Lyo00}}]\label{bdpu}
For Bernoulli bond and site percolation on any transitive graph $\CG$, we have
$$\pu(\CG)\ge\frac{1}{\g(\CG)}.$$
\qed
\end{thm}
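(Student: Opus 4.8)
The plan is to argue by contradiction. Suppose $\pu(\CG)<1/\g(\CG)$, and fix $p$ with $\pu(\CG)<p<1/\g(\CG)$. Because $p>\pu(\CG)$, $\Pr_p$-a.s.\ there is a unique infinite cluster, and moreover this uniqueness is \emph{stable}: it survives the deletion of any finite set of vertices of $\CG$ (a known stability property of the uniqueness phase). Because $p<1/\g(\CG)$, a first-moment computation will show that $\Pr_p$-a.s.\ the vertex $o$ lies on only finitely many open self-avoiding cycles. I would then derive a contradiction between these two facts, which gives $\pu(\CG)\ge1/\g(\CG)$.

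\emph{The first-moment side.} A fixed self-avoiding cycle of length $n$ through $o$ is open with probability $p^n$ --- it has $n$ edges in the bond model and $n$ vertices in the site model --- so the expected number of open self-avoiding cycles of length $n$ through $o$ is $a_n(\CG,o)\,p^n$. From $p<1/\g(\CG)$ we get $\limsup_n\sqrt[n]{a_n(\CG,o)\,p^n}=p\,\g(\CG)<1$, hence $\sum_n a_n(\CG,o)\,p^n<\infty$, and the first Borel--Cantelli lemma yields that $\Pr_p$-a.s.\ only finitely many $n$ admit an open self-avoiding cycle of length $n$ through $o$. In particular $\Pr_p\big(\,o\text{ lies on open self-avoiding cycles of unbounded length}\,\big)=0$.

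\emph{The uniqueness side.} I would show that, to the contrary, $\Pr_p$-a.s.\ on the event $\{o\leftrightarrow\infty\}$ --- which has probability $\theta(p):=\Pr_p(o\leftrightarrow\infty)>0$ --- the vertex $o$ \emph{does} lie on open self-avoiding cycles of arbitrarily large length. Fix a large $m$ and a vertex $x$ at graph distance $m$ from $o$. By the FKG inequality and uniqueness, with probability at least $\theta(p)^2$ both $o$ and $x$ lie in the infinite cluster, hence are joined by an open self-avoiding path $\pi$ (say a shortest one, so $|\pi|\ge m$). I would then produce a \emph{second} open self-avoiding path from $o$ to $x$ meeting $\pi$ only at $o$ and $x$, so that the union of the two is an open self-avoiding cycle through $o$ of length at least $2m$. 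For this, since $\pi$ is a ``thin'' set and $\CG$ is non-amenable, deleting $\pi\sm\{o,x\}$ should leave both $\pc$ and $\pu$ of the resulting graph below $p$, so that conditionally on $\pi$ (revealed by a suitable BFS-type exploration) there is still a positive probability, bounded below uniformly in $m$, that $o$ and $x$ lie in a common infinite open cluster of $\CG\sm(\pi\sm\{o,x\})$, which gives the required second path. Letting $m\to\infty$ then gives open cycles through $o$ of unbounded length with probability bounded away from $0$, and indistinguishability of the infinite cluster (or a direct ergodicity argument) upgrades this to the $\Pr_p$-a.s.\ statement on $\{o\leftrightarrow\infty\}$. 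This contradicts the first-moment side.

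The main obstacle will be the production of the second path: I expect everything else --- the Borel--Cantelli bound, the FKG step, the exploration and the ergodicity upgrade --- to be soft, but the assertion that a thin connected set such as $\pi$ cannot disconnect the infinite cluster in a way that separates $o$ from all sufficiently distant vertices of it (equivalently, that in the uniqueness phase a fixed vertex is, with non-negligible probability, joined to far-away points of the infinite cluster by \emph{two} disjoint open paths rather than merely one) is exactly where the non-amenability of $\CG$ --- and, for the graphs treated in this \work, the hyperbolicity of $\Hd$ --- must genuinely be used, via structural facts about the infinite cluster for $p>\pu$ (e.g.\ that it is non-amenable and one-ended) together with a Menger/cutset argument.
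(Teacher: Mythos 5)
The paper does not prove this theorem: it is stated with a \qed\ immediately after the display because the proof is Schramm's, cited from \cite[Thm.~3.9]{Lyo00}, so there is no in-paper proof to compare your attempt against.

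On the merits: your first-moment half is correct and standard --- $\mathbb{E}_p[\#\{\text{open simple cycles through }o\}]=\sum_n a_n(\CG,o)p^n<\infty$ for $p<1/\g(\CG)$, whence a.s.\ only finitely many open simple cycles pass through $o$ and, by local finiteness, their lengths are a.s.\ bounded. The genuine gap is the ``uniqueness side''. What you need, in order to contradict the first-moment conclusion, is a lower bound of the form $\Pr_p(\exists\text{ open simple cycle through }o\text{ of length}\ge m)\ge c(p)>0$ with $c(p)$ \emph{independent of $m$}, so that the decreasing intersection over $m$ still has positive probability. Your sketch produces, for each $m$, a first open path $\pi$ from $o$ to a vertex $x$ at distance $m$ with probability $\ge\theta(p)^2$, and then tries to produce a second path disjoint from the interior of $\pi$. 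But the conditional probability of that second path, in your scheme, is over a graph from which an $m$-dependent set has been removed, and you give no reason why this conditional probability should stay bounded away from $0$ as $m\to\infty$. Asserting that ``deleting $\pi\sm\{o,x\}$ should leave both $\pc$ and $\pu$ of the resulting graph below $p$'' is not enough: even granting that, the relevant two-point connection probability in the modified graph can a priori tend to $0$. This uniformity is precisely the content of the missing lemma, and you correctly flag it as the obstacle, but the argument as written does not close it.

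Two further issues. First, ``revealing $\pi$ by a suitable BFS-type exploration'' is delicate: an exploration that identifies a shortest open path also reveals closed edges around it, so the unexplored graph is not $\CG\sm(\pi\sm\{o,x\})$ but $\CG$ minus the whole explored set, which may include all edges incident to $o$ and $x$; the conditioning must be set up more carefully (e.g.\ via insertion tolerance rather than exploration) to keep a usable i.i.d.\ residual environment. Second, you lean on non-amenability (and even on hyperbolicity) as ``genuinely used,'' but the theorem as stated holds for \emph{every} transitive graph, amenable or not, and in the amenable case it reduces to $\pc\ge1/\g$, which is not a triviality; a correct proof must not rely on non-amenability, so its appearance in your outline is a sign the argument is not yet the right one. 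The phrase ``uniqueness survives deletion of any finite set'' also needs justification: it is true for $p>\pu$ by insertion/deletion tolerance, but it is a statement to be proved, not quoted as folklore, and it does not by itself supply the uniform-in-$m$ estimate you need.
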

In order to use this theorem, we need estimates for $\rt(\CG)$ (a value related to $\g(\CG)$---see Notations~\ref{Cas}, Remark \ref{g<=gs<=rt} and Theorem \ref{thmgs}). The estimates that I use are inspired by \cite{Nag} and based on Gabber's lemma (see Subsection \ref{subsecGab}).  
\section{Preliminaries}\label{secprelim}
\begin{denot1}
In this \work{} I assume that $0\in\N$ (i.e.~$0$ is a natural number).
\end{denot1}
%%%Poni\.zej definicje z cz\k{e}\'sci ,,Estimate...''
\begin{rem}\label{pathsinnonsmpl}
I adopt the convention that paths and cycles can use some vertices and edges more than once and that a cycle is a particular case of a path, i.e.\ that it has endpoints.
\par For graphs which are not simple (i.e.~admit loops and multiple edges), I always consider paths not as sequences of vertices, but rather as sequences of edges. In particular, I distinguish between any two paths passing through the same vertices in the same order but through different edges.
\end{rem}
\begin{denot}\label{Cas}
Let $\CG$ be any graph, not necessarily simple, and $o$---its vertex. For any $n\in\N$, I define (in the context of Remark \ref{pathsinnonsmpl}):
\begin{itemize}
\item $C_n(\CG,o)$ to be the number of (oriented) cycles of length $n$ in $\CG$ starting at $o$;
\item $a^*_n(\CG,o)$ to be the number of oriented cycles of length $n$ in $\CG$, starting at $o$, without backtracks, i.e.~without pairs of consecutive passes through the same edge of $\CG$, forth and back. (I do not regard backtracks cyclically, i.e.~passing the same edge as the first and the last one is not a backtrack.)
\end{itemize}
%For a sequence $x=(x_n)_n$ of non-negative numbers, I define its growth rate as
%$$\mathrm{gr}(x)=\limsup_{n\to\infty}\sqrt[n]{x_n}.$$
Based on that, I define:
\begin{itemize}
\item $\rt(\CG,o) = \limsup_{n\to\infty}\sqrt[n]{C_n(\CG,o)}$,
\item $\gs(\CG,o) = \limsup_{n\to\infty}\sqrt[n]{a^*_n(\CG,o)}$.
\end{itemize}
Sometimes I may drop ``$o$'' or ``$(\CG,o)$'' in all the above notations if the context is obvious. Also, I will drop ``$o$'' when $\CG$ is transitive because then the above values do not depend on $o$.
\end{denot}
\begin{rem}\label{g<=gs<=rt}
Note that for any $\CG$ and its vertex $o$,
$$a_n(\CG,o)\le a^*_n(\CG,o)\le C_n(\CG,o)$$
for $n\in\N$,\footnote{It is true for natural $n\ge 3$, to be strict.} hence also
$$\g(\CG,o) \le \gs(\CG,o) \le \rt(\CG,o).$$
It implies that, thanks to Theorems \ref{bdpc} and \ref{bdpu}, it is sufficient to prove one of the inequalities $\gs(\CG)<\gr(\CG)$, $\rt(\CG)<\gr(\CG)$ in order to show that $\pc(\CG)<\pu(\CG)$.
\end{rem}
%%%Pozosta\l{}e definicje
Below, I introduce some geometric notions concerning $\HHH$ used in this \partw. For definitions and descriptions of models of $\HHH$, see e.g.~Chapters I.2 and I.6 of \cite{BH}. For the theory of Coxeter groups and reflections groups, consult \cite{Dav} and \cite[Chap.~IV]{Mask}. 
\begin{df}\label{dfpolyh}\mbox{}
\begin{itemize}
\item By a half-space I mean by default a closed half-space.
\item By a \emd{convex polyhedron} in $\HHH$ I mean an intersection of finitely many half-spaces in $\HHH$ with non-empty interior.
\item For a plane supporting a convex polyhedron, i.e.~intersecting it so that it is left on one side of the plane, I call that intersection a \emd{face}, an \emd{edge} or a \emd{vertex} of the polyhedron if it is $2$-, $1$- or $0$-dimensional, respectively.
%\item a face of a polyhedron is a non-degenerate (as a polygon) intersection of the polyhedron with a plane tangent to it; (what is a polygon?)
%\item an edge of a polyhedron is a non-empty intersection of its faces;
\item Two distinct faces $F_1$, $F_2$ of polyhedron are \emd{neighbours} (or: do \emd{neighbour}) iff they have an edge in common; we then write $F_1\sim F_2$.
\item The \emd{degree} of a face $F$ of a polyhedron, denoted by $\deg(F)$, is the number of its neighbours.
%\item a vertex of polyh. is a point shared by at least three faces of it;
%\item \textbf{or just:} by a polyhedron in $\HHH$ I mean not only bounded polyhedron, but possibly a polyhedron with ideal or hyperideal vertices. At once, I demand of a polygon that it has only finitely many faces and every face neighbours (i.e.~has some edges in common) at least three others.
\item A convex polyhedron is \emd{Coxeter} iff the dihedral angle at each edge of it is of the form $\pi/m$, where $m\ge 2$ is natural.
\end{itemize}
\end{df}
\section{Description of the polyhedron and the graph}\label{secPiG}
The graphs that I am going to consider in this \partw{} are the Cayley graphs of the reflection groups of Coxeter polyhedra in $\HHH$. Below I introduce these notions:
\begin{df}
The \emd{reflection group} of a convex polyhedron $\Pi$ is the group of isometries of $\HHH$ generated by the hyperbolic reflections in planes of faces of $\Pi$.
\par A \emd{Coxeter matrix} over a finite set $S$ is a matrix $(m(s,t))_{s,t\in S}$ with elements in $\N\cup\{\infty\}$ such that
\begin{equation*}
m(s,t)=\left\{
\begin{array}{rl}
1&\textrm{ if }s=t\\
\ge 2&\textrm{ otherwise.}
\end{array}\right.
\end{equation*}
A \emd{Coxeter system} is a pair $(G,S)$, where $G$ is a group generated by $S$ with the presentation
\begin{equation*}
G=\langle S|s^2:s\in S; (st)^{m(s,t)}: s,t\in S, m(s,t)\neq\infty\rangle,
\end{equation*}
where $(m(s,t))_{s,t\in S}$ is a Coxeter matrix. We then call $G$ a \emd{Coxeter group}.
\end{df}
\begin{rem}
If $\Pi$ is a Coxeter polyhedron in $\HHH$, $S$ is the set of reflections in planes of its faces and $G=\langle S\rangle$ is its reflection group, then $(G,S)$ is a Coxeter system. The underlying Coxeter matrix $(m(s,t))_{s,t\in S}$ is connected with the dihedral angles of $\Pi$ as follows: for two neighbouring faces $F_1,F_2$ of $\Pi$ and the corresponding generators $s_1,s_2\in S$ (respectively), the dihedral angle between these faces equals $\pi/m(s_1,s_2)$. It is a special case of Poincar\'e's Polyhedron Theorem---Theorem IV.H.11 in \cite{Mask} (one has to use also the proposition in IV.I.6 there).
%Moreover, the orbit $G\cdot\Pi$ of $\Pi$ under $G$ is a tiling of $\HHH$ and $G$ acts (from the left(czy to nie jest jasne?)) simply transitively on the set of tiles.
Moreover, from that theorem, $\Pi$ (or $\sint\Pi$, depending on the convention) is a fundamental polyhedron for $G$ (see \cite[Definition IV.F.2]{Mask}; roughly speaking, that is why the orbit $G\cdot\Pi$ is called a \emd{tiling} of $\HHH$ and $v\Pi,v\in G$---its \emd{tiles}) and because of that, the only stabiliser in $G$ of any interior point of $\Pi$ is the identity. Hence, fixing $o\in\sint\Pi$, one can identify $G$ with the orbit of $o$ by the obvious bijection. (Cf.~also \cite[Theorem 6.4.3]{Dav} for the case of compact $\Pi$.)
\end{rem}
\begin{assm}\label{PiG}
For the rest of this \work{}, let $\Pi$ be a Coxeter polyhedron in $\HHH$, let $S$ be the set of reflections in planes of its faces and let $G=\langle S\rangle$ be its reflection group. Let $\CG$ be the Cayley graph of the Coxeter system $(G,S)$, i.e.~the undirected (simple) graph with $V(\CG)=G$ such that an edge joins two vertices $v,w$ in $\CG$ iff $v=ws$ for some $s\in S$. I am going to think of $G=V(\CG)$ as of the orbit of a fixed point $o\in\sint\Pi$, so that $V(\Pi)\subseteq\HHH$, and $o$ is the identity element of $G$. (These assumptions are valid unless indicated otherwise.)
\end{assm}
\begin{rem}
The graph $\CG$ is dual to the tiling $G\cdot\Pi$ in the sense that each $v\in V(\CG)=G$ naturally corresponds with the tile $v\Pi$ (hence one may denote $v\Pi$ by $v\d$) and an edge $\{v,w\}\in E(\CG)$ indicates that the tiles $v\Pi$ and $w\Pi$ have a face in common. Indeed, the tiles $v^{-1}v\Pi=\Pi$ and $v^{-1}w\Pi$ share a face, say the face of $\Pi$ corresponding to $s$ for some $s\in S$, iff $s=v^{-1}w$).
\end{rem}
\begin{denot1}\label{ed}
I use the above remark to introduce the following notion: for $e=\{v,w\}\in E(\CG)$ (or $e=(v,w)\in\oE(\CG)$---see Notation \ref{oE}), let $e\d$ be the common face of $v\Pi$ and $w\Pi$.
Then, we say that $e$ traverses or passes through the face $e\d$. (Though, the geodesic segment between the endpoints of $e$ (in $\HHH$) does not need to literally cross the face $e\d$, so the introduced notion has a combinatorial nature.)
\end{denot1}
\section{A basic approach}\label{secbasic}
In this section I present a basic approach to proving the existence of a non-degenerate middle phase of Bernoulli bond and site percolation on some cocompact reflection groups of $\HHH$. The price for its relative easiness, in comparison to more developed versions I present later in this \partw{}, is that it works only for cocompact right-angled reflection groups with at least $18$ generators. Still, there is a quite elegant example of such group:
\begin{exmp}
Let $\Pi$ be a hyperbolic right-angled truncated icosahedron (i.e.~with the combinatorics of a classical football) in $\HHH$. Then its reflection group is a cocompact right-angled reflection group with $32$ generators.
\end{exmp}
\begin{thm*}[recalled Theorem \ref{3phbasic}]
If $\Pi$ is a compact right-angled polyhedron in $\HHH$ with $k\ge 18$ faces, then for $\CG$ the Cayley graph of the reflection group of $\Pi$ with the standard generating set, we have
$$\pc(\CG)<\pu(\CG)$$
for Bernoulli bond and site percolation on $\CG$.
\end{thm*}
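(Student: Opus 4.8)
The plan is to reduce the statement, via Remark~\ref{g<=gs<=rt} together with Theorems~\ref{bdpc} and~\ref{bdpu}, to the single inequality $\rt(\CG)<\gr(\CG)$: once this is in hand, $\pc(\CG)\le 1/\gr(\CG)<1/\rt(\CG)\le 1/\g(\CG)\le\pu(\CG)$. Since $\Pi$ has $k$ faces, $\CG$ is $k$-regular, and the number $C_n(\CG,o)$ of closed walks of length $n$ based at $o$ equals $(A^n)_{oo}$, where $A$ is the adjacency operator of $\CG$; hence $\rt(\CG)=\limsup_n\bigl((A^n)_{oo}\bigr)^{1/n}$ is the $\ell^2$-operator norm of $A$, which for a $k$-regular graph equals $k\rho(\CG)$, with $\rho(\CG)$ the spectral radius of the simple random walk on $\CG$. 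So the whole task becomes: prove $k\rho(\CG)<\gr(\CG)$ for $k\ge18$.

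For the lower bound on $\gr(\CG)$ I would quote Theorem~\ref{gr>=}; let me recall the mechanism. A compact right-angled polyhedron is combinatorially a simple $3$-polytope (Andreev's theorem), so the graph $L$ on vertex set $S$ in which two generators are joined exactly when the corresponding faces of $\Pi$ are neighbours --- equivalently, when the two generators commute --- is the $1$-skeleton of the dual, a maximal planar graph on $k$ vertices, and therefore has $3k-6$ edges. Its complement $\bar L$ then has $\binom{k}{2}-(3k-6)=\tfrac12(k-3)(k-4)$ edges, hence average degree $(k-3)(k-4)/k$. Any word $s_1\cdots s_n$ over $S$ whose consecutive letters are neither equal nor commuting admits no nontrivial commutation, so by the solution of the word problem in Coxeter groups (Tits) it is geodesic and no two distinct such words represent the same element of $G$; the number of these words of length $n$ is $\mathbf{1}^{\top}\bar A^{n-1}\mathbf{1}$ for $\bar A$ the adjacency matrix of $\bar L$, which grows like the $(n{-}1)$-st power of the leading eigenvalue of $\bar A$. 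For $k\ge18$ the graph $\bar L$ is connected and non-bipartite --- its minimum degree is at least $6$, because an Euler-characteristic count shows that no face of $\Pi$ can have more than $k-7$ sides once every face is at least a pentagon (which a right-angled hyperbolic polygon must be) --- so that leading eigenvalue is at least the average degree. Hence $\gr(\CG)\ge(k-3)(k-4)/k=k-7+12/k$.

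The core of the proof is the upper bound on $\rho(\CG)$, which I would derive from Gabber's lemma (Subsection~\ref{subsecGab}) in the style of~\cite{Nag}: for any positive weight $m$ on $V(\CG)$,
$$\rho(\CG)\ \le\ \sup_{v\in V(\CG)}\ \frac1k\sum_{w\sim v}\sqrt{\frac{m(w)}{m(v)}}.$$
The point is to build $m$ from the hyperbolic geometry. Viewing $V(\CG)$ as the orbit of $o$ inside $\HHH$ (Assumption~\ref{PiG}), I would fix a boundary point $\xi\in\bd\HHH$ in general position with respect to the tiling and put $m(v)=e^{-\al\,\b_\xi(v)}$, where $\b_\xi$ is the Busemann function at $\xi$ and $\al>0$ is a parameter to be optimised, so that $\sqrt{m(w)/m(v)}=e^{\al(\b_\xi(v)-\b_\xi(w))/2}$. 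Two inputs control the inner sum: the edges of $\CG$ have uniformly bounded hyperbolic length $\le D$ (finitely many edge-orbits), so each summand is at most $e^{\al D/2}$; and --- this is where hyperbolicity together with the large number of faces enters --- because the right-angled condition fixes the local shape of the tiles, for a good $\xi$ there is a constant $N$ independent of $v$ such that at most $N$ of the $k$ neighbours $w$ of $v$ satisfy $\b_\xi(w)\le\b_\xi(v)-\e_0$, while the remaining $k-N$ of them satisfy $\b_\xi(w)\ge\b_\xi(v)+\e_1$ for some fixed $\e_1>0$. This gives $\rho(\CG)\le\frac1k\bigl(Ne^{\al D/2}+(k-N)e^{-\al\e_1/2}\bigr)$; optimising in $\al$ and inserting the explicit values of $N,D,\e_0,\e_1$ computed from the polyhedron yields a bound $\rho(\CG)\le\psi(k)$ with $k\psi(k)<k-7+12/k$ precisely when $k\ge18$.

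Putting the two estimates together, $\rt(\CG)=k\rho(\CG)<k-7+12/k\le\gr(\CG)$ for $k\ge18$, which by the first paragraph gives $\pc(\CG)<\pu(\CG)$. I expect the spectral estimate to be the main obstacle: Gabber's lemma itself is soft, but pinning down the geometric constants $N,\e_1,D$ (and hence $\psi$) tightly enough --- and choosing $\xi$ so that the bound on ``inward'' neighbours holds uniformly over all vertices --- so that the comparison with $k-7+12/k$ survives all the way down to $k=18$, rather than only for much larger $k$, is the delicate part. It is precisely to lower this threshold (and to reach non-right-angled polyhedra) that the sharper tools developed later in the chapter are needed: Theorem~\ref{thmgs}, which replaces $\rt(\CG)$ by the strictly smaller $\gs(\CG)$, and a refined form of the growth estimate behind Theorem~\ref{gr>=}.
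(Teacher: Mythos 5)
Your opening reduction is exactly the paper's: invoke Theorems~\ref{bdpc}, \ref{bdpu} and Remark~\ref{g<=gs<=rt} to reduce everything to $\rt(\CG)<\gr(\CG)$. From there, however, both of your two estimates diverge from the paper's, and in a way that does not close the argument at $k=18$.

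For the growth rate, you describe a word-counting mechanism (words over $S$ with no two consecutive equal or commuting letters, counted via the adjacency matrix of the complement graph $\bar L$). That count is indeed valid for a right-angled Coxeter group, but it yields only $\gr(\CG)\ge (k-3)(k-4)/k\approx k-7$, and you misattribute this to Theorem~\ref{gr>=}. The paper instead uses the exact growth series (Theorem~\ref{gr}, via Steinberg's formula and the flag triangulation of $\SS$), giving $\gr(\CG)=\frac{k-4+\sqrt{(k-4)^2-4}}{2}\approx k-4$. This roughly ``$3$ units'' difference matters: at $k=18$ your lower bound is $\approx 11.67$, while the paper's upper bound on $\rt$ is $2\sqrt{3\cdot 15}\approx 13.42$, so with your version the desired inequality simply fails at $k=18$ (and only kicks in at much larger $k$).

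For the spectral estimate, the paper also uses Gabber's lemma, but the weight is purely combinatorial: it comes from the edge-orientation $O(\CG)$ defined by the Coxeter length function $l$, and the key geometric input (Propositions~\ref{ePi}, \ref{vPi}, \ref{r<=3}, via Andreev's theorem) is that every $v\ne o$ has $r(v)\le 3$ ``incoming'' edges. Choosing the Gabber weights $c_1=c_2=c_3=\sqrt{(k-3)/3}$ gives $\rt(\CG)\le 2\sqrt{3(k-3)}$, with a constant depending on $k$ only. Your Busemann-function weight $m(v)=e^{-\al\,\b_\xi(v)}$ instead introduces constants $D$ (maximal hyperbolic edge length), $\e_1$, $N$ that depend on the specific polyhedron $\Pi$, not just on $k$, since compact right-angled polyhedra with $k$ faces form a positive-dimensional moduli space on which $D$ is unbounded. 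You also never actually compute these constants, so the claim ``$k\psi(k)<k-7+12/k$ precisely when $k\ge 18$'' is not established, and could not be established in a $\Pi$-uniform way by that route. The paper's orientation-based argument sidesteps both problems and is what makes the threshold $k\ge 18$ come out cleanly by comparing $2\sqrt{3(k-3)}$ with $\frac{k-4+\sqrt{(k-4)^2-4}}{2}$ and their derivatives.
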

%\begin{thm*}[recalled theorem \ref{3phbasic}]
%If $\Pi$ is a compact right-angled polyhedron in $\HHH$ with $k\ge 18$ faces, then in the setting of Assumption \ref{PiG}, $\rt(\CG)<\gr(\CG)$, hence (see rem.~\ref{g<=gs<=rt})
%$$\pc(\CG)<\pu(\CG)$$
%for bond and site Bernoulli percolation on $\CG$.
%\end{thm*}
Before I prove this theorem, I establish some general facts about reflection groups in $\HHH$, useful in the whole \partw{}.
%(Definicje potrzebne do lem. Gabbera).
\subsection{Gabber's lemma}%(czy nie szkodzi, że tak oddzielam d-d od tw.?)
\label{subsecGab}
I estimate $\rt$ of the reflection group of a hyperbolic polyhedron using Gabber's lemma stated below.
\begin{df}\label{sprad}
For any (simple) graph $\CG$, by its \emd{spectral radius}, denoted by $\varrho(\CG)$, I mean the spectral radius of the simple random walk on $\CG$ starting at its fixed vertex $o$. (The \emd{simple random walk} on $\CG$ starting at $o$ is a Markov chain with the state space $V(\CG)$, where at each step the probability of a transition from a vertex $x$ to its neighbour equals $1/\deg(x)$---see \protect{\cite[Section I.1]{Woe}}). In other words
\begin{equation*}
\varrho(\CG) = \limsup_{n\to\infty}\sqrt[n]{p^{(n)}(o,o)},
\end{equation*}
where for any natural $n$, $p^{(n)}(o,o)$ denotes the probability that the simple random walk on $\CG$ starting at $o$ is back at $o$ after $n$ steps. Note that the spectral radius of $\CG$ does not depend on the choice of $o$ (as for $o,o'\in V(\CG)$ there is $C>0$ such that for $n\in\N$, $p^{(n+\mathrm{dist}(o,o'))}(o',o') \ge Cp^{(n)}(o,o)$ and \emph{vice-versa}, whence $p^{(n)}(o,o)$ and $p^{(n)}(o',o')$ has the same asymptotic behaviour).
\end{df}
\begin{rem}\label{remrrt}
It is easily seen that for any (simple) $k$-regular graph $\CG$ and for any $n\in\N$,
$$p^{(n)}(o,o)=C_n(\CG,o)/k^n,$$
so $\varrho(\CG)=\rt(\CG)/k$.
\end{rem}
\begin{denot}\label{oE}
For an undirected (simple) graph $\CG$, let $\oE(\CG)$ denote the set of edges of $\CG$ given orientations. (It can be formalised by $\oE(\CG)=\{(v,w):\{v,w\}\in E(\CG)\}$.) For $e\in\oE(\CG)$, let $\bar{e}$ denote the edge inverse to $e$ and let $e_+$,$e_-$ denote the end (head) and the origin (tail) of $e$, respectively.
\end{denot}
\begin{lem}[Gabber]
Let $\CG$ be an unoriented, infinite, locally finite graph and let a function $F:\oE(\CG)\to\mathbb{R}_+$ satisfy
\begin{equation}
F(e)=F(\bar{e})^{-1}
\end{equation}
for each edge $e\in\oE(\CG)$. If there exists a constant $C_F>0$ such that for each vertex $v$ of $\CG$,
\begin{equation}
\frac{1}{\deg(v)}\sum_{e_+=v} F(e)\le C_F,
\end{equation}
then
\begin{equation}
\varrho(\CG)\le C_F.
\end{equation}
\qed
\end{lem}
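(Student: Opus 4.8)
The plan is to realize the spectral radius of the simple random walk as the norm of the transition operator acting on $\ell^2(V(\CG))$ and then to use the function $F$ to construct a multiplier that transports this operator into a self-adjoint operator whose norm is visibly bounded by $C_F$. Concretely, let $P$ be the transition operator of the simple random walk, so that $P$ has matrix entries $p(v,w)=1/\deg(v)$ when $v\sim w$ and $0$ otherwise. The spectral radius $\varrho(\CG)=\limsup_n \sqrt[n]{p^{(n)}(o,o)}$ coincides with the operator norm $\|P\|_{\ell^2\to\ell^2}$; more precisely, $\|P\|$ equals the operator norm of $P$ on the symmetrized space, and one has $p^{(n)}(o,o)\le \|P\|^n$ together with the matching $\limsup$, so it suffices to show $\|P\|\le C_F$. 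Here I would use that $P$ is reversible with respect to the measure $\deg(\cdot)$, so that $P$ is self-adjoint on $\ell^2(V(\CG),\deg)$; hence its norm on that weighted space equals its spectral radius, and this is the quantity we must bound.

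First I would pass to the weighted Hilbert space $\ell^2(V(\CG),\deg)$ and write, for $f$ finitely supported,
$$
\langle Pf,f\rangle_{\deg} \;=\; \sum_{\{v,w\}\in E(\CG)} f(v)f(w)\big(\text{each unoriented edge counted once, i.e. }f(v)f(w)+f(w)f(v)\text{ over oriented edges, halved}\big).
$$
To keep this clean I would instead work directly with the oriented-edge sum: $\langle Pf,f\rangle_{\deg}=\sum_{e\in\oE(\CG)} f(e_-)f(e_+)$, where I adopt the convention of Notation~\ref{oE}. Now insert the multiplier: for each oriented edge $e$ write $f(e_-)f(e_+) = \big(F(e)^{1/2} f(e_-)\big)\big(F(e)^{-1/2} f(e_+)\big) = \big(F(e)^{1/2} f(e_-)\big)\big(F(\bar e)^{1/2} f(e_+)\big)$, using the hypothesis $F(\bar e)=F(e)^{-1}$. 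By the Cauchy--Schwarz inequality applied termwise (i.e. $xy\le\tfrac12 x^2+\tfrac12 y^2$),
$$
\big|\langle Pf,f\rangle_{\deg}\big| \;\le\; \frac12\sum_{e\in\oE(\CG)} F(e)\, f(e_-)^2 \;+\; \frac12\sum_{e\in\oE(\CG)} F(\bar e)\, f(e_+)^2.
$$
Reindexing the second sum by $e\mapsto\bar e$ shows the two sums are equal, so the bound becomes $\sum_{e\in\oE(\CG)} F(e)\, f(e_-)^2 = \sum_{v\in V(\CG)} f(v)^2 \sum_{e_+ = v} F(\bar e) = \sum_{v} f(v)^2 \sum_{e_- = v} F(e)$, and again by the involution $e\mapsto\bar e$ this equals $\sum_v f(v)^2\sum_{e_+=v}F(e) \le \sum_v f(v)^2\cdot \deg(v)\, C_F = C_F\,\|f\|_{\deg}^2$, invoking the second hypothesis on $F$.

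Combining, $|\langle Pf,f\rangle_{\deg}|\le C_F\|f\|_{\deg}^2$ for all finitely supported $f$, hence for all $f\in\ell^2(V(\CG),\deg)$ by density; since $P$ is self-adjoint on this space, its spectral radius equals $\sup_{\|f\|=1}|\langle Pf,f\rangle_{\deg}|\le C_F$, which gives $\varrho(\CG)\le C_F$. I expect the main point requiring care to be the identification of $\varrho(\CG)$ with the $\ell^2(V,\deg)$-operator norm of $P$ — that is, justifying that the combinatorially defined $\limsup_n\sqrt[n]{p^{(n)}(o,o)}$ really is the spectral radius of a self-adjoint operator — and making sure all the edge/oriented-edge reindexing and the local finiteness (so that the sums $\sum_{e_+=v}$ are finite) are handled correctly; the rest is a one-line Cauchy--Schwarz estimate. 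An alternative, should one prefer to avoid functional analysis, is to prove the bound directly on $p^{(2n)}(o,o)$ by expanding it as a sum over closed walks, weighting each walk by the telescoping product of $F$ along its edges (which equals $1$ for a closed walk), and applying the arithmetic--geometric mean inequality step by step; but the operator-norm argument is cleaner and is the one I would write up.
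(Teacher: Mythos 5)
The paper does not prove Gabber's lemma itself but refers the reader to \cite[Prop.~1]{Bart} (cf.\ also \cite{Nag}), where the argument is exactly the operator-theoretic one you sketch: realize $\varrho(\CG)$ as bounded by the $\ell^2(V(\CG),\deg)$-operator norm of the self-adjoint transition operator $P$, then estimate $|\langle Pf,f\rangle_{\deg}|$ by inserting the multiplier $F^{1/2}$ edgewise and applying $xy\le\frac{1}{2}(x^2+y^2)$. The identification you flag as delicate only requires the easy half here: from self-adjointness of $P$ on $\ell^2(V(\CG),\deg)$ and $\langle P^n\delta_o,\delta_o\rangle_{\deg}=\deg(o)\,p^{(n)}(o,o)$ one gets $p^{(n)}(o,o)\le\|P\|^n$ for all $n$, hence $\varrho(\CG)\le\|P\|$, which is all the lemma needs; the reverse inequality is irrelevant.

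There is, however, a concrete bookkeeping error in your final reindexing. After Cauchy--Schwarz you correctly reduce to $\sum_{e\in\oE(\CG)}F(e)f(e_-)^2=\sum_{v}f(v)^2\sum_{e_-=v}F(e)$, and then assert that ``by the involution $e\mapsto\bar e$ this equals $\sum_v f(v)^2\sum_{e_+=v}F(e)$''. That is false: the involution turns $\sum_{e_-=v}F(e)$ into $\sum_{e_+=v}F(\bar e)=\sum_{e_+=v}F(e)^{-1}$, which the hypothesis does not control. The fix is to place the multipliers the other way: write $f(e_-)f(e_+)=\bigl(F(e)^{-1/2}f(e_-)\bigr)\bigl(F(e)^{1/2}f(e_+)\bigr)$, so that Cauchy--Schwarz gives $|\langle Pf,f\rangle_{\deg}|\le\frac{1}{2}\sum_e F(\bar e)f(e_-)^2+\frac{1}{2}\sum_e F(e)f(e_+)^2$; the substitution $e\mapsto\bar e$ in the first sum shows both sums equal $\sum_e F(e)f(e_+)^2=\sum_v f(v)^2\sum_{e_+=v}F(e)\le C_F\sum_v\deg(v)f(v)^2=C_F\|f\|_{\deg}^2$, which is precisely what the hypothesis supplies. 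With this one change your argument closes.
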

See \cite[Prop.~1]{Bart} for the proof. (They formulate this lemma for regular graphs, but their proof is valid in the above generality. Also, cf.~Lemma of Gabber on p.~2 of \cite{Nag}.)
\par In this \partw{} I am going to use the notion of $\rt$ rather than $\varrho$, as the former seems more convenient here. So I reformulate the above lemma, using Remark \ref{remrrt}:
\begin{cor}\label{Gabcor}
Let $\CG$ and $F$ be as in the above lemma and assume that $\CG$ is regular. Then, if a constant $C_F>0$ satisfies
\begin{equation}
\sum_{e_+=v} F(e)\le C_F
\end{equation}
for every vertex $v$ of $\CG$, then
\begin{equation}
\rt(\CG)\le C_F.
\end{equation}
\end{cor}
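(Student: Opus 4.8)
The plan is to reduce directly to Gabber's lemma via the dictionary between $\rt$ and the spectral radius $\varrho$ provided by Remark~\ref{remrrt}. Since $\CG$ is as in Gabber's lemma (in particular locally finite) and regular, every vertex has the same finite degree; call it $k$. First I would rewrite the hypothesis: for each vertex $v$ of $\CG$,
\begin{equation*}
\frac{1}{\deg(v)}\sum_{e_+=v} F(e) = \frac{1}{k}\sum_{e_+=v} F(e) \le \frac{C_F}{k}.
\end{equation*}
Thus $F$ satisfies the assumptions of Gabber's lemma with the constant $C_F/k$ in place of $C_F$ (the symmetry condition $F(e)=F(\bar e)^{-1}$ is inherited, as $F$ is ``as in the above lemma'').

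Next I would apply Gabber's lemma itself, which yields $\varrho(\CG)\le C_F/k$. Finally I would invoke Remark~\ref{remrrt}: for a $k$-regular graph $\CG$ one has $p^{(n)}(o,o)=C_n(\CG,o)/k^n$ for all $n\in\N$, hence $\varrho(\CG)=\rt(\CG)/k$, equivalently $\rt(\CG)=k\,\varrho(\CG)$. Combining the two displayed facts gives
\begin{equation*}
\rt(\CG) = k\,\varrho(\CG) \le k\cdot\frac{C_F}{k} = C_F,
\end{equation*}
which is the claim.

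I do not expect a genuine obstacle here, as this is a formal corollary. The only points needing a word of care are that ``$\CG$ regular'' together with local finiteness makes the common degree $k$ a well-defined finite number (so dividing and multiplying by $k$ is legitimate), and that Remark~\ref{remrrt}'s identity $\varrho(\CG)=\rt(\CG)/k$ is exactly what transports the spectral-radius bound to a bound on $\rt$. No connectedness or transitivity of $\CG$ is used.
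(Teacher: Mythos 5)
Your proof is correct and follows exactly the route the paper indicates: the corollary is introduced with the sentence ``So I reformulate the above lemma, using Remark \ref{remrrt}'', and your argument (rescale the hypothesis by $1/k$, apply Gabber's lemma to get $\varrho(\CG)\le C_F/k$, then use $\rt(\CG)=k\,\varrho(\CG)$) is precisely that reformulation.
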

%\subsection{Geometry of $\Pi$ and $\CG$}\label{geomPiG}
\subsection{Geometry of the polyhedron and the graph}\label{geomPiG}
\begin{df}\label{OG}\label{lgth}
Let $\CG$ be the Cayley graph of any Coxeter system $(G,S)$ and let $o$ be the identity element of $G$. I denote by $l$ the \emd{length function} on $(G,S)$, i.e.~for $v\in G=V(\CG)$, $l(v)$ is the graph-theoretic distance from $v$ to $o$ (or: the least length of a word over $S$ equal $v$ in $G$). Let $O(\CG)$ denote the \emd{standard orientation of edges} of $\CG$ arising from $l$:
\begin{equation}
O(\CG):=\{e\in\oE(\CG):l(e_+)>l(e_-)\}.
\end{equation}
\end{df}
\begin{rem}\label{l>l}
Note that, in the setting of Definition \ref{lgth}, we never have $l(e_+)=l(e_-)$ for $e\in\oE(\CG)$. In fact, otherwise we would obtain a word over $S$ of odd length, trivial in $G$, hence a product of conjugates of the Coxeter relations, which are of even lengths, a contradiction.
\par So, for $e\in E(\CG)$, exactly one of the two oriented edges corresponding to $e$ is in $O(\CG)$. This edge is called the \emd{orientation of $e$ defined by} $O(\CG)$, or \emd{$e$ oriented according to} $O(\CG)$.
\end{rem}
\begin{df}
From now on, whenever I mention or use the orientation of an unoriented edge of $\CG$, I mean, by default, the orientation defined by $O(\CG)$. Particularly, by an edge of $\CG$ \emd{passing to} (or \emd{from}) vertex $v\in V(\CG)$ I mean an edge $e$ from $E(\CG)$ with $e_+=v$ (respectively $e_-=v$) when oriented according to $O(\CG)$.
\end{df}
\begin{rem}
Assume Assumption \ref{PiG} (particularly, on $\CG$). Whenever $e\in \oE(\CG)$, the plane $P$ of $e\d$ separates the endvertices of $e$, and separates one of them from $o$---let us call it $v$ and the other endvertex---$u$. Then, it turns out that $(u,v)\in O(\CG)$. To see that,
% consider $\CG$ as a geometric realisation (with the edges realised as homeomorphic copies of interval $[0;1]$) with the natural geodesic metric, in which the lengths of edges are all equal to $1$. 
take a path $p$ in $\CG$ from $o$ to $v$ of minimal length: $l(v)$. Here, consider $p$ as a sequence of vertices. Then, reflect in $P$
%\footnote{To be precise, here I mean a reflection as an isometry of $\CG$ induced by that reflection.}
every fragment of $p$ lying outside $P$ (from the point of view of $o$), obtaining a path 
%(with some backtracks just in a middle of an edge of $\CG$)
(which stays in one vertex for some steps, such steps contributing $0$ to the path length) from $o$ to $u$ with the length strictly less than $l(v)$. %(I consider every half an edge being of length $1/2$).
Hence a geodesic in $\CG$ from $o$ to $u$ has length $l(u)<l(v)$, so $(u,v)\in O(\CG)$.
\end{rem}
\begin{denot}
For $v\in V(\CG)$, in the setting of Definition \ref{OG}, let $r(v)$ denote the number of edges of $\CG$ passing to $v$, i.e.
\begin{equation}
r(v)=\#\{e\in O(\CG):e_+=v\}.
\end{equation}
Note that $r(v)>0$ for $v\neq o$ (and $r(o)=0$).
For natural $i$, let $q_i(v)$ be the number of edges passing from $v$ to vertices with $r(\cdot)=i$; formally
\begin{equation}
q_i(v)=\#\{e\in O(\CG):e_-=v,r(e_+)=i\}.
\end{equation}
Note that always $q_0(v)=0$.
\par I will prove in Proposition \ref{r<=3} that in the setting of Assumption \ref{PiG} we have $r(v)\le 3$ (whence $q_i(v)=0$ for $i>3$).
\end{denot}
The only assumption on $\Pi$ in this subsection is Assumption \ref{PiG}. Here I am going to use some geometric facts on the tiling of $\HHH$ for estimating $r(v)$ and for using Corollary \ref{Gabcor} (see the previous subsection):
\begin{prop}\label{ePi}
For any two faces of $\Pi$, either they neighbour or they lie in disjoint planes.
\end{prop}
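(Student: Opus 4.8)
The plan is to argue by contradiction, using the Coxeter condition on the dihedral angles together with basic hyperbolic trigonometry of the angle of parallelism. Suppose $F_1, F_2$ are two faces of $\Pi$ lying in planes $P_1, P_2$, and suppose they do not neighbour (no common edge) but $P_1 \cap P_2 \neq \emptyset$ in $\HHH$. First I would observe that since $\Pi$ is a Coxeter polyhedron, if $F_1$ and $F_2$ did share an edge, the dihedral angle there would be $\pi/m$ for some integer $m \ge 2$, hence acute or right; but we are in the case where they do \emph{not} share an edge, so this constraint does not directly apply --- instead I want to rule out $P_1 \cap P_2 \neq \emptyset$ altogether in that case. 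The key geometric input: two planes in $\HHH$ either intersect in a geodesic line, or are parallel (meet at one ideal point), or are ultraparallel (admit a common perpendicular and are disjoint in $\cl{\HHH}$). So "lie in disjoint planes" should be read as $P_1 \cap P_2 = \emptyset$ inside $\HHH$; I must show that non-neighbouring faces cannot have planes meeting in $\HHH$.

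The central step is as follows. If $P_1 \cap P_2$ is a geodesic line $\ell \subseteq \HHH$, then $\ell$ divides each $P_i$ into two half-planes, and the convex polyhedron $\Pi$ lies on one side of $P_1$ and one side of $P_2$; hence $\Pi$ is contained in one of the four dihedral wedges cut out by $P_1 \cup P_2$ along $\ell$. I would then argue that $\ell$ must actually meet $\Pi$: because $F_1$ is a face supported by $P_1$ and $F_2$ by $P_2$, and $\Pi$ is confined to a single wedge, the relevant portions of $P_1$ and $P_2$ bounding $\Pi$ approach $\ell$. More precisely, consider the nearest-point projection arguments or a direct convexity argument: the set $\Pi \cap P_1 = F_1$ and $\Pi \cap P_2 = F_2$, and if $\ell \cap \sint\Pi \neq \emptyset$ then $\ell$ passes through the interior, forcing both $F_1$ and $F_2$ to contain the segment $\ell \cap \Pi$ in their closure, i.e.\ $F_1 \cap F_2 \supseteq \ell \cap \Pi$, which is an edge (or higher-dimensional, impossible for distinct faces) --- so $F_1 \sim F_2$, contradiction. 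The remaining subcase is $\ell$ meeting $\cl\Pi$ only in its boundary but not its interior; here one uses that $F_1, F_2$ are $2$-dimensional faces actually appearing in the irredundant half-space presentation, so each $P_i$ genuinely touches $\Pi$ in a $2$-cell, and a support-plane / general-position argument shows the only way two such planes meet while both supporting $\Pi$ is along a shared face-edge.

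I expect the main obstacle to be the careful handling of the case where $P_1 \cap P_2 = \ell$ meets $\cl\Pi$ only on its boundary, or is tangent at infinity (parallel planes, meeting at one ideal point, so disjoint in $\HHH$ --- which is actually allowed by the statement, since "disjoint planes" presumably means disjoint \emph{in} $\HHH$). One must be precise about what "disjoint planes" means: I would interpret it as $P_1 \cap P_2 = \emptyset$ within $\HHH$ (they may still share ideal boundary points), and then the whole content is: \emph{two faces either share an edge or their planes do not cross inside $\HHH$}. The cleanest route is probably to invoke Andreev-type / Coxeter-polyhedron structure: in a Coxeter polyhedron, any two bounding planes that intersect in $\HHH$ must do so at a dihedral angle $\pi/m$ along a genuine edge of the polyhedron (this is essentially built into Definition \ref{dfpolyh} together with the fact that all the relevant planes are facet-defining), so crossing planes $\Leftrightarrow$ neighbouring faces. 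If the paper wants a self-contained argument rather than citing Andreev, the convexity argument above (project $\ell$ onto $\Pi$, deduce a common edge) is the way to go, and the fiddly part is ensuring the projection/intersection with $\Pi$ is genuinely an edge and not a vertex or empty.
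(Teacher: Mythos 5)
Your convexity argument has a fatal gap, and it is worth seeing exactly where. You split into the case where $\ell = P_1 \cap P_2$ meets $\sint\Pi$ and the case where it does not, and you treat the first as the easy one. But the first case is \emph{vacuous}: $\ell \subseteq P_1$ and $P_1$ is a supporting plane of the convex set $\Pi$, so $P_1 \cap \sint\Pi = \emptyset$, hence $\ell \cap \sint\Pi = \emptyset$ always. In fact $\ell \cap \Pi \subseteq (P_1 \cap \Pi) \cap (P_2 \cap \Pi) = F_1 \cap F_2$, so if $F_1,F_2$ do not share an edge, $\ell$ can meet $\Pi$ in at most one vertex. Thus the entire weight rests on what you call the ``fiddly'' subcase --- and within it, the genuinely dangerous possibility is that $\ell$ misses $\cl\Pi$ entirely, with $P_1$ and $P_2$ crossing each other far from the polyhedron. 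A pure convexity or ``support-plane in general position'' argument cannot rule this out: the statement is simply false for arbitrary convex polyhedra. Already in the plane, a convex polygon with an obtuse interior angle (a regular pentagon, say) has non-adjacent sides whose lines cross; in $\HHH$ one can build the analogous prism-like polyhedra. The non-obtuse dihedral angle condition is therefore not a side remark but the load-bearing hypothesis, and your main line of argument never uses it.

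Your fallback suggestion --- invoke Andreev --- is in fact exactly what the paper does, but you underestimate it by describing it as ``essentially built into Definition \ref{dfpolyh}.'' It is not built in; it is a nontrivial theorem. The paper states a version of Andreev's theorem: for a convex polyhedron in $\HHH$ with all dihedral angles non-obtuse, $\dim\bigcap_i\c{F_i} = \dim\bigcap_i\c{P_i}$ (closures taken in $\cHHH$, with the convention that an ideal point has dimension $-1$ and $\emptyset$ has dimension $-\infty$). From $P_1 \cap P_2 \neq \emptyset$ one gets $\dim(\c{P_1}\cap\c{P_2}) \ge 0$, hence $\dim(\c{F_1}\cap\c{F_2}) \ge 0$; by strict convexity of the Klein ball this forces $F_1 \cap F_2$ to contain an actual point of $\HHH$, so the two faces share at least a vertex. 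Finally, if they shared only a vertex $v$ and not an edge, the polyhedral angle of $\Pi$ at $v$ would have at least four faces, which is impossible when all dihedral angles are non-obtuse; so $F_1 \sim F_2$. This last step is also absent from your sketch. In short: your instinct to cite Andreev is correct, but your primary argument does not work, and the angle hypothesis must enter at two points (Andreev's dimension equality, and the trihedral-angle observation) that your outline skips.
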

\begin{prop}\label{vPi}
For any three faces of $\Pi$ whose planes have non-empty intersection, some vertex of $\Pi$ belongs to all those faces.
\end{prop}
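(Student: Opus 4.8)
The plan is to reduce the statement to the combinatorics of the Coxeter system and then invoke a standard fact about finite (spherical) special subgroups. Let $F_1,F_2,F_3$ be three faces of $\Pi$ whose planes $P_1,P_2,P_3$ have a common point $x\in\HHH$. Let $s_1,s_2,s_3\in S$ be the corresponding reflections. First I would observe that, since each $P_i$ passes through $x$, the subgroup $\langle s_1,s_2,s_3\rangle$ is a subgroup of the stabiliser of $x$ in $G$, which is a finite group (the stabiliser of a point in a discrete reflection group acting on $\HHH$ is finite, being a discrete subgroup of the compact group $\mathrm{Isom}(\mathbb{S}^2)$ of isometries fixing $x$). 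Hence the special subgroup $G_T$ generated by $T=\{s_1,s_2,s_3\}$ is finite, i.e.\ $T$ is a \emph{spherical} subset of $S$.

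Next I would use the correspondence, coming from Poincar\'e's Polyhedron Theorem (as recalled in the excerpt), between spherical special subgroups of $(G,S)$ and vertices of $\Pi$: a subset $T\subseteq S$ is spherical of rank equal to $\dim\HHH=3$ (or, more precisely, a maximal spherical subset) exactly when the faces indexed by $T$ meet in a vertex of $\Pi$, and for $T$ of rank $3$ this vertex is the unique one common to all faces in $T$. Since here $|T|\le 3$, either $T$ already has three elements and is spherical, or after possibly enlarging $T$ to a maximal spherical subset $T'\supseteq T$ of rank $3$ we get a vertex $v$ of $\Pi$ lying on all faces indexed by $T'$, hence in particular on $F_1,F_2,F_3$. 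The key point to check is that $T$ is genuinely of rank $3$, i.e.\ that $s_1,s_2,s_3$ are distinct and that no two of the planes coincide — but distinct faces of a convex polyhedron lie in distinct planes by definition, and Proposition \ref{ePi} guarantees that any two of $P_1,P_2,P_3$ that are not disjoint actually correspond to neighbouring faces, so in particular the three faces are pairwise neighbouring once their planes pairwise meet (which they do, all passing through $x$). This also shows that $F_1,F_2,F_3$ pairwise share edges, which is what makes their meeting in a single vertex geometrically plausible.

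An alternative, more self-contained route avoids Coxeter-theoretic machinery and argues directly in $\HHH$: the three planes $P_1,P_2,P_3$ all contain $x$, and locally near $x$ the polyhedron $\Pi$ (being an intersection of half-spaces) looks like a convex cone; the faces $F_1,F_2,F_3$ correspond to three of the bounding half-spaces whose boundary planes pass through $x$, so $x$ lies in the closure of $\Pi$ and, by convexity and local finiteness of the face structure, $x$ lies on the intersection of the relevant faces, which is a single vertex of $\Pi$ because three planes in general position in $\HHH$ meet in a point. The delicate step in this approach — and the one I expect to be the main obstacle — is justifying that $x$ actually belongs to $\Pi$ (not merely to the planes) and that the intersection $F_1\cap F_2\cap F_3$ is nonempty rather than the planes meeting outside the polyhedron; this is exactly where one must use that the $P_i$ support $\Pi$ and invoke Proposition \ref{ePi} (neighbouring faces) together with the finite-sidedness of $\Pi$ to rule out the degenerate configurations. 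I would therefore present the Coxeter-group argument as the clean proof and use the geometric picture only as motivation.
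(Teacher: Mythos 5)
Your main (Coxeter--theoretic) route has a genuine gap at exactly the step you describe as a ``standard fact''. You correctly observe that if $P_1,P_2,P_3$ meet at $x\in\HHH$ then $\langle s_1,s_2,s_3\rangle\le\mathrm{Stab}_G(x)$ is finite, so $T=\{s_1,s_2,s_3\}$ is spherical. But you then invoke a biconditional --- ``$T$ spherical of rank $3$ exactly when the faces indexed by $T$ meet in a vertex of $\Pi$'' --- and attribute it to Poincar\'e's Polyhedron Theorem. Poincar\'e's theorem (and the standard stabiliser theorem, e.g.\ \cite[Thm.~6.4.3]{Dav}) gives the \emph{other} direction: for $y\in\Pi$, the stabiliser of $y$ is generated by the reflections in faces through $y$, so faces sharing a vertex give a spherical subset. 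The direction you need --- $T$ spherical implies that $F_1\cap F_2\cap F_3\ne\emptyset$ --- is not automatic. The Cartan fixed-point theorem gives a fixed point of $\langle T\rangle$, which lies on $P_1\cap P_2\cap P_3$, but nothing forces that point to lie on the polyhedron $\Pi$ rather than outside it (cut off by some \emph{other} bounding half-space of $\Pi$). Ruling that out is exactly the geometric content of Proposition~\ref{vPi}. Indeed, in the proof of Claim~\ref{no4inL}, the paper \emph{derives} ``spherical triple $\Rightarrow$ common vertex'' \emph{from} Proposition~\ref{vPi}, so using that correspondence here would be circular.

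The paper's actual proof does something quite different: it quotes Andreev's theorem (\cite{Andr}), in the form
\[
\dim\bigcap_{i\in I}\c{F_i} \;=\; \dim\bigcap_{i\in I}\c{P_i},
\]
valid for convex hyperbolic polyhedra with non-obtuse dihedral angles. From this, ``planes meet'' immediately gives ``faces meet'', and for three distinct faces of a convex polyhedron a non-empty intersection must be a vertex. So the needed geometric input is precisely Andreev's theorem, not Coxeter combinatorics. Your second (purely geometric) sketch correctly identifies the obstacle --- showing the intersection point actually lies on $\Pi$ --- but does not overcome it; that step is what Andreev's theorem provides, and it cannot simply be absorbed into an appeal to Proposition~\ref{ePi} or to finite-sidedness.
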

Those facts are easy consequences of the theorem from \cite{Andr}. I formulate it below in a version for $\HHH$ and a finite-sided convex polyhedron (just as in Assumption \ref{PiG}) for simplicity. Before, I need some definitions:
\begin{df}\label{dffromAndr}
Consider the Klein unit ball model of $\HHH$ with its ideal boundary $\bdi\HHH$ being the unit sphere in $\mathbb{R}^3$. Then for $A\subseteq\HHH$, we denote by $\c{A}$ its closure in $\cHHH:=\HHH\cup\bdi\HHH$. Then, a single point in $\bdi\HHH$ is claimed to have dimension $-1$ (as opposed to a point in $\HHH$) and the empty set---to have dimension $-\infty$.
\end{df}
\begin{thm}[a version of the theorem from \cite{Andr}]
Let $\Pi$ be a convex polyhedron in $\HHH$ (as in Assumption \ref{PiG}) whose all dihedral angles are non-obtuse and let $(F_i)_{i\in I}$ be any family of its faces and for $i\in I$ let $P_i$ be the plane of $F_i$. Then
$$\dim\bigcap_{i\in I}\c{F_i} = \dim\bigcap_{i\in I}\c{P_i}.$$
\qed
\end{thm}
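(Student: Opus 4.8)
One inequality is immediate: since $F_i\subseteq P_i$ we get $\c{F_i}\subseteq\c{P_i}$, hence $\bigcap_i\c{F_i}\subseteq\bigcap_i\c{P_i}$, and dimension is monotone under inclusion (with the conventions of Definition~\ref{dffromAndr}). So the whole content is the reverse inequality, and since $\Pi$ is finite-sided we may take $I$ finite. I would argue by cases on $m:=\dim\bigcap_{i\in I}\c{P_i}\in\{-\infty,-1,0,1,2\}$. The case $m=-\infty$ is vacuous: then $\bigcap_i\c{P_i}=\emptyset$, so $\bigcap_i\c{F_i}=\emptyset$ as well. The case $m=2$ is easy: the closures of two distinct hyperbolic planes meet in at most a $1$-dimensional arc, so $m=2$ forces all $P_i$ to coincide, hence all $F_i$ coincide and $\dim\bigcap_i\c{F_i}=2$.

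For $m\in\{0,1\}$ I would first shrink $I$ to at most three faces with the same plane-intersection. If $m=1$ then $L:=\bigcap_{i\in I}P_i$ is a geodesic of $\HHH$, every $P_i$ contains $L$, and for any two distinct $P_1,P_2$ in the family $P_1\cap P_2=L$, so $F_1\cap F_2=\Pi\cap L=\bigcap_{i\in I}F_i$. If $m=0$ then $\bigcap_{i\in I}P_i=\{x\}$ with $x\in\HHH$, the outward normals of the $P_i$ span $x^{\perp}$, and three of them forming a basis give planes with $P_1\cap P_2\cap P_3=\{x\}$ and $F_1\cap F_2\cap F_3=\Pi\cap\{x\}=\bigcap_{i\in I}F_i$. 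In both cases it remains to show (i) the faces really have a common point of $\HHH$, and (ii) the resulting intersection has the expected dimension. For (i) --- the Andreev-type ``linking'' lemma, and the genuine use of non-obtuseness --- I would work in the hyperboloid model, where the outward unit normals $e_i$ have Gram matrix with $1$'s on the diagonal and non-positive off-diagonal entries, and take the point $p\in\Pi$ closest to $L$ (resp.\ to $x$); a nearest-point/convexity argument, exploiting $\langle e_k,e_i\rangle\le 0$ between the ``blocking'' plane $P_k$ at $p$ and the planes of the family, then forces $p\in L$ (resp.\ $p=x$), so that $\bigcap_i F_i\neq\emptyset$. For (ii) I would use that the normals of all planes through a single point of $\Pi$ are linearly independent: a non-trivial non-negative relation among them --- which the sign pattern of the Gram matrix supplies whenever they are dependent --- would force $\Pi$ into the intersection of the corresponding planes, contradicting $\dim\Pi=3$; consequently $\Pi$ near such a point is a simplicial-type polyhedral cone whose faces have exactly the codimension predicted by the number of planes, which yields $\dim(\Pi\cap L)=1$ (resp.\ $\dim(\Pi\cap\{x\})=0$), i.e.\ $\dim\bigcap_i\c{F_i}\ge m$.

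The case $m=-1$ is the delicate one. Here $\bigcap_{i\in I}\c{P_i}=\{\xi\}$ for a single ideal point $\xi$, and ``$\ge$'' means showing $\xi\in\c{F_i}$ for every $i$ --- equivalently, $\xi\in\c\Pi$ together with each face actually reaching $\xi$. I would pass to the upper half-space model with $\xi$ at infinity, in which every $P_i$ is a vertical half-plane, examine the horospherical cross-sections of $\Pi$ at large height, and show that if $\xi\notin\c\Pi$ then some bounding hemisphere $P_k$ of $\Pi$, together with the vertical $P_i$'s, violates non-obtuseness; this is exactly where hyperbolic rigidity is essential (the Euclidean analogue fails, as a long thin tilted box shows). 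I expect this ideal-point case, together with the linking lemma~(i), to be the main obstacles; the shortest honest route --- implicit in the \cite{Andr} citation above, where all of this is carried out in greater generality --- is simply to quote that theorem, the present statement being its restriction to a finite-sided polyhedron of $\HHH$, the two dimension conventions of Definition~\ref{dffromAndr} being chosen precisely so that the conclusion becomes a clean equality rather than a list of cases.
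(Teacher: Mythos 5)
The paper does not prove this statement: the \verb|\qed| placed directly after the theorem statement (with the bracketed attribution ``a version of the theorem from \cite{Andr}'') is the paper's convention for results quoted from the literature, just as with Theorems~\ref{bdpc}, \ref{bdpu}, \ref{Steinb} and Gabber's lemma. So the ``paper's own proof'' here is precisely the citation of Andreev, and your closing observation that the shortest honest route is to quote that theorem is exactly what the paper does.

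As for your attempted direct argument, the reductions and the easy cases $m\in\{-\infty,2\}$ are sound, and your linear-independence argument (non-positive Gram off-diagonals on the positive-definite space $x^{\perp}$ force any dependence to be a non-negative relation, which would push $\Pi$ into a proper subplane) is correct when fleshed out. But the two steps that carry all the weight are left as sketches, and they are not routine. First, the ``linking lemma'' that $\bigcap_i\c{P_i}\neq\emptyset$ forces $\bigcap_i\c{F_i}\neq\emptyset$: taking $p\in\Pi$ nearest to $x$ (or to $L$) gives you \emph{some} separating face plane, but deducing that this plane together with the $P_i$ violates non-obtuseness requires a genuine combinatorial/geometric argument over possibly several faces, not a single inner-product inequality. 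Second, the ideal-point case $m=-1$ you explicitly flag as delicate; showing that a common ideal point of all the $\c{P_i}$ must lie on $\c\Pi$ and on each $\c{F_i}$ is the hyperbolic rigidity phenomenon at the heart of Andreev's theorem and is not something a horospherical cross-section picture alone settles. These two gaps \emph{are} Andreev's theorem, so your sketch doesn't give an independent proof --- it reduces the statement to itself --- and the only complete route available is the citation, which is what you and the paper both ultimately do.
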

\begin{rem}
In order to conclude Propositions \ref{ePi} and \ref{vPi} from the above theorem, one has to observe that
\begin{equation}
\bigcap_{i\in I}F_i\neq\emptyset \iff \dim\bigcap_{i\in I}\c{F_i}\ge 0 \iff \dim\bigcap_{i\in I}\c{P_i}\ge 0 \iff \bigcap_{i\in I}P_i\neq\emptyset
\end{equation}
using Definition \ref{dffromAndr} and the convexity of $\bigcap_{i\in I}\c{F_i}$ and $\bigcap_{i\in I}\c{P_i}$ in the Klein model. That immediately gives Proposition \ref{vPi}. To conclude Proposition  \ref{ePi} as well, observe that if the planes of two faces of $\Pi$ intersect, then they have at least one common vertex. Then, they must have a common edge because otherwise the polyhedral angle of $\Pi$ at their common vertex would have more than three faces, which is impossible for a polyhedron with non-obtuse dihedral angles. So we obtain Proposition \ref{vPi}.
\end{rem}
\begin{rem} %\label{nonobtuse}
By default, I consider the planes of faces of $\Pi$ oriented outside the polyhedron, i.e. the angle between them is $\pi$ minus the angle between their normal vectors, which I consider always pointing outside the polyhedron.
\end{rem}
\begin{prop}\label{v:r>=3}
For any $v\in V(\CG)$ and any three edges passing to $v$, they correspond to three faces of the tile $v\Pi$ which have a vertex in common.
\end{prop}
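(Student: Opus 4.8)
The plan is to reduce the statement, via Propositions~\ref{ePi} and~\ref{vPi}, to the geometric fact that the three planes involved have a common point of $\HHH$, and to derive that fact from the hypothesis that the plane of each of the three faces separates $o$ from $v\Pi$. If fewer than three edges pass to $v$ there is nothing to prove (in particular $v\neq o$), so I fix three distinct edges $e_1,e_2,e_3$ passing to $v$, set $F_i:=e_i\d$ (three distinct faces of $v\Pi$), and let $P_i$ be the plane of $F_i$. Since $e_i$ passes to $v$, the remark describing $O(\CG)$ in terms of the planes of the $e\d$'s shows that $P_i$ separates the point $v$ from $o$; and since $F_i$ is a face of the convex polyhedron $v\Pi$, that polyhedron lies in one of the two closed half-spaces bounded by $P_i$ — call it $H_i^+$, as it contains the interior point $v$ — while $o$ lies in the open complementary half-space $H_i^-$. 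Thus $v\Pi\subseteq H_i^+$ and $o\in H_i^-$ for $i=1,2,3$.

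I would first show that the $P_i$ meet pairwise in $\HHH$. If, say, $P_1$ and $P_2$ were disjoint, they would divide $\HHH$ into three regions — an open region $A$ beyond $P_1$, a region between them, and an open region $B$ beyond $P_2$ — and, as $v\Pi$ meets both $P_1$ and $P_2$ (along $F_1$, resp.\ $F_2$), it lies in neither $\overline{A}$ nor $\overline{B}$; this forces $H_1^-=A$ and $H_2^-=B$, which are disjoint, contradicting $o\in H_1^-\cap H_2^-$. So the $P_i$ intersect pairwise, and Proposition~\ref{ePi}, applied to $v\Pi$ (isometric to $\Pi$), makes the $F_i$ pairwise neighbours; I write $e_{ij}:=F_i\cap F_j$ for the common edge of $F_i$ and $F_j$, a non-degenerate geodesic segment lying on the geodesic line $L_{ij}:=P_i\cap P_j$.

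The crux, which I expect to be the delicate step, is to show $P_1\cap P_2\cap P_3\neq\emptyset$ in $\HHH$, i.e.\ to exclude the ``triangular prism'' position of the three planes. Suppose this intersection is empty; then $L_{12}$ and $L_{13}$ are disjoint geodesics of the hyperbolic plane $P_1$. Since $e_{13}\subseteq F_1\subseteq H_2^+$ while $e_{13}\not\subseteq L_{12}$ (otherwise $e_{13}\subseteq L_{12}\cap L_{13}=\emptyset$), the geodesic $L_{13}$ lies strictly on the $H_2^+$-side of $L_{12}$ in $P_1$; symmetrically $L_{12}$ lies strictly on the $H_3^+$-side of $L_{13}$. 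Consequently the closed half-plane $\overline{H_2^-}\cap P_1$ (bounded by $L_{12}$) and the closed half-plane $\overline{H_3^-}\cap P_1$ (bounded by $L_{13}$) are the two ``outer'' regions that the disjoint pair $L_{12},L_{13}$ cuts out of $P_1$, and hence are disjoint. On the other hand the geodesic segment $[o,v]$ meets each $P_i$ in exactly one point $x_i$ (since $o\in H_i^-$ and $v\in H_i^+$); relabelling the indices so that $x_1$ is a crossing point nearest to $o$, the point $x_1$ lies on the closed $o$-side of both $P_2$ and $P_3$ (if $x_2$ or $x_3$ happens to equal $x_1$, it simply lies on that plane), so $x_1\in P_1\cap\overline{H_2^-}\cap\overline{H_3^-}=\emptyset$ — a contradiction. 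Hence $P_1\cap P_2\cap P_3\neq\emptyset$, and Proposition~\ref{vPi} applied to $v\Pi$ then produces a vertex of $v\Pi$ lying on $F_1$, $F_2$ and $F_3$, which is the assertion.
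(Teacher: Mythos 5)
Your proof is correct and follows essentially the same route as the paper: both fix the half-spaces $H_i$ containing $v\Pi$, note $o\notin H_i$, first rule out a disjoint pair of face planes, then rule out the common-point-free (prism) configuration by deriving that $o$ would have to lie in some $H_i$, and finally invoke Propositions~\ref{ePi} and~\ref{vPi}. The only real difference is in how the no-prism contradiction is extracted: the paper observes tersely that in the prism configuration $H_1\supseteq\HHH\setminus(H_2\cup H_3)\ni o$, whereas you spell this out by slicing with the plane $P_1$, showing the two outer half-planes bounded by $L_{12}$ and $L_{13}$ are disjoint, and then locating the first crossing point of $[o,v]$ in their intersection — a more explicit but equivalent realisation of the same idea.
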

\begin{proof}
Assume that some three edges pass to a vertex $v$. Let $H_1,H_2,H_3$ be the (closed) half-spaces containing $v\cdot\Pi$ whose boundaries contain the faces $F_1,F_2,F_3$, respectively, corresponding to those edges. Because none of $H_1,H_2,H_3$ contains $o$ and none of them contains another, their boundaries must intersect pairwise, as well as $F_1,F_2,F_3$ (by Proposition \ref{ePi}). Further, if those all boundaries had no common point, the intersection $A=H_1\cap H_2\cap H_3$ would be a bi-infinite triangular prism and $H_1$ would contain $\HHH\sm (H_2\cup H_3)$ together with $o$, which contradicts the assumptions. So the planes of $F_1,F_2,F_3$ have a point in common---call it $p$ and by Proposition \ref{vPi}, $F_1,F_2,F_3$ share $p$ as a common vertex.
\end{proof}
\begin{prop}\label{r<=3}
For any $v\in V(\CG)$, we have $r(v)\le 3$.
\end{prop}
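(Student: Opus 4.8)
The plan is to deduce $r(v)\le 3$ from Proposition \ref{v:r>=3} together with the geometry of polyhedral angles in $\HHH$. Suppose, for contradiction, that some vertex $v\in V(\CG)$ has $r(v)\ge 4$, i.e.\ at least four edges pass to $v$. By Proposition \ref{v:r>=3}, applied to any three of these four edges, the three corresponding faces of the tile $v\Pi$ share a common vertex. Thus we get four faces $F_1,F_2,F_3,F_4$ of $v\Pi$ such that every triple among them has a common vertex. The aim is to show this forces all four faces to meet at a single vertex of $v\Pi$, which then contradicts the fact that a convex polyhedron in $\HHH$ with non-obtuse dihedral angles has exactly three faces meeting at each vertex (the same fact already invoked in the remark after the Andreev theorem, coming from the spherical link of a vertex being a spherical triangle).

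First I would pin down that, since $v\Pi$ is convex with non-obtuse dihedral angles, at each of its vertices exactly three faces meet (equivalently, the link is a spherical triangle; a quadrilateral or larger link is impossible without obtuse angles). Next, using the version of Andreev's theorem quoted above (applied to the polyhedron $v\Pi$, or equivalently to $\Pi$ after translating by $v^{-1}$), the intersection $\bigcap_{i=1}^4 \widehat{F_i}$ has the same dimension as $\bigcap_{i=1}^4 \widehat{P_i}$, where $P_i$ is the plane of $F_i$. Because every triple $F_i\cap F_j\cap F_k$ is non-empty (a common vertex by Proposition \ref{v:r>=3}), every triple of planes $P_i\cap P_j\cap P_k$ is non-empty, so in the Klein model the convex sets $\widehat{P_i}$ pairwise and triple-wise intersect; a Helly-type argument in the convex ball model (Helly's theorem for convex subsets of $\mathbb{R}^3$, dimension $3$) then gives $\bigcap_{i=1}^4 \widehat{P_i}\neq\emptyset$, hence $\dim\bigcap_{i=1}^4 \widehat{P_i}\ge 0$, hence $\dim\bigcap_{i=1}^4 \widehat{F_i}\ge 0$, i.e.\ $\bigcap_{i=1}^4 F_i\neq\emptyset$. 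A point in this intersection is a vertex of $v\Pi$ lying on all four faces $F_1,\dots,F_4$, contradicting the three-faces-per-vertex property. Therefore $r(v)\le 3$.

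The main obstacle I anticipate is handling the Helly step cleanly: I need that the closures $\widehat{P_i}$ in the Klein ball $\cHHH$ are convex (true, since each is the intersection of $\cHHH$ with an affine hyperplane of $\mathbb{R}^3$) and then that pairwise and triple intersections being non-empty forces a common point. In $\mathbb{R}^3$ Helly's theorem needs every $4$ of the sets to intersect; with only $4$ sets this is automatic once every triple intersects \emph{and} the fourth is hit — but that is exactly what we must prove, so I would instead argue directly: the three planes $P_1,P_2,P_3$ meet (in the closed ball) at a point $p$ which is a vertex of $v\Pi$; if $P_4$ did not pass through $p$, then near $p$ the faces $F_1,F_2,F_3$ already form the full link (three faces, non-obtuse angles), leaving no room for $F_4$ to pass to $v$ without cutting off $o$ — more precisely, $H_4$ (the half-space of $F_4$ containing $v\Pi$) would have to contain $o$ or contain one of $H_1\cap H_2\cap H_3$'s bounding configurations, reproducing the prism-style contradiction of Proposition \ref{v:r>=3}. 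I expect this local link argument, rather than the global Helly statement, to be the cleanest route, and writing it carefully is where the real work lies.
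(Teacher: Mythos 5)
Your strategy—reduce to ``all four faces share a vertex'' and then invoke the three-faces-per-vertex property for non-obtuse polyhedra—is a reasonable plan, and your step 4 is sound (the paper does use that a polyhedral angle with more than three faces and non-obtuse dihedral angles is impossible, e.g.\ in the remark after the version of Andreev's theorem and in the proof of Claim~\ref{no4inL}). However, there is a genuine gap in step~3, and you have essentially identified it yourself without resolving it. Helly's theorem in $\mathbb{R}^3$ requires every \emph{four} of the convex sets to intersect, so with exactly four sets $\c{P_1},\dots,\c{P_4}$ it asserts nothing beyond the hypothesis; ``every triple intersects'' does not, by any Helly-type argument, force a common point of all four. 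Your fallback ``local link argument'' is only a gesture: the phrase that $H_4$ ``would have to contain $o$ or contain one of $H_1\cap H_2\cap H_3$'s bounding configurations, reproducing the prism-style contradiction'' is not a proof, and the prism dichotomy from Proposition~\ref{v:r>=3} was specifically about three planes with empty common intersection, which is not the configuration here. You acknowledge that ``writing it carefully is where the real work lies''---that real work is exactly what is missing.

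The paper's actual argument bypasses the question of whether $P_4$ passes through $p$ entirely. It sets $A=H_1\cap H_2\cap H_3$, the trihedral cone with apex $p$ (known from the proof of Proposition~\ref{v:r>=3}). Since $p$ is a vertex of $F_1,F_2,F_3$ but not of $F_4$, one has $p\in\sint H_4$; and $\bdt H_4$ meets each edge-ray of $A$ in one point other than $p$, so $A\cap H_4$ is a bounded tetrahedral region near $p$ while $A\sm H_4$ is unbounded. Because $o\notin H_i$ for $i=1,2,3$ and each plane $\bdt H_i$ passes through $p$, the open ray from $p$ in the direction \emph{away} from $o$ lies in $A$; being unbounded, it must exit $H_4$. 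Hence the line $po$ meets $\bdt H_4$ on the far side of $p$ from $o$, so $po\cap H_4$ is the ray containing $p$ and all of $\overrightarrow{po}$, giving $o\in H_4$---a contradiction. To repair your proposal you would need to supply precisely this cone-and-ray argument (or an equivalent); the Helly route cannot be made to work, and the case ``all four planes concur at $p$'' need not be treated separately, since the argument above already covers everything.
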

\begin{proof}
Assume that, on the contrary, $r(v)\ge 4$. Then there are half-spaces $H_1,H_2,H_3,H_4$ corresponding to faces $F_1,F_2,F_3,F_4$ of $\Pi$ and separating $o$ from $\Pi$. Let $A=H_1\cap H_2\cap H_3$ (we know that it is a trihedral angle from the proof of Proposition \ref{v:r>=3}).
\par $\bdt H_4$ has to cross $\sint A$ in order to produce face $F_4$ and has to cross each of the edges of $A$ in exactly $1$ point other than $p$ (from Proposition \ref{v:r>=3} and the fact that $H_4$ contains all the edges of $\Pi$ adjacent to $p$).
Note that $p\in\sint H_4$.
It is clear that the open half-line $po\sm\overrightarrow{po}$ of the line $po$ lies in $A$, so it crosses $\bdt H_4$. Hence, $\overrightarrow{po}\subseteq H_4$ along with $o$, a contradiction.
%Let $a$ be the common point of the line $l=op$ and $\bdt H_4$. Then, the half-line $l\sm\overrightarrow{ap}$ is outside $H_4$, so $\overrightarrow{ap}\subseteq H_4$ along with $o$, a contradiction.
\end{proof}
\begin{denot}
Now, I define the function $F$ for use of Corollary \ref{Gabcor}: for $e\in O(\CG)$, let
\begin{equation}F(e)=c_{r(e_+)},\quad F(\bar e)=\frac{1}{c_{r(e_+)}},\end{equation}
according to the condition from the corollary. Here $c_1,c_2,c_3>0$ are parameters (only three ones, as here always $0<r(e_+)\le 3$). I will write $(c_1,c_2,c_3)=\bar c$ for short. Let for $v\in V(\CG)$,
\begin{equation}f_v(\bar c)=\sum_{e_+=v} F(e).\end{equation}
\end{denot}
In this setting
\begin{align}
f_v(\bar c)
&=\sum_{e\in O(\CG):e_+=v} c_{r(v)} + \sum_{e\in O(\CG):e_-=v} \frac{1}{c_{r(e_+)}}= \label{f(r,q)1}\\
&=r(v)c_{r(v)} + \sum_{i=1}^3 \frac{q_i(v)}{c_i},\label{f(r,q)2}
\end{align}
because $q_i(v)=0$ for $i>3$ (due to Proposition \ref{r<=3}) and for $i=0$.
Recall that $k$ is the number of faces of $\Pi$.
\begin{lem}\label{rhobasic}%(może dziwnie, że ten lemat jest w podsekcji ,,Geometry...''?)
If $k\ge 6$, we have
$$\rt(\CG)\le 2\sqrt{3(k-3)}.$$
\end{lem}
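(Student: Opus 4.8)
The goal is to bound $\rt(\CG)$ using Corollary \ref{Gabcor}, which requires finding parameters $\bar c = (c_1,c_2,c_3)$ and a constant $C_F$ such that $f_v(\bar c)\le C_F$ for every vertex $v$, and then optimizing $C_F$ to reach $2\sqrt{3(k-3)}$. By the formula \eqref{f(r,q)2} we have $f_v(\bar c) = r(v)c_{r(v)} + \sum_{i=1}^3 q_i(v)/c_i$, where by Proposition \ref{r<=3} we know $r(v)\in\{0,1,2,3\}$ (the case $r(v)=0$ being only $v=o$, which is harmless since then there are no incoming edges and $\CG$ is infinite, so a single bad vertex does not affect the $\limsup$, or one checks it directly). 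So I must control, for each possible value $r(v)=j\in\{1,2,3\}$, the quantity $j c_j + \sum_i q_i(v)/c_i$.

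**Key combinatorial input.** The crucial geometric fact I would establish first is an upper bound on the out-degrees $q_i(v)$ in terms of $k$ and $r(v)$. Each vertex $v$ has exactly $k$ incident edges total (the graph is $k$-regular, $k$ being the number of faces of $\Pi$), $r(v)$ of them incoming and $k - r(v)$ outgoing, so $\sum_{i=1}^3 q_i(v) = k - r(v)$. The sharper point, which is where the geometry of right angles / non-obtuse angles and Propositions \ref{v:r>=3}, \ref{r<=3} enter, is that an outgoing edge from $v$ can only reach a vertex $w$ with $r(w)$ large (i.e. $r(w)=2$ or $3$) if the corresponding face of $v\Pi$ is suitably positioned relative to the faces carrying the incoming edges of $v$; concretely, I expect that $q_2(v)$ and $q_3(v)$ are bounded by something like a constant times $r(v)$ (each incoming-edge face can be "responsible" for only boundedly many neighbours $w$ with $r(w)\ge 2$, since such a $w$ forces a shared edge/vertex configuration with that face). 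This lets me write $f_v(\bar c)\le jc_j + (k - j - (\text{terms with } q_2,q_3))/c_1 + q_2(v)/c_2 + q_3(v)/c_3$ and push the bound.

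**Optimization.** Once I have, for each $j\in\{1,2,3\}$, an estimate $f_v \le g_j(\bar c)$ with $g_j$ an explicit function of $k$ and $\bar c$, I set $C_F = \max_j g_j(\bar c)$ and minimize over $\bar c>0$. The target $2\sqrt{3(k-3)}$ strongly suggests that the binding constraint comes from the $r(v)=3$ vertices: there $f_v = 3c_3 + \sum q_i/c_i$, and if the dominant contribution is $3c_3 + (k-3)/c_3$ (taking, say, all outgoing edges conservatively weighted by $1/c_3$, or more precisely after the reductions above the worst case balances $3c_3$ against roughly $(k-3)/c_3$), then choosing $c_3$ to balance these two terms via AM–GM gives $2\sqrt{3(k-3)}$ exactly, with optimal $c_3 = \sqrt{(k-3)/3}$. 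I would then verify that with this $c_3$ (and suitably chosen $c_1,c_2$, e.g. $c_1$ small and $c_2$ intermediate, or all equal if that already works) the bounds $g_1,g_2\le 2\sqrt{3(k-3)}$ also hold, using $k\ge 6$ to make the inequalities go through — this is where the hypothesis $k\ge6$ is presumably needed, to guarantee $c_3\ge$ some threshold and keep the $r=1,2$ estimates under control.

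**Main obstacle.** The routine part is the AM–GM balancing at the end. The real work — and the step I expect to be the main obstacle — is pinning down the correct combinatorial bounds on $q_1(v), q_2(v), q_3(v)$ from the polyhedron's geometry: deciding exactly how many neighbours of $v$ can have $r(\cdot)=2$ or $3$, which requires the Andreev-type facts (Propositions \ref{ePi}, \ref{vPi}, \ref{v:r>=3}, \ref{r<=3}) to rule out too many mutually-separating half-spaces clustering near $v$. Getting these counts tight enough that the optimization lands on $2\sqrt{3(k-3)}$ rather than something weaker is the crux; if the naive bound $\sum q_i = k - r(v)$ with all weight on $1/c_1$ is used too crudely one gets a worse constant, so the argument must exploit that "most" outgoing edges land on vertices with $r(\cdot)=1$.
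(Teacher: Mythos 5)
Your setup is correct as far as it goes: you invoke Corollary~\ref{Gabcor}, the formula~\eqref{f(r,q)2} for $f_v(\bar c)$, Proposition~\ref{r<=3} to cap $r(v)$ at $3$, and the AM--GM balance $3c_3 + (k-3)/c_3 = 2\sqrt{3(k-3)}$ at $c_3 = \sqrt{(k-3)/3}$. However, the ``main obstacle'' you identify does not exist for this lemma, and your final assertion --- that the crude bound $\sum_i q_i(v) = k - r(v)$ with uniform weight yields a worse constant, so one must exploit that ``most'' outgoing edges land on vertices with $r(\cdot)=1$ --- is simply false. The paper's proof sets $c_1 = c_2 = c_3 = c := \sqrt{(k-3)/3}$, whereupon every outgoing edge contributes $1/c$ regardless of $r(e_+)$, and \eqref{f(r,q)1} collapses to
\begin{equation*}
f_v(\bar c) = r(v)\,c + \bigl(k - r(v)\bigr)\frac{1}{c}.
\end{equation*}
Since $k\ge 6$ gives $c\ge 1/c$, this expression is nondecreasing in $r(v)$, so it is maximised at $r(v)=3$ (covering $v=o$ with $r(o)=0$ automatically), yielding exactly $3c + (k-3)/c = 2\sqrt{3(k-3)}$. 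No bound on $q_2(v)$ or $q_3(v)$ is needed at this stage; such bounds (e.g.\ Lemma~\ref{q_3gen}) enter only in the sharper Lemmas~\ref{rhogen} and~\ref{rhoRACpt}, where one genuinely chooses the $c_i$ distinct to squeeze out a smaller constant. You raised the ``all equal'' option yourself but dismissed it without checking that it already lands exactly on the target; that dismissal is the gap, and it sent you toward establishing combinatorial facts that are unnecessary here and, for a general (not necessarily right-angled) Coxeter polyhedron, not available in the simple form your plan assumes.
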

\begin{rem}
The above bound has a better asymptotic behaviour that those in Lemmas \ref{rhogen} and \ref{rhoRACpt}, but the latter ones give the inequality $\pc<\pu$ for a bit more values of $k$ (see Section \ref{secrems}).
\end{rem}
\begin{proof}
I am going to choose values of $c_1,c_2,c_3$ giving a good upper bound for $\sup_{v\in V(\CG)}f_v(\bar c)$: let $c_1=c_2=c_3=\sqrt{\frac{k-3}{3}}$. Then from \eqref{f(r,q)1}
\begin{align*}
f_v(\bar c) &= r(v){\textstyle\sqrt{\frac{k-3}{3}}} + (k-r(v)){\textstyle\sqrt{\frac{3}{k-3}}}\le\\
&\le 3{\textstyle\sqrt{\frac{k-3}{3}}} + (k-3){\textstyle\sqrt{\frac{3}{k-3}}}=2\sqrt{3(k-3)},
\end{align*}
because here $\sqrt{\frac{k-3}{3}}\ge\sqrt{\frac{3}{k-3}}$ and $r(v)\le 3$.
\end{proof}
\begin{df}\label{nerve}
For a Coxeter system $(G,S)$, we call a subset $T\subseteq S$ \emd{spherical} if the subgroup $\langle T\rangle$ is finite. The \emd{nerve} of $(G,S)$ is the abstract simplicial complex (or its geometric realisation if indicated so) whose simplices are all non-empty spherical subsets of $S$.
(For definitions of abstract simplicial complex and its geometric realisation, see \cite[Section A.2]{Dav} or Chapter I.7, especially the appendix of Chapter I.7, of \cite{BH}. A definition of nerve is also present in \cite[Section 7.1.]{Dav}.) % For a simplicial complex $C$ (abstract or geometric one), I denote by $V(C)$, $E(C)$ its sets of vertices and edges, respectively.
%Note also that $L$ does not need to be connected.
\end{df}
\begin{clm}\label{no4inL}
In the setting of Assumption \ref{PiG}, no subset of $S$ of cardinality $4$ is spherical (i.e.~the nerve of $(G,S)$ contains no $3$-simplex).
\end{clm}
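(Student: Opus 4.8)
The plan is to argue by contradiction, combining the Andreev-type theorem (via the same reasoning that gives Propositions \ref{ePi} and \ref{vPi}) with the elementary fact that a finite group of isometries of $\HHH$ has a fixed point.

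So suppose $T=\{s_1,s_2,s_3,s_4\}\subseteq S$ has four elements and is spherical, i.e.\ $H:=\langle T\rangle$ is finite. For $i=1,\dots,4$ let $F_i$ be the face of $\Pi$ whose reflection is $s_i$, and let $P_i$ be the plane of $F_i$; since $s_1,\dots,s_4$ are pairwise distinct, so are $F_1,\dots,F_4$. Because $H$ is finite, the orbit $H\cdot o$ is a finite, hence bounded, subset of $\HHH$; as $\HHH$ is a complete CAT($0$) space (see \cite[Ch.~I.2, II.1]{BH}) it has a unique circumcentre $x\in\HHH$, and $x$ is fixed by every element of $H$ (each of them permutes $H\cdot o$, so fixes its circumcentre by uniqueness). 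In particular $x$ is fixed by each reflection $s_i$, hence $x$ lies on the mirror of $s_i$, which is exactly the plane $P_i$. Thus $\bigcap_{i=1}^{4}P_i\neq\emptyset$.

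Now I apply the chain of equivalences recorded in the remark following the Andreev-type theorem to the family $(F_i)_{i=1}^{4}$: from $\bigcap_{i=1}^{4}P_i\neq\emptyset$ (and the convexity, in the Klein model, of $\bigcap_{i=1}^{4}\c{P_i}$ and $\bigcap_{i=1}^{4}\c{F_i}$) one gets $\bigcap_{i=1}^{4}F_i\neq\emptyset$. Hence the four distinct faces $F_1,\dots,F_4$ have a common point $y\in\HHH$. A point of $\Pi$ that lies on four distinct faces must be a vertex of $\Pi$, so at least four faces of $\Pi$ meet at $y$.

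This is impossible. Since $\Pi$ is Coxeter, all its dihedral angles have the form $\pi/m\le\pi/2$, and consequently at most three faces meet at any vertex of $\Pi$: the link of $\Pi$ at a vertex is a convex spherical $n$-gon whose interior angles are the dihedral angles along the incident edges, and since these angles are each $\le\pi/2$ while their sum exceeds $(n-2)\pi$, we get $(n-2)\pi<n\pi/2$, i.e.\ $n\le 3$ (the same phenomenon is used in the proofs of Propositions \ref{v:r>=3} and \ref{r<=3}). The resulting contradiction proves that no four-element subset of $S$ is spherical, i.e.\ the nerve of $(G,S)$ contains no $3$-simplex. The one step that requires any care is the passage from ``the four planes meet'' to ``the four faces meet,'' which is precisely where the Andreev-type theorem does the work.
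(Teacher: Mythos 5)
Your proof is correct, and it takes a genuinely different route from the paper's. The paper first applies Proposition \ref{vPi} (plus the ``bi-infinite triangular prism'' observation) to each of the four triples, deducing that every three of the four faces share a vertex of $\Pi$; it then observes that all four cannot share a vertex, argues that this forces $\Pi$ to be a compact tetrahedron, and finally gets a contradiction because the reflection group of a compact hyperbolic tetrahedron is infinite. You instead invoke the Bruhat--Tits/Cartan fixed-point theorem once, directly for the four-element generating set, to place a single point of $\HHH$ on all four mirror planes; you then apply the Andreev-type theorem (the same chain of equivalences as in the remark after it, which the paper states for an arbitrary index family $I$) to the quadruple of faces, concluding that the four faces themselves meet, and kill this with the polyhedral-angle count at a vertex. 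Your version sidesteps the paper's least formal step -- the inference that $\Pi$ ``must be a compact tetrahedron'' -- at the cost of making the fixed-point argument explicit; this is a fair trade, and in fact the paper already invokes exactly this fixed-point fact (Corollary II.2.8 of \cite{BH}) in the later proof that the nerve embeds in $\SS$. One very small point: where you say a point of $\Pi$ lying on four distinct faces must be a vertex, it is worth recording why -- any edge of a convex polyhedron lies on exactly two faces, so a point on three or more distinct faces cannot lie in the relative interior of an edge or face and must therefore be a vertex; this is implicit in your argument but deserves a sentence.
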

\begin{proof}
First, note that any $3$ generators constituting a spherical subset of $S$, correspond to reflection planes with non-empty intersection (otherwise those planes would be the planes of some bi-infinite triangular prism and reflections in them would generate an infinite group). Hence, from Proposition \ref{vPi} every $3$ faces corresponding to such $3$ generators share a vertex.
\par Now, assume, contrary to the claim, that there is a spherical subset of $S$ of cardinality $4$. Then, from the above, every $3$ of the $4$ faces of $\Pi$ corresponding to those $4$ generators, have a common vertex. On the other hand, there is no common vertex of all those $4$ faces because otherwise we would have a polyhedral angle with more than $3$ faces and non-obtuse dihedral angles, which is impossible. One can easily see that then $\Pi$ must be a compact tetrahedron, but then the group generated by those $4$ faces would be infinite, a contradiction.
\end{proof}
\subsection{Growth of right-angled cocompact groups in $\HHH$}
\begin{clm}
Let $G$ be the (Coxeter) reflection group of a right-angled compact polyhedron $\Pi$ in $\HHH$ with the standard generating set $S$ and let $L$ be the nerve of $(G,S)$ (in the sense of a geometric realisation---see Definition \ref{nerve}). Then $L$ is a flag triangulation of $\SS$. (A simplicial complex is \emd{flag} iff any finite set of its vertices which are pairwise connected by edges spans a simplex.)
\end{clm}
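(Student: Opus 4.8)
The plan is to prove two things: that the nerve $L$ is a triangulation of $\SS$ (a topological $2$-sphere), and that it is flag. The flagness part is the quick one, so I would do it first. A subset $T\subseteq S$ spans a simplex of $L$ exactly when $\langle T\rangle$ is finite. By Claim~\ref{no4inL} no $4$-element subset of $S$ is spherical, so $L$ has dimension at most $2$; what remains is to check that any three generators which are pairwise joined by edges of $L$ actually span a $2$-simplex. So suppose $s_1,s_2,s_3\in S$ are pairwise spherical, i.e.\ $\langle s_i,s_j\rangle$ is finite for each pair. In a Coxeter system, $\langle s_i,s_j\rangle$ is a dihedral group, finite iff $m(s_i,s_j)<\infty$. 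Geometrically, $m(s_i,s_j)<\infty$ means the two reflection planes of $F_i$ and $F_j$ meet (they are neighbouring faces of $\Pi$, by Proposition~\ref{ePi}). Thus the planes $P_1,P_2,P_3$ pairwise intersect; by the argument already used in the proof of Claim~\ref{no4inL} (ruling out the bi-infinite triangular prism) they have a common point, so by Proposition~\ref{vPi} the faces $F_1,F_2,F_3$ share a vertex $v$ of $\Pi$. At $v$ the polyhedral angle is right-angled (all dihedral angles $\pi/2$), and the reflection group of a right-angled trihedral angle is the finite group $(\Z/2)^3$; hence $\langle s_1,s_2,s_3\rangle$ is finite and $\{s_1,s_2,s_3\}$ is spherical. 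This gives flagness.

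Next I would identify $L$ with the boundary complex of $\Pi$ in a combinatorial sense and deduce it triangulates $\SS$. The vertices of $L$ are the faces of $\Pi$; an edge of $L$ joins $F_i,F_j$ iff $\langle s_i,s_j\rangle$ is finite iff (by the above) $F_i\sim F_j$, i.e.\ $F_i$ and $F_j$ share an edge of $\Pi$; and a triple spans a $2$-simplex iff the three faces share a vertex of $\Pi$. On the other hand, since $\Pi$ is a compact convex polyhedron in $\HHH$, its boundary $\bd\Pi$ is homeomorphic to $\SS$, and $\bd\Pi$ carries the usual cell structure with (polygonal) $2$-cells $=$ faces, $1$-cells $=$ edges, $0$-cells $=$ vertices. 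The claim is that $L$ is precisely the nerve of the closed cover of $\bd\Pi$ by the faces, which, because $\Pi$ is \emph{simple} at each vertex (right-angled implies exactly three faces meet at each vertex — this is Proposition~\ref{v:r>=3}/\ref{r<=3} in dual language, or directly: a right-angled vertex polyhedral angle has exactly three faces), is the barycentric-type dual triangulation of $\bd\Pi$. Concretely: place a vertex of $L$ at (an interior point of) each face of $\Pi$, an edge of $L$ across each edge of $\Pi$ joining the two adjacent face-points, and a $2$-simplex of $L$ around each vertex of $\Pi$ spanned by the three incident face-points; simplicity of $\Pi$ at every vertex is exactly what makes these triangles well-defined and makes their union, glued along the edges, a genuine simplicial complex homeomorphic to $\bd\Pi\cong\SS$. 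I would spell this out by exhibiting the homeomorphism $|L|\to\bd\Pi$ cell by cell.

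The step I expect to be the main obstacle is the clean identification of the abstract complex $L$ with the geometric dual complex of $\bd\Pi$ — in particular verifying that \emph{every} simplex of $L$ arises this way and that no spurious higher simplices occur, so that $|L|$ really is a closed surface rather than, say, a surface with extra cells or self-intersections. The key inputs are: Propositions~\ref{ePi} and~\ref{vPi} translate the combinatorics of $S$ into incidences of faces/edges/vertices of $\Pi$; Claim~\ref{no4inL} bounds the dimension; and right-angledness forces each vertex link of $\Pi$ to be a triangle (three faces, three edges), which both supplies the $2$-simplices of $L$ and rules out vertices of $\Pi$ of higher degree that would break the triangulation. Once these are in hand, the homeomorphism $|L|\cong\bd\Pi\cong\SS$ is a matter of assembling the local pictures, and flagness is the paragraph above.
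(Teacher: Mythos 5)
Your overall structure---identify $L$ as the dual complex of $\bdt\Pi \cong \SS$ using simplicity of $\Pi$ at each vertex, then verify flagness---is the same as the paper's, but the flagness paragraph has a gap. You infer that the planes $P_1,P_2,P_3$ pairwise intersect, and then conclude they have a common point ``by the argument already used in the proof of Claim~\ref{no4inL} (ruling out the bi-infinite triangular prism).'' That argument does not transfer. In the proof of Claim~\ref{no4inL} the prism is excluded because one already knows $\langle s_1,s_2,s_3\rangle$ is finite (a prism would make it infinite); here the finiteness of $\langle s_1,s_2,s_3\rangle$ is exactly the conclusion you want, so invoking it is circular. And the implication is not a formality: in $\HHH$, three pairwise-intersecting planes can in general fail to share a point.

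The paper closes this step by citing Andreev's theorem (\cite[Thm.~6.10.2(ii)]{Dav}): a compact right-angled polyhedron in $\HHH$ has no prismatic $3$-circuit, i.e.\ no three faces pairwise adjacent without a common vertex. You would need to invoke this, or a substitute, to complete your argument. In fact the right-angled hypothesis makes the step available more cheaply: in any Coxeter system, pairwise-commuting generators (all $m(s_i,s_j)=2$) generate $(\Z/2)^3$, a finite group, so the nerve of a right-angled Coxeter group is flag by the theory of special parabolic subgroups alone, with no geometry required; alternatively, three pairwise Lorentz-orthogonal space-like normals in $\mathbb{R}^{3,1}$ span a positive-definite $3$-space whose orthogonal complement is time-like, so the three planes do meet in $\HHH$. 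Either observation would repair the step; the rest of the proposal (simplicity of $\Pi$ and duality with $\bdt\Pi$) is sound and matches the paper.
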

\begin{proof}
Due to Claim \ref{no4inL}, there is no $3$-simplex (nor higher-dimensional ones) in $L$. Now, each pair of faces of $\Pi$ corresponds to a spherical pair of generators iff those faces neighbour (for the ``only if'' part, use Proposition \ref{ePi} and the fact that reflections in two disjoint planes generate an infinite group). Similarly, each $3$ faces correspond to a spherical subset of $S$ iff they share a vertex---see the proof of Claim \ref{no4inL}. Note that the degrees of vertices of $\Pi$ are all equal to $3$ because the only possibility for a polyhedral angle with right dihedral angles is a trihedral angle (we then call $\Pi$ simple). So, $L$ is a complex dual to the polygonal complex of faces of $\Pi$ (meaning that vertices of $L$ correspond to faces of $\Pi$, edges of $L$---to edges of $\Pi$ and triangles of $L$---to vertices of $\Pi$), hence, it is a triangulation of $\SS$. It remains to show that it is flag: if it were not, then we would have three faces pairwise neighbouring, but not sharing a vertex, which would contradict Andreev's theorem (see \cite[Thm.~6.10.2 (ii)]{Dav}), as $\Pi$ is right-angled and simple.
\end{proof}
\begin{df}\label{grser}
Let $G$ be a group with finite generating set $S$. Then the \emd{growth series} of $G$ with respect to $S$ is the formal power series $W$ defined by
$$W(z)=\sum_{n=0}^\infty\#S(n)z^n = \sum_{g\in G}z^{l(g)},$$
where for $n\in\N$, $S(n)=B(n)\sm B(n-1)$ is the (graph-theoretic) sphere in the Cayley graph of $(G,S)$, centred at some fixed vertex, of radius $n$, and $l$ is the length function on $(G,S)$ (cf.~Definitions \ref{dfgr} and \ref{lgth}).
\end{df}
Growth rate of a group with finite set of generators is exactly the reciprocal of radius of convergence of the growth series of the group. That and the above claim lead to the following theorem.
\begin{thm}[for the proof, see \protect{\cite[Example 17.4.3.]{Dav}} with the exercise there]
\label{gr}
For $G$ the (Coxeter) reflection group of a right-angled compact polyhedron $\Pi$ with $k$ faces in $\HHH$, with the standard generating set $S$, its growth rate
$$\gr(G,S)=\frac{k-4+\sqrt{(k-4)^2-4}}{2}.$$
\qed
\end{thm}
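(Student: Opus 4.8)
The plan is to combine three ingredients already at hand: the claim just proved that the nerve $L$ of $(G,S)$ is a flag triangulation of $\SS$, the stated fact that $\gr(G,S)$ equals the reciprocal of the radius of convergence of the growth series $W$, and the closed formula for the growth series of a right-angled Coxeter group in terms of its nerve. The latter is the content of \cite[Example 17.4.3]{Dav}; it also follows from Steinberg's formula, since in a right-angled Coxeter group every spherical parabolic $\langle T\rangle$ is elementary abelian with growth series $(1+z)^{\#T}$, and the spherical subsets of $S$ are exactly the simplices of the flag complex $L$ (together with $\emptyset$). Explicitly it reads
$$\frac{1}{W(z)}=\sum_{i=-1}^{\dim L} f_i\left(\frac{-z}{1+z}\right)^{i+1},$$
where $f_i$ denotes the number of $i$-simplices of $L$ and $f_{-1}=1$ accounts for the empty simplex.

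Next I would compute the $f$-vector of $L$. Since $L$ triangulates $\SS$, Euler's formula gives $f_0-f_1+f_2=2$, and since each triangle has three edges while each edge lies on exactly two triangles, $3f_2=2f_1$. Together with $f_0=\#S=k$ these yield $f_1=3k-6$ and $f_2=2k-4$. Substituting into the displayed identity and multiplying by $(1+z)^3$, a direct expansion gives
$$\frac{(1+z)^3}{W(z)}=(1+z)^3-kz(1+z)^2+(3k-6)z^2(1+z)-(2k-4)z^3=1+(3-k)z+(k-3)z^2-z^3,$$
and this cubic factors as $(1-z)\bigl(z^2-(k-4)z+1\bigr)$. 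Hence
$$W(z)=\frac{(1+z)^3}{(1-z)\bigl(z^2-(k-4)z+1\bigr)}.$$

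Finally I would read off the radius of convergence. The numerator vanishes only at $z=-1$, which is not a root of the denominator, so $W$ is a rational function whose poles are $z=1$ and the two roots $\tfrac12\bigl((k-4)\pm\sqrt{(k-4)^2-4}\bigr)$ of the quadratic; in the range of interest ($k\ge 12$ by Andreev's theorem, in any case $k\ge 7$) the discriminant is positive, so these roots are real, positive, and multiply to $1$. Thus $r:=\tfrac12\bigl((k-4)-\sqrt{(k-4)^2-4}\bigr)<1$ is the pole of smallest modulus. Since $W$ has nonnegative coefficients, its radius of convergence equals $r$, and therefore
$$\gr(G,S)=\frac1r=\frac{(k-4)+\sqrt{(k-4)^2-4}}{2},$$
using $r\cdot\tfrac12\bigl((k-4)+\sqrt{(k-4)^2-4}\bigr)=1$.

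The argument is essentially bookkeeping once the right tools are in place; the only points needing care are invoking the correct growth-series formula for right-angled Coxeter groups (and recording that its parabolic factors are $(1+z)^{\#T}$), and verifying that it is the small root of $z^2-(k-4)z+1$, rather than $z=1$, that governs convergence. The $f$-vector computation and the factorization of the cubic are routine.
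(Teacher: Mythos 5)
Your proof is correct and follows essentially the route the paper outsources to Davis (Example 17.4.3): express $1/W$ via Steinberg's formula for the right-angled Coxeter system, compute the $f$-vector of the flag triangulation $L$ of $\SS$ from Euler's relation and $3f_2=2f_1$, factor the resulting cubic as $(1-z)(z^2-(k-4)z+1)$, and identify the smallest positive pole as $1/\gr$. This is in fact the very same computation the paper carries out explicitly for the auxiliary series $W^{\mathrm{rb}}$ in Section \ref{appxgr} (see formulas \eqref{f_1rb}, \eqref{f_2rb}, \eqref{1/Wrbsum}, \eqref{1/Wrbprod}), so there is nothing to flag beyond noting that invoking nonnegativity of the coefficients is not strictly needed to locate the radius of convergence, since for a rational function it is automatically the modulus of the nearest pole.
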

\subsection{The proof for basic approach}
I recall the theorem to be proved:
\begin{thm*}[recalled Theorem \ref{3phbasic}]
If $\Pi$ is a compact right-angled polyhedron in $\HHH$ with $k\ge 18$ faces, then for $\CG$ the Cayley graph of reflection group of $\Pi$ with the standard generating set, we have
$$\pc(\CG)<\pu(\CG)$$
for Bernoulli bond and site percolation on $\CG$.
\end{thm*}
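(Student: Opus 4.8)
The plan is to derive the theorem from the two general bounds already at our disposal: $\pc(\CG)\le 1/\gr(\CG)$ (Theorem \ref{bdpc}) and $\pu(\CG)\ge 1/\g(\CG)$ (Theorem \ref{bdpu}), together with the chain $\g(\CG)\le\gs(\CG)\le\rt(\CG)$ from Remark \ref{g<=gs<=rt}. By that remark it is enough to prove the single inequality $\rt(\CG)<\gr(\CG)$: once this is known, $\pc(\CG)\le 1/\gr(\CG)<1/\rt(\CG)\le 1/\g(\CG)\le\pu(\CG)$, and all quantities involved are positive (for instance $\pc(\CG)\ge 1/(k-1)>0$ by Remark \ref{remph123}), so this genuinely gives $\pc(\CG)<\pu(\CG)$.

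First I would feed the hypotheses into the tools of this section. Since $\Pi$ is compact and right-angled with $k\ge 18\ge 6$ faces, Lemma \ref{rhobasic} gives $\rt(\CG)\le 2\sqrt{3(k-3)}$, while Theorem \ref{gr} evaluates the growth rate exactly as $\gr(\CG)=\tfrac12\bigl(k-4+\sqrt{(k-4)^2-4}\bigr)$. Thus the whole statement reduces to the elementary inequality
$$2\sqrt{3(k-3)}<\frac{k-4+\sqrt{(k-4)^2-4}}{2}\qquad\text{for every }k\ge 18 .$$

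To check this I would replace the right-hand side by a convenient affine minorant. For $k\ge 7$ one has $(k-4)^2-4-(k-5)^2=2k-13\ge 0$, hence $\sqrt{(k-4)^2-4}\ge k-5$ and $\gr(\CG)\ge\tfrac12(2k-9)=k-\tfrac92$. It therefore suffices to prove $2\sqrt{3(k-3)}<k-\tfrac92$ for $k\ge 18$; both sides being non-negative there, we may square, obtaining the equivalent polynomial inequality $k^2-21k+\tfrac{225}{4}>0$, whose larger root is $\tfrac12\bigl(21+\sqrt{216}\bigr)<18$. Hence the inequality holds for all $k\ge 18$, and the theorem follows. I do not expect any genuine difficulty here; the only point worth noting is that this chain of estimates is essentially tight, in the sense that the displayed numerical inequality already fails at $k=17$ (there $2\sqrt{42}>\tfrac12(13+\sqrt{165})$), which is exactly the reason for the hypothesis $k\ge 18$ and the motivation for the finer arguments developed later in the chapter.
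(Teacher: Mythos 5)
Your proof is correct and follows essentially the same route as the paper: reduce to $\rt(\CG)<\gr(\CG)$ via Remark \ref{g<=gs<=rt}, then combine Lemma \ref{rhobasic} with Theorem \ref{gr}. The only difference is cosmetic, in the final elementary step: the paper checks the inequality at $k=18$ and compares derivatives of both sides, whereas you replace $\gr(\CG)$ by the affine minorant $k-\tfrac92$ and reduce to the quadratic inequality $k^2-21k+\tfrac{225}{4}>0$; both verifications are equally valid and equally short.
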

%\begin{thm*}[recalled theorem \ref{3phbasic}]
%If $\Pi$ is a compact right-angled polyhedron in $\HHH$ with $k\ge 18$ faces, then in the setting of Assumption \ref{PiG}, $\rt(\CG)<\gr(\CG)$, hence (see rem.~\ref{g<=gs<=rt})
%$$\pc(\CG)<\pu(\CG).$$
%\end{thm*}
\begin{proof}
First, note that, due to Remark \ref{g<=gs<=rt}, it is sufficient to show that $\rt(\CG)<\gr(\CG)$ to prove the theorem.
\par Let us put $b_1(k):=2\sqrt{3(k-3)}$ (the upper bound for $\rt(\CG)$) and $b_2(k):=\frac{1}{2}(k-4+\sqrt{(k-4)^2-4})$ (the formula for $\gr(\CG)$). It is sufficient to prove that for real $k\ge 18$,
\begin{equation}b_1(k)<b_2(k).\end{equation}
That will follow once shown for $k=18$, provided that inequality
\begin{equation}
\dk b_1(k) \le \dk b_2(k)
\end{equation}
is shown for $k\ge 18$.
\par For $k=18$, we have
$$b_1(k)=2\sqrt{45}<7+\sqrt{48}=b_2(k)$$
and for $k\ge 18$, we differentiate:
\begin{equation*}
\dk b_1(k)=\sqrt{\frac{3}{k-3}}\le 1
\end{equation*}
and
\begin{equation*}
\dk b_2(k) = \frac{1}{2} + \frac{1}{2}\frac{k-4}{\sqrt{(k-4)^2-4}}\ge 1 \ge \dk b_1(k), \label{gr'>=1},
\end{equation*}
which finishes the proof.
\end{proof}
%\section{The general case}\label{secgen}
\section{The case of a general polyhedron}\label{secgen}
Recall Assumption \ref{PiG}.
\begin{thm*}[recalled Theorem \ref{3phgen}]
For $\CG$ the Cayley graph of the reflection group of a Coxeter polyhedron in $\HHH$ (in the sense of Def.\ \ref{dfpolyh}) with $k\ge13$ faces, with the standard generating set, we have
$$\pc(\CG)<\pu(\CG)$$
for Bernoulli bond and site percolation on $\CG$.
\end{thm*}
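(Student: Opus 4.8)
The plan is to reduce the whole statement, via Remark~\ref{g<=gs<=rt} together with Theorems~\ref{bdpc} and~\ref{bdpu}, to the single geometric--combinatorial inequality $\gs(\CG)<\gr(\CG)$. Indeed $\pc(\CG)\le 1/\gr(\CG)$ while $\pu(\CG)\ge 1/\g(\CG)\ge 1/\gs(\CG)$, so once $\gs(\CG)<\gr(\CG)$ we obtain $\pc(\CG)<\pu(\CG)$ for the bond and the site model at once. Thus everything comes down to producing a good upper bound for $\gs(\CG)$ and a good lower bound for $\gr(\CG)$, each as an explicit function of the number of faces $k$, and comparing them for $k\ge 13$. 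Note that the geometric lemmas of Section~\ref{secbasic} are available here without extra hypotheses: a Coxeter polyhedron has all dihedral angles $\pi/m\le\pi/2$, hence non-obtuse, so Andreev's theorem and Propositions~\ref{ePi}, \ref{vPi}, \ref{v:r>=3}, \ref{r<=3} apply, and in particular $r(v)\le 3$ for every $v\in V(\CG)$.

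For the upper bound on $\gs(\CG)$ I would proceed in two stages. First, refine the Gabber estimate: using $r(v)\le 3$ and the finer structure of the numbers $q_i(v)$ (in particular geometric restrictions on how many edges can pass from a vertex to vertices with $r(\cdot)=3$), one can make a better, non-equal choice of the parameters $c_1,c_2,c_3$ in Corollary~\ref{Gabcor} than the symmetric choice of Lemma~\ref{rhobasic}. This yields Lemma~\ref{rhogen}: an upper bound on $\rt(\CG)$, equivalently on the spectral radius $\varrho(\CG)=\rt(\CG)/k$ (Remark~\ref{remrrt}), with better behaviour than $2\sqrt{3(k-3)}$. Second, feed $\varrho(\CG)$ into Theorem~\ref{thmgs} from the appendix, which bounds the growth rate of the number of simple cycles of a $k$-regular graph in terms of its spectral radius; since simple (indeed backtrack-free) closed walks are far scarcer than arbitrary closed walks, this produces a bound on $\gs(\CG)$ strictly below the one on $\rt(\CG)$, and it is this sharper quantity that gets compared with $\gr$.

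For the lower bound on $\gr(\CG)$ the exact formula of Theorem~\ref{gr} is unavailable (it needs $\Pi$ compact and right-angled), so I would invoke the general growth lower bound of Theorem~\ref{gr>=}, valid for the reflection group of any Coxeter polyhedron in $\HHH$ with at least $6$ faces. With the two explicit functions $k\mapsto(\text{upper bound on }\gs)$ and $k\mapsto(\text{lower bound on }\gr)$ in hand, I would finish exactly as in the proof of Theorem~\ref{3phbasic}: verify the inequality at $k=13$ by a direct numerical check, and then show that on $[13,\infty)$ the derivative in $k$ of the growth lower bound dominates the derivative of the cycle upper bound, so the gap only widens for larger $k$; since both sides are continuous in $k$ this settles all integers $k\ge 13$.

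The main obstacle is concentrated away from the soft percolation reduction and has two pressure points. The genuinely delicate part is the two-stage cycle estimate: getting Lemma~\ref{rhogen} sharp enough under only the weak hypothesis ``Coxeter'' (no right angles, no compactness), which forces a careful analysis of the admissible configurations of $q_i(v)$, and then not losing too much when passing through Theorem~\ref{thmgs}. The second point is that Theorem~\ref{gr>=} is necessarily weaker than the right-angled formula of Theorem~\ref{gr}, so the window in which the two estimates cross is narrower; pushing the threshold all the way down to $k=13$ is precisely where the final one-variable calculus must be done with some care, and where the optimization of the weights $\bar c$ in the Gabber step, rather than the equal choice of Lemma~\ref{rhobasic}, becomes essential.
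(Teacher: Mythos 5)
Your proposal matches the paper's proof of Theorem~\ref{3phgen} essentially step for step: reduce to $\gs(\CG)<\gr(\CG)$ via Remark~\ref{g<=gs<=rt}, bound $\rt(\CG)$ by $\frac{k+17}{3}$ using a non-symmetric weight $\bar c=(3,3,2)$ in Gabber's lemma together with the geometric bound on $q_3(v)$ (Lemma~\ref{q_3gen}), pass this through Theorem~\ref{thmgs} to get the $\gs$-bound, invoke Theorem~\ref{gr>=} for the growth lower bound, and finish by checking $k=13$ and comparing derivatives on $[13,\infty)$. This is correct and is the same route the paper takes.
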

%\begin{thm*}[recalled theorem \ref{3phgen}]
%If $k\ge13$, then for $\CG$ the Cayley graph of $(G,S)$ such as in Assumption \ref{PiG}, the inequality $\gs(\CG)<\gr(\CG)$ holds, hence also (see rem.~\ref{g<=gs<=rt})
%$$\pc(\CG)<\pu(\CG)$$
%for bond and site Bernoulli percolation on $\CG$.
%\end{thm*}
The tools which improve the result in Theorem \ref{3phbasic} to the above one (and also to Theorem \ref{3phRACpt}), are the upper bound for $\gs(\CG)$, along with a more appropriate upper bound for $\rt(\CG)$, and a lower bound for $\gr(\CG)$, stated below. (Along with the upper bound for $\gs(\CG)$, I establish the equality in the second part of the theorem below, but I do not use it in this \work.)
\begin{thm}\label{thmgs}
Let $\CG$ be an arbitrary regular graph of degree $k\ge 2$ (not necessarily simple) with distinguished vertex $o$ and let $\rt=\rt(\CG,o)$, $\gs=\gs(\CG,o)$. Then
\begin{equation}
\gs \le \frac{\rt+\sqrt{\rt^2-4(k-1)}}{2}.
\end{equation}
If, in addition, $\rt(\CG)>2\sqrt{k-1}$ (e.g.~when $\CG$ is vertex-transitive and simple %(? -- mo\.ze mo\.zna dopu\'sci\'c wielokrotne kraw\k{e}dzie?)
and is not a tree), then the estimate becomes an identity:
\begin{equation}
\gs = \frac{\rt+\sqrt{\rt^2-4(k-1)}}{2}.
\end{equation}
\end{thm}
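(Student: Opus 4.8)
The plan is to reduce everything to a single combinatorial identity/inequality relating cycles without backtracks ($a^*_n$) to cycles without any restriction ($C_n$), via a generating-function argument. Fix the vertex $o$ and write $A^*(z)=\sum_n a^*_n(\CG,o)z^n$ and $R(z)=\sum_n C_n(\CG,o)z^n$ for the corresponding generating functions; their radii of convergence are $1/\gs$ and $1/\rt$ respectively. The key step is to express $R$ in terms of $A^*$ (or vice versa) through a ``peeling off backtracks'' decomposition: any closed walk from $o$ to $o$ of length $n$ factors uniquely as a backtrack-free closed walk with ``decorations'' — at each vertex visited, one may insert an arbitrary closed walk that immediately backtracks its first step. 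This is the standard Ihara–Bass / non-backtracking-walk bookkeeping on a $k$-regular graph: inserting a backtracking excursion at a vertex contributes a factor controlled by the generating function for first-return-and-die walks, which on a $k$-regular graph has the universal shape $z^2/(\text{something})$ because after stepping out along an edge one has $k-1$ ways to continue (not $k$, since backtracking the last step is forbidden inside the excursion until the very end). Carrying this through yields a functional relation of the form $R(z) = \Phi\!\left(A^*(z), z\right)$ where $\Phi$ is an explicit algebraic (in fact rational) function, from which one reads off how the singularities of $R$ and $A^*$ are related.

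Concretely, I expect the relation to take the shape in which the substitution $z \mapsto \dfrac{z}{1+(k-1)z^2}$ converts the backtrack-free count into the full count, i.e. something like $A^*\!\left(\dfrac{z}{1+(k-1)z^2}\right)$ matches $R(z)$ up to elementary factors (this is exactly the substitution appearing in Ihara-type zeta identities for $k$-regular graphs). Granting such a relation, the radius of convergence $1/\rt$ of $R$ is the smallest $z>0$ at which either $\dfrac{z}{1+(k-1)z^2}$ hits the radius of convergence $1/\gs$ of $A^*$, or the map $z\mapsto \dfrac{z}{1+(k-1)z^2}$ itself becomes singular (its maximum over $z>0$, attained at $z=1/\sqrt{k-1}$ with value $\dfrac{1}{2\sqrt{k-1}}$). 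Solving $\dfrac{z}{1+(k-1)z^2}=\dfrac{1}{\gs}$ for the relevant root gives $z$ in terms of $\gs$ and $k$; setting that $z$ equal to $1/\rt$ and inverting produces precisely $\rt = \gs + \dfrac{k-1}{\gs}$, which rearranges to $\gs = \dfrac{\rt+\sqrt{\rt^2-4(k-1)}}{2}$. The inequality $\gs \le \dfrac{\rt+\sqrt{\rt^2-4(k-1)}}{2}$ in general (when $\rt \le 2\sqrt{k-1}$ the right side is to be read with the understanding $\rt^2-4(k-1)$ could be $\le 0$, but one only needs the bound) comes from the ``$\ge$'' direction of the comparison, i.e. from the fact that every backtrack-free cycle is in particular a cycle, suitably amplified through the generating-function substitution; equality requires that the singularity of $R$ is \emph{driven} by the singularity of $A^*$ rather than by the branch point of the substitution, which is exactly the hypothesis $\rt > 2\sqrt{k-1}$ (equivalently $1/\rt < 1/(2\sqrt{k-1})$, so the branch point at $z=1/\sqrt{k-1}$ is not reached first).

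For the parenthetical claim that a vertex-transitive simple graph of degree $k$ which is not a tree has $\rt > 2\sqrt{k-1}$: by Remark \ref{remrrt}, $\rt(\CG) = k\,\varrho(\CG)$ where $\varrho$ is the spectral radius of simple random walk, so the claim is $\varrho(\CG) > 2\sqrt{k-1}/k$, i.e. the spectral radius strictly exceeds the Kesten value for the $k$-regular tree. This is a known fact: the $k$-regular tree minimizes the spectral radius among $k$-regular graphs, with equality only for the tree itself (a transitive graph achieving the tree's spectral radius would, by the theory of spectral radius and amenability/covering, have to be the tree); I would cite this rather than reprove it, or give the short argument that a non-backtracking closed walk of some fixed length $\ell\ge 3$ exists (since the graph has a cycle and is not a tree), and transitivity lets one find such returns at a positive density of times, forcing $a^*_n$, hence $C_n$, to grow faster than the tree's $C_n \sim (2\sqrt{k-1})^n$.

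The main obstacle I anticipate is getting the backtrack-peeling decomposition exactly right — in particular making sure the ``insert a backtracking excursion'' operation is a genuine bijection (no double counting, correct treatment of the initial and final edges at $o$, and the subtlety, flagged in the paper's own Notations, that backtracks are not regarded cyclically) and that the resulting functional equation has precisely the rational form claimed, since an error there changes the constant $k-1$ and breaks the identity. Once the functional equation is pinned down, extracting the two displayed formulas is routine singularity analysis.
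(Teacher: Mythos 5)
Your approach coincides with the paper's at the conceptual level: both reduce the theorem to a generating-function identity relating $R(z)=\sum_n C_n z^n$ and $A^*(z)=\sum_n a^*_n z^n$, followed by singularity analysis, and both exploit the fact that backtracks can be ``peeled off'' to the universal cover. The paper does this cleanly by lifting every closed walk at $o$ to a walk in the $k$-regular tree $T_k$ ending at a cover of $o$, giving the exact convolution $C_n=\sum_{d=0}^n a^*_d\, c^{T_k}(n,d)$, and then substituting the explicit Green function $G(d,kz)=A(z)f(z)^d$ of simple random walk on $T_k$. This avoids the bookkeeping headaches you flag (no need to set up a backtrack-insertion bijection from scratch) and gives the identity $R(z)=A(z)\,A^*\!\bigl(f(z)\bigr)$ with $f(z)=\dfrac{2z}{1+\sqrt{1-4(k-1)z^2}}$.

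There is, however, a genuine direction error in your sketch. The map you name, $g(z)=\dfrac{z}{1+(k-1)z^2}$, is the \emph{inverse} of the paper's $f$; the correct relation is $R(z)=A(z)\,A^*\!\bigl(f(z)\bigr)$, or equivalently $A^*(w)$ matches $R\bigl(g(w)\bigr)$ up to the factor $A(g(w))$, \emph{not} ``$A^*\!\bigl(g(z)\bigr)$ matches $R(z)$'' as you wrote. With your stated relation the singularity analysis breaks down: since $g$ is bounded above by $\tfrac{1}{2\sqrt{k-1}}$ on $(0,\infty)$, if $\gs<2\sqrt{k-1}$ then $g(z)$ never reaches $1/\gs$, which would force $R$ to have infinite radius of convergence — absurd. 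Concretely, your singularity equation $\dfrac{z}{1+(k-1)z^2}=\dfrac{1}{\gs}$ with $z=1/\rt$ actually yields $\gs=\rt+\dfrac{k-1}{\rt}$, which contradicts $\gs\le\rt$ (Remark \ref{g<=gs<=rt}); the correct equation is $\dfrac{w}{1+(k-1)w^2}=\dfrac{1}{\rt}$ with $w=1/\gs$, equivalently $f(1/\rt)=1/\gs$, which does give $\rt=\gs+\dfrac{k-1}{\gs}$ and hence $\gs=\dfrac{\rt+\sqrt{\rt^2-4(k-1)}}{2}$. You arrive at the right formula, but only after an algebraic slip that silently undoes the earlier swap. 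Related to this: the unconditional inequality $\gs\le\dfrac{\rt+\sqrt{\rt^2-4(k-1)}}{2}$ is not a consequence of ``every backtrack-free cycle is a cycle'' (that only gives the weaker $\gs\le\rt$); it comes from the precise form $f(\r(C))=\min\bigl(\r(a^*),1/\sqrt{k-1}\bigr)\le\r(a^*)$, so you do need the whole functional equation, including the prefactor $A(z)$ whose branch point at $z=\tfrac{1}{2\sqrt{k-1}}$ is what produces the $\min$ — this is the case your sketch glosses over. Once the direction of the substitution is corrected and the branch-point case is handled, your plan matches the paper's proof.
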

The proof of the above theorem is deferred to \appsec\ \ref{appxgs}.
\begin{thm}\label{gr>=}
For $\CG$ the Cayley graph of the reflection group of a Coxeter polyhedron with $k$ faces in $\HHH$ with the standard generating set, if we assume that $k\ge 6$, then the growth rate of $\CG$
$$\gr(\CG)\ge\frac{k-4+\sqrt{(k-4)^2-4}}{2}.$$
\end{thm}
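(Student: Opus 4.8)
The plan is to obtain a lower bound for $\gr(\CG)$ by exhibiting, inside the Cayley graph $\CG$ of the reflection group $G$, a sufficiently large tree-like (or at least ``geometrically expanding'') subset of vertices whose cardinality grows at the desired rate. The natural comparison is with the right-angled case of Theorem \ref{gr}, where the growth rate is exactly $\tfrac12\big(k-4+\sqrt{(k-4)^2-4}\big)$: the heuristic is that making some dihedral angles smaller than $\pi/2$ (i.e.\ replacing some $m(s,t)=2$ by $m(s,t)\ge 3$, or even dropping the relation entirely when two faces are disjoint) only \emph{adds} group elements and hence can only increase the growth rate. So the target quantity should be a universal lower bound over all Coxeter polyhedra with $k$ faces. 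First I would recall that $\gr(\CG)$ equals the reciprocal of the radius of convergence of the growth series $W(z)=\sum_{g\in G}z^{l(g)}$ (Definitions \ref{dfgr}, \ref{grser}), so it suffices to bound below the exponential growth of $\#S(n)$.

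Next I would set up a combinatorial counting of geodesic words. Using Remark \ref{l>l} and the structure of $O(\CG)$, every vertex $v\neq o$ has $r(v)\ge 1$ incoming edges and, by Proposition \ref{r<=3}, at most $3$; the point is rather to count \emph{outgoing} edges in $O(\CG)$, i.e.\ ways to extend a geodesic. A vertex $v$ at level $n$ has $k-r(v)\ge k-3$ outgoing edges, but not all of them continue a geodesic ``efficiently'' — some pairs of them meet at a common neighbour at level $n+1$ (this happens precisely when the two corresponding generators $s,t$ satisfy $m(s,t)=2$, which by Proposition \ref{ePi} means the two faces neighbour with a right angle). The key step is to control how many such coincidences can occur: by Claim \ref{no4inL} the nerve has no $3$-simplex, so the ``coincidence graph'' on the $\le k$ faces is triangle-free, and by Proposition \ref{vPi}/\ref{v:r>=3} incoming edges at a vertex correspond to faces sharing a common vertex of $v\Pi$, of which there are at most $3$. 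I would feed this into a transfer-matrix / generating-function estimate: writing a recursion for (a lower bound on) $\#S(n)$ in terms of $\#S(n-1)$ and $\#S(n-2)$, with the subtracted ``backtracking/coincidence'' term bounded using the triangle-free structure and $r(\cdot)\le 3$, one gets a characteristic equation of the form $x^2-(k-4)x+1=0$ (or an inequality dominating it), whose larger root is exactly $\tfrac12\big(k-4+\sqrt{(k-4)^2-4}\big)$.

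I expect the main obstacle to be making the ``coincidence'' bookkeeping rigorous without over-counting: one must show that from each level-$n$ vertex the number of \emph{distinct} level-$(n+1)$ vertices reached is at least $k-4$ times (roughly) the number reached the previous step, uniformly in $v$, and this requires carefully separating the contribution of the $\le 3$ incoming edges (which ``come back'') from genuine new directions, while accounting for the at most one extra collision forced by a right-angled neighbour pair among the outgoing faces — here the triangle-freeness of the nerve (Claim \ref{no4inL}) is what prevents the collisions from compounding. An alternative, possibly cleaner route is to quote a monotonicity principle for growth series of Coxeter groups under relaxation of relations (the growth series is coefficient-wise non-decreasing when an exponent $m(s,t)$ is increased or a relation removed), and then invoke Theorem \ref{gr} applied to the right-angled Coxeter system on the same generating set $S$ whose nerve is obtained from the $1$-skeleton of the nerve of $(G,S)$; the subtlety there is ensuring such a right-angled system still corresponds to (or is dominated by) an honest polyhedron with $k$ faces, or else arguing purely group-theoretically that the resulting Coxeter group has growth rate at least $\tfrac12\big(k-4+\sqrt{(k-4)^2-4}\big)$ regardless of geometric realizability. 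Either way, the inequality $k\ge 6$ is exactly what is needed to keep the discriminant $(k-4)^2-4$ nonnegative and the recursion in the ``expanding'' regime.
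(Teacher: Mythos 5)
Your proposal contains the right heuristic direction in its second route (compare to the compact right-angled case and argue that the general case grows at least as fast), and you correctly flag the point where a real argument is needed. But there is a genuine gap, and the mechanism the paper uses is quite different from what you sketch.

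The ``monotonicity principle'' you invoke — that weakening or dropping Coxeter relations increases the growth series — does not come for free from a group homomorphism. If you replace a label $m(s,t)=2$ by some $m(s,t)=m\ge 3$ with $m$ odd, then $(st)^2=1$ fails in the larger group and $(st)^m=1$ fails in the smaller one, so there is no homomorphism between the two Coxeter groups sending $S$ to $S$ in either direction. Hence you cannot deduce a ball-by-ball inclusion, and you would need a genuinely different argument (a Tits-rewriting/normal-form analysis, or a citation to a nontrivial theorem) to justify the monotonicity; the paper does not go that way at all. Likewise your first (transfer-matrix) route never pins down the recursion, and the envisaged inequality $\#S(n+1)\ge(k-4)\#S(n)-\#S(n-1)$ is not established — controlling collisions purely by triangle-freeness of the nerve is not enough without the geometric facts the paper derives.

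What the paper actually does, in the proof deferred to the section on the growth-rate estimate, is analytic and works at the level of Steinberg's formula for $1/W(z)$ (Theorem~\ref{Steinb}). It writes $1/W$ as a signed sum of $1/W_T(t^{-1})$ over spherical $T\subseteq S$, embeds the nerve $L$ into $\mathbb{S}^2$ (using the polyhedron $\Pi$ itself), and then performs two face-by-face comparisons: Claim~\ref{clWsum} bounds each local contribution $A(f)$ of a face of the embedded $1$-skeleton by its right-angled counterpart $B(f)$ (using explicit growth polynomials of rank-$\le 3$ spherical Coxeter groups and elementary polynomial inequalities such as $[a-d,b+d]\le[a,b]$), and Claim~\ref{clsumWrb} then uses Euler's formula for the embedded $1$-skeleton to compare the resulting expression to the one for a flag triangulation of $\mathbb{S}^2$ with $k$ vertices, whose smallest positive root gives exactly $\tfrac12\bigl(k-4+\sqrt{(k-4)^2-4}\bigr)$. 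Degenerate nerves (no edges, or a single edge or triangle) are handled separately by exhibiting an embedded $(k-3)$-regular tree. Neither Steinberg's formula nor the $\mathbb{S}^2$-embedding/Euler-characteristic step appears in your proposal, and without some replacement for them the argument does not close.
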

(The assumptions here are just as in Assumption \ref{PiG}.) The proof is presented in \appsec\ \ref{appxgr}.
\begin{lem}\label{rhogen}
For $k$ and $\CG$ as in Assumption \ref{PiG}, we have%chyba zawsze - ?
$$\rt(\CG)\le\frac{k+17}{3}.$$
\end{lem}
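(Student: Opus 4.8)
The plan is to reuse the Gabber-type machinery from Subsection \ref{subsecGab}: by Corollary \ref{Gabcor} it suffices to choose parameters $c_1,c_2,c_3>0$ so that $f_v(\bar c)\le (k+17)/3$ for every vertex $v$ of $\CG$, where $f_v(\bar c)=r(v)c_{r(v)}+\sum_{i=1}^3 q_i(v)/c_i$ by \eqref{f(r,q)2}. Since the bound we are after, $(k+17)/3$, is genuinely smaller (asymptotically) than the crude $2\sqrt{3(k-3)}$ from Lemma \ref{rhobasic}, a uniform choice $c_1=c_2=c_3$ will not do; I expect to need $c_1,c_2,c_3$ distinct and tuned so that the worst-case vertex over all admissible values of $(r(v),q_1(v),q_2(v),q_3(v))$ is controlled. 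The first step is therefore to collect the combinatorial constraints on these quantities coming from the geometry of $\Pi$: we have $1\le r(v)\le 3$ for $v\neq o$ (Proposition \ref{r<=3}), $\sum_i q_i(v)=k-r(v)$ (every face of the tile $v\Pi$ other than the $r(v)$ ``incoming'' ones gives an edge leaving $v$), and — crucially — a constraint limiting how many neighbours of $v$ can themselves have $r(\cdot)=3$, since $r(w)=3$ forces three faces of $w\Pi$ to meet at a vertex (Proposition \ref{v:r>=3}), which is a strong local condition near $v$.

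The second step is to extract that last constraint explicitly. If $e=(v,w)\in O(\CG)$ and $r(w)=3$, then $w\Pi$ has three faces meeting at a common vertex $p$, one of which is $e\d$; the other two correspond to edges of $\CG$ that, together with $e$, ``surround'' $p$. Using Propositions \ref{ePi}, \ref{vPi} and the non-obtuse-angle structure (each vertex of $\Pi$ has exactly three faces), one bounds the number of such $w$ adjacent to a fixed $v$ in terms of the number of vertices of $\Pi$ lying on the faces through which edges leave $v$ — roughly, $q_3(v)$ is at most a constant times the number of vertices of the tile $v\Pi$ visible from the ``far side'', and in any case one gets an inequality of the shape $q_3(v)\le \alpha r(v)+\beta$ for small explicit constants, or more simply a bound on $q_3(v)$ independent of $k$. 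Plugging such a bound into \eqref{f(r,q)2} and taking $c_3$ large (so that the $q_3(v)/c_3$ term is negligible) while balancing $c_1,c_2$ against $k$ should yield the target. Concretely I anticipate something like $c_1=c_2$ of order $k$ making $r(v)c_{r(v)}$ of order $k$ for $r(v)\le 2$, handled against $\sum_{i=1}^2 q_i(v)/c_i\le (k-r(v))/c_1$ of order $1$, and the $r(v)=3$ case handled separately using that $r(v)=3$ is itself rare-ish; the constant $17$ and the divisor $3$ then come out of optimising the two or three resulting linear-in-$k$ expressions at their crossover.

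The main obstacle, as in Lemma \ref{rhobasic} but more delicate here, is the $r(v)=3$ case: the term $3c_3$ must be kept below $(k+17)/3$, which forces $c_3\le (k+17)/9$, yet the reciprocal terms $1/c_1,1/c_2$ appearing at a neighbour $w$ with small $r(w)$ want $c_1,c_2$ \emph{large}, and $c_3$ also appears as $q_3(v)/c_3$ which wants $c_3$ large — so the three constraints pull in different directions and the verification reduces to checking a small finite system of linear inequalities in $k$ holds for all $k\ge 13$. I would set up the $f_v$ bound as an explicit piecewise-linear function of $k$ over the finitely many combinatorial types of $v$, optimise the $c_i$ to minimise its maximum, and then confirm the resulting maximum is $\le (k+17)/3$ for $k\ge 13$ by checking the endpoint $k=13$ and a derivative (or slope) comparison, exactly mirroring the endpoint-plus-monotonicity argument used in the proof of Theorem \ref{3phbasic}.
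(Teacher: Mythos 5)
Your proposal correctly identifies the machinery (Corollary \ref{Gabcor} via the function $f_v(\bar c)$ from \eqref{f(r,q)2}) and correctly senses that the heart of the matter is bounding $q_3(v)$; but the details you sketch would not produce the stated bound, for two reasons.

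First, the parameter choice you anticipate --- $c_1=c_2$ of order $k$, with the $r(v)=3$ case ``handled separately using that $r(v)=3$ is itself rare-ish'' --- cannot work. Corollary \ref{Gabcor} requires a \emph{uniform} bound $f_v(\bar c)\le C_F$ over all $v$; rarity of a combinatorial type of $v$ is irrelevant, since $C_F$ must dominate every $f_v$. With $c_1\sim k$, already for $r(v)=2$ the term $r(v)c_{r(v)}=2c_2\sim 2k$ exceeds $(k+17)/3$. The paper's choice is the opposite: $c_1=c_2=3$ and $c_3=2$, fixed small constants. Then the dominant term in $f_v(\bar c)$ is $(q_1(v)+q_2(v))/3\approx k/3$, the term $r(v)c_{r(v)}$ contributes at most a small constant ($\le 6$), and the $q_3(v)/c_3$ term is controlled by the $q_3$-bound. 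The worst case is $r(v)=2$, giving exactly $(k+17)/3$.

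Second, you assert a bound on $q_3(v)$ (``at most a constant times the number of vertices of $v\Pi$ visible from the far side'') but do not establish it, and it is not of the right shape. The lemma in the paper (Lemma \ref{q_3gen}) gives $q_3(v)\le 0,2,3$ for $r(v)=1,2,3$ respectively, and the argument is specific: for an edge $e$ passing from $v$ to $v'$, one shows via a reflection $R_e$ in the plane of $e\d$ (Claim \ref{eN'}) that the set of edges passing to $v'$ is contained in $\{|e|\}\cup R_e(N)$, where $N$ is the set of edges passing to $v$ traversing faces neighbouring $e\d$. If $r(v')=3$ then by Proposition \ref{r<=3} the three corresponding faces share a vertex of $\Pi$, and reflecting back, $e\d$ together with two faces through which edges enter $v$ must share a vertex --- so $q_3(v)$ is bounded by the number of such shared vertices, which is $0$, $2$ or $3$ according to $r(v)$. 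Without this reflection argument there is no way to get the small absolute constants needed, and without them the lemma's bound fails. Finally, the endpoint-plus-slope-comparison you describe is used in the paper only to deduce $b_1(k)<b_2(k)$ in Theorem \ref{3phgen}; Lemma \ref{rhogen} itself holds for all $k$ (under Assumption \ref{PiG}) by a purely algebraic computation once $\bar c$ and the $q_3$-bounds are fixed.
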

\begin{proof}
In order to bound the sum \eqref{f(r,q)2} (and to use the Corollary \ref{Gabcor}), I put $c_1=c_2=3,c_3=2$ and I am going to bound $q_3(v)$.
\begin{lem}\label{q_3gen}
For $o\neq v\in V(\CG)$, we have
$$q_3(v)\le
\left\{\begin{array}{r@{\text{ if }}l}
0& r(v)=1\\
2& r(v)=2\\
3& r(v)=3
\end{array}\right.$$
\end{lem}
Before proving the lemma, some notions concerning oriented edges need to be introduced:
\begin{denot}
For $e\in\oE(\CG)$, let $|e|\in E(\CG)$ denote $e$ as unoriented edge and let us denote by $H(e)$ the half-space bounded by the plane of $e\d$, containing $e_-$.
\par For $e\in E(\CG)$ (or $e\in \oE(\CG)$), let $R_e$ be the reflection in the plane of $e\d$. Whenever I write $R_e(e')$ for some $e'=(v,w)\in\oE(\CG)$, I mean the oriented edge $(R_e(v),R_e(w))$.
\end{denot}
\begin{proof}
Let $e\in O(\CG)$ be an arbitrary edge passing from $v$ and $N$ be the set of unoriented edges passing to $v$ (when oriented according to $O(\CG)$) traversing faces of $\Pi$ neighbouring $e\d$:
\begin{equation}
N=\{|f|:f\in O(\CG), f_+=v, f\d\sim e\d\}.
\end{equation}
Call the vertex $e_+$ by $v'$ and let $N'=R_e(N)$ and note that $v'\Pi=R_e(v\Pi)$.
\begin{clm}\label{eN'}
All the edges passing to $v'$ belong (without orientations) to $\{|e|\}\cup N'$.
\end{clm}
\begin{proof}
The proof is by contraposition: for arbitrary $e'\in \oE(\CG)$ passing from $v'$ such that $|e'|\notin\{|e|\}\cup N'$, I show that $e'\in O(\CG)$. So, let $e'\neq \bar e$ be arbitrary edge from $\oE(\CG)$ passing from $v'$ (this orientation is assumed just for convenience), such that $|e'|\notin\{|e|\}\cup N'$. Then,
\begin{itemize}
\item if $e'\d$ neighbours $e\d$, then $R_e(e')\d\sim e\d$ as well and $|R_e(e')|\notin N$, so $|R_e(e')|$ passes from $v$, i.e.~$R_e(e')\in O(\CG)$, so
\begin{equation*}
H(e')\supseteq H(R_e(e'))\cap H(e)\ni o,
\end{equation*}
because $v'\Pi$ has only non-obtuse dihedral angles, so $e'\in O(\CG)$;
\item if $e'\d$ does not neighbour $e\d$, then from Proposition \ref{ePi} the planes containing them are disjoint, so $H(e')\supseteq H(e)\ni o$ and $e'\in O(\CG)$.
\end{itemize}
That shows that the edges passing to $v'$ all belong to $\{|e|\}\cup N'$ (without orientations).
\end{proof}
%\begin{rem}
%The above claim and its proof apply, as well, to the more general case of reflection groups in $\Hn$, $\mathbb{R}^n$ and ($\mathbb{S}^n$???) for $n\in\N$.
%\end{rem}
%From the claim we have $r(v')\le 1+\#N$.
Now, assume that $r(v')=3$. By the claim, two of the edges passing to $v'$ are in $N'$ and the third is $e$ and by Proposition \ref{r<=3} the corresponding faces of $v'\Pi$ share a common vertex. So do their reflections in $R_e$, which are $e\d$ and two of the faces corresponding to edges passing to $v$.
\par That means that:
\begin{itemize}
\item if $r(v)=1$, then there is no $e$ as above (i.e.~with $r(e_+)=3$) and $q_3(v)=0$;
\item if $r(v)=2$, then the two faces corresponding to edges passing to $v$ have to lie on intersecting planes, hence share one edge (from Proposition \ref{ePi}) and there are at most two possibilities for such a common vertex with $e\d$ as above (and hence for $e$), so $q_3(v)\le 2$;
\item if $r(v)=3$, then the faces corresponding to edges passing to $v$ share a common vertex and share at most three other vertices pairwise---the latter are the only possibilities for a common vertex with $e\d$ as above, so $q_3(v)\le 3$.
\end{itemize}
That finishes the proof of Lemma \ref{q_3gen}
\end{proof}
\begin{rem}
Consider vertex $o$. Each neighbour $v$ of $o$ has $r(v)=1$ (because going closer to $o$ from $v$ we must return to $o$). Hence, $q_1(o)=k$, $q_2(o)=q_3(o)=0$.
\end{rem}
Now, recall that $\bar c = (3,3,2)$. For $o\neq v\in V(\CG)$, because $q_1(v)+q_2(v)=k-r(v)-q_3(v)$, we have
\begin{align}
%\begin{array}{rl}\addtolength{\arraycolsep}{0.5ex}
\\
\textrm{if $r(v)=1$, then} &\quad f_v(\bar c)=3 + \frac{q_1(v)+q_2(v)}{3} + \frac{0}{2}=\frac{k+8}{3},\notag\\%\label{rhogenlinestim}
\textrm{if $r(v)=2$, then} &\quad f_v(\bar c)=2\cdot 3 + \frac{q_1(v)+q_2(v)}{3} + \frac{q_3(v)}{2}\le 6+\frac{k-4}{3}+\frac{2}{2}=\frac{k+17}{3},\notag\\
\textrm{if $r(v)=3$, then} &\quad f_v(\bar c)=3\cdot 2 + \frac{q_1(v)+q_2(v)}{3} + \frac{q_3(v)}{2}\le 6+\frac{k-6}{3}+\frac{3}{2}=\frac{k+16\frac{1}{2}}{3},\notag
%\end{array}
\end{align}
and for $v=o$, $f_o(\bar c)=\frac{k}{3}$, so we put $C_f=\frac{k+17}{3}$ in Corollary \ref{Gabcor} and obtain the Lemma \ref{rhogen}.
\end{proof}
%\subsection{The proof(?)}
\begin{proof}[Proof of Theorem \ref{3phgen}]
First of all, note that, due to Remark \ref{g<=gs<=rt}, it is sufficient to show that $\gs(\CG)<\gr(\CG)$ in order to prove the theorem.
\par The calculations are analogous to those in proof of Theorem \ref{3phbasic}. Theorems \ref{thmgs}, \ref{gr>=} and Lemma \ref{rhogen} give us the following bounds, which I denote by $b_1(k)$, $b_2(k)$, respectively:
\begin{align}
\gs(\CG) \le \frac{\frac{k+17}{3}+\sqrt{(\frac{k+17}{3})^2-4(k-1)}}{2} &= b_1(k)\\
\gr(\CG) \ge \frac{k-4+\sqrt{(k-4)^2-4}}{2} &= b_2(k),
\end{align}
hence it is sufficient to prove the inequality $b_1(k)<b_2(k)$ for $k\ge 13$.
Let us check it for $k=13$:
\begin{equation*}
\rt(\CG)\le 10,\quad\textrm{so}\quad
b_1(k) = 5+\sqrt{13} < \frac{9+\sqrt{77}}{2} = b_2(k).
\end{equation*}
and, again, check the inequality
\begin{equation}
\dk b_1(k) \le \dk b_2(k)
\end{equation}
for real $k\ge 13$. The right-hand side derivative is $\ge1$ by \eqref{gr'>=1} and the left-hand side:
\begin{equation*}
\dk\left(\frac{k+17+\sqrt{k^2-2k+325}}{6}\right) = \frac{1}{6}\left(1+\frac{2k-1}{2\sqrt{k^2-2k+325}}\right) \le \frac{1}{6}\left(1+\frac{2k-1}{2k-2}\right) < 1,
\end{equation*}
which finishes the proof.
\end{proof}
\section{The compact right-angled case}\label{secRACpt}
In this section I assume that $\Pi$ is right-angled and compact, hence $G$ is a right-angled Coxeter group acting cocompactly on $\HHH$.
\begin{exmp}\label{dodecah}
One of the simplest examples of such $\Pi$ is the right-angled regular dodecahedron. Its orbit under the action of $G$ is a regular tiling of $\HHH$.
\end{exmp}
\begin{thm*}[recalled Theorem \ref{3phRACpt}]
If $\Pi$ is a compact right-angled polyhedron in $\HHH$, then for $\CG$ the Cayley graph of the reflection group of $\Pi$ with the standard generating set, we have
$$\pc(\CG)<\pu(\CG)$$
for Bernoulli bond and site percolation on $\CG$.
\end{thm*}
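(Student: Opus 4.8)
The plan is to run the same scheme as in Sections~\ref{secbasic} and~\ref{secgen}: by Remark~\ref{g<=gs<=rt} it suffices to prove $\gs(\CG)<\gr(\CG)$.

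A preliminary combinatorial observation: a compact right-angled polyhedron $\Pi$ in $\HHH$ has at least $12$ faces. Indeed $\Pi$ is simple (a right-angled polyhedral angle is trihedral), and by Andreev's theorem no face of $\Pi$ is a triangle or a quadrilateral, so every face has at least $5$ edges. Writing $F=k$, $E$, $V$ for the numbers of faces, edges and vertices, simplicity gives $2E=3V$, counting edge-incidences around faces gives $2E\ge 5F$, and Euler's formula $V-E+F=2$ then forces $E=3F-6$ and hence $F\ge 12$; equality makes all faces pentagons, so by Andreev $\Pi$ is the right-angled dodecahedron of Example~\ref{dodecah}.

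For $k\ge 13$ there is nothing new to do: $\CG$ satisfies Assumption~\ref{PiG} with $k\ge 13$, so Theorem~\ref{3phgen}, whose proof yields $\gs(\CG)<\gr(\CG)$, applies verbatim. Hence the whole content of the theorem lies in the single case $k=12$, the reflection group of the right-angled dodecahedron. There the generic estimate $\rt(\CG)\le\frac{k+17}{3}$ of Lemma~\ref{rhogen}, inserted into Theorem~\ref{thmgs} (legitimate since $\CG$ is vertex-transitive, simple and not a tree, so $\rt(\CG)>2\sqrt{k-1}$ and the discriminant is positive) and compared with the \emph{exact} growth rate $\gr(\CG)=\tfrac12\bigl(k-4+\sqrt{(k-4)^2-4}\bigr)=4+\sqrt{15}$ from Theorem~\ref{gr}, falls just short of the needed strict inequality. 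The decisive step is therefore a sharper upper bound for $\rt(\CG)$ in the compact right-angled case, Lemma~\ref{rhoRACpt}: one again applies Gabber's lemma through Corollary~\ref{Gabcor} with $F(e)=c_{r(e_+)}$, but now the counts $r(v)$ and $q_i(v)$ are controlled more tightly, using that every dihedral angle of $\Pi$ is $\pi/2$ and that the nerve of $(G,S)$ is a flag triangulation of $\SS$ in which the link of a vertex is a cycle of length the degree of the corresponding face. Concretely, I would redo the analysis behind Lemma~\ref{q_3gen} under these rigidity constraints to obtain an improved bound on $q_3(v)$, hence on $f_v(\bar c)$, and then re-optimise the parameters $c_1,c_2,c_3$.

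Granting a bound $\rt(\CG)\le\beta$ with $\beta$ sufficiently small at $k=12$, the proof concludes exactly as for Theorems~\ref{3phbasic} and~\ref{3phgen}: feed $\beta$ into Theorem~\ref{thmgs} to bound $\gs(\CG)$ and check the numerical inequality $\tfrac12\bigl(\beta+\sqrt{\beta^2-4(k-1)}\bigr)<4+\sqrt{15}$ at $k=12$ — equivalently that $\beta<48-10\sqrt{15}$, a value that Lemma~\ref{rhogen} narrowly fails to reach. (If one prefers to handle all $k\ge 12$ uniformly, one adds, as in the earlier proofs, a comparison of derivatives in $k$; but only $k=12$ is genuinely new here.) The main obstacle is precisely this sharpened estimate for $\rt(\CG)$ at $k=12$: the margin over Lemma~\ref{rhogen} is thin, so the improved bound on $q_3(v)$ must be genuinely stronger, and it is exactly there that the right-angled hypothesis — via flagness of the nerve and Andreev's theorem — has to be used in an essential way.
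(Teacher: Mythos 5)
Your overall scheme matches the paper's proof exactly: reduce to $\gs(\CG)<\gr(\CG)$ via Remark~\ref{g<=gs<=rt}, observe $k\ge 12$, invoke Theorem~\ref{3phgen} for $k\ge 13$, and dispose of the single case $k=12$ (the right-angled dodecahedron) by a sharper bound on $\rt(\CG)$ fed through Theorem~\ref{thmgs} and compared with the exact growth rate $4+\sqrt{15}$ from Theorem~\ref{gr}. Your threshold computation $\beta<48-10\sqrt{15}\approx 9.27$ is correct, and so is the observation that $\frac{k+17}{3}=\frac{29}{3}\approx 9.67$ from Lemma~\ref{rhogen} just misses it; your Euler-formula argument for $k\ge 12$ is a valid alternative to the paper's parity argument.

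The genuine gap is that you never actually produce the sharpened estimate $\rt(\CG)\le\beta$; you say ``Granting a bound\dots'' and describe in general terms how one ought to redo Lemma~\ref{q_3gen}, but the bound is left undemonstrated, and the numerical margin is thin enough that this is precisely where the work lies. Moreover your sketch of how to get it is slightly off target: the paper's Lemma~\ref{rhoRACpt} does not proceed by improving the bound on $q_3(v)$ alone, but by bounding the sum $q_2(v)+q_3(v)$ uniformly. The mechanism is Lemma~\ref{Delta}, $\Delta\le\frac{k-1}{2}$ (proved by a direct face-count around a maximal face $F$: the faces $F, F'_1,\dots,F'_\Delta, F''_1,\dots,F''_\Delta$ are pairwise distinct), combined with Claim~\ref{eN'} to show that $q_2(v)+q_3(v)$ is at most the number of faces of $v\Pi$ neighbouring some face hit by an edge passing to $v$; this gives bounds $\le\frac{k-1}{2},\ k-5,\ k-3$ for $r(v)=1,2,3$ respectively. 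One then takes the \emph{asymmetric} weights $c_1=5,c_2=2,c_3=1$ (not the near-equal weights of Lemma~\ref{rhogen}) and finds $C_F=\frac{10k+62}{20}$, i.e.\ $\rt(\CG)\le\frac{k}{2}+3\tfrac{1}{10}=9.1$ at $k=12$, which clears your threshold. Your remark about the nerve being a flag triangulation with cyclic vertex links is true but plays no role in the paper's estimate; the flagness of the nerve is used only to justify Theorem~\ref{gr}, which you already invoke. So the strategy is right, but the decisive lemma is asserted rather than proved, and the hint you give about how to prove it would not quite reproduce the argument that works.
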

%\begin{thm*}[recalled Theorem \ref{3phRACpt}]
%If $\Pi$ is a compact right-angled polyhedron in $\HHH$, then in the setting of Assumption \ref{PiG} the inequality $\gs(\CG)<\gr(\CG)$ holds, hence also (see rem.~\ref{g<=gs<=rt})
%$$\pc(\CG)<\pu(\CG)$$
%for bond and site Bernoulli percolation on $\CG$.
%\end{thm*}
The proof is analogous to the proof of Theorem \ref{3phgen}, but I don't use Theorem \ref{gr>=} (rather Theorem \ref{gr}) and I use an additional fact:
\begin{lem}\label{Delta}
In the setting of the above theorem, $\Delta\le\frac{k-1}{2}$.
\end{lem}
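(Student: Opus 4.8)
\emph{Plan.} The plan is to phrase everything in terms of the nerve $L$ of $(G,S)$, which (as recalled above) is a flag triangulation of $\SS$ whose vertices are the faces of $\Pi$ — so $L$ has $k$ vertices — whose edges are the pairs of neighbouring faces, and whose triangles are the vertices of $\Pi$. Under this dictionary $\Delta$, the largest number of faces of $\Pi$ neighbouring a single face, is exactly the largest degree of a vertex of $L$. The decisive extra hypothesis — the one point where \emph{compactness} together with \emph{right-angledness} enters, beyond mere flagness — is the realisation part of Andreev's theorem for compact right-angled polyhedra (see \cite[Thm.~6.10.2 (ii)]{Dav} and \cite{Andr}): it prohibits a prismatic $4$-circuit, i.e.\ it guarantees that $L$ contains no $4$-cycle without a chord. (In particular $L$ has no vertex of degree $<5$, although below I only use $\Delta\ge 4$, which is already forced by flagness; cf.\ Claim \ref{no4inL}.) It thus suffices to prove $k\ge 2\Delta+1$.

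Pick a vertex $v$ of $L$ with $\deg v=\Delta$ and let $C=(u_1,\dots,u_\Delta)$ be its link, a $\Delta$-cycle in $L$; $C$ is an \emph{induced} cycle, because any chord $u_iu_j$ would together with $v$ span a triangle of $L$ and hence force $u_i,u_j$ to be consecutive on $C$. Now $C$ cuts the sphere $|L|=\SS$ into two discs; let $D$ be the one whose interior does not contain $v$, so that the interior of $D$ contains precisely the remaining $m:=k-1-\Delta$ vertices of $L$. I count the edges of $L$ from a vertex of $C$ to a vertex in $\mathrm{int}\,D$ in two ways. First, each $u_i$ has at least two neighbours in $\mathrm{int}\,D$: the triangles of $D$ at $u_i$ form a fan between the boundary edges $u_{i-1}u_i$ and $u_iu_{i+1}$, and if $u_i$ had at most one interior neighbour, then either this fan is the single triangle $u_{i-1}u_iu_{i+1}$, a chord of $C$, or it consists of two triangles $u_{i-1}u_ix$, $xu_iu_{i+1}$, so that the unique interior neighbour $x$ is adjacent to both $u_{i-1}$ and $u_{i+1}$ and $v,u_{i-1},x,u_{i+1}$ is a chordless $4$-cycle (indeed $v\not\sim x$, since $\mathrm{lk}\,v=C$ and $x\notin C$, and $u_{i-1}\not\sim u_{i+1}$, since $C$ is induced and $\Delta\ge 4$) — contradicting Andreev's theorem. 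Hence there are at least $2\Delta$ such edges. Second, each $x\in\mathrm{int}\,D$ has at most two neighbours on $C$: three neighbours would contain two non-consecutive ones $u_i,u_j$ (three pairwise-consecutive vertices of a $\Delta$-cycle with $\Delta\ge 4$ do not exist), and then $v,u_i,x,u_j$ would be a chordless $4$-cycle, again impossible. Hence there are at most $2m$ such edges. Comparing the two counts, $2\Delta\le 2m$, so $m\ge\Delta$ and $k=1+\Delta+m\ge 2\Delta+1$, that is, $\Delta\le\frac{k-1}{2}$.

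The only real obstacle is locating and invoking the correct form of Andreev's theorem: flagness of $L$ by itself is too weak (a bipyramid-type triangulation — the nerve of a prism — is flag yet violates the bound), and it is exactly the absence of a prismatic $4$-circuit that excludes the offending configurations appearing in the two counts above. Everything else is the elementary double count just described.
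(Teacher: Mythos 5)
Your proof is correct but takes a genuinely different route from the paper's. The paper argues directly on the polyhedron: for a face $F$ with $\Delta$ sides it exhibits $2\Delta$ further faces --- the $\Delta$ neighbours $F'_i$ (one per edge of $F$) and $\Delta$ additional faces $F''_i$, where $F''_i$ is the face of $\Pi$ perpendicular to, and attached at the far end of, the edge of $\Pi$ that exits $F$ at its $i$-th vertex --- and then shows by elementary hyperbolic geometry (a closest-point-on-a-plane argument to separate the $F''_i$, and disjointness from $F$ to separate the two families) that all $2\Delta+1$ faces are pairwise distinct. You instead work entirely inside the nerve $L$, double-counting edges between the link $C$ of a maximum-degree vertex and the interior of the disc on the far side of $C$, with both bounds ($\ge 2$ interior neighbours per vertex of $C$, $\le 2$ $C$-neighbours per interior vertex) driven by the chordless-$4$-cycle exclusion. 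The inputs differ: the paper's argument needs only the geometry already assembled around Propositions \ref{ePi}--\ref{r<=3} together with the trihedral vertex structure, whereas you invoke the prismatic-$4$-circuit clause of Andreev's theorem, which the chapter never states explicitly (only the $3$-circuit/flag clause is established there, via Claim \ref{no4inL} and the flag-triangulation claim). What your version buys is a purely combinatorial proof that isolates exactly which hypothesis on $L$ makes the bound work --- as you note, flagness alone is insufficient (the octahedral nerve of a cube is flag but has $\Delta=4>5/2$) --- at the cost of importing an extra piece of Andreev's theorem from outside the chapter.
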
 
\begin{proof}
Take any face $F$ of $\Pi$ with $\Delta$ sides. Each edge of $F$ belongs to unique face $F'_i$ other than $F$, where $i=1,\ldots,\Delta$ numerates these edges. There are also $\Delta$ edges of $\Pi$ perpendicular to $F$, incident to its vertices. Outside $F$, at the ends of those edges there are attached faces $F''_i,i=1,\ldots,\Delta$ perpendicular to those edges, respectively.
\par Now:
\begin{itemize}
\item the faces $F,F'_1,\ldots,F'_\Delta$ are pairwise distinct because no two edges of $F$ lie on a common line;
\item the faces $F''_i,i=1,\ldots,\Delta$ are pairwise distinct because each of the planes containing them determines uniquely the closest point in the plane of $F$ (disjoint from them) and those points---the vertices of $F$---are pairwise distinct;
\item any face $F''_i,i=1,\ldots,\Delta$ is distinct from any face $F,F'_1,\ldots,F'_\Delta$ because the former are disjoint from $F$ and the latter are not.
\end{itemize}
So all the faces $F,F'_1,\ldots,F'_\Delta,F''_1\ldots,F''_\Delta$ are pairwise distinct, which shows that $k\ge2\Delta+1$ and finishes the proof of the lemma.
\end{proof}
\begin{rem}\label{k>=12}
In the setting of Theorem \ref{3phRACpt}, $k\ge 12$. To see that, note first that $k\ge 11$ because a face of $\Pi$, which is right-angled, must have at least $5$ sides, so in Lemma \ref{Delta} $\Delta\ge 5$. Now, suppose that $k=11$. Then from Lemma \ref{Delta} $\Delta=5$, so all faces of $\Pi$ are pentagons. Since $\Pi$ is compact, we obtain twice the number of its edges counting all sides of each of the faces. That gives $5k$, so $k$ must be even, a contradiction.
\end{rem}
\begin{lem}\label{rhoRACpt}
In the setting of Theorem \ref{3phRACpt}, we have
$$\rt(\CG)\le \frac{k}{2}+3\frac{1}{10}.$$
\end{lem}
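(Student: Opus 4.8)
The plan is to follow the proof of Lemma~\ref{rhogen} almost verbatim, changing only the parameters and one of the inputs. I apply Gabber's lemma through Corollary~\ref{Gabcor} with $F(e)=c_{r(e_+)}$ for $e\in O(\CG)$ and $F(\bar e)=1/c_{r(e_+)}$, where $c_1,c_2,c_3>0$ are to be chosen. As in \eqref{f(r,q)1}--\eqref{f(r,q)2}, using $r(v)\le 3$ from Proposition~\ref{r<=3} this reduces the statement to producing $c_1,c_2,c_3$ and a constant $C_F\le\frac k2+3\frac1{10}$ with $f_v(\bar c)=r(v)c_{r(v)}+\sum_{i=1}^3 q_i(v)/c_i\le C_F$ for every vertex $v$.

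The right-angled hypothesis forces $\Pi$, hence every tile $v\Pi$, to be simple (all vertices trihedral), which yields sharper estimates for the $q_i(v)$ than in the general case, and that is what lets the constant drop. Besides the bounds $q_3(v)=0$, $q_3(v)\le 2$, $q_3(v)\le 3$ for $r(v)=1,2,3$ supplied directly by Lemma~\ref{q_3gen}, I would prove two extra facts by short combinatorics on $v\Pi$. First, if $r(v)=1$ and $F_0$ is the unique face of $v\Pi$ whose plane separates $v\Pi$ from $o$, then by Claim~\ref{eN'} any edge passing from $v$ to a vertex with $r(\cdot)=2$ must traverse a face neighbouring $F_0$, so $q_2(v)\le\deg(F_0)\le\Delta$. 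Second, if $r(v)=2$ with $F_1,F_2$ the two such faces, then $F_1\sim F_2$ — the faces of the convex tile $v\Pi$ whose planes separate it from the external point $o$ form a connected patch on its boundary, so if there are only two of them they share an edge — and, $\Pi$ being simple, the two faces of $v\Pi$ other than $F_1,F_2$ that contain an endpoint of the edge $F_1\cap F_2$ are distinct and each neighbours both $F_1$ and $F_2$; hence at least two faces neighbour both $F_1$ and $F_2$, and since by Claim~\ref{eN'} every face neighbouring neither $F_1$ nor $F_2$ gives an edge to a vertex with $r(\cdot)=1$, one gets $q_1(v)\ge k+2-\deg(F_1)-\deg(F_2)\ge k+2-2\Delta\ge 3$, the last step by Lemma~\ref{Delta} and $k\ge 12$ (Remark~\ref{k>=12}).

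Taking $c_1=4$, $c_2=2$, $c_3=\frac65$ I would then run the four-way case analysis. For $r(v)=0$ one has $f_o=k/4$, and for $r(v)=1$ the estimate $q_2(v)\le\Delta\le\frac{k-1}2$ gives $f_v\le 4+\frac{3(k-1)}8$; both are below $\frac k2+3\frac1{10}$ for all $k\ge 12$. For $r(v)=2$, maximising $f_v=4+\frac{q_1}4+\frac{q_2}2+\frac{5q_3}6$ over $q_1+q_2+q_3=k-2$ subject to $q_3\le 2$ and $q_1\ge 3$ (so $q_2\le k-7$) gives $f_v\le\frac k2+2\frac{11}{12}$. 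For $r(v)=3$ the only input needed is $q_3(v)\le 3$: maximising $f_v=\frac{18}5+\frac{q_1}4+\frac{q_2}2+\frac{5q_3}6$ over $q_1+q_2+q_3=k-3$ puts all available weight on the largest coefficients $\frac56$ of $q_3$ and $\frac12$ of $q_2$, i.e.\ $q_3=3$, $q_2=k-6$, $q_1=0$, yielding $f_v\le\frac{18}5+\frac{k-6}2+\frac52=\frac k2+3\frac1{10}$ — exactly the asserted bound, and the binding case; the choice $c_3=\frac65$ is precisely what makes $3c_3+\frac3{c_3}=\frac{61}{10}$, while $c_1=4$ and $c_2=2$ are just large enough to keep the $r(v)=1,2$ cases under this value. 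The part that needs the most care is the geometric assertion in the second extra fact — that the two ``hidden'' faces $F_1,F_2$ are neighbours and that their neighbourhoods overlap in at least two faces — which is exactly where the right-angledness is used, via simplicity of $\Pi$ together with Propositions~\ref{ePi} and~\ref{vPi}; everything downstream is the short optimisation just described.
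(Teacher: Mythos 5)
Your proof is correct and follows essentially the same route as the paper's: both apply Corollary \ref{Gabcor} with a case split on $r(v)$, using Lemma \ref{q_3gen} together with the right-angledness of $\Pi$ (via Lemma \ref{Delta}, Claim \ref{eN'}, and simplicity) to obtain $q_3\le 0,2,3$ and $q_2(v)+q_3(v)\le\Delta,\,2\Delta-4$ for $r(v)=1,2$. The only difference is your choice of Gabber weights $(c_1,c_2,c_3)=(4,2,\tfrac{6}{5})$ in place of the paper's $(5,2,1)$, which makes $r(v)=3$ the binding case rather than $r(v)=2$, but both choices land exactly on $\frac{k}{2}+3\frac{1}{10}$.
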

\begin{proof}
Note that every vertex of $v\Pi$ is adjacent to exactly $3$ faces of $v\Pi$ because a polyhedral angle with all dihedral angles right must be trihedral. For $o\neq v\in V(\CG)$, consider the faces of $v\Pi$ traversed by edges passing to $v$. Then, from Lemma \ref{Delta}, the number of other faces of $v\Pi$ neighbouring them is at most
\begin{equation}
\left\{
\begin{array}{l@{\text{ if }}l}
\Delta\le\frac{k-1}{2}& r(v)=1\\
2+2(\Delta-3)=2\Delta-4\le k-5& r(v)=2\\
\min(3+3(\Delta-4),k-r(v))\le k-3& r(v)=3\\
\end{array}\right.
\end{equation}
and so is $q_2(v)+q_3(v)$ (because of Claim \ref{eN'}---as in proof of Lemma \ref{q_3gen}).
Now, let $c_1=5,c_2=2,c_3=1$. For $o\neq v\in V(\CG)$, basing on all that and on Lemma \ref{q_3gen} itself, I estimate (similarly to proof of Lemma \ref{rhogen}; note that below, taking first the smallest possible value for $q_1(v)=k-r(v)-q_2(v)-q_3(v)$, then the smallest possible value for $q_2(v)$, indeed gives the upper bounds because $c_1>c_2>c_3$):
\begin{align}
\\
\textrm{if $r(v)=1$, then} &\quad f_v(\bar c)=5 + \frac{q_1(v)}{5} +  \frac{q_2(v)}{2} + 0 \le 5 + \frac{\frac{k-1}{2}}{5} +  \frac{\frac{k-1}{2}}{2} = \frac{7k+93}{20},\notag\\
\textrm{if $r(v)=2$, then} &\quad f_v(\bar c)=2\cdot 2 + \frac{q_1(v)}{5} +  \frac{q_2(v)}{2} + q_3(v) \le 4+\frac{3}{5}+\frac{k-7}{2}+2 = \frac{10k+62}{20},\notag\\
\textrm{if $r(v)=3$, then} &\quad f_v(\bar c)=3\cdot 1 + \frac{q_1(v)}{5} +  \frac{q_2(v)}{2} + q_3(v)\le 3+\frac{k-6}{2}+3 = \frac{10k+60}{20}\notag
\end{align}
and for $v=o$, $f_o(\bar c)=\frac{k}{5}=\frac{4k}{20}$, so we put $C_f=\frac{10k+62}{20}$ in Corollary \ref{Gabcor}, as it is the largest of the bounds above (because $k\ge 12$, due to Remark~\ref{k>=12}), and we obtain the lemma.
\end{proof}
\begin{proof}[Proof of the Theorem \ref{3phRACpt}]
First, note that, due to Remark \ref{g<=gs<=rt}, it is sufficient to show that $\gs(\CG)<\gr(\CG)$ to prove the theorem.
\par Because the conclusion is shown for $k>12$ in general case in Theorem \ref{3phgen}, it is sufficient to show it for $k=12$ (because of Remark \ref{k>=12}).
So I calculate:
\begin{equation*}
\rt(\CG) \le 9\frac{1}{10},\quad\textrm{so}\quad
\gs(\CG) \le \frac{91+\sqrt{3881}}{20} < 4+\sqrt{15} = \gr(\CG).
\end{equation*}
which shows the desired inequality.
\end{proof}
\section{Effectiveness of different approaches}\label{secrems}
Recall Assumption \ref{PiG}.
\par In the following table I give the conditions (lower bounds) for $k$, necessary and sufficient for obtaining the inequality $\pc(\CG)<\pu(\CG)$ by means of Lemma \ref{rhobasic}, \ref{rhogen} and \ref{rhoRACpt}, respectively, and using the value $\gs(\CG)$ or only $\rt(\CG)$, respectively.
\begin{center}
\begin{tabular}{|l||c|c|}
\hline
&Using $\rt$&Using $\gs$\\\hline\hline
Using lem.~\ref{rhobasic}&$k\ge 18$&$k\ge 15$\\\hline
Using lem.~\ref{rhogen}&$k\ge 15$&$k\ge 13$\\\hline
Using lem.~\ref{rhoRACpt}&$k\ge 15$&$k\ge 12$\\\hline
\end{tabular}
\end{center}
One can see that in the above cases the use of $\gs$ improves the bound on $k$ by $2$ or $3$. A similar effect has using Lemma \ref{rhogen} instead of \ref{rhobasic} (both valid for the general case from Assumption \ref{PiG}). Using Lemma \ref{rhoRACpt} (proved here only for compact right-angled $\Pi$) plays a role only for $k=12$ when using $\gs$ (which covers the case of Example \ref{dodecah}).
\par It is worth noting that the methods used in this \partw{} do not give inequality $\pc(\CG)<\pu(\CG)$ for some regular tilings of $\HH$, e.g.~with right-angled regular pentagons.
%\appendix
\section{Estimate for $\gs$---an enhancement of spectral radius}\label{appxgs}
Below, I am going to prove the relations between $\gs$ and $\rt$ in the Theorem \ref{thmgs}:
\begin{thm*}[recalled Theorem \ref{thmgs}]
Let $\CG$ be an arbitrary regular graph of degree $k\ge 2$ (not necessarily simple) with distinguished vertex $o$ and let $\rt=\rt(\CG,o)$, $\gs=\gs(\CG,o)$. Then
\begin{equation}
\gs \le \frac{\rt+\sqrt{\rt^2-4(k-1)}}{2}.
\end{equation}
If, in addition, $\rt(\CG)>2\sqrt{k-1}$ (e.g. when $\CG$ is vertex-transitive and simple and is not a tree), then the estimate becomes an identity:
\begin{equation}
\gs = \frac{\rt+\sqrt{\rt^2-4(k-1)}}{2}.
\end{equation}
\end{thm*}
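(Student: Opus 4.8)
The plan is to reduce the whole statement to one generating-function identity between the sequences $(C_n)_n$ and $(a^*_n)_n$ and then read off both the inequality and the equality from the way that identity ties the two radii of convergence together. Put $C(z)=\sum_{n\ge0}C_n(\CG,o)\,z^n$ and $A(w)=\sum_{n\ge0}a^*_n(\CG,o)\,w^n$. Since $0\le C_n\le k^n$ and $0\le a^*_n\le k(k-1)^{n-1}$, these are honest power series, and by the Cauchy--Hadamard formula their radii of convergence are $1/\rt$ and $1/\gs$ respectively (with the convention that $\gs=0$ corresponds to radius $+\infty$; then the claimed bound reads $0\le\sqrt{k-1}$ and there is nothing to prove, so I assume $\gs>0$). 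I also record the universal bound $\rt\ge 2\sqrt{k-1}$: the covering projection $T_k\to\CG$, where $T_k$ is the $k$-regular tree (the universal cover of $\CG$), maps the closed walks of length $n$ based at a fixed lift of $o$ injectively into the closed walks of length $n$ based at $o$ in $\CG$, so $C_n(\CG,o)\ge C_n(T_k,o)$, and $\limsup_n C_n(T_k,o)^{1/n}=2\sqrt{k-1}$. In particular $\rt^2-4(k-1)\ge0$, so the quantity in the statement is real.

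The combinatorial heart of the argument is the identity
\begin{equation}\label{mastergs}
A(w)=\frac{1-w^2}{1+(k-1)w^2}\;C\!\left(\frac{w}{1+(k-1)w^2}\right),
\end{equation}
valid for every $k$-regular graph (with the conventions of Remark~\ref{pathsinnonsmpl} in the non-simple case). I would obtain \eqref{mastergs} from the path-counting identities of \cite{Bart}: every closed walk arises from a unique non-backtracking closed walk by inserting backtracks, and expanding by inclusion--exclusion over such insertions --- equivalently, through a resolvent identity for the non-backtracking walk operator on the directed edges of $\CG$ --- produces precisely the argument-rescaling $w\mapsto\sigma(w):=w/(1+(k-1)w^2)$, in which the exponent $k-1$ is the number of non-backtracking continuations at a vertex of a $k$-regular graph, together with the rational prefactor $(1-w^2)/(1+(k-1)w^2)$. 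As a sanity check, on $T_k$ we have $A\equiv1$ and $C(\sigma(w))=(1+(k-1)w^2)/(1-w^2)$, so the right-hand side of \eqref{mastergs} collapses to $1$; and comparing the coefficients of $w^3$ on a simple graph recovers $a^*_3=C_3$, i.e.\ the fact that every closed walk of length $3$ is a non-backtracking triangle. Making the insertion/inclusion--exclusion bookkeeping precise in the stated generality is the step I expect to be the main obstacle; everything after \eqref{mastergs} is routine analytic combinatorics.

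To extract the theorem, invert the rescaling: solving $z=\sigma(w)$ with $w\to0$ as $z\to0$ gives $w=w(z):=\bigl(1-\sqrt{1-4(k-1)z^2}\bigr)/\bigl(2(k-1)z\bigr)$, an increasing real-analytic bijection of $[0,1/(2\sqrt{k-1})]$ onto $[0,1/\sqrt{k-1}]$, inverse to $\sigma$ there. All the power series involved have nonnegative coefficients, so by Pringsheim's theorem I may transfer radii of convergence through $\sigma$ and $w$. Because $\rt\ge2\sqrt{k-1}$, the equation $\sigma(w)=1/\rt$ has a solution $w(1/\rt)\in(0,1/\sqrt{k-1}]$ on the increasing branch of $\sigma$, and for complex $w$ with $|w|<w(1/\rt)$ the right-hand side of \eqref{mastergs} is analytic (the prefactor is analytic for $|w|<1/\sqrt{k-1}$ and $|\sigma(w)|\le\sigma(|w|)<1/\rt$); hence the radius of convergence of $A$ is at least $w(1/\rt)$, i.e.
$$\frac1{\gs}\ \ge\ w\!\left(\tfrac1{\rt}\right)\ =\ \frac{\rt-\sqrt{\rt^2-4(k-1)}}{2(k-1)},$$
which, after rationalising the right-hand side, is exactly $\gs\le\tfrac12\bigl(\rt+\sqrt{\rt^2-4(k-1)}\bigr)$.

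For the equality assume $\rt>2\sqrt{k-1}$, so that $1/\rt<1/(2\sqrt{k-1})$ and hence $w(1/\rt)<1/\sqrt{k-1}$; there $\sigma$ is locally biholomorphic (its derivative vanishes only at $1/\sqrt{k-1}$), and since $C$ has a genuine singularity at its radius $1/\rt$ (Pringsheim again) the composition $C\circ\sigma$, and thus the whole right-hand side of \eqref{mastergs}, has a genuine singularity at $w(1/\rt)$ while remaining analytic on $|w|<w(1/\rt)$. Therefore the radius of convergence of $A$ equals $w(1/\rt)$ exactly, i.e.\ $1/\gs=w(1/\rt)$, which is the asserted identity $\gs=\tfrac12\bigl(\rt+\sqrt{\rt^2-4(k-1)}\bigr)$ (and in passing $\gs>\sqrt{k-1}$). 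Finally, the sufficient condition quoted for this hypothesis --- $\CG$ vertex-transitive, simple and not a tree --- does give $\rt>2\sqrt{k-1}$: a $k$-regular vertex-transitive graph other than $T_k$ has $\varrho(\CG)>2\sqrt{k-1}/k$ (Kesten's theorem on the spectral radius), and $\rt=k\,\varrho(\CG)$ by Remark~\ref{remrrt}.
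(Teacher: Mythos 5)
Your master identity
$$A^*(w)=\frac{1-w^2}{1+(k-1)w^2}\,C\!\left(\frac{w}{1+(k-1)w^2}\right)$$
is correct and is precisely the paper's key formula in disguise. With the paper's $f(z)=\frac{1-\sqrt{1-4(k-1)z^2}}{2(k-1)z}$ one has $f=\sigma^{-1}$ on the relevant interval and, after a short computation, $\frac{1-w^2}{1+(k-1)w^2}\big|_{w=f(z)}=1/A(z)$ where $A(z)=\frac{2(k-1)}{k-2+k\sqrt{1-4(k-1)z^2}}$; so your identity is exactly $\F(C)(z)=A(z)\,\F(a^*)(f(z))$, and your analytic endgame (transfer of radii, Pringsheim for the forced singularity under $\rt>2\sqrt{k-1}$) is the paper's as well. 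The trouble is that you state the identity without proving it: you explicitly concede the ``insertion/inclusion--exclusion bookkeeping'' as ``the main obstacle'' and defer to \cite{Bart}, which is the Bartholdi--Cantat--Ceccherini-Silberstein--de la Harpe paper on spectral-radius estimates (the source the paper uses for Gabber's lemma), not a source for the non-backtracking-walk identity. Since that identity carries the entire content of the theorem, this is a genuine gap. The paper closes it cleanly by lifting to the universal cover $T_k$: a closed walk of length $n$ at $o$ lifts uniquely to a path in $T_k$ from $\tilde o$ to a lift $\tilde o'$ of $o$, the number of such $\tilde o'$ at distance $d$ from $\tilde o$ is $a^*_d$, whence $C_n=\sum_d a^*_d\,c^{T_k}(n,d)$, and the Green-function formula for $T_k$ (Woess, Lemma~1.24) supplies the factor $A(z)f(z)^d$ in one step with no bookkeeping at all. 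That is the rigorous version of your ``insert backtracks'' sketch and you should complete it rather than leave it to a citation.

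Separately, the analytic transfer contains an error that happens not to be fatal. You claim $|\sigma(w)|\le\sigma(|w|)$ for complex $w$ with $|w|<1/\sqrt{k-1}$; this is false (for $w=it$ one has $|\sigma(it)|=t/(1-(k-1)t^2)>\sigma(t)$), and in fact $\sigma$ does not map the disc $|w|<w(1/\rt)$ into the disc $|z|<1/\rt$, so the right-hand side of your identity is not analytic on that full disc. What saves you is that $A^*$ has nonnegative coefficients, so by Pringsheim its dominant singularity is on the positive real ray; on $[0,w(1/\rt))$ the map $\sigma$ is a genuine increasing bijection onto $[0,1/\rt)$, so the right-hand side is holomorphic on a complex neighbourhood of that real segment, which is exactly what Pringsheim's contrapositive needs. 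You should argue along the real ray and invoke Pringsheim directly instead of the false disc estimate. The paper sidesteps the issue entirely by reading the identity in the direction $\F(C)(r)=A(r)\,\F(a^*)(f(r))$ for real $r\ge0$: there $C$, $A$ and $f$ all have nonnegative Taylor coefficients, so every step is a rearrangement of nonnegative terms (Tonelli), and the radii of convergence are compared by evaluating only at positive reals, with no complex analysis required.
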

%\subsection{A formula involving $C_n$ and $a^*_n$}
\begin{proof}
Recall Notations \ref{Cas}.
The following proposition is basic for obtaining the estimate for $\gs$.
\begin{prop}
Let $\CG$ be any (undirected) regular graph of degree $k\ge 2$ (not necessarily simple). Let us choose its vertex $o$ for being its origin. Consider the values $C_n(\CG,o)$ and $a^*_n(\CG,o)$---let us call them $C_n$, $a^*_n$, respectively, for short. Then
\begin{equation*}
C_n = \sum_{d=0}^n a^*_d c^{T_k}(n,d),
\end{equation*}
where for any natural $n$, $c^{T_k}(n,d)$ is the number of paths of length $n$ in $T_k$ joining two points with distance $d$ between them (note that $c^{T_k}(n,d)$ is well-defined).
\end{prop}
\begin{proof}
Let us consider the universal cover%\footnote{Universal cover of a graph $\CG$ (not necessarily simple) is a graph $\tilde \CG$ with mapping $p:\tilde \CG\to \CG$, i.e.~$p$ maps vertices and edges of $\tilde\CG$ to vertices and edges of $\CG$, respectively, such that for any incident vertex $v$ and edge $e$ of $\tilde\CG$, $p(v)$ and $p(e)$ are also incident, and for any $v\in V(\CG)$ and $\tilde v\in V(\tilde\CG)$ with $p(\tilde v)=v$, the set of edges incident with $\tilde v$ maps bijectively to the set of edges incident with $v$.}%
 of $\CG$, which is the infinite $k$-regular tree $T_k$ (with origin being a vertex $\ot$ chosen to cover $o$). Then every path in $\CG$ starting at $o$ lifts to a unique path starting at $\ot$ in $T_k$ which covers it. In particular, every cycle in $\CG$ starting at $o$ lifts to a unique path in $T_k$ joining $\ot$ to some vertex $\ot'$ covering $o$. So, for any natural $n$, if $d(\ot,\ot')$ is the distance between $\ot$ and $\ot'$, then
\begin{equation}
C_n = \quad\voidindop{\sum}{\ot'\in T_k\text{ covering }o}\quad c^{T_k}(n,d(\ot,\ot')) = \sum_{d=0}^n\#\{\ot'\in T_k\text{ covering $o$}: d(\ot,\ot')=d\}c^{T_k}(n,d),\label{eqprfCas}
\end{equation}
Now, the number of $\ot'\in T_k$ at distance $d$ from $\ot$, covering $o$ is exactly the number of geodesic segments of length $d$ from $\ot$ to such vertices $\ot'$. But a cycle $P$ in $\CG$ lifts to a geodesic in $T_k$ if and only if $P$ does not admit any backtracks. Hence
$$C_n=\sum_{d=0}^n a^*_d c^{T_k}(n,d).$$
\end{proof}
%\subsection{Estimates for $\gs$}
\begin{denot1}
For any numerical sequence $x=(x_n)_{n=0}^\infty$, denote by $\F(x)$ its generating function, i.e. power series (of $z\in\mathbb{C}$)
$$\F(x)(z)=\sum_{n=0}^\infty x_n z^n.$$
\end{denot1}
\begin{rem}
In this \appsec, I consider every sum of a power series as a sum (a number, if it is convergent, or $\pm\infty$, if it diverges to $\pm\infty$), rather than value of some holomorphic continuation of it, unless indicated otherwise.
\end{rem}
We calculate $\F(C)(r)$ for $r\ge 0$:
\begin{equation}
\F(C)(r) = \sum_{n=0}^\infty C_n r^n = \sum_{n=0}^\infty \sum_{d=0}^n a^*_d c^{T_k}(n,d) r^n = \sum_{d=0}^\infty a^*_d \sum_{n=d}^\infty c^{T_k}(n,d) r^n.
\end{equation}
Here I am going to use a formula concerning the simple random walk on $T_k$. As $c^{T_k}(n,d)=0$ for $n<d$, and $c^{T_k}(n,d)/k^n$ is the probability of passing from $\ot$ to some fixed $\ot'$ at distance $d$ from $\ot$ in the simple random walk on $T_k$ in $n$ steps, we have for $z\in\mathbb{C}$
\begin{equation}
\sum_{n=d}^\infty c^{T_k}(n,d) z^n = \sum_{n=0}^\infty (c^{T_k}(n,d)/k^n)(kz)^n = G(d,kz),
\end{equation}
where $G(d,\cdot)$ is the Green function for the simple random walk on $T_k$ and a pair of its vertices at distance $d$. 
(Here I treat the Green function as a power series. For the definition, see \cite[1.6]{Woe}.)
From Lemma 1.24 from \cite{Woe} we have
\begin{align}
G(d,kz) &= \frac{2(k-1)}{k-2+\sqrt{k^2-4(k-1)(kz)^2}}\left(\frac{k-\sqrt{k^2-4(k-1)(kz)^2}}{2(k-1)kz}\right)^d=\notag\\
&= \underbrace{\frac{2(k-1)}{k-2+k\sqrt{1-4(k-1)z^2}}}_{A(z)}\Biggl(\underbrace{\frac{1-\sqrt{1-4(k-1)z^2}}{2(k-1)z}}_{f(z)}\Biggr)^d,\label{Green}
\end{align}
(where we always take the standard branch of square root\footnote{Note that here, under the square roots, if $z$ is close to $0$, then we have values close to $1$}, with $\sqrt{1}=1$). Let us introduce notations $A(z),f(z)$, as indicated in the above formula. For $z=0$, formally, there is a problem with $f(z)$ defined by such formula, but
\begin{equation}
f(z) = \frac{2z}{1+\sqrt{1-4(k-1)z^2}}
\end{equation}
(for $z$ such that it exists), so if we put $f(0)=0$, then formula \eqref{Green} is satisfied because $G(d,0)=0^d$. Then the right-hand side of \eqref{Green} is holomorphic on the $0$-centred open ball $B(1/(2\sqrt{k-1}))$, so the equality holds on that ball. Hence, for $r\in [0;1/(2\sqrt{k-1}))$,
\begin{equation}
\F(C)(r) = A(r) \sum_{d=0}^\infty a^*_d f(r)^d = A(r) F(a^*)(f(r))%\label{eqgenfcnsCas}
\end{equation}
Note that here $0<A(r)<\infty$, so for $r\in [0;1/(2\sqrt{k-1}))$,
\begin{equation}\label{FCfinFAfin}
F(C)(r)<\infty \iff F(a^*)(f(r))<\infty.
\end{equation}
\begin{clm}
Put $f(1/(2\sqrt{k-1}))=1/\sqrt{k-1}$.\footnote{Actually, it follows from \eqref{Green}, although $1/(2\sqrt{k-1})$ is a singularity of $f$ as a holomorphic function, so I am avoiding a doubt.} Then $f:[0;1/(2\sqrt{k-1})] \to [0;1/\sqrt{k-1}]$ is strictly increasing and onto.
\end{clm}
\begin{proof}
The strict monotonicity is obvious, as well as the continuity. Since $f(0)=0$ and $f(1/(2\sqrt{k-1}))=1/\sqrt{k-1}$, being onto follows from the Darboux property.
\end{proof}
\begin{rem}
Further in this \appsec, I restrict $f$ to $[0;1/(2\sqrt{k-1})]$ by default.
\end{rem}
\begin{denot1}\label{radconvrg}
For power series $F(z)=\sum_{n=0}^\infty x_n z^n$, by $\r(F)$ or $\r(x)$ I mean its radius of convergence (equal to $1/\mathrm{gr}((x_n)_n)$)
and by $\R(F)$ or $\R(x)$ I mean the set $\{r\ge 0: F(r)<\infty\}$.
\end{denot1}
\begin{clm}
$f(\r(C))=\min\(\r(a^*),1/\sqrt{k-1}\)$. In particular, $f(\r(C)) \le \r(a^*)$.
\end{clm}
\begin{rem}
The left-hand side above makes sense because $\r(C)=1/\rt \le 1/(2\sqrt{k-1})$ (this bound can be obtained by noting that for $n\in\N$, $C_n(\CG,o)\ge c^{T_k}(n,0)=C_n(T_k,\ot)$---from \eqref{eqprfCas}---hence $\rt(\CG)\ge\rt(T_k)$ and $\rt(T_k)=2/\sqrt{k-1}$ by \cite[Lem.~1.24]{Woe}).
\end{rem}
 \begin{proof}[Proof of the claim]
%
\begin{comment}
 I am going to split the proof of the equality into bounding left-hand side by right-hand side and \emph{vice-versa}:
 \begin{enumerate}
 \item Proof of $f(\r(C)) \le \min(\r(a^*),1/\sqrt{k-1})$: Let $w\ge 0$. Assume that $w<f(\r(C))$ (hence $w<1/\sqrt{k-1}$). Then $f^{-1}(w)<\r(C)$ and $F(C)(f^{-1}(w))<\infty$, so from \eqref{FCfinFAfin} $F(a^*)(w)<\infty$ (as $f^{-1}(w)<1/(2\sqrt{k-1})$) and so $w\le \r(a^*)$, which finishes this part of the proof.
 \item Proof of $f(\r(C)) \ge \min(\r(a^*),1/\sqrt{k-1})$: Let $w\ge 0$ and assume that
$$w<\min(\r(a^*),1/\sqrt{k-1}).$$
Then, if we put $r=f^{-1}(w)<1/(2\sqrt{k-1})$, then we have $F(a^*)(f(r))<\infty$, so from \eqref{FCfinFAfin}, $F(C)(r)<\infty$, hence $r\le \r(C)$. That amounts to $w\le f(\r(C))$, which finishes the proof.
 \end{enumerate}
 \end{proof}
\end{comment}
%
From \eqref{FCfinFAfin} we have
\begin{align}
[0;1/(2\sqrt{k-1}))\cap \R(F(C)) &= [0;1/(2\sqrt{k-1}))\cap f^{-1}(\R(F(a^*))),
\intertext{so, taking the images in $f$, we have}
\begin{split}
f[[0;1/(2\sqrt{k-1}))\cap \R(F(C))] &= \Bigl([0;1/\sqrt{k-1})\cap \R(F(a^*))\Bigr)\cap \im(f) =\\
&= [0;1/\sqrt{k-1})\cap \R(F(a^*)).
\end{split}
\end{align}
Taking the suprema of those sets gives
$$f(\min(1/(2\sqrt{k-1}),\r(C))) = \min(1/\sqrt{k-1},\r(a^*)),$$
which is
$$f(\r(C)) = \min(1/\sqrt{k-1},\r(a^*))$$
because $\r(C)\le 1/(2\sqrt{k-1})$.
\end{proof}

Now,
\begin{equation*}
f(\r(C)) = f(1/\rt) = \frac{2}{\rt+\sqrt{\rt^2-4(k-1)}},
\end{equation*}
hence
\begin{equation*}
\gs = \frac{1}{\r(a^*)} \le \frac{1}{f(\r(C))} = \frac{\rt+\sqrt{\rt^2-4(k-1)}}{2},
\end{equation*}
which proves the first part of Theorem \ref{thmgs}. For the second part, assume that $\rt>2\sqrt{k-1}$. Then
\begin{equation}
\min(\r(a^*),1/\sqrt{k-1}) = f(\r(C)) < 1/\sqrt{k-1},
\end{equation}
so $\r(a^*)<1/\sqrt{k-1}$, hence
\begin{equation}
f(\r(C)) = \r(a^*),
\end{equation}
which, as above, gives the equality
$$\gs = \frac{\rt+\sqrt{\rt^2-4(k-1)}}{2}.$$
\end{proof}%of the thm.
\section{Estimate for growth rate of a Coxeter group in $\HHH$}\label{appxgr}
In this \appsec\ I prove Theorem \ref{gr>=}. Recall the Assumption \ref{PiG} and notions from Subsection \ref{geomPiG}.
\begin{thm*}[recalled Theorem \ref{gr>=}]
For $k$ and $\CG$ as in Assumption \ref{PiG}, if we assume that $k\ge 6$, then the growth rate of $\CG$
$$\gr(\CG)\ge\frac{k-4+\sqrt{(k-4)^2-4}}{2}.$$
\end{thm*}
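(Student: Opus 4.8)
The plan is to bound the radius of convergence $\r(W)$ of the growth series $W$ of $(G,S)$ (Definition~\ref{grser}); since the growth rate of a finitely generated group is the reciprocal of that radius (recalled just above Theorem~\ref{gr}), the asserted inequality is equivalent to $\r(W)\le z_0$, where $z_0:=\tfrac12\bigl(k-4-\sqrt{(k-4)^2-4}\bigr)$ is the smaller root of $z^2-(k-4)z+1$. For $k=6$ the claim is just $\gr(\CG)\ge1$, true because $G$ is infinite, so from now on $k\ge7$, and then $z_0=2/\bigl((k-4)+\sqrt{(k-4)^2-4}\bigr)\le 2/(3+\sqrt5)<2/5$. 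The engine is the standard rational identity expressing the reciprocal growth series of an infinite Coxeter group through its spherical parabolic subgroups (this is the identity underlying \cite[Ex.~17.4.3]{Dav}): $1/W(t)=\sum_{T\in\mathcal S}(-1)^{|T|}/W_T(t^{-1})$, over spherical $T\subseteq S$. By Claim~\ref{no4inL} only $|T|\le3$ occur, and since finite-Coxeter growth polynomials are palindromic (via the longest element) this becomes
$$\frac1{W(t)}=1-\frac{kt}{1+t}+\sum_{e\in E}\frac{t^{m_e}}{W_{I_2(m_e)}(t)}-\sum_{\tau\in V}\frac{t^{N_\tau}}{W_\tau(t)},$$
where $E$ is the set of edges of $\Pi$ (equivalently spherical pairs, as every dihedral angle is $\pi/m$ with $m<\infty$), $m_e$ the label of $e$, $V$ the set of honest vertices of $\Pi$ (equivalently spherical triples, by Propositions~\ref{vPi} and \ref{v:r>=3}), and $N_\tau$ the number of positive roots of the rank-$3$ finite group $W_\tau$.

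Next I would feed in the geometry of Subsection~\ref{geomPiG}. Every Coxeter polyhedron in $\HHH$ is simple (at an honest vertex the stabiliser is a finite reflection group, at an ideal one an irreducible Euclidean one — no two faces of $\Pi$ are asymptotically parallel — each with a simplicial chamber), so the nerve $L$ is a triangulation of $\SS$ with one open triangle deleted for each ideal vertex; with $b$ the number of ideal vertices, trivalence and Euler's formula give $|E|=3(k-2)$ and $|V|=2(k-2)-b$. Since each edge of $\Pi$ lies on two vertices, I would split each term $t^{m_e}/W_{I_2(m_e)}(t)$ into two half-edge pieces attached to the endpoints of $e$ and regroup:
$$\frac1{W(t)}=1-\frac{kt}{1+t}+\sum_{\tau\in V}\Bigl(\tfrac12\!\!\sum_{e\ni\tau}\tfrac{t^{m_e}}{W_{I_2(m_e)}(t)}-\tfrac{t^{N_\tau}}{W_\tau(t)}\Bigr)+\tfrac12\!\!\sum_{v\ \mathrm{ideal}}\ \sum_{e\ni v}\tfrac{t^{m_e}}{W_{I_2(m_e)}(t)}.$$

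Now evaluate at $t=z_0$ and write $u=-z_0/(1+z_0)\in(-\tfrac12,0)$ and $\phi_m=t^m/W_{I_2(m)}(t)$. I would establish three elementary estimates (the routine verifications I will not carry out here): (i) $0\le\phi_m\le\phi_2=u^2$ for $m\ge2$, since $t^m/(1-t^m)$ is decreasing in $m$; (ii) $t^{N_\tau}/W_\tau(t)\le t^3/(1+t)^3=-u^3$ for every rank-$3$ finite Coxeter group (each degree is $\ge2$); (iii) for every spherical triple $\tau$ the bracketed term above is $\le\tfrac32u^2+u^3$ — equality for $A_1^3$, and otherwise some label is $\ge3$, producing a saving $\ge\tfrac12(u^2-\phi_3)=\tfrac{t^2}{2(1+t)^2(1+t+t^2)}$ that dominates the deficit $t^3/(1+t)^3-t^{N_\tau}/W_\tau(t)\le t^3/(1+t)^3$ precisely when $1-t-2t^2-2t^3\ge0$, which holds at $t=z_0<2/5$. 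For the ideal sum, the labels at an ideal vertex satisfy $\sum m_i^{-1}=1$, so at most one equals $2$ and $\sum_{e\ni v}\phi_{m_e}\le u^2+2\phi_3$. Substituting and using $|V|=2(k-2)-b$, the $b$ ideal-vertex contributions cancel the $b$-part of $|V|(\tfrac32u^2+u^3)$ up to a remainder $b(\phi_3-u^2-u^3)$, leaving
$$\frac1{W(z_0)}\le\Bigl(1-\frac{kz_0}{1+z_0}+(3k-6)u^2+(2k-4)u^3\Bigr)+b\bigl(\phi_3(z_0)-u^2-u^3\bigr).$$
The parenthesised term is the reciprocal growth series at $z_0$ of the right-angled group on $k$ faces; by the computation behind Theorem~\ref{gr} it factors as $(2u+1)\bigl((k-2)u^2+(k-2)u+1\bigr)$ and vanishes at $z_0$ (this is exactly how $z_0$ was chosen), while $\phi_3(z_0)\le u^2+u^3$ reduces to $z_0^2(1+z_0)\le1$, true for $k\ge7$. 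Hence $1/W(z_0)\le0$.

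Finally, since $1/W(0)=1$ and $1/W$ is continuous and positive on $[0,\r(W))$, the inequality $1/W(z_0)\le0$ is impossible unless $\r(W)\le z_0$, which is the theorem. I expect the main obstacle to be precisely the non-compact case: losing the $b$ negative "vertex'' terms must be recovered from the large labels forced at ideal vertices, and this is where $k\ge7$ (i.e.\ $z_0<2/5$) is needed — for compact $\Pi$ one has $b=0$ and the argument collapses to the identity underlying Theorem~\ref{gr}. A secondary nuisance is the careful check of estimate (iii) across the finitely many rank-$3$ spherical types ($A_1^3$, $A_1\times I_2(m)$, $A_3$, $B_3$, $H_3$), together with justifying the growth-series identity and the simplicity of $\Pi$ in the possibly non-cocompact setting.
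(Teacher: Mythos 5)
Your proof has a genuine gap that prevents it from covering the generality of Assumption~\ref{PiG}. The structural claim in the middle — that $\Pi$ is simple with trivalent ideal vertices, so that the nerve $L$ is a triangulation of $\SS$ with $b$ open triangles deleted, giving $|E|=3(k-2)$ and $|V|=2(k-2)-b$ — is false in general. A Coxeter polyhedron in $\HHH$ need not have finite volume, and can even have \emph{no} vertices at all. For instance, a bi-infinite right-angled heptagonal prism ($k=7$) has nerve $L$ equal to a $7$-cycle, with no $2$-simplices, so $|E|=7\ne 3(k-2)=15$; and the polyhedron of Example~\ref{p0<pu} ($k=6$, five cyclically perpendicular faces plus one disjoint face) has nerve a $5$-cycle plus an isolated vertex. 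Moreover even in the finite-volume case your parenthetical justification breaks: a right-angled ideal vertex has four faces, its link in $L$ is a $4$-cycle (its Euclidean group is the reducible $\tilde A_1\times\tilde A_1$, and its two opposite faces are asymptotically parallel), so the corresponding "hole'' in $L$ is a quadrilateral, not a triangle, and your vertex-regrouping of the half-edge terms no longer makes sense for edges of $\Pi$ that are bi-infinite lines or half-lines.

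The paper's proof takes a different and strictly more general route precisely to avoid this. Rather than assuming any combinatorial model for $L$, it first shows that $L$ embeds in $\SS$, then works with the embedded $1$-skeleton $L_1$ and the closures of the complementary regions as "faces'', which may be polygons of arbitrary degree and need not be simplices of $L$ at all. The Steinberg sum is regrouped over these faces of $L_1$; each face-local quantity $A(f)$ is bounded by a right-angled counterpart $B(f)$ (Claim~\ref{clWsum}, including a case analysis for the four types of rank-$3$ spherical groups), and then the Euler inequality $f_1\le f_0+f_2-2$ together with a side-count $2f_1\ge 4f_2-\ft{2}$ (valid for possibly disconnected $L_1$ with non-triangular faces) is used to reach $1/W(t)\le 1/W\rb(t)$ on $(0;1]$ (Claim~\ref{clsumWrb}); a short separate argument dispatches the degenerate nerves excluded by Assumption~\ref{L!=s+vs}. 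Your last two steps — identifying $z_0$ as the root of the right-angled polynomial and invoking continuity from $1/W(0)=1$ — are the same as the paper's and are fine. To salvage your argument you would essentially have to replace the $\Pi$-vertex regrouping by a regrouping over faces of $L_1$ and prove a per-face inequality against a triangle of label $2$, which is exactly what the paper does; as written, your proof only covers compact $\Pi$ and finite-volume $\Pi$ with all ideal vertices simplicial, which is strictly narrower than the theorem.
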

\begin{proof}
%(Czy dobre sformułow.?)(chyba tak)\\
Let $W$ be the growth series of $G$ with respect to $S$ (see Definition~\ref{grser}).
I use the following formula of Steinberg for $1/W(z)$.
\begin{thm}[Steinberg, \protect{\cite[1.28]{Stein}}, see also \protect{\cite[Sect.~17.1]{Dav} or \cite[Thm.~1]{Kolp}}]\label{Steinb}
$$\frac{1}{W(z)}=\sum_{T\in\mathcal{F}} \frac{(-1)^{\#T}}{W_T(z^{-1})}$$
(as formal Laurent series of $z$), where $\mathcal{F}=\{T\subseteq S:\langle T\rangle\text{ is finite}\}$ (i.e.~the family of spherical subsets of $S$) and $W_T$ is the growth series of the Coxeter group $\langle T\rangle$ with respect to generating set $T$.
\\\qed
\end{thm}
\begin{rem}
The above theorem is formulated in terms of formal Laurent series, as in \cite{Stein} (see notations 1.24 there). (Note that it makes sense because every formal Laurent series admits formal Laurent series reciprocal to it, and for $T\in\mathcal{F}$, $W_T$ are polynomials.) On the other hand, it can be also viewed as a meromorphic function on $\mathbb{C}$ because due to \cite[1.26]{Stein}, it is a rational function. So for the rest of this \appsec, I treat all growth series as meromorphic functions.
%Jeszcze o 0^{-1}
Also, I am going to use the convention: $1/0=\infty$, $1/\infty=0$.%, so that e.g.~left-hand side of the equality of Theorem \ref{Steinb} is continuous in $z=0$.
\end{rem}
For a power series $\sum_{n=0}^\infty x_n z^n$, let $\mathrm{r}(x)$ be its radius of convergence (as in Definition \ref{radconvrg}).
\begin{lem}[\protect{\cite[par.~7.21]{Titch}}]\label{poscoeff}
Any power series $\sum_{n=0}^\infty x_n z^n$ with $x_n\ge 0$ has a singularity at $\mathrm{r}(x)$.
\\\qed
\end{lem}
\begin{rem}\label{root1/W}
The radius of convergence of $W$, which is the least positive pole of $W$, due to the above lemma, equals the radius of convergence of
$$\sum_{n=0}^\infty\#B(n)z^n = \sum_{n=0}^\infty \sum_{m=0}^n \#S(m)z^n = W(z)/(1-z),$$
which is $1/\gr(\CG)$. Hence, it is the least positive root of the function $1/W$.
\end{rem}
\begin{denot}\label{L}
Let $L$ be the nerve of $(G,S)$ (as an abstract simplicial complex---see Definition \ref{nerve}). Note that $\mathcal{F}$ is the set of all simplices in $L$ plus the empty set, so for $\s\in L$, $W_\s$ should be understood with $\s\subseteq S$. For a simplicial complex $C$ (abstract or geometric one), I denote by $V(C)$, $E(C)$ its sets of vertices and edges, respectively.
Note also that $L$ does not need to be connected.
\end{denot}
In order that the rest of the proof worked, I have to show now the conclusion of the theorem for three exceptional cases, and after that exclude them from consideration (Assumption \ref{L!=s+vs}). Those cases are:
\begin{itemize}
\item the nerve $L$ contains only vertices (i.e.~no edges);
\item there is only $1$ edge in $L$;
\item there is only $1$ triangle in $L$ and no edges outside that triangle
\end{itemize}
and i consider them at once. Namely, in each of those cases, there is a set $S'\subseteq S$ of $k-3$ isolated vertices of $L$, which generates a subgroup $G'<G$ isomorphic to $\mathbb{Z}_2^{*(k-3)}$ (each of the free factors of $\mathbb{Z}_2^{*(k-3)}$ is generated by some $s\in S'$). Hence, the Cayley graph $\CG'$ of $(G',S')$, which is the infinite $(k-3)$-regular tree (with growth rate $k-4$), embeds in $\CG$, so
\begin{equation}
\gr(\CG) \ge \gr(\CG') = k-4 \ge \frac{k-4+\sqrt{(k-4)^2-4}}{2},
\end{equation}
which finishes the proof for the three cases above. Now, I exclude those cases:
\begin{assm}\label{L!=s+vs}
For the rest of the proof, let $L$ contain more that $1$ edge and not have exactly $1$ triangle containing all the edges of $L$. (In other words: $L$ doesn't amount to a collection of isolated vertices and single $0$-, $1$- or $2$-simplex.)
\end{assm}
Using Claim \ref{no4inL} and Theorem \ref{Steinb}, I calculate a formula for $1/W(t)$:
\begin{equation}\label{1/W3sums}
\frac{1}{W(t)} = 1 - \quad\lvoidindop{\sum}{v\text{---a vertex of }L} \frac{1}{W_{\{v\}}(t^{-1})} + \quad\lvoidindop{\sum}{e\text{---a side of }L} \frac{1}{W_e(t^{-1})} - \qquad\lvoidindop{\sum}{f\text{---a $2$-simplex of }L} \frac{1}{W_f(t^{-1})}.
\end{equation}
\begin{rem}
Now, the main idea of the proof is to use the ,,right-angled compact'' counterpart of $W$. Namely, imagine a right-angled Coxeter group $G\rb$ with generating set $S\rb$ of $k$ elements and with nerve $L\rb$ which is a flag triangulation of $\SS$ (as in Theorem \ref{gr}; I use a convention of putting $\cdot\rb$ on elements concerning the ``right-angled compact version'' of $G$). Then $L\rb$ has $k$ vertices; let $f_1\rb$, $f_2\rb$ be the numbers of its edges and triangles, respectively. Then, they are uniquely determined by $k$, using Euler formula for such triangulation:
\begin{align}
k-f_1\rb+f_2\rb=2.
\end{align}
We have also $2f_1\rb=3f_2\rb$, so
\begin{align}
2k - 3f_2\rb + 2f_2\rb = 4,\notag\\
f_2\rb = 2(k - 2),\text{\quad and}\label{f_2rb}\\
f_1\rb = \frac{3}{2}f_2\rb = 3(k - 2).\label{f_1rb}
\end{align}
So, if $W\rb$ is the growth series of $(G\rb,S\rb)$, then, using formula \eqref{1/W3sums}, we have
\begin{equation}
\frac{1}{W\rb(t)} = 1 - \frac{k}{t^{-1}+1} + \frac{f_1\rb}{(t^{-1}+1)^2} - \frac{f_2\rb}{(t^{-1}+1)^3},\label{1/Wrbsum}
\end{equation}
as for $\mathbb{Z}_2^n$ with generating set consisting of the generators of the factors (copies of $\mathbb{Z}_2$), its growth series is $(1+z)^n$ (for $n=1$, it is obvious, for other $n$, see \cite[17.1.13]{Dav}); so from \eqref{f_1rb} and \eqref{f_2rb}
\begin{equation}
\frac{1}{W\rb(t)} = \frac{t-1}{(t+1)^3}(-t^2 + (k-4)t-1).\label{1/Wrbprod}
\end{equation}
As $k\ge 6$, all the roots of $1/W\rb(t)$ are non-negative and the least one is
\begin{equation}
\frac{k-4-\sqrt{(k-4)^2-4}}{2}.
\end{equation}
In this proof, I am not going to use existence of such group $G\rb$. What I use is only the fact that the right-hand side of the inequality of the Theorem \ref{gr>=} (which coincides with formula for the growth of $(G\rb,S\rb)$) is indeed reciprocal of that least positive root of $1/W\rb(t)$ (similarly to Remark \ref{root1/W}). So I define just the function $1/W\rb(t)$ by \eqref{1/Wrbsum} (in terms of $f_1\rb,f_2\rb$ defined by \eqref{f_1rb} and \eqref{f_2rb}) and I want to bound the least positive root of $1/W$ by that of $1/W\rb$ (from above), which is less or equal to $1$. Note that $1/W(0),1/W\rb(0)=1>0$ and $1/W$, $1/W\rb$ are continuous on $[0;1]$ (which is easily seen from Theorem~\ref{Steinb}, formulae \eqref{1/Wrbsum}, \eqref{1/Wrbprod} and for $1/W$ at $0$---from the fact that $\r(W)>0$), so, to get that bound, it suffices to prove that
\begin{equation}\label{ineqWWrb}
1/W(t)\le 1/W\rb(t)
\end{equation}
for $t\in(0;1]$. I do it in Claims \ref{clWsum} and \ref{clsumWrb}.
\end{rem}
\begin{prop}
The nerve $L$ (in the sense of the geometric realisation) embeds into sphere $\SS$.
\end{prop}
\begin{rem}
A combinatorial version of the idea of this fact in a bit different setting is present in \cite[Example 7.1.4]{Dav}.
\end{rem}
\begin{proof}
First, consider $\HHH$ in the Klein unit ball model (see e.g.~\cite[Chapter I.6]{BH}). The ideal boundary $\bdi\HHH$ of $\HHH$ is the unit sphere in this model and naturally compactify the unit ball $\HHH$ resulting in $\cHHH$---the closed unit disc. Let for $A\subseteq\HHH$, $\bdi A$ denote $\cl{A}\sm A$, where the closure $\cl{A}$ is taken in $\cHHH$. In this context, note that $\B=\bdi\Pi\cup\bdt\Pi$ is homeomorphic to $\SS$. Indeed, here $\Pi$ is a convex set (as a subset of $\mathbb{R}^3$), in particular, it is star-convex  with regard to some point in $\int\Pi$.
\par Now, i construct an embedding $\Phi$ of $L$ into $\B$ (which completes the proof). Let us fix an interior point $c_f$ or $c_e$, respectively, of each face $f$ and of each edge $e$ of $\Pi$. Then, for $s\in S=V(L)$, I put $\Phi(s)=c_f$, where $f$ is the face of $\Pi$ corresponding to $s$. Next, I embed the edges of $L$: for $e=\{s,t\}\in E(L)$, the planes of faces corresponding to $s$ and $t$ have non-empty intersection (in order that the corresponding reflections generated a finite group), so by Proposition \ref{ePi} those faces are neighbours---let $e\d$ denote their common edge\footnote{Not to be confused with $e\d$ from Notation \ref{ed}.}. I join $\Phi(s)$ with $c_{e\d}$ and $\Phi(t)$ with $c_{e\d}$ by geodesic segments (lying in the faces corresponding to $s$, $t$, respectively). The union of those two segments is a path in $\B$ joining $\Phi(s)$ with $\Phi(t)$---let $\Phi(e)$ be that path. Note that $\Phi(e_1)$ and $\Phi(e_2)$ are disjoint off the endpoints for edges $e_1\neq e_2$ of $L$, so now $\Phi$ is an embedding of $1$-skeleton of $L$ into $\B$. Now, each $2$-simplex $\s$ of $L$ corresponds to three generators $s_i\in S$ ($i=1,2,3$). Let $e_i, i=1,2,3$ be edges of $\s$. Then points $\Phi(s_i)$ are pairwise joined by paths $\Phi(e_i)$. Because $\langle s_1,s_2,s_3\rangle$ is finite, the planes of faces corresponding to $s_1,s_2,s_3$ have non-empty intersection, due to Corollary II.2.8 from \cite{BH} ($\HHH$ is a $\mathrm{CAT}(0)$ space). So, from Proposition \ref{vPi}, those faces share a vertex $p$. From Jordan-Schoenflies theorem, $\bigcup_{i=1}^3\Phi(e_i)$ disconnects $\B$ into two open discs (up to homeomorphism) with boundary $\bigcup_{i=1}^3\Phi(e_i)$. So I take the closure of the component of $\B\sm\bigcup_{i=1}^3\Phi(e_i)$ containing $p$---call it $D$. Obviously, it is an embedding of $\s$ in $\B$ (with sides $\Phi(e_i)$ and vertices $\Phi(s_i)$). Put $\Phi(\s)=D$. Then, indeed, $\Phi$ is an embedding of $L$ because for different $2$-simplices $\s_1,\s_2$ of $L$, the interiors (taken in $\B$) of $\Phi(\s_1)$ and $\Phi(\s_2)$ are disjoint.
\end{proof}
\begin{df}\label{L1}
Let us denote by $L_1$ the embedding of $1$-skeleton of $L$ in $\SS$ from the above proposition. I call closure of a component of its complement in $\SS$ its \emd{face}. I am going to identify $1$-skeletons of $L$ and of its embedding in $\SS$. Also, I declare a face of $L_1$ to belong to $L$ (or to be a $2$-simplex of $L$) iff it equals the embedding of some $2$-simplex of $L$.
For $f$---a face of $L_1$, I denote by $\deg(f)$ the \emd{number of sides} of $f$, i.e.~number of edges lying in $f$, but with edges crossing interior of $f$ counted twice.
\end{df}
%\footnote{For a definition of nerve, see \cite[Section 7.1.]{Dav}.} $L\rb$ which is a flag\footnote{A simplicial complex is \emph{flag} iff any finite set of its vertices which are pairwise connected by edges spans a simplex.} 
\begin{denot}\label{f's}
Let $f_0,f_1,f_2$ be the numbers of vertices, edges and faces of $L_1$, respectively, $\ft2$---of $3$-sided faces of $L_1$ and $\fL2$---the number of 2-simplices of $L$. For $e$---an edge of $L$, let $m(e)$ be the order of $s_1s_2$ in $G$, where $e=\{s_1,s_2\}$ (so that the dihedral angle of $\Pi$ at the edge corresponding to $e$ is $\pi/m(e)$). (Note that $f_0=k$.)
\end{denot}
\begin{rem}\label{WvWe}
Note that for a vertex $v$ of $L$,
\begin{equation}\label{Wv}
W_{\{v\}}(z)=z+1
\end{equation}
and, by an easy exercise, for an edge $e$ of $L$,
\begin{equation}
W_e(z)=(z+1)(z^{m(e)-1}+\cdots+z+1).
\end{equation}
\end{rem}
\begin{denot}
To ease our work with such polynomials, for $n\in\N$, I define polynomials
\begin{equation*}
[n](z)=\underbrace{z^{n-1}+\cdots+z+1}_{n\text{ summands}}
\end{equation*}
and for $n_1,\ldots,n_m\in\N$,
\begin{equation*}
[n_1,\ldots,n_m](z)=\prod_{i=1}^m[n_i](z).
\end{equation*}
I will drop ``$(z)$'' if the argument is obvious.
\end{denot}
For $t\in(0;1]$, using this notation, Remark \ref{WvWe} and \eqref{1/W3sums}, we have
\begin{align}
\frac{1}{W(t)}&=1 - \frac{f_0}{t^{-1}+1} + \sum_{e\text{---a side of }L} \frac{1}{[2,m(e)](t^{-1})} - \sum_{f\text{---a $2$-simplex of }L} \frac{1}{W_f(t^{-1})}%\label{1/WsumfromWvWe}
\intertext{and, reordering the sums into a sum taken over the faces of $L_1$}
&=1 - \frac{f_0}{t^{-1}+1} + \sum_{f\text{---a face of }L_1} \underbrace{\left(\sum_{e\text{---a side of }f}\frac{1}{2}\frac{1}{[2,m(e)](t^{-1})} - \frac{\ind_{f\in L}}{W_f(t^{-1})}\right)}_{A(f)}.\label{A(f)}
\end{align}
It will be convenient to work with the expression $A(f)$ defined above (for $f$ a face of $L_1$).
\begin{rem}\label{remsidessum}
The variable $e$ of the last summation above runs through the sides of $f$, which means that some edge may be counted twice there.
\end{rem}
\begin{clm}\label{clWsum}
For $t\in(0;1]$,
\begin{equation*}
\frac{1}{W(t)} \le 1 - \frac{f_0}{t^{-1}+1} + \frac{f_1}{(t^{-1}+1)^2} - \frac{\ft2}{(t^{-1}+1)^3}.
\end{equation*}
\end{clm}
\begin{proof}
It suffices to prove the bound
\begin{equation}
A(f) \le \underbrace{\frac{\deg(f)}{2(t^{-1}+1)^2} - \frac{\ind_{f\text{---3-sided}}}{(t^{-1}+1)^3}}_{B(f)} \label{B(f)}
\end{equation}
for $f$---a face of $L_1$, to take a sum over all such $f$'s, and to make calculation analogous to deriving \eqref{A(f)} from \eqref{1/W3sums} (with similar rendering of sums) for the right-hand side of the claim (written using summations on edges and $3$-sided faces of $L_1$), to complete the proof.
So I estimate $A(f)$ for $f$ a face of $L_1$, considering three cases:
\begin{case}$f$ is not 3-sided\end{case}
Then
\begin{equation}
A(f) \le \sum_{e\text{---a side of }f}\frac{1}{2[2,2](t^{-1})} - 0 = B(f).
\end{equation}
I used here the fact that $[n]\le[m]$ for $n\le m$, on $[0;\infty)$.
\begin{case}$f$ is 3-sided, but outside $L$\end{case}
Note that because of Assumption \ref{L!=s+vs}, $\overline{\SS\sm f}$ is not a triangle of $L$. Hence, vertices of $f$ generate an infinite subgroup of $G$. It means that the dihedral angles between the faces of $\Pi$ corresponding to the vertices of $f$, sum up to a number not greater than $\pi$ (see e.g.~\cite[Exercise 6.8.10]{Dav} combined with \cite[Theorem 6.8.12]{Dav} for an explanation). Thus,
% to an ideal or (pozaidealny -- ?) vertex%
%\footnote{Note that I do not call an ideal or (pozaidealny -- ?) vertex just a vertex.}
%$f\d$ of $\Pi$ shared by $3$ of its faces. One can obtain a hyperbolic or euclidean triangle with the same set of angles as the dihedral angles between those $3$ faces as follows: take a plane---if $f\d$ is (pozaidealny)---or a horosphere (which is isometric to $\mathbb{R}^2$)---if $f\d$ is ideal---perpendicular to all planes of faces sharing $f\d$ and take its intersection with those planes. Then the obtained lines contain the sides of desired triangle. That means that the sum of those dihedral angles $\sum_{e\text{---a side of }f}\frac{\pi}{m(e)}\le\pi$, so
\begin{equation}
\sum_{e\text{---a side of }f}\frac{1}{m(e)}\le 1.
\end{equation}
Note also that for an edge $e$ and $t^{-1}>1$,
\begin{align}
[2,m(e)](t^{-1}) &= ((t^{-2})^{\frac{m(e)}{2}}-1)(t^{-1}+1)/(t^{-1}-1) \ge \notag\\
&\ge \voidindunderbrace{{\textstyle\frac{m(e)}{2}}(t^{-2}-1)}{\text{value of tangent to $z^{\frac{m(e)}{2}}-1$ at $z=1$ taken at $t^{-2}$}}(t^{-1}+1)/(t^{-1}-1) = {\textstyle\frac{m(e)}{2}}(t^{-1}+1)^2.
\end{align}
This is also true for $t=1$. Hence,
\begin{equation}
\begin{split}
A(f) &\le \sum_{e\text{---a side of }f}\frac{1}{m(e)(t^{-1}+1)^2} - 0 \le\\
&\le \frac{1}{(t^{-1}+1)^2} < \frac{3}{2}\frac{1}{(t^{-1}+1)^2} - \frac{1}{2}\frac{1}{(t^{-1}+1)^3} = B(f),
\end{split}
\end{equation}
as $t^{-1}+1 > 1$.
\begin{case}$f$ is a triangle from $L$\end{case}
Because here $f\subseteq S$ is spherical, the Steinberg formula (Theorem \ref{Steinb}) yields
\begin{align}
\frac{1}{W_f(t)} &= 1 - \frac{3}{t^{-1}+1} + \sum_{e\text{---a side of }f} \frac{1}{[2,m(e)](t^{-1})} - \frac{1}{W_f(t^{-1})} =\\
&= 1 - \frac{3}{t^{-1}+1} + 2A(f) + \frac{1}{W_f(t^{-1})}.\label{Steinb f in L}
\end{align}
%hence
%\begin{align}
%A(f) %&= \sum_{e\text{---a side of }f}\frac{1}{2[2,m(e)](t^{-1})} - \frac{\ind_{f\in L}}{W_f(t^{-1})}=\\
%= \frac{1}{2}\left(\frac{1}{W_f(t)}-1+\frac{3}{t^{-1}+1}-\frac{1}{W_f(t^{-1})}\right).
%\end{align}
Let $m(f)$ be the length of the longest element of $\langle f\rangle$ (i.e.~$m(f)=\max_{g\in\langle f\rangle}l(g)$, using Definition \ref{lgth}).
%Because $W_f$ is a palindromic polynomial due to
By Lemma 17.1.1.~%and the remark following it
in \cite{Dav}, we have $W_f(t)=t^{m(f)}W_f(t^{-1})$, so
\begin{align}
\frac{t^{-m(f)}-1}{W_f(t^{-1})} = 1 - \frac{3}{t^{-1}+1} + 2A(f).\label{A(f)(W_f)final}
%A(f) = \frac{1}{2} \left(\frac{t^{-m(f)}-1}{W_f(t^{-1})} - 1 + \frac{3}{t^{-1}+1}\right).\label{A(f)(W_f)final}
\end{align}
%If $t=1$, then the above expression equals $1/4$ and $B(f)=1/4$, so $A(f)\le B(f)$.\\
%Now, let $t<1$. 
Consider ``right-angled counterparts'' of $\langle f\rangle$ and $W_f$, which are $\mathbb{Z}_2^3$ and its growth series $W_f\rb$, respectively, the latter given by
%Note that computation analogous to \eqref{Steinb f in L} through \eqref{A(f)(W_f)final} can be made for the ``right-angled counterpart'' of $\langle f\rangle$, which is $\mathbb{Z}_2^3$---for $W_f\rb$ given by
\begin{equation}
\frac{1}{W_f\rb(t)} = 1 - \frac{3}{t^{-1}+1} + \frac{3}{(t^{-1}+1)^2} - \underbrace{\frac{1}{(t^{-1}+1)^3}}_{1/W_f\rb(t^{-1})}% = \frac{1}{(t+1)^3}
\end{equation}
%(cf.~\eqref{Wv} and see\textbf{??})---obtaining
(as in first equality of \eqref{Steinb f in L}; see also explanation of \eqref{1/Wrbsum}). Note that computations analogous to \eqref{Steinb f in L} through \eqref{A(f)(W_f)final} can be made for $1/W_f\rb$ and $B(f)=\frac{3}{2(t^{-1}+1)^2} - \frac{1}{(t^{-1}+1)^3}$ in place of $1/W_f$ and $A(f)$, giving
\begin{equation}
\frac{t^{-3}-1}{(t^{-1}+1)^3} = \frac{t^{-3}-1}{W_f\rb(t^{-1})} = 1 - \frac{3}{t^{-1}+1} + 2B(f).
%B(f) = \frac{1}{2} \left(\frac{t^{-3}-1}{(t^{-1}+1)^3} - 1 + \frac{3}{t^{-1}+1}\right).
\end{equation}
So the inequality $A(f)\le B(f)$ is equivalent to
\begin{align}
\frac{t^{-m(f)}-1}{W_f(t^{-1})} &\le \frac{t^{-3}-1}{(t^{-1}+1)^3},
\intertext{which is obvious for $t=1$, and for $t<1$, it is equivalent to}
[m(f),2,2,2](t^{-1}) &\le W_f(t^{-1})[3](t^{-1}).\label{ineqW_fbeftable}
\end{align}
To prove this, I will need the following fact:
\begin{prop}
Let $a,b$ be natural numbers such that $a\le b+1$. Then for any natural $d\le a$,
$$[a-d,b+d](t)\le[a,b](t)$$
for any $t\ge 0$.
\end{prop}
\begin{proof}
Let $t\ge 0$. First, I prove the conclusion for $d=1$:
\begin{align*}
[a,b](t)-[a-1,b+1](t) &= [a,b] - ([a]-t^{a-1})([b]+t^b) = t^{a-1}[b] - [a]t^b + t^{a+b-1} =\\
&= ([a+b-1]-[a-1]) - ([a+b]-[b]) + t^{a+b-1} =\\
&= -t^{a+b-1} - [a-1] + [b] + t^{a+b-1} = [b] - [a-1] \ge 0.
\end{align*}
%Now, using an inductive argument, we have our thesis for any natural $d$.
The general case follows by induction on $d$.
\end{proof}
Below, I use the Table 1.~from \cite{KelPer}, giving convenient formulae for $W_f$, which is the growth series of the reflection group of a Coxeter triangle on $\SS$, i.e.~of $\mathrm{G}_2^{m(f)-1}\times\mathbb{Z}_2$, where $m(f)\ge 3$, $\mathrm{A}_3$, $\mathrm{B}_3$ or $\mathrm{H}_3$ (with the standard sets of generators). I consider those four cases below (for $t<1$), using the above proposition and the fact that $m(f)=\deg(W_f)$. Here, while $t^{-1}$ is still argument of the polynomials, I drop it for brevity.
%poprzednia wersja wykazu w 1. wersji pliku
%\begin{table}[!h]
\begin{center}
\begin{tabular}{|c|c|l|}
\hline
$\langle f\rangle\cong$ & $W_f$%---from \cite{KelPer}
 & \multicolumn{1}{|c|}{Proof of \eqref{ineqW_fbeftable}}\\ \hline
$\mathrm{G}_2^{m(f)-1}\times\mathbb{Z}_2$ & $[2,2,m(f)-1]$ & $[2,2,2,m(f)]\le[2,2,3,m(f)-1]=[3]W_f$\rule{0pt}{3ex}\\
$\mathrm{A}_3$ & $[2,3,4]$ & $[2,2,2,m(f)]=[2,2,2,6]\le[2,2,3,5]\le[2,3,3,4]=[3]W_f$\rule{0pt}{3ex}\\
$\mathrm{B}_3$ & $[2,4,6]$ & $[2,2,2,m(f)]=[2,2,2,9]\le[2,2,4,7]\le[2,3,4,6]=[3]W_f$\rule{0pt}{3ex}\\
$\mathrm{H}_3$ & $[2,6,10]$ & $[2,2,2,m(f)]=[2,2,2,15]\le[2,2,7,10]\le[2,3,6,10]=[3]W_f$\rule[-2ex]{0pt}{5ex}\\ \hline
\end{tabular}
\end{center}
%\end{table}
\par That finishes the proof of inequality \eqref{B(f)} in case of $f$ a triangle from $L$, hence completes the proof of the claim.
\end{proof}
\begin{clm}\label{clsumWrb}
For $t\in(0;1]$,
\begin{equation*}
1 - \frac{f_0}{t^{-1}+1} + \frac{f_1}{(t^{-1}+1)^2} - \frac{\ft2}{(t^{-1}+1)^3} \le \frac{1}{W\rb(t)}
\end{equation*}
(left-hand side above is exactly the right-hand side in Claim \ref{clWsum}).
\end{clm}
\begin{proof}
From \eqref{1/Wrbsum}, to prove the claim, it suffices to show that
\begin{equation}
\frac{f_1}{(t^{-1}+1)^2} - \frac{\ft2}{(t^{-1}+1)^3} \le \frac{f_1\rb}{(t^{-1}+1)^2} - \frac{f_2\rb}{(t^{-1}+1)^3}.
\end{equation}
Euler formula for $L_1$ gives
\begin{equation}
f_1\le f_0+f_2-2\label{ineqEulL_1}
\end{equation}
(recall that $L_1$ may be disconnected), so the desired inequality above is implied by
\begin{align}
\frac{f_0+f_2-2}{(t^{-1}+1)^2} - \frac{\ft2}{(t^{-1}+1)^3} &\le \frac{3(f_0-2)}{(t^{-1}+1)^2} - \frac{2(f_0-2)}{(t^{-1}+1)^3},\\
\frac{f_2}{(t^{-1}+1)^2} - \frac{\ft2}{(t^{-1}+1)^3} &\le \frac{2(f_0-2)t^{-1}}{(t^{-1}+1)^3}.\label{clsumWrb.ineqf_2}
\end{align}
Now, counting sides in every face of $L_1$ gives
\begin{equation}
2f_1 \;\ge\; 3\ft2 + 4(f_2-\ft2) \;=\; 4f_2-\ft2,
\end{equation}
so from \eqref{ineqEulL_1}
\begin{align}
2(f_0+f_2-2) \;&\ge\; 4f_2-\ft2,\notag\\
2f_0-2f_2-4 \;&\ge\; -\ft2,\label{ineqft2}
\end{align}
so inequality \eqref{clsumWrb.ineqf_2} holds, provided that
\begin{align}
\frac{f_2}{(t^{-1}+1)^2} + \frac{2f_0-2f_2-4}{(t^{-1}+1)^3} \;&\le\; \left. \frac{2(f_0-2)t^{-1}}{(t^{-1}+1)^3}\quad \right|\cdot(t^{-1}+1)^3\\
%f_2(t^{-1}+1) + (2f_0-2f_2-4) \;&\le\; 2(f_0 - 2)t^{-1},\\
f_2(t^{-1}-1) \;&\le\; 2(f_0 - 2)(t^{-1}-1),\notag\\
0 \;&\le\; (2f_0 - 4 - f_2)(t^{-1}-1),\notag
\end{align}
which is true because $t^{-1}\ge 1$ and, due to \eqref{ineqft2}, $2f_0-4-f_2 \ge f_2-\ft2 \ge 0$.
\end{proof}
Claims \ref{clWsum} and \ref{clsumWrb} give the inequality \eqref{ineqWWrb} and hence complete the proof of the theorem.
\end{proof}

%auto-ignore
\newcommand{\G}{G} % graf zanurzony
\renewcommand{\S}{{\mathbb{S}}}
\newcommand{\cHd}{{\widehat{\mathbb{H}}^d}}
\newcommand{\hc}[1]{\accentset{\frown}{#1}}
\newcommand{\hcHd}{{\hc{\mathbb{H}}^d}}
\newcommand{\hbd}{\eth}
\newcommand{\pbd}{\pi} % rzut na $\hbd\Hd$
\renewcommand{\R}{{\mathbb{R}}}
\renewcommand{\L}{\mathcal{L}}
\newcommand{\NP}{{\mathcal{N}}}
\newcommand{\p}{{p_{1/2}}}
\newcommand{\hR}{^{(h,R)}}
\newcommand{\ho}{o\hR}
\newcommand{\IOd}{(0;1]\times O(d)}
\newcommand{\gt}{\tilde{g}}
\renewcommand{\d}{\delta}
\newcommand{\conn}{\leftrightarrow}
\newcommand{\then}{\Longrightarrow}

\newcommand{\Bo}{B^1_{r_0}}
\newcommand{\bo}{\mathbf{o}}
\newcommand{\vh}{v_\mathrm{h}}
\newcommand{\tends}[1]{\xrightarrow[#1]{}}
\newcommand{\RhR}{\mathscr{R}\hR}

\newcommand{\bigcupincr}{\bigcup}

\newlength{\bigcapKXwidth}
\settowidth{\bigcapKXwidth}{$\displaystyle{\bigcap_{K\subseteq X}}$}
\newcommand{\bigcapKX}{\makebox[\bigcapKXwidth]{$\displaystyle{\bigcap_{\substack{K\subseteq X\\ K\text{ -- compact}}}}$}}

\chapter{One-point boundaries of ends of infinite clusters}\label{ch1ptbd}
%article/\part?
\section{Introduction}

In this chapter I am concerned with boundaries of ends of the percolation clusters on graphs ``naturally'' embedded in $\Hd$ with $d\ge 2$. As it is mentioned in the introduction to the \work, I started to investigate them hoping that it would be useful for discovering an additional phase transition between $\pc$ and $\pu$. This phase transition is defined as the threshold $\p$ between two ranges of values of the \Bp\ parameter $p$: those for which all the infinite clusters have only one-point boundaries of ends, and the remaining values of $p$. So,  the question is if $\pc<\p<\pu$ e.g.\ for some natural tiling graphs in $\Hd$ for $d\ge 3$. I define the boundaries of ends of a cluster in $\Hd$ as follows:
\begin{denot}
For any topological space $X$, by $\sint_X$ and $\overline{\makebox[1em]{$\cdot$}}^X$ I mean operations of taking interior and closure, respectively, in the space $X$. (I use such notations especially when $X$ is a subspace of another topological space.)
\end{denot}
\begin{df}\label{dfbd}
Let $X$ be a completely regular Hausdorff ($\mathrm T_{3\frac{1}{2}}$), locally compact topological space. Then:
\begin{itemize}
\item An \emd{end} of a subset $C\subseteq X$ is a function $e$ from the family of all compact subsets of $X$ to the family of subsets of $C$ such that:%satisfying the following conditions:
\begin{itemize}
 \item for any compact $K\subseteq X$ the set $e(K)$ is one of the component of $C\setminus K$;
 \item for $K\subseteq K'\subseteq X$ -- both compact -- we have
 $$e(K)\supseteq e(K').$$
\end{itemize}
\end{itemize}
Now let $\cX$ be an arbitrary compactification of $X$. Then
\begin{itemize}
\item The \emd{boundary} of $C\subseteq X$ is the following:
$$\bdi C= \clcX{C}\setminus X.$$
%(by $\overline{C}^Y$ I mean the closure taken in the space $Y$).
\item Finally the \emd{boundary of an end} $e$ of $C\subseteq X$ is%(considered as a subset of $\cX$) is 
$$\bdi e=\bigcapKX\bdi e(K).$$
\end{itemize}
I also put $\c C = \clcX{C}$.\uwagaj{Może zrezygnować z ogólnego sformułowania def. i od razu definiować dla $\Hd$?} Whenever I mean a boundary in the usual sense (taken in $\Hd$ by default), I denote it by $\bdt$ to distinguish it from $\bdi$.
\par I use these notions in the context of the hyperbolic space $\Hd$, where the underlying compactification is the compactification $\cHd$ of $\Hd$ by its set of points at infinity\footnote{For $\Hd$, it is the same as its Gromov boundary---see \cite[Section III.H.3]{BH}.} (see \cite[Definition II.8.1]{BH}). The role of $C$ above will be played by percolation clusters in $\Hd$.
\par Let also $\bdi\Hd$ denote $\cHd\sm\Hd$, the set of points at infinity. If $\Hd$ is considered in its Poincar\'e disc model\footnote{It is called also Poincar\'e ball model.}, $\bdi\Hd$ is naturally identified with the boundary sphere of the Poincar\'e disc.
\end{df}
\begin{rem}\label{smarthat}
In this chapter, whenever I consider a subset of $\Hd$ denoted by a symbol of the form e.g.\ $C_x^y(z)$, I use the notation  $\c C_x^y(z)$ for its closure in $\cHd$ instead of $\c{C_x^y(z)}$, for aesthetic reasons.
\end{rem}
In this chapter I give a sufficient condition for $p$-\Bbp\ to admit infinite clusters with only one-point boundaries of ends, for a large class of transitive graphs embedded in $\Hd$. Namely, that sufficient condition is ``$p<p_0$'', where $p_0$ is a threshold defined in Definition \ref{dfp0}. The key part of the proof is an adaptation of the proof of Theorem (5.4) from \cite{Grim}, which in turn is based on \cite{Men}.
\par In the next section I formulate the assumptions on the graph and the main theorem.

%\subsection{Assumptions\uwaga{?}}
%\subsection{Main theorem}
\subsection{The graph and the sufficient condition}

\begin{assm}\label{assmG}
Throughout this \partw{} I assume that $\G$ is a connected (simple) graph embedded in $\Hd$, such that:
\begin{itemize}
\item its edges are geodesic segments;
\item the sets of vertices and edges are locally finite;
\item it is {transitive} (under the action of some group of isometries of $\Hd$---see the definition below).
\end{itemize}
Let us also pick a vertex $o$ (for ``origin'') of $\G$ and fix it once and for all.
\end{assm}
\begin{df}
Throughout this chapter, for any graph $\G$ embedded in arbitrary metric space, I call this embedded graph \emd{transitive under isometries} if some group of isometries of the space acts on $\G$ by graph automorphisms transitively on its set of vertices.
\par By a \emd{simple} graph I mean a graph without loops and multiple edges.
\end{df}
\begin{rem}
Local finiteness in the above assumption means that every compact subset of $\Hd$ meets only finitely many vertices and edges of the embedded graph. Note that by these assumptions, $V(\G)$ is countable, $\G$ has finite degree and is a closed subset of $\Hd$.
\end{rem}

\begin{df}\label{dfp0}
For $v\in V(\G)$, by $C(v)$ I mean the percolation cluster of $v$ in $\G$.
Let $\NP(\G)$ (for ``null''), or $\NP$ for short, be defined by
\begin{equation}\label{assmnull}
\NP(\G) = \{p\in[0;1]:(\forall x\in\bdi\Hd)(\Pr_p(x\in\bdi C(o))=0)\}
\end{equation}
and put
\begin{equation}
p_0=p_0(\G)=\sup\NP(\G).
\end{equation}
\end{df}
\begin{rem}
In words, $\NP$ is the set of parameters $p$ of \Bbp{} on $G$ such that~no point of $\bdi\Hd$ lies in boundary of the cluster of $o$ with positive \pbb.
Note that $\NP$ is an interval (I do not know whether it is right-open or right-closed) because the events $\{x\in\bdi C(o)\}$ for $x\in\bdi\Hd$ are all increasing (see Definition \ref{incr}), so $\Pr_p(x\in\bdi C(o))$ is a non-decreasing function of $p$ (see \cite[Thm. (2.1)]{Grim}). That allows us to think of $p_0$ as the point of a phase transition.
\par I am going to make a few more remarks concerning the above definition and how $p_0$ may be related to the other percolation thresholds in Section \ref{secremsp0}.
\end{rem}
%I assume \uwaga{ale chcę zdef. $\NP$ itd. $\leadsto$ ew. popr. wszystkie okolice użyć (assmnull)} as well that for bond Bernoulli percolation on $\G$ in the middle phase, for any $x\in\bdi\Hd$,
%\begin{equation}\label{assmnull}
%\Pr(x\in\bdi C(o))=0.
%\end{equation}

Now, I formulate the main theorem:
\begin{thm}\label{mainthm}
Let $G$ satisfy the Assumption \ref{assmG}. Then, for any $0\le p<p_0$, a.s. every cluster in $p$-\Bbp\ on $\G$ has only one-point boundaries of ends.
\end{thm}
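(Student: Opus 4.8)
The plan is to show that if some cluster in $p$-\Bbp\ had an end $e$ with a boundary $\bdi e$ containing at least two distinct points of $\bdi\Hd$, then we could produce, with positive probability, a point $x\in\bdi\Hd$ lying in $\bdi C(o)$, contradicting $p<p_0=\sup\NP(\G)$. So first I would fix $p<p_0$ and suppose for contradiction that with positive probability there is a cluster $C$ with an end $e$ such that $\bdi e$ contains two distinct points $x_1,x_2\in\bdi\Hd$. By transitivity and countability of $V(\G)$, this event can be refined: there is a vertex $v$ and positive probability that $C(v)$ has such an end, and then by the transitive isometry action we can transport this to $o$, so $\Pr_p$-a.s.-positively $C(o)$ has an end $e$ with $\#\bdi e\ge 2$.

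Next, the geometric heart of the argument: an end $e$ of a connected set $C\subseteq\Hd$ whose boundary $\bdi e$ contains two distinct ideal points must ``contain'' a path going to infinity towards each of them, and — crucially — the geodesic $\gamma$ joining $x_1$ to $x_2$ in $\Hd$ is then within bounded Hausdorff distance of (a suitable subset of) $C$; more precisely, for every compact $K$ the component $e(K)$ of $C\setminus K$ accumulates on both $x_1$ and $x_2$, so (using that $C$ is a closed, connected, locally finite graph and that $\widehat{e(K)}$ is connected and compact in $\cHd$) the closure $\widehat{e(K)}$ must contain the whole boundary geodesic arc, hence a nontrivial subarc of $\bdi\Hd$ between $x_1$ and $x_2$ — or at least we can locate a third ideal point $x_3$ on that arc with $x_3\in\bdi e(K)$ for all $K$, i.e.\ $x_3\in\bdi e\subseteq\bdi C(o)$. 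The point is to convert ``two boundary points'' into ``a whole interval's worth of boundary points,'' using connectedness of $C$ in $\Hd$ together with the fact that the ideal boundary separates $\Hd$ and that a connected set joining $x_1,x_2$ near infinity, when closed up in $\cHd$, cannot avoid the geodesic between them. I would make this precise by an argument analogous in spirit to Grimmett's Theorem (5.4) and the method of \cite{Men}: one shows that the ``shadow'' at infinity of $e(K)$ is closed and that if it contained two points it contains the arc between them, which is the step the excerpt flags as the key adaptation.

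Finally, once we know $\bdi e$ contains some $x_3\in\bdi\Hd$, we get $\Pr_p(x_3\in\bdi C(o))>0$ directly (or, being slightly more careful, we integrate over the random $x_3$: since the event ``$C(o)$ has an end with $\#\bdi e\ge2$'' has positive probability and on it $\bdi C(o)$ contains an ideal point from a prescribed uncountable family, a measurability/Fubini argument over $\bdi\Hd$ with the rotation-type symmetries forces $\Pr_p(x\in\bdi C(o))>0$ for some fixed $x\in\bdi\Hd$). Either way this contradicts $p\in\NP(\G)$, which holds because $p<p_0=\sup\NP(\G)$ and $\NP$ is an interval containing $[0,p_0)$. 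Hence a.s.\ no cluster has an end with more than one boundary point, i.e.\ every end of every cluster has a one-point boundary, which is the assertion.

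\textbf{Main obstacle.} I expect the hard part to be the geometric/topological step: showing rigorously that an end whose boundary contains two ideal points actually has an ideal point of its boundary that we can ``catch'' with positive probability and pin down as a fixed element of $\bdi\Hd$ (rather than a random one depending on $\omega$). This requires carefully exploiting connectedness of clusters inside $\Hd$, the topology of $\cHd$, and the symmetry of $\G$ to pass from ``positive probability of a random bad ideal point'' to ``positive probability of a fixed bad ideal point'' — exactly the place where the adaptation of the Grimmett–Menshikov-style argument does the work.
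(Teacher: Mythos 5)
Your proposal shares the paper's starting point (suppose for contradiction that with positive probability some end has at least two distinct ideal boundary points), but the ``geometric heart'' of your argument contains a genuine gap, and you never invoke the paper's actual key technical tool.

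The step you flag as hard is in fact not just hard but wrong as stated. It is \emph{not} true that an end whose boundary contains two distinct ideal points must have an ideal boundary containing a whole arc, or even a third point. The closure $\widehat{e(K)}$ of a connected set is connected in $\cHd$, but its trace on $\bdi\Hd$ need not be connected: a ``fibrous'' cluster that sends thinner and thinner tendrils towards $x_1$ and $x_2$, linked by a connected spine well inside $\Hd$, can have an end $e$ with $\bdi e=\{x_1,x_2\}$ exactly. So there is no fixed $x_3$ to extract. Worse, even if $\bdi e$ were random with positive probability of being nonempty, this does not contradict $p\in\NP$: the condition $\Pr_p(x\in\bdi C(o))=0$ for all \emph{fixed} $x$ is compatible with $\bdi C(o)$ being almost surely a nonempty (measure-zero) set, and neither a Fubini argument nor the isometry symmetry promotes ``a random bad point with positive probability'' to ``a fixed bad point with positive probability.'' Also, the Grimmett/Menshikov method you cite has nothing to do with shadows at infinity being connected — it is a quantitative exponential-decay bound, which is precisely the ingredient your outline is missing.

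The paper's actual proof takes a quantitative route that sidesteps all of this. Two distinct boundary points of the end $e$ give a positive Euclidean diameter $\d>0$ of $\bdi e$ (in the half-space model). For every compact $K$, the piece $e(K)$ still accumulates on $\bdi e$, so for every small $h$ the cluster inside the thin slab $L^h$ must have projection diameter at least $\d$. Scaling the slab $L^{1/2^k}$ up to $L$ by $2^k$ turns this into a cluster of $\G^{2^k\cdot}\cap L$ with projection diameter $\ge 2^k\d$, intersecting one of $(2^k)^{d-1}$ translates of a fixed box. Lemma \ref{corMen} (the Menshikov-style exponential decay, which holds precisely because $p<p_0$) gives probability $\le\al e^{-\phi\d 2^{k-1}}$ for each translate; the total bound $(2^k)^{d-1}\al e^{-\phi\d 2^{k-1}}\to 0$ contradicts the assumed positive probability. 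The definition of $\NP$/$p_0$ enters only through this decay estimate, not through any direct hunt for a fixed ideal point. You should reorganize your argument around that scaling-plus-decay mechanism; the purely topological route does not close.
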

The key ingredient of the proof of this theorem is Lemma \ref{corMen}, which is a corollary of Theorem \ref{lemMen}. Both of the latter are quite interesting in their own right. They are presented (along with a proof of Lemma \ref{corMen}) in separate Section \ref{secexpdec}. The elaborate proof of Theorem \ref{lemMen}, rewritten from the proof of Theorem (5.4) in \cite{Grim}, is deferred to Section \ref{prflemMen}. The proof of this theorem itself, is presented in Section \ref{secprfmainthm}.
\begin{comment}
\begin{rem}
In the assumptions of the above theorem, $p_o$ can be replaced by
$$p_0' = \sup\{p\in[0;1]: g_p(r)\tends{r\to\infty}0\}$$
with $g_p(r)$ from the Definition \ref{gpr} because only the fact that for $p<p_0$, $g_p(r)\tends{r\to\infty}0$ (implied by Claim \ref{Mtight}) is used (in the proof of Lemma \ref{gt<=sqrt}). Accordingly, $p_0'\ge p_0$. Nevertheless, I do not know if it is possible that $p_0'>p_0$.
\end{rem}
\end{comment}

%\subsection[More on $\NP$ and $p_0$]{More on $\NP$ and $p_0$ used in the assumption of the\uwagap? main theorem}
\subsection{Remarks on the sufficient condition}\label{secremsp0}

In this section I give some remarks on the threshold $p_0$ and on the events $\{x\in\bdi C(o)\}$ (used to define $\NP$).
\begin{df}
For $A,B\subseteq\Hd$, let $A\conn B$ be the event that there is an open path in the percolation process (given by the context) intersecting both $A$ and $B$. I say also that such path \emd{joins} $A$ and $B$. If any of the sets is of the form $\{x\}$, I write $x$ instead of $\{x\}$ in that formula and those phrases.
\end{df}
\begin{rem}\label{nullmeasb}
For $x\in\bdi\Hd$, the configuration property $\{x\in\bdi C(o)\}$ is indeed a (measurable) random event.
Even more: the set
\begin{equation}
A = \{(x,\omega)\in \bdi\Hd\times 2^{E(\G)}:x\in\bdi(C(o))(\omega)\}
\end{equation}
is measurable in the product $\bdi\Hd\times 2^{E(\G)}$ (where the underlying $\s$-field on $\bdi\Hd$ is the $\s$-field of Borel sets). To prove it, let us introduce a countable family $(H_n)_{n\in\N}$ of half-spaces such that~the family of open discs
$$\{\sint_{\bdi\Hd}\bdi H_n:n\in\N\}$$
is a base of the topology on $\bdi\Hd$. Then, let us rewrite the condition defining $A$:
\begin{eqnarray}
x\in \bdi C(o) &\iff& (\forall n)(x\in\sint_{\bdi\Hd}\bdi H_n \then C(o)\cap H_n\neq\emptyset) \iff\\
&\iff& (\forall n)\bigl(\neg(x\in\sint_{\bdi\Hd}\bdi H_n) \lor\\
&&\lor\, \bigl(x\in\sint_{\bdi\Hd}\bdi H_n \land (\exists v\in V(\G)\cap H_n)(o\conn v)\bigr)\bigr),\notag
\end{eqnarray}
which is a measurable condition, as the sets
$$\{(x,\omega)\in \bdi\Hd\times 2^{E(\G)}: x\in\sint_{\bdi\Hd}\bdi H_n\} = \sint_{\bdi\Hd}\bdi H_n\times 2^{E(\G)}$$
and
$$\{(x,\omega)\in \bdi\Hd\times 2^{E(\G)}: o\conn v\textrm{ in }\omega\}$$
are measurable for $n\in\N$, $v\in V(\G)$.
\par For $x\in\bdi\Hd$, the measurability of the event $\{x\in\bdi C(o)\}$ follows the same way if we treat it as the $x$-section of $A$:
\begin{equation}
\{x\in\bdi C(o)\} = \{\omega:(x,\omega)\in A\}.
\end{equation}

% because if we take a sequence of (closed) half-spaces $H_1,H_2,\ldots$ in $\Hd$ such that $\bigcap_{n=1}^\infty \hat{H_n}=\{x\}$ and for any $n$, $x\in\sint_{\bdi\Hd}\bdi H_n$, then
%\begin{align}
%x\in\bdi C(o) \iff (\forall n\in\mathbb{N})(C(o)\cap H_n\neq\emptyset) \iff (\forall n\in\mathbb{N})(\exists v\in V(\G)\cap H_n)(v\in C(o)),
%\end{align}
%which is clearly measurable.
\end{rem}

\begin{rem}\label{ineqp0}
%The Assumption \ref{assmnull} \uwaga{\ldots?}
The threshold $p_0$ is bounded as follows:
$$\pc\le p_0 \le\pu.$$
The inequality $\pc\le p_0$ is obvious and the inequality $p_0\le\pu$ can be shown as follows: if $p$ is such that $\Pr_p$-a.s.~there is a unique infinite cluster in $\G$, then with some \pbb\ $a>0$, $o$ belongs to the infinite cluster and by BK-inequality (see Theorem \ref{BKineq}), for any $v\in V(\G)$,
$$\Pr_p(o\conn v)\ge a^2.$$

Take $x\in\bdi\G$. Choose a decreasing (in the sense of set inclusion) sequence $(H_n)_n$ of half-spaces such that $\bigcap_{n=1}^\infty \sint_{\bdi\Hd}\bdi H_n=\{x\}$. Since $x\in\bdi\G$, we have $V(\G)\cap H_n \neq \emptyset$ for all $n$. Therefore
\begin{align}
\Pr_p(x\in\bdi C(o)) &= \Pr_p\left(\bigcap_{n\in\N}\{(\exists v\in V(\G)\cap H_n)(o\conn v)\}\right) =\\
&= \lim_{n\to\infty} \Pr_p((\exists v\in V(\G)\cap H_n)(o\conn v)) \ge a^2.
\end{align}
Hence, $p\notin\NP$, so $p\ge p_0$, as desired.
\par The main theorem (Theorem \ref{mainthm}) is interesting when $\pc<p_0$. As for now, I do not know, what is the class of embedded graphs $\G$ (even among those arising from Coxeter reflection groups as in \ref{PiG}) satisfying $\pc(\G)<p_0(\G)$. I suspect that $p_0=\pu$ for graphs as in \ref{PiG} in the cocompact case (see Remark \ref{meas0->nullae}; in such case most often we would have $p_0>\pc$). On the other hand, there are examples where $p_0<\pu$ (see Example \ref{p0<pu} below). Still, I do not know if it is possible that $\pc=p_0<\pu$.
\end{rem}
\begin{exmp}\label{p0<pu}
Let $\Pi$ be an unbounded polyhedron with $6$ faces in $\HHH$ whose five faces are cyclically perpendicular and the sixth one is disjoint from them. Then, in the setting of Assumption \ref{PiG}, the corresponding Coxeter group $G$ is isomorphic to the free product of $\mathbb Z_2$ and the right-angled Coxeter group $G_5$ with the nerve being a simple $5$-cycle. Let $\CG$ and $\CG_5$ be the Cayley graphs of $G$ and $G_5$, respectively. Then, $\CG$ has infinitely many ends, so from \cite[Exercise 7.12(b)]{LP} $\pu(\CG)=1$. Next, if $p>\pu(\CG_5)$, then with positive \pbb\ $\bdi C(o)$ contains the whole circle $\bdi(G_5\cdot o)$. (It is implied by Thm.~4.1 and Lem.~4.3 from \cite{BS96}.) Hence, $p_0\le\pu(\CG_5)<\pu(\CG)$, as $\pu(\CG_5)<1$ by \cite[Thm.~10]{BB}. Moreover, the conclusion of the main theorem (Theorem \ref{mainthm}) fails for any $p>\pu(\CG_5)$.
\end{exmp}
\begin{rem}\label{meas0->nullae}
This remark is hoped to explain a little my suspicion (stated in Remark \ref{ineqp0}) that for the Cayley graph of a cocompact Coxeter reflection group in $\Hd$, we have $p_0=\pu$. Namely, for $p<\pu$, I suspect that a property of the $p$-\Bbp\ in that setting quite similar to $\Pr_p(x\in\bdi C(o))=0$ (considered in \eqref{assmnull}) is exhibited:
\begin{equation}
\Pr_p\textrm{-a.s.~}|\bdi C(o)|=0.
\end{equation}
Here $|\cdot|$ can be the Lebesgue measure on $\bdi\Hd=\S^{d-1}$, or the Poisson measure on $\bdi\Hd$ arising from the simple random walk on $\G$ starting at $o$. (For a definition of a simple random walk and an explanation of Poisson boundary, see \cite{Woe}, Section 1.C and Section 24., p.~260, respectively.) If one proves it, then the probability vanishing in \eqref{assmnull} follows for $|\cdot|$-a.e. point $x\in\bdi\Hd$ by an easy exercise (using a simple version of Fubini's theorem for for the product measure $\Pr_p\times|\cdot|$). In addition, because the induced action of such cocompact group on $\bdi\Hd$ has only dense orbits (see e.g.~\cite[Proposition 4.2]{KapBen}), I rather suspect that in such situation as above, $\Pr_p(x\in\bdi C(o))=0$ holds for all $x\in\bdi\Hd$.
\end{rem}

\newcommand{\maled}{d}
\section{Definitions: percolation on a fragment of $\mathbb{H}^{\lowercase{d}}$}

Here I am going to introduce some notions and notations used in Theorem \ref{lemMen} and Lemma \ref{corMen} and in the proof of the main theorem.
\begin{denot1}
First of all I remark that in this \work\ $0$ is a natural number. I denote the set of all positive natural numbers by $\N_+$.
\end{denot1}
\begin{df}\label{dfhHd}
For the rest of this \partw, consider $\Hd$ in its fixed half-space Poincar\'e model (being the upper half-space $\R^{d-1}\times(0;\infty)$) in which the point $o$ (the distinguished vertex of $\G$) is represented by $(0,\ldots,0,1)$. (It will play the role of origin of both $\Hd$ and $\G$.)
\par The half-space model of $\Hd$ and its relation to the Poincar\'e ball model are explained in \cite[Chapter I.6, p.~90]{BH}. Note that the inversion of $\R^d$ mapping the Poincar\'e ball model $\mathbb{B}^d$ to our fixed half-space model sends one point of the sphere $\bdt\mathbb{B}^d$ to infinity. In the context of the half-space model, I treat that ``infinity'' as an abstract point (outside $\R^d$) compactifying $\R^d$. I call it the \emd{point at infinity} and denote it by $\infty$.
\par Let $\hcHd$ be the closure of $\Hd$ in $\R^d$ and $\hbd\Hd=\hcHd\sm\Hd$ 
(so here $\hcHd=\R^{d-1}\times[0;\infty)$ and $\hbd\Hd=\R^{d-1}\times\{0\}$). I identify $\hcHd$ with $\cHd\sm\{\infty\}$ and $\hbd\Hd$ with $\bdi\Hd\sm\{\infty\}$ in a natural way. Also, for any closed $A\subseteq\Hd$, let $\hc{A}=\cl{A}^\hcHd$ and $\hbd A= \hc{A}\sm A$. (Here, for complex notation for a subset of $\Hd$ (of the form e.g.\ $A_x^y(z)$), I use the same notational convention for $\,\hc\cdot\,$ as for $\,\c\cdot\,$---see Remark \ref{smarthat}.)
\par Although sometimes I use the linear and Euclidean structure of $\R^d$ in $\Hd$, the default geometry on $\Hd$ is the hyperbolic one, unless indicated otherwise. On the other hand, by the \emd{Euclidean metric of the disc model} I mean the metric on $\cHd$ induced by the embedding of $\cHd$ in $\R^d$ (as a unit disc) arising from the Poincar\'e disc model of $\Hd$. Nevertheless, I am going to treat that metric as a metric on the set $\hcHd\cup\{\infty\}=\cHd$, never really considering $\Hd$ in the disc model.
\end{df}

\begin{df}
For $k>0$ and $x\in\R^{d-1}\times\{0\}$, by $y\mapsto k\cdot y$ and $y\mapsto y+x$ (or $k\cdot$, $\cdot+x$, respectively, for short) I mean always just a scaling and a translation of $\R^d$, respectively, often as isometries of $\Hd$. (Note that restricted to $\Hd$ they are indeed hyperbolic isometries.)
\end{df}

\begin{denot}
Let $\Isom(\Hd)$ denote the isometry group of $\Hd$.
\par For any $h\in(0;1]$ and $R\in O(d)$ (the orthogonal linear group of $\R^d$) the pair $(h,R)$ determines uniquely an isometry of $\Hd$ denoted by $\Phi\hR$ such that~$\Phi\hR(o)=(0,\ldots,0,h)$ and $D\Phi\hR(o)=hR$ (as an ordinary derivative of a function $\R^{d-1}\times(0,\infty)\to\R^d$).
\par Let $\G\hR$ denote $\Phi\hR[\G]$. Similarly, for any $\Phi\in\Isom(\Hd)$ let $\G^\Phi=\Phi[\G]$. Further, in the same fashion, let $o\hR=\Phi\hR(o)$ (which is $h\cdot o$) and $o^\Phi=\Phi(o)$.
\end{denot}
\begin{df}
For any $p\in[0;1]$, whenever I consider $p$-\Bbp{} on $\G^\Phi$ for $\Phi\in\Isom(\Hd)$, I just take $\Phi[\omega]$, where $\omega$ denotes the random configuration in $p$-\Bbp\ on $G$.
\end{df}
\begin{rem}\label{remcoupl}
One can say that this is a way of coupling of the \Bbp\ processes on $\G^\Phi$ for $\Phi\in\Isom(\Hd)$.
\par Formally, the notion of ``$p$-\Bbp\ on $\G^\Phi$'' is not well-defined because for different isometries $\Phi_1$, $\Phi_2$ of $\Hd$ such that $\G^{\Phi_1}=\G^{\Phi_2}$, still the processes $\Phi_1[\omega]$ and $\Phi_2[\omega]$ are different. Thus, I am going to use the convention that the isometry $\Phi$ used to determine the process $\Phi[\omega]$ is the same as used in the notation $\G^\Phi$ determining the underlying graph.
\end{rem}
\begin{denot}
Let $L^h=\R^{d-1}\times(0;h]\subseteq\Hd$ and denote $L=L^1$. (In other words, $L^h$ is the complement of some open horoball in $\Hd$, which viewed in the Poincar\'e disc model $\mathbb{B}^d$ is tangent to $\bdi\mathbb{B}^d$ at the point corresponding to $\infty$.)
\end{denot}

\begin{df}
Consider any closed set $A\subseteq\Hd$ intersecting each geodesic line only in finitely many intervals and half-lines of that line (every set from the algebra of sets generated by convex sets satisfies this condition, e.g.~$A=L^h$). Then, by $\G^\Phi\cap A$ I mean an embedded graph in $A$ with the set of vertices consisting of $V(\G^\Phi)\cap A$ and the points of intersection of the edges of $\G^\Phi$ with $\bdt A$\uwaga{ale może zmienić tę def., aby nie zajmować się tymi dodatk. wierzch.? $\leadsto$ Sprawdzić, czy w wersji z odrzuceniem krawędzi $\nsubseteq A$ wszystko działa i ew. poprawić wszystko pod tym kątem.} and with the edges being all the non-degenerate components of intersections of edges of $\G^\Phi$ with $A$. The percolation process on $\G^\Phi\cap A$ I consider in this \partw\ is, by default, the process $\Phi[\omega]\cap A$. The same convention as in Remark \ref{remcoupl} is used for these processes.
\end{df}

\begin{rem}
To prove the main theorem, I use the process $\Phi\hR[\omega]\cap L^H$ for $p\in[0;1]$ and for different $H$. In some sense, it is $p$-\Bbp{} on $\G\hR\cap L^H$: on one hand, this process is defined in terms of the independent random states of the edges of $\G\hR$, but on the other hand, some different edges of the graph $\G\hR\cap L^H$ are obtained from the same edge of $\G\hR$, so their states are stochastically dependent. Nevertheless, I am going to use some facts about \Bp\ for the percolation process on $\G\hR\cap L^H$. In such situation, I consider the edges of $\G\hR$ intersecting $L^H$ instead of their fragments obtained in the intersection with $L^H$.
%\footnote{Namely, because some edges of $\G\hR$ may be split in the intersection with $L^H$, here we do not declare two fragments obtained from such edge to have stochastically independent states.})
%{\uwaga{? -- ale kawałki tej samej kraw. $\G\hR$ są zależne stoch.! Czy ja tego potrzebuję? Przed d-dem (lemMen) chyba nie.}}
\end{rem}

\section{Exponential decay of the cluster size distribution}\label{secexpdec}

I am going to treat the percolation process $\Phi\hR[\omega]\cap L^H$ roughly as a Bernoulli percolation process on the standard lattice $\mathbb{Z}^{d-1}$ (given graph structure by joining every pair of vertices from $\mathbb{Z}^{d-1}$ with distance $1$ by an egde). It is motivated by the fact that $\mathbb{Z}^{d-1}$ with the graph metric is quasi-isometric to $\hbd\Hd$ or $L^H$ with the Euclidean metric. (Two metric spaces are \emd{quasi-isometric} if, loosely speakig, there are mappings in both directions between them which are bi-Lipschitz up to an additive constant. For strict definition, see \cite[Definition I.8.14]{BH}; cf.\ also Exercise 8.16(1) there.)

%That leads to the next section, in which I am concerned with the decay of the cluster size distribution for graphs of either of the two classes above.
In the setting of $\mathbb{Z}^{d-1}$, we have a theorem on exponential decay of the cluster size distribution, below the critical threshold of percolation:{\uwaga{jakie $d$? Chyba $d\ge2$?}}
\begin{thm}[\protect{\cite[Theorem (5.4)]{Grim}}]\label{thm5.4}
For any $p<\pc(\mathbb{Z}^d)$ there exists $\psi(p)>0$ such that in $p$-\Bbp{} on $\mathbb{Z}^d$
$$\Pr_p(\textrm{the origin $(0,\ldots,0)$ is connected to the sphere of radius }n)<e^{-\psi(p)n}\quad\textrm{for all }n,$$
where the spheres are considered in the graph metric on $\mathbb{Z}^d$.
\\\qed
\end{thm}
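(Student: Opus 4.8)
The statement is Menshikov's theorem on exponential decay of the radius of the subcritical open cluster, and I would prove it by reproducing the argument of \cite[Chapter~5]{Grim}. Write $B(n)$ for the graph ball of radius $n$ about the origin of $\mathbb{Z}^d$ and $\partial B(n)$ for its sphere, put $g_p(n)=\Pr_p\big(0\conn\partial B(n)\big)$, and note that $g_p(n)=\Pr_p(M\ge n)$ where $M$ is the $\ell^\infty$-radius of the open cluster $C(0)$ of the origin. Since $\bigcap_n\{0\conn\partial B(n)\}=\{|C(0)|=\infty\}$ and for $p<\pc(\mathbb{Z}^d)$ this event has probability $0$, we already know that $g_p(n)\downarrow 0$; the whole point of the proof is to upgrade this to a geometric rate.

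\emph{Step 1: a differential inequality.} I would first apply Russo's formula to the increasing event $\{0\conn\partial B(n)\}$, obtaining that $g_p(n)$ is differentiable in $p$ with $\frac{d}{dp}g_p(n)=\Est_p(N_n)$, where $N_n$ is the number of edges pivotal for that event. The combinatorial heart is a description of the pivotal set: on $\{0\conn\partial B(n)\}$ all pivotal edges lie ``in series'' (every open path from $0$ to $\partial B(n)$ traverses $e_1,\dots,e_{N_n}$ in the same order), and deleting them decomposes the relevant part of $C(0)$ into ``sausages'' $S_0,\dots,S_{N_n}$ whose $\ell^\infty$-radii $\rho_0,\dots,\rho_{N_n}$ satisfy $\sum_i(\rho_i+1)\ge n$ while, by repeated use of the BK inequality, being stochastically dominated by independent copies of $M$. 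Assembling these facts yields a differential inequality of the shape
$$\frac{d}{dp}\log g_p(n)\;\ge\;\frac{n}{\sum_{k=0}^{n-1}g_p(k)}-1 .$$
Making the sausage decomposition and the repeated BK applications precise is, I expect, the main obstacle of the entire proof.

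\emph{Step 2: analytic bootstrap.} Fix $p<\pi<\pc(\mathbb{Z}^d)$ and set $T_\pi(n)=\sum_{k=0}^{n-1}g_\pi(k)$. Because $g_\pi(k)\to 0$ its Ces\`aro averages also tend to $0$, so $T_\pi(n)=o(n)$. Using monotonicity in the parameter ($g_p(k)\le g_\pi(k)$ and hence $\sum_{k<n}g_p(k)\le T_\pi(n)$ for $p\le\pi$), integrating the inequality of Step~1 in the percolation parameter from $p$ to $\pi$ and using $g_\pi(n)\le 1$ gives $g_p(n)\le\exp\big(-(\pi-p)(n/T_\pi(n)-1)\big)$; since $n/T_\pi(n)\to\infty$, a bounded number of further iterations of this scheme (each time re-summing the bound just obtained, passing through a polynomial and then a stretched-exponential estimate) forces $\sum_n g_p(n)<\infty$ for every $p<\pc(\mathbb{Z}^d)$. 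In particular $T_\pi(n)\le D<\infty$, and one last integration from $p$ to $\pi$ gives $g_p(n)\le e^{\pi-p}e^{-(\pi-p)n/D}$, which is the claimed estimate with a suitable $\psi(p)>0$ (absorbing the constant). Finally I would remark that, although the surrounding chapter concerns $\mathbb{H}^d$, this is a purely lattice statement and is invoked later only through the quasi-isometry between $\mathbb{Z}^{d-1}$ and $L^H$, so no hyperbolic geometry enters its proof.
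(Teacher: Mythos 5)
The paper does not prove Theorem \ref{thm5.4} itself; it cites \cite[Theorem (5.4)]{Grim} and then carefully \emph{adapts} that argument to the hyperbolic setting in Section \ref{prflemMen} to obtain Theorem \ref{lemMen}. Your Step 1 reproduces the ingredients of Grimmett's proof faithfully, and they match the structure the paper mirrors: Russo's formula for the increasing event, the ``sausage'' decomposition at pivotal edges, the repeated BK applications to stochastically dominate the sausage lengths by i.i.d.\ copies of the radius, and the resulting differential inequality
$\frac{d}{dp}\log g_p(n)\ge n/\bigl(\sum_{k<n}g_p(k)\bigr)-1$. That part is sound.

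The problem is in Step 2. From the Ces\`aro observation you only get $T_\pi(n)=\sum_{k<n}g_\pi(k)=o(n)$, and a single integration then gives $g_p(n)\le\exp\bigl(-(\pi-p)(n/T_\pi(n)-1)\bigr)$, which forces $g_p(n)\to 0$ but at an \emph{a priori arbitrarily slow} rate (e.g.\ if $T_\pi(n)\sim n/\log\log n$, one iteration gives only $(\log n)^{-(\pi-p)}$, whose partial sums are still nearly linear). Re-summing and iterating with a \emph{bounded} number of steps and \emph{fixed} parameter gaps does not, in general, cross the threshold to summability, because the gain per step depends on how fast $T_\pi(n)/n\to 0$, which $o(n)$ alone does not control. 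The crucial missing ingredient is the recursive construction of Grimmett's Lemma~(5.24) (mirrored here by Lemma~\ref{gt<=sqrt}): one builds an \emph{infinite} descending sequence $p_1>p_2>\cdots$ and an increasing sequence of radii $r_i$, tied by the recursion $r_{i+1}=r_i/g_i$, $p_{i+1}=p_i-3g_i(1-\ln g_i)$ with $g_i=\tilde g_{p_i}(r_i)$; the point is that $g_{i+1}\le g_i^2$, giving doubly exponential decay of $g_i$, while $\sum_i(p_i-p_{i+1})$ stays bounded (so the recursion stays above the target parameter). This is what produces the intermediate power-law bound $g_p(r)\lesssim r^{-1/2}$, after which one more pass through the functional inequality yields stretched-exponential decay, summability, and then the claimed exponential bound. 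Your ``bounded number of iterations'' glosses over precisely this step, which is the technical heart of Menshikov's argument and the part the paper takes the most care over.
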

The idea (of a bit more general theorem) comes from \cite{Men}, where a sketch of proof is given, and a detailed proof of the above statement is present in \cite{Grim}.

I adapt the idea of this theorem to the percolation process on $\G\hR\cap L$ in Theorem \ref{lemMen} and Lemma \ref{corMen}, appropriately rewriting the proof in \cite{Grim}, which is going to be the key part of the proof of main theorem.
In order to consider such counterpart of the above theorem, I define a kind of tail of all the distributions of the cluster size in $\G\hR\cap L$ for $(h,R)\in\IOd$ as follows:

\begin{df}\label{gpr}
Let $\pbd$ be the Euclidean orthogonal projection from $\Hd$ onto $\hbd\Hd$ and for any $x,y\in\Hd$,
$$d_\hbd(x,y)=\|\pbd(x)-\pbd(y)\|_\infty,$$
where $\|\cdot\|_\infty$ is the maximum (i.e.\ $l^\infty$) norm on $\hbd\Hd=\R^{d-1}\times\{0\}$.
Then, for $r>0$ and $x\in\Hd$, let
$$B_r(x)=\{y\in\Hd:d_\hbd(x,y)\le r\}\quad\textrm{and}\quad S_r(x)=\bdt B_r(x)$$
and for $h>0$, put
$$B_r^h(x)=B_r(x)\cap L^h.$$
If $x=o$ (or, more generally, if $\pbd(x)=\pbd(o)$), then I omit ``$(x)$''.
At last, for $p\in[0;1]$ and $r>0$, let
$$g_p(r)=\sup_{(h,R)\in\IOd}\Pr_p( \ho\conn S_r\textrm{ in }\G\hR\cap L).$$
\end{df}
\begin{rem}
In the Euclidean geometry, $B_r(x)$ and $B_r^h(x)$ are just cuboids of dimensions $r\times\ldots\times r\times\infty$ (unbounded in the direction of $d$-th axis) and $r\times\ldots\times r\times h$, respectively (up to removal of the face lying in $\hbd\Hd$).
\end{rem}
The condition ``$p<p_c(\mathbb{Z}^d)$'' in Theorem \ref{thm5.4} is going to be replaced by ``$p<p_0$'', which is natural because of the remark below. Before making it, I introduce notation concerning the percolation clusters:
\begin{denot}\label{dfMhRL}%\label{dfgp}
For $\Phi\in\Isom(\Hd)$ and $v\in V(\G^\Phi)$ and a set $A\subseteq\Hd$ from the algebra generated by the convex sets, let $C^\Phi(v)$ and $C_A^\Phi(v)$ be the clusters of $v$ in $\G^\Phi$ and $\G^\Phi\cap A$, respectively, in the percolation configuration. Similarly, for $(h,R)\in\IOd$ and $\Phi=\Phi\hR$, I use notations $C\hR(v)$ and $C\hR_A(v)$, respectively.
\par If $v=\Phi(o)$, I omit ``$(v)$'' for short.
\end{denot}
\begin{rem}\label{rembdd}
If $p\in\NP$, then for any $\Phi\in\Isom(\Hd)$, the cluster $C^\Phi$ is $\Pr_p$-a.s.~bounded in the Euclidean metric. The reason is as follows. Take any $p\in\NP$ and $\Phi\in\Isom(\Hd)$. Then, for any $x\in\bdi\Hd$, we have $x\notin \c C(o)$ $\Pr_p$-a.s.~as well as $x\notin \c C^\Phi$ $\Pr_p$-a.s.
If we choose $x=\infty$ (for our half-space model of $\Hd$), then $\c C^\Phi$ is $\Pr_p$-a.s.~a compact set in $\hcHd$, so $C^\Phi$ is bounded in the Euclidean metric.
\end{rem}

Now, I formulate the theorem which is the counterpart of Theorem \ref{thm5.4}. Its proof (based on that of \cite[Theorem (5.4)]{Grim}) is delayed to Section \ref{prflemMen}.
\begin{thm}[exponential decay of $g_p(\cdot)$]\label{lemMen}\label{LEMMEN}
Let a graph $G$ embedded in $\Hd$ be connected, locally finite, transitive under isometries and let its edges be geodesic segments (as in Assumption \ref{assmG}). Then, for any $p<p_0$, there exists $\psi=\psi(p)>0$ such that~for any $r>0$,
$$g_p(r)\le e^{-\psi r}.$$
\end{thm}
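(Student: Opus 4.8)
The plan is to transcribe, to the present setting, Menshikov's proof of exponential decay below the critical point in the form given in \cite[proof of Theorem~(5.4)]{Grim} (which is Theorem~\ref{thm5.4} here); the family of percolation processes $\{\Phi\hR[\omega]\cap L:(h,R)\in\IOd\}$ plays the role of the single process on $\mathbb{Z}^{d-1}$ there. Fix $p<p_0$ and choose $p<p_1<p_2<p_0$. Everything below is carried out for the coupled processes on the graphs $\G\hR\cap L$ with parameter ranging over $[0;p_2]$; I abbreviate $g\hR_\pi(r)=\Pr_\pi(\ho\conn S_r\text{ in }\G\hR\cap L)$, so that $g_\pi(r)=\sup_{(h,R)\in\IOd}g\hR_\pi(r)$.

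\emph{Step~0: the tail tends to~$0$.} I first check that $g_\pi(r)\to0$ as $r\to\infty$ for every $\pi<p_0$. Since $\NP$ is an interval with $\sup\NP=p_0$, such a $\pi$ belongs to $\NP$, so by Remark~\ref{rembdd} the cluster $C\hR$ is $\Pr_\pi$-a.s.\ bounded in the Euclidean metric for every $(h,R)$; as $C\hR_L\subseteq C\hR$ the same holds for the cluster in $\G\hR\cap L$, hence $g\hR_\pi(r)\to0$ for each fixed $(h,R)$. Uniformity in $(h,R)$ comes from a compactness argument: $O(d)$ is compact, and the case $h\to0$ is reduced to the previous one by rescaling by $h$, which identifies the process on $\G\hR\cap L$ with the process on $\G^{(1,R)}\cap L^{1/h}$ and $S_r$ with $S_{r/h}$, so that $g\hR_\pi(r)\le\Pr_\pi(o^{(1,R)}\conn S_{r/h}\text{ in }\G^{(1,R)})\to0$ as $r/h\to\infty$.

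\emph{Step~1: the differential inequality.} For fixed $(h,R)$ and $r>0$ the event $\{\ho\conn S_r\text{ in }\G\hR\cap L\}$ is increasing and is determined by the states of the finitely many edges of $\G\hR$ meeting the bounded region $B^1_r$, so Russo's formula (see \cite{Grim}) yields $\frac{d}{d\pi}g\hR_\pi(r)=\Est_\pi(\#\{\text{edges of }\G\hR\text{ pivotal for that event}\})$. The heart of the matter is to bound this expectation below, with a constant independent of $(h,R)$, by Menshikov's counting estimate: partition $B^1_r$ into of order $r$ concentric shells in the $d_\hbd$-metric; any open path realising the connection must cross all of them, and if the open cluster of $\ho$ uses a pivotal edge then past that edge it only ``just'' reaches the next shell, so the number of pivotal edges is at least the number of shells that consecutive ``overshoot'' lengths can cover, each overshoot being stochastically dominated --- via transitivity of $\G$ under isometries (to move the estimate to an arbitrary vertex) and monotonicity under $\G\hR\cap L\subseteq\G\hR$ --- by the tail $(g_\pi(j))_j$. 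A BK/FKG bookkeeping then produces a constant $c=c(d,\G)>0$, uniform in $(h,R)$, $r$ and $\pi$, with
\begin{equation*}
\frac{d}{d\pi}\,g\hR_\pi(r)\ \ge\ c\,g\hR_\pi(r)\bigl(1-g\hR_\pi(r)\bigr)\,\frac{r}{\sum_{j=1}^{r}g_\pi(j)}.
\end{equation*}

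\emph{Step~2: integration, and the main obstacle.} From here one follows Menshikov's argument. Using Step~0 (so that $1-g\hR_\pi(r)\ge\tfrac12$ for $r$ large, uniformly in $(h,R)$ and $\pi\le p_2$, and that $\tfrac1r\sum_{j\le r}g_{p_2}(j)\to0$), integrating the inequality over $[p_1;p_2]$ and taking the supremum over $(h,R)$ gives $g_{p_1}(r)\le\exp(-\omega(r))$ with $\omega(r)\to\infty$; the subtle part of \cite[proof of Theorem~(5.4)]{Grim} then upgrades this to $M:=\sum_{r\ge1}g_{p_1}(r)<\infty$. Since $\sum_{j\le r}g_\pi(j)\le M$ for $\pi\le p_1$, integrating the inequality over $[p;p_1]$ and taking the supremum over $(h,R)$ gives $g_p(r)\le g_{p_1}(r)\exp\!\bigl(-\tfrac{c(p_1-p)}{2M}\,r\bigr)\le e^{-\psi r}$ for $r$ large, with $\psi=\tfrac{c(p_1-p)}{2M}>0$; enlarging the constant absorbs the finitely many small~$r$, completing the proof. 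The hardest point is Step~1: making Menshikov's pivotal-edge count go through in the hyperbolic setting with all constants \emph{uniform} over the non-compact family $\IOd$ and over all starting vertices --- this requires transitivity of $\G$, the rescaling argument controlling $h\to0$, and care with the geometry of the $d_\hbd$-shells, which are not nested the way Euclidean boxes in $\mathbb{Z}^{d-1}$ are. This is why the detailed proof is deferred to Section~\ref{prflemMen}.
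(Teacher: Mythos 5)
Your high-level strategy coincides with the paper's: one adapts Menshikov's proof from \cite[Theorem (5.4)]{Grim}, letting the coupled family of processes on $\G\hR\cap L$, $(h,R)\in\IOd$, play the role of the single process on $\mathbb{Z}^{d-1}$, proving a differential inequality via Russo's formula and a pivotal-edge/BK argument, and bootstrapping it. Your Step~0 is essentially the paper's Claim~\ref{Mtight}, and your Step~2 outlines the same bootstrapping as the paper's Lemma~\ref{gt<=sqrt} and the calculations following it. However, Step~1 contains a genuine gap.

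You assert that $\{\ho\conn S_r\text{ in }\G\hR\cap L\}$ is ``determined by the states of the finitely many edges of $\G\hR$ meeting the bounded region $B^1_r$.'' This is false: $B^1_r = [-r;r]^{d-1}\times(0;1]$ is bounded in the Euclidean metric, but it is not relatively compact in $\Hd$ --- it abuts the ideal boundary $\hbd\Hd$, so it has infinite hyperbolic diameter and therefore meets infinitely many vertices and edges of the (hyperbolically) locally finite graph $\G\hR$. Consequently the event is not a cylinder event and Russo's formula cannot be applied to it directly; nor can the BK inequality be applied directly to the events $\{y(\Gamma)\conn S_r\}$ arising in the pivotal-edge bookkeeping (both require events determined by finitely many edges). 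The paper resolves this by truncating to $L_\d=\R^{d-1}\times[\d;1]$, working with the events $A^\d(r)=\{\ho\conn S_r\text{ in }\G\hR\cap L_\d\}$ (which do depend on finitely many edges), establishing the functional inequality \eqref{fctineq(f)} there, and only then letting $\d\to0^+$, then taking the supremum over $(h,R)$, and then a one-sided limit in $r$ to obtain \eqref{fctineq(gt)1}. This truncation is the essential technical device the author explicitly comments on in Remark~\ref{whyLd}; your proposal omits it entirely.

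Two further, related inaccuracies. Because the $d_\hbd$-distances between vertices of $\G\hR$ form a dense set (not $\N$ as for $\mathbb{Z}^{d-1}$), the correct normalisation in the differential inequality is the integral $\int_0^r\gt_\b(m)\,\ud m$, not the sum $\sum_{j\le r}g_\pi(j)$ you wrote; and to run the renewal/Wald step one needs $1-g_p$ to be a left-continuous distribution function, which need not hold, so the paper works throughout with the left-continuous modification $\gt_p(r)=\lim_{\rho\to r^-}g_p(\rho)$. Neither of these appears in your proposal, and both are needed to make Step~1 and the subsequent integration rigorous in the present continuum setting (cf.\ Remark~\ref{diffprfs}(2)--(3)).
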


The next lemma is a stronger version of the above one, where we take the union of all the clusters meeting some $\Bo$ instead of the cluster of $\ho$ in $\G\hR\cap L$. In other words, here the role of $\ho$ played in Theorem \ref{lemMen} is taken over by its thickened version $\Bo\cap V(\G^\Phi)$ for any $\Phi\in\Isom(\Hd)$. That leads to the following notation:
\begin{denot1}
I denote
$$\bo_\Phi=\Bo\cap V(\G^\Phi)$$
for $\Phi\in\Isom(\Hd)$.
\end{denot1}

\begin{df}
For any $C\subseteq\Hd$, I define its \emd{size} by
$$r(C)=\sup_{x\in C}d_\hbd(o,x).$$
\end{df}

\begin{lem}\label{corMen}
Let a graph $G$ embedded in $\Hd$ be connected, locally finite, transitive under isometries and let its edges be geodesic segments (as in Assumption \ref{assmG}). Then, for any $p$ such that~the conclusion of Theorem \ref{lemMen} holds (in particular, for $p<p_0$) and for any $r_0>0$, there exist $\al=\al(p,r_0),\phi=\phi(p,r_0)>0$ such that~for any $r\ge 0$,
\begin{equation}\label{ineqcorMen}
\sup_{\Phi\in\Isom(\Hd)} \Pr_p(r(\bigcup_{v\in\bo_\Phi} C_L^\Phi(v))\ge r) \le \al e^{-\phi r}.
\end{equation}
\end{lem}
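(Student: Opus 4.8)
The plan is to derive the estimate from Theorem~\ref{lemMen} in three moves: an isometry bringing each starting vertex to the normal form $\ho$; a passage from ``a vertex reaches $d_\hbd$-distance $\rho$'' to ``a cluster has size $\ge r$''; and — the real content — a decomposition of the (possibly infinite) family $\bo_\Phi$ according to the \emph{height} reached by the cluster in the half-space model. First I would record an isometry-invariance of the bound. Fix $\Phi\in\Isom(\Hd)$ and a vertex $v$ of $\G^\Phi$ of height $h\le1$; write $v=\Phi(w)$, $w\in V(\G)$, choose by transitivity an isometry $\g$ of $\Hd$ which is a graph automorphism of $\G$ with $\g(o)=w$, and let $T$ be the horizontal translation of $\Hd$ taking $\pbd(v)$ to $\pbd(o)$. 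Then $T\Phi\g$ sends $o$ to $(0,\dots,0,h)$, and since an isometry of $\Hd$ carrying $o$ to a point of height $h$ has derivative $h$ times an orthogonal matrix at $o$, the uniqueness in the definition of $\Phi\hR$ forces $T\Phi\g=\Phi\hR$ for a suitable $R$, whence $T[\G^\Phi]=\G\hR$ and $T(v)=\ho$. As $T$ preserves every $L^H$ and is a $d_\hbd$-isometry, while $\g$ preserves $\Pr_p$, this gives, for every $\rho>0$ and with $S_\rho(v)=\{y:d_\hbd(v,y)=\rho\}$,
$$\Pr_p\bigl(v\conn S_\rho(v)\text{ in }\G^\Phi\cap L\bigr)=\Pr_p\bigl(\ho\conn S_\rho\text{ in }\G\hR\cap L\bigr)\le g_p(\rho)\le e^{-\psi\rho},$$
with $\psi=\psi(p)>0$ from Theorem~\ref{lemMen}.

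Next I would pass from a vertex to a cluster. Edges of $\G^\Phi$ have hyperbolic length at most a constant $\lambda=\lambda(\G)$, so a piece of $\G^\Phi\cap L$ lying in a single geodesic segment has $d_\hbd$-diameter $\le\lambda$; in particular every point of a cluster $C$ of $\G^\Phi\cap L$ is within $d_\hbd$-distance $\lambda$ of a vertex of $C$, and — since along a geodesic of hyperbolic length $\le\lambda$ the height varies by a factor $\le e^{\lambda}$ — a cluster reaching height $>t$ contains an (original) vertex of height $>t\,e^{-\lambda}$. Consequently, if $d_\hbd(o,u)\le D$ and $r(C_L^\Phi(u))\ge r$ then $C_L^\Phi(u)$ contains a point at $d_\hbd$-distance $\ge r-D-\lambda$ from $u$, so by path-connectedness and the intermediate value theorem for $d_\hbd(u,\cdot)$ we get $\{r(C_L^\Phi(u))\ge r\}\subseteq\{u\conn S_{r-D-\lambda}(u)\text{ in }\G^\Phi\cap L\}$, of probability $\le e^{-\psi(r-D-\lambda)}$ by Step~1; likewise $\Pr_p(u\conn B_D(o)\text{ in }\G^\Phi\cap L)\le e^{-\psi(d_\hbd(o,u)-D-\lambda)}$.

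Since $r\bigl(\bigcup_{v\in\bo_\Phi}C_L^\Phi(v)\bigr)=\sup_v r(C_L^\Phi(v))$, the event to be estimated says that some cluster $C$ of $\G^\Phi\cap L$ meets $B_{r_0}(o)$ and has $r(C)\ge r$. The hard part, and the reason the lemma is not a one-line union bound, is that $\bo_\Phi$ may be infinite: $\G^\Phi$ typically has infinitely many vertices inside $\Bo=[-r_0,r_0]^{d-1}\times(0;1]$, accumulating on the ideal-boundary face. The way past this is to sort such clusters by the maximal height $H^*$ of $C$ (a.s.\ positive and finite, as $p\in\NP$): defining $j\ge0$ by $H^*\in(2^{-j-1},2^{-j}]$, the cluster lies in $L^{2^{-j}}$, and the dilation $y\mapsto2^{j}y$ — a hyperbolic isometry — carries $\G^\Phi\cap L^{2^{-j}}$ onto $\G^\Psi\cap L$ for an isometric copy $\G^\Psi$ of $\G$, multiplies all $d_\hbd$-distances by $2^{j}$, and turns $C$ into a cluster of $\G^\Psi\cap L$ that reaches height $>\tfrac12$, meets $B_{2^{j}r_0}(o)$, and has size $\ge2^{j}r$. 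By Step~2 this rescaled cluster contains a vertex $u$ of height $>\tfrac12e^{-\lambda}$; if $d_\hbd(o,u)\le2^{j-1}r$ then its cluster reaching size $2^{j}r$ has probability $\le e^{-\psi(2^{j-1}r-\lambda)}$, and if $d_\hbd(o,u)>2^{j-1}r$ then (using $r>2r_0$, so $2^{j-1}r>2^{j}r_0$) its reaching $B_{2^{j}r_0}(o)$ has probability $\le e^{-\psi(d_\hbd(o,u)-2^{j}r_0-\lambda)}$. The decisive point is that vertices of $\G^\Psi\cap L$ at height in $(\tfrac12e^{-\lambda},1]$ and $d_\hbd$-distance $\le R$ from $o$ lie in a hyperbolically bounded region, hence number only $O_{d,\lambda,\G}(R^{d-1})$; summing the two estimates over all such $u$ (grouping the second by $d_\hbd(o,u)\in[m,m+1)$ and using a geometric tail) bounds the $j$-th height level by $C_1(\G,p)\,e^{\psi r_0 2^{j}}(2^{j}r)^{d-1}e^{-\psi 2^{j}r/2}=C_1(2^{j}r)^{d-1}e^{-\psi 2^{j}(r/2-r_0)}$. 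Summing over $j\ge0$: for $r>2r_0+1$ the factor $e^{-\psi 2^{j}(r/2-r_0)}$ decays doubly-exponentially in $j$ and dominates the growth $2^{j(d-1)}$ of the vertex count, so the series is $\le C_2(\G,p,r_0)\,r^{d-1}e^{-\psi(r/2-r_0)}$; absorbing the polynomial into the exponent and enlarging the constant to cover $r\le2r_0+1$ gives \eqref{ineqcorMen} with $\al=\al(p,r_0)$ and $\phi=\phi(p)\in(0,\psi/2)$, uniformly in $\Phi$. The only delicate step is the last one — checking that the height-level sum really converges with the claimed rate, i.e.\ that the rescaling gain at level $j$ beats the number of candidate vertices — together with the bookkeeping that in a cluster reaching a prescribed height there is always an original vertex within a controlled height band (the ``factor $e^{\pm\lambda}$'' remark in Step~2).
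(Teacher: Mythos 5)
Your argument is correct and follows essentially the same strategy as the paper's own proof: a dyadic height stratification, a rescaling that brings the highest reached point into the top band, the exponential bound from Theorem \ref{lemMen}, and a count of candidate vertices whose polynomial growth across dyadic levels is dominated by the doubly-exponential decay of the rescaled probability. The bookkeeping differs slightly --- you index by the dyadic level $j$ of the cluster's maximal height, rescale by $2^j$, and then sum over a witnessing vertex $u$ in the top band with a near/far dichotomy, whereas the paper pivots on the maximal-height vertex $\vh$ visited by open paths from $\bo$ to $S_r$, rescales by $\tfrac{1}{Hh(\vh)}$, and sums over $v\in V(\G^\Phi)\cap B^1_r$ via a dyadic tessellation into copies of $K=\Bo\sm L^{1/2}$ --- but it is the same computation organised in a different order.
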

\begin{proof}
First, note that it is sufficient to prove the inequality
\begin{equation}
\label{ineqcorMen<->}
\sup_{\Phi\in\Isom(\Hd)} \Pr_p(\bo_\Phi \conn S_r\textrm{ in }\G^\Phi\cap L) \le \al e^{-\phi r}
\end{equation}
for $r$ greater than some fixed $r_1>0$ in place of \eqref{ineqcorMen}. Indeed, suppose there exist $\al,\phi>0$ such that \eqref{ineqcorMen<->} holds for all $r>r_1$. We then have:
\begin{itemize}
\item for any $r>r_1$ and $\e\in(0;\min(r-r_1,1))$,
\begin{align}
\sup_{\Phi\in\Isom(\Hd)} \Pr_p(r(\bigcup_{v\in\bo} C_L^\Phi(v))\ge r) &\le \sup_{\Phi\in\Isom(\Hd)} \Pr_p(\bo_\Phi \conn S_{r-\e}\textrm{ in }\G^\Phi\cap L) \le\\
&\le \al e^{-\phi(r-\e)} \le (\al e^{\phi})e^{-\phi r},
\end{align}
\item for $r\le r_1$, the left-hand side of \eqref{ineqcorMen} is less than or equal to $1 \le e^{\phi r_1}e^{-\phi r}$.
\end{itemize}
So then we will get the lemma for any $r\ge 0$ with $\max(e^{\phi r_1},\al e^{\phi})$ put in place of $\al$.

\par Now, I prove \eqref{ineqcorMen<->} (I pick $r_1$ as above later): let $r>r_0$ and $\Phi\in\Isom(\Hd)$. The task is to pick appropriate values of $\al$ and $\phi$ independently of $r$ and $\Phi$.
\begin{df}
Put $\bo=\bo_\Phi$. For $x\in\Hd\subseteq\R^d$, let $h(x)$ denote the $d$-th coordinate of $x$ (or: Euclidean distance from $x$ to $\hbd\Hd$), which I call \emd{height} of $x$.
\end{df}
Assume for a while that $\bo\conn S_r$ in $\G^\Phi\cap L$ (note that this event may have probability $0$, e.g.~when $\bo=\emptyset$).
Consider all open paths %\footnote{A \emd{simple} path is a path which does not visit any vertex more than once.}
in $\G^\Phi\cap L$ joining $\bo$ to $S_r$ and consider all the vertices of $G\Phi$ visited by those paths, lying in $B_r^1$. There is a non-zero finite number of vertices of maximal height among them because $\G^\Phi$ is locally finite. Choose one of these vertices at random and call it $\vh$. This $\Hd$-valued random variable is defined whenever $\bo\conn S_r$ in $\G^\Phi\cap L$.)
\begin{obs}
There exists $H\ge 1$ such that a.s. if $\vh$ is defined, then $\vh \conn S_\frac{r-r_0}{2}(\vh)$ in $\G^\Phi\cap L^{Hh(\vh)}$.
\end{obs}
\begin{proof}
 Assume that $\vh$ is defined and take a path $P$ joining $\bo$ to $S_r$ passing through $\vh$. Hyperbolic lengths of edges in $\G^\Phi$ are bounded from above (by the transitivity of $\G^\Phi$ under isometries).
That implies that for any edge of $\G^\Phi$ the ratio between the heights of any two of its points is also bounded from above by some constant $H\ge 1$ (it is going to be the $H$ in the observation). (The reason for that are the two following basic properties of the half-space model of $\Hd$:
\begin{itemize}
\item The heights of points of any fixed hyperbolic ball (of finite radius) are bounded from above and from below by some positive constants.
\item Any hyperbolic ball can be mapped onto any other hyperbolic ball of the same radius by a translation by vector from $\R^{d-1}\times\{0\}$ composed with a linear scaling of $\R^d$.
\end{itemize}
That implies that the path $P\subseteq L^{Hh(\vh)}$.
\par Now, because $P$ contains some points $x\in\Bo$ and $y\in S_r$ and, by triangle inequality, $d_\hbd(x,y)\ge r-r_0$, it follows that $d_\hbd(\vh,x)$ or $d_\hbd(\vh,y)$ is at least $\frac{r-r_0}{2}$ (again by triangle inequality). Hence, $P$ intersects $S_\frac{r-r_0}{2}(\vh)$, which finishes the proof.
\end{proof}
Based on that observation, I estimate:
\begin{align}
\Pr_p(\bo\conn S_r\textrm{ in }\G^\Phi\cap L) \label{Pbo<->Sr}\tag{$\ast$}
&= \quad\voidindop{\sum}{v\in V(\G^\Phi)\cap B_r^1}\quad \Pr_p(\vh\textrm{ is defined and }\vh=v) \le\\
&\le \quad\voidindop{\sum}{v\in V(\G^\Phi)\cap B_r^1}\quad \Pr_p(v\conn S_\frac{r-r_0}{2}(v)\textrm{ in }\G^\Phi\cap L^{Hh(v)}) =\\
&= \quad\voidindop{\sum}{v\in V(\G^\Phi)\cap B_r^1}\quad \Pr_p\Bigl(\textstyle\frac{1}{H}\cdot o\conn S_\frac{r-r_0}{2Hh(v)}\left(\frac{1}{H}\cdot o\right)\textrm{ in }
\frac{1}{Hh(v)}(\G^{\Phi} - \pbd(v))\cap L^1\Bigr),
% \sum_{v\in V(\G^\Phi)\cap B_r^1} \Pr_p(o\conn S_\frac{r-r_0}{2Hh(v)}(o)\textrm{ in }\G^{\Phi'_v}\cap L^1),
\end{align}
by mapping the situation via the (hyperbolic) isometry $\frac{1}{Hh(v)}(\cdot - \pbd(v))$ for each $v$.
Note that because for $v\in V(\G^\Phi)\cap B_r^1$, $\frac{1}{H}\cdot o$ indeed is a vertex of $\frac{1}{Hh(v)}(\G^{\Phi} - \pbd(v))$, by the transitivity of $\G$ under isometries, we can replace the isometry $\frac{1}{Hh(v)}\(\cdot-\pbd(v)\)$ with an isometry giving the same image of $\G$ and mapping $o$ to $\frac{1}{H}\cdot o$, hence of the form $\Phi^{(1/H,R)}$. That, combined with the assumption on $p$ (the conclusion of Theorem \ref{lemMen}), gives
%where
%$$\Phi'_v = \frac{1}{Hh(v)}\(\phi_v-\pbd(v)\)$$
%and $\phi_v$ is an isometry of $\Hd$ such that~$\phi_v[\G]$ and $\phi_v(o)=v$.\uwagap{Czy jasno?}
%Further,
\begin{equation}\label{Pbo<->Sr<exp}
\eqref{Pbo<->Sr} \le \sum_{v\in V(\G^\Phi)\cap B_r^1} g_p\(\frac{r-r_0}{2Hh(v)}\)
\le \sum_{v\in V(\G^\Phi)\cap B_r^1} e^{-\psi\frac{r-r_0}{2Hh(v)}},
\end{equation}
where $\psi$ is as in Theorem \ref{lemMen}.
%(as $\Phi'(o)=(0,\ldots,0,1/H)$, so $\Phi'$ is of the form $((h',R'))$).
\par Because $B_r^1 = [-r;r]^{d-1}\times(0;1]$, one can cover it by $\left\lceil\frac{r}{r_0}\right\rceil^{d-1}$ translations of $\Bo$ by vectors from $\R^{d-1}\times\{0\}$. So, let $\{\Bo(x_i):i=1,\ldots,\left\lceil\frac{r}{r_0}\right\rceil^{d-1}\}$ be such covering. Moreover, each $\Bo(x_i)$ can be tesselated by infinitely many isometric (in the hyperbolic sense) copies of $K=\Bo\sm L^{\frac{1}{2}}$, more precisely, by: a translation of $K$, $2^{d-1}$ translations of $\frac{1}{2}K$,  $(2^{d-1})^2$ translations of $\frac{1}{2^2}K$, etc., all along $\R^{d-1}\times\{0\}$. Let $U=\sup_{\phi\in\Isom(\Hd)}\#(V(\G^\Phi)\cap\phi[K])$ ($U<\infty$ by local finiteness of $\G$). Then, splitting the sum from \eqref{Pbo<->Sr<exp} according to those tesselations,
\begin{align}
\eqref{Pbo<->Sr} &\le \sum_{i=1}^{\left\lceil\frac{r}{r_0}\right\rceil^{d-1}} \sum_{v\in V(\G^\Phi)\cap B_{r_0}^1(x_i)} e^{-\psi\frac{r-r_0}{2Hh(v)}} \le\\
&\le \left\lceil\frac{r}{r_0}\right\rceil^{d-1} \sum_{k=0}^\infty (2^{d-1})^k U\sup_{h\in[\frac{1}{2^{k+1}};\frac{1}{2^k}]} e^{-\psi\frac{r-r_0}{2Hh}} \le\\
&\le U\left\lceil\frac{r}{r_0}\right\rceil^{d-1} \sum_{k=0}^\infty (2^{d-1})^k e^{-\frac{\psi}{H}2^{k-1}(r-r_0)} =\label{smallinfsum}\\
&= U\left\lceil\frac{r}{r_0}\right\rceil^{d-1} \sum_{k=0}^\infty e^{\ln 2\cdot k(d-1) - \frac{\psi}{H}2^{k-1}(r-r_0)}.
\end{align}
Now, I am going to show that the above bound is finite and tends to $0$ at exponential rate with $r\to\infty$. First, I claim that there exists $k_0\in\N$ such that
\begin{equation}\label{k_0cond}
(\forall k\ge k_0)(\forall r\ge 2r_0) \left(\ln2\cdot k(d-1) - 2^{k-1} \frac{\psi}{H}(r-r_0) \le -kr\right).
\end{equation}
Indeed, for sufficiently large $k$ we have $2^{k-1}\frac{\psi}{H}-k>0$, so for $r\ge 2r_0$
\begin{equation}
\left(2^{k-1}\frac{\psi}{H}-k\right)r \ge \left(2^{k-1}\frac{\psi}{H}-k\right)\cdot2r_0
\end{equation}
and
\begin{equation}
2^{k-1}\frac{\psi}{H}(r-r_0) - kr \ge 2^{k-1}\frac{\psi}{H}r_0 - 2kr_0 \ge k(d-1)\ln2
\end{equation}
for sufficiently large $k$. So, let $k_0$ satisfy \eqref{k_0cond}. Then, for $r\ge 2r_0$,
\begin{align}
\eqref{Pbo<->Sr} &\le U\left\lceil\frac{r}{r_0}\right\rceil^{d-1} \(\sum_{k=0}^{k_0-1} (2^{d-1})^k e^{-2^{k-1} \frac{\psi}{H}(r-r_0)} + \sum_{k=k_0}^\infty e^{-kr}\) \le\\
&\le U\left\lceil\frac{r}{r_0}\right\rceil^{d-1} \biggl(k_0(2^{d-1})^{k_0-1} e^{-\frac{\psi}{2H}(r-r_0)} + e^{-k_0r}\underbrace{\frac{1}{1-e^{-r}}}_{\le \frac{1}{1-e^{-2r_0}}}\biggr) \le\\
&\le U\left\lceil\frac{r}{r_0}\right\rceil^{d-1} (De^{-Er})
\end{align}
for some constants $D,E>0$. If we choose $r_1\ge 2r_0$ such that
$$(\forall r\ge r_1)\left( \left\lceil\frac{r}{r_0}\right\rceil^{d-1} \le e^\frac{Er}{2}\right)$$
(which is possible), then
$$\eqref{Pbo<->Sr}\le UDe^\frac{-Er}{2}\quad\textrm{for }r\ge r_1,$$
%But for $r<r_1$, $\eqref{Pbo<->Sr}\le 1 \le e^\frac{Er_1}{2}e^\frac{-Er}{2}$, so
%$$\eqref{Pbo<->Sr}\le \max(AD, e^\frac{Er_1}{2}) e^\frac{-Er}{2},$$
which finishes the proof of the lemma.
\end{proof}

\section{Scaling---proof of the main theorem}\label{secprfmainthm}

Now I complete the proof of the main theorem:
\begin{thm*}[recalled Theorem \ref{mainthm}]
Let $G$ satisfy the Assumption \ref{assmG}. Then, for any $0\le p<p_0$, a.s. every cluster in $p$-\Bbp\ on $\G$ has only one-point boundaries of ends.
\end{thm*}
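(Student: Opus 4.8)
The plan is to prove the equivalent statement that, for every $p<p_0$, the event ``some percolation cluster admits an end whose boundary contains at least two distinct points'' has $\Pr_p$-probability $0$. Two consequences of $p<p_0$ are used at the outset. Since $\NP(\G)$ is an interval containing $0$ whose supremum is $p_0$, we have $p\in\NP(\G)$; hence, on the one hand, the hypotheses of Theorem~\ref{lemMen} and Lemma~\ref{corMen} hold, so $g_p(r)$ decays exponentially and the $d_\hbd$-size of the union of the $L$-clusters meeting any thick origin $\bo_\Phi$ has exponential tails, uniformly over $\Phi\in\Isom(\Hd)$; on the other hand, $\Pr_p(x\in\bdi C(v))=0$ for every $x\in\bdi\Hd$ and every vertex $v$ (for $v\neq o$, apply an isometry of $\G$ carrying $o$ to $v$). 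Integrating the latter over $x\in\bdi\Hd$ against Lebesgue measure and summing over the countably many vertices $v$, Fubini's theorem gives that $\Pr_p$-a.s.\ every cluster $C$ has $\bdi C$ of measure zero; in particular $\bdi C$ is a proper subset of $\bdi\Hd$.

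Next I would fix a countable dense family $\mathcal I\subseteq\Isom(\Hd)$ and run Borel--Cantelli: since $\sum_{r}\al e^{-\phi r}<\infty$, for each $\Phi\in\mathcal I$, each $r_0\in\N$ and each slab height $H\in\N$ (using the isometric identification of $\G^\Phi\cap L^H$ with $\G^{\Phi'}\cap L$ for a rescaled frame $\Phi'$), Lemma~\ref{corMen} yields that a.s.\ $r\bigl(\bigcup_{v\in\bo_\Phi}C^\Phi_{L^H}(v)\bigr)<\infty$. A routine perturbation argument --- a small change of $\Phi$ moves the thick origin and the slab by a bounded amount and changes $r(\cdot)$ by a bounded amount --- then upgrades this to the a.s.\ event $\Omega_1$: for \emph{every} isometry $\Phi$ and all $r_0,H\in\N$, $r\bigl(\bigcup_{v\in\bo_\Phi}C^\Phi_{L^H}(v)\bigr)<\infty$. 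Set $\Omega_0=\Omega_1\cap\{\text{every cluster }C\text{ has }\bdi C\text{ of measure zero}\}$, so $\Pr_p(\Omega_0)=1$.

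On $\Omega_0$, suppose toward a contradiction that a cluster $C=C(v_0)$ has an end $e$ with distinct points $x,y\in\bdi e$. Since $\bdi C$ is a null, proper subset of $\bdi\Hd$, pick $\zeta\in\bdi\Hd\setminus\bdi C$; then $C$ misses a small enough horoball based at $\zeta$, so in the half-space model with $\zeta$ sent to $\infty$ we may assume $C\subseteq L^H$ for some $H$, while $x,y$ become distinct points $x',y'\in\hbd\Hd$. For each $n$ the end-component $e(B(v_0,n))$ (where $B(v_0,n)$ is the ball of hyperbolic radius $n$) is connected, avoids $B(v_0,n)$, and accumulates at both $x'$ and $y'$, hence contains a path $P_n$ from a vertex near $x'$ to one near $y'$. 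Estimating hyperbolic balls inside the slab $L^H$, one gets that for large $n$ either $P_n$ reaches $d_\hbd$-distance tending to $\infty$ from $v_0$, or $P_n$ is confined to Euclidean height tending to $0$. In the first case, translating the frame by a rational approximation of $-x'$ so that the near-$x'$ endpoint of $P_n$ lies in $\bo_\Phi$, the path $P_n$ exhibits, inside the \emph{fixed} frame $\Phi$, a connected subgraph of $\G^\Phi\cap L^H$ through $\bo_\Phi$ of $d_\hbd$-size tending to $\infty$; hence $r\bigl(\bigcup_{v\in\bo_\Phi}C^\Phi_{L^H}(v)\bigr)=\infty$, contradicting $\Omega_1$. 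In the second case, which is the delicate one, the paths $P_n$ accumulate on $\hbd\Hd$ along a connected set joining fixed neighbourhoods of $x'$ and $y'$, so $\bdi e$ (hence $\bdi C$) contains a nondegenerate continuum; rescaling by the appropriate dilation and a rational recentering that uses this continuum structure, one again produces frames in which $r\bigl(\bigcup_{v\in\bo_\Phi}C^\Phi_L(v)\bigr)$ is arbitrarily large, contradicting $\Omega_1$ --- the only configuration in which this rescaling could fail to close being one in which $C$ accumulates on a subset of $\bdi\Hd$ of positive measure, which is excluded because $\bdi C$ is null. Either way we reach a contradiction, so on $\Omega_0$ no cluster has an end with a two-point boundary, which proves Theorem~\ref{mainthm}.

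The main obstacle is precisely the geometric scaling step of the last paragraph: converting ``$\bdi e$ has two points'' into a connected piece of the cluster that starts in a bounded region and spreads to $d_\hbd$-distance $\to\infty$ while staying inside a single horoball complement, i.e.\ into the hypothesis of Lemma~\ref{corMen}. This requires (a) choosing the horoball with centre off $\bdi C$ so that the cluster sits in a slab $L^H$; (b) controlling the rescalings so the long connection stays anchored at a thick origin of some (fixed or countable-family) frame; and (c) isolating and excluding, via $\Pr_p(x\in\bdi C(o))=0$, the ``fan-shaped'' degeneracy in which the end covers a portion of $\bdi\Hd$ of positive measure and no clean contradiction with a fixed-scale exponential bound is available. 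The surrounding steps --- the Fubini reduction, the Borel--Cantelli application of Lemma~\ref{corMen}, and the density/perturbation passage from $\mathcal I$ to all isometries --- are routine.
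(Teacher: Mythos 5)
Your overall approach does not close, and the obstacle you flag at the end is not merely technical --- it is structural. The event $\Omega_1$ you build (``for every frame $\Phi$ and all $H,r_0$, $r\bigl(\bigcup_{v\in\bo_\Phi}C^\Phi_{L^H}(v)\bigr)<\infty$'') is already automatic from $p\in\NP$ and Remark~\ref{rembdd}: once the cluster of each vertex is a.s.\ Euclidean-bounded, the union of finitely many such clusters is a.s.\ bounded, for every fixed frame. So $\Omega_1$ encodes no new information, and a pathwise contradiction with it would require producing a \emph{single} fixed frame $\Phi$ in which some $r(\cdot)$ is infinite. That cannot happen for a Euclidean-bounded cluster: both accumulation points $x',y'$ of the end sit in a bounded region of $\hbd\Hd$, so every connecting path $P_n$ stays at bounded $d_\hbd$-distance from $o$, and the $d_\hbd$-size stays finite. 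The apparent blow-up of $r(\cdot)$ only appears after a rescaling, i.e.\ after changing the frame to a dilation with ratio $\to\infty$; but then the frame is not fixed, and ``finite for each fixed frame'' is not contradicted.

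The paper's proof avoids this by staying at the level of probabilities of events rather than a.s.\ properties of configurations. It assumes, towards a contradiction, a uniform $a>0$ lower bound for the event ``some end boundary has Euclidean diameter $\geq\d$ and meets $\hbd B_r$''; it slides the slab height $h=1/2^k$ down, rescales by $2^k$ (a hyperbolic isometry), and observes that the event becomes ``some cluster in a slab $L$ has projection diameter $\geq 2^k\d$ and meets $B_{2^k r}$,'' which still has probability $\geq a$. It then covers $B_{2^k r}$ by $(2^k)^{d-1}$ isometric copies of $B_r$ and applies the \emph{quantitative} bound of Lemma~\ref{corMen}: the resulting estimate $a\le(2^k)^{d-1}\alpha e^{-\phi\d 2^{k-1}}\to 0$ is a contradiction. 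The key is precisely the interplay between the polynomial count of translates and the exponential decay, and that decay is only available as a probability bound, not pathwise. In short: you correctly identified Lemma~\ref{corMen} and the scaling by horoball complements as the relevant ingredients, but the Borel--Cantelli/perturbation framework and the Fubini step are detours, and the pathwise formulation of the final contradiction is where the argument genuinely fails.
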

\begin{proof}[Proof of Theorem \ref{mainthm}]
Fix $p\in[0;p_0)$ and suppose towards a contradiction that with some positive probability there is some cluster with some end with a non-one-point boundary. Note that by Remark \ref{rembdd} and by the transitivity of $\G$ under isometries, for any $v\in V(\G)$ a.s.~$C(v)$ is bounded in the Euclidean metric, so, a.s.~all the percolation clusters in $\G$ are bounded in the Euclidean metric. Then, for some $\d>0$ and $r>0$, there exists with \pbb\ $a>0$ a cluster bounded in the Euclidean metric, with boundary of some end having Euclidean diameter greater than or equal to $\d$ and intersecting the open disc $\sint_{\hbd\Hd}\hbd B_r$.
Let $C$ and $e$ be such cluster and its end, respectively. Let for $A\subseteq\Hd$, the \emd{projection diameter} of $A$ be the Euclidean diameter of $\pbd(A)$. Then for $h>0$
\begin{itemize}
\item the set $\cl{C\sm L^h}$ is compact;
\item $e(\cl{C\sm L^h})$ is a cluster in the percolation configuration on $\G\cap L^h)$;
\item $e(\cl{C\sm L^h})$ has projection diameter at least $\d$ and intersects $B_r\cap V(\G)$.
\end{itemize}
All the above implies that for any $k\in\N$,
$$\Pr_p(\exists\textrm{ a cluster in $\G\cap L^\frac{1}{2^k}$ of projection diameter $\ge\d$ intersecting $B_r\cap V(\G)$})\ge a,$$
so, by scaling by $2^k$ in $\R^d$ (which is a hyperbolic isometry), we obtain
$$\Pr_p(\exists\textrm{ a cluster in $\G^{2^k\cdot}\cap L$ of projection diameter $\ge2^k\d$ intersecting $B_{2^kr}\cap V(\G^{2^k\cdot})$})\ge a$$
(where $\G^{2^k\cdot}$ is the image of $\G$ under the scaling). $B_{2^kr}\cap L$ is a sum of $(2^k)^{d-1}$ isometric copies of $B_r\cap L$, so the left-hand side of above inequality is bounded from above by
\begin{multline}
(2^k)^{d-1}\sup_{\Phi\in\Isom(\Hd)} \Pr_p(\exists\textrm{ a cluster in $\G^\Phi\cap L$ of proj.\ diam.\ $\ge2^k\d$ intersecting $B_{r}\cap V(\G^\Phi)$}) \le\\
\le (2^k)^{d-1}\sup_{\Phi\in\Isom(\Hd)} \Pr_p\left(r\left(\bigcup_{v\in B_r^1\cap V(\G^\Phi)} C_L^\Phi(v)\right)\ge \frac{2^k\d}{2}\right)
\end{multline}
(because the size of a cluster is at least half its projection diameter),
so by Lemma \ref{corMen}, for any $k\in\N$,
$$a\le (2^k)^{d-1} \al e^{-\phi\d 2^{k-1}},$$
where $\al,\phi>0$ are constants (as well as $\d$, $a$ and $r$). But the right-hand side of this inequality tends to $0$ with $k\to \infty$, a contradiction.
\end{proof}

\section{Proof of the exponential decay}\label{prflemMen}
In this section, I prove Theorem \ref{lemMen}:
\begin{thm*}[recalled Theorem \ref{lemMen}]
Let a graph $G$ embedded in $\Hd$ be connected, locally finite, transitive under isometries and let its edges be geodesic segments (as in Assumption \ref{assmG}). Then, for any $p<p_0$, there exists $\psi=\psi(p)>0$ such that~for any $r>0$,
$$g_p(r)\le e^{-\psi r}.$$
\end{thm*}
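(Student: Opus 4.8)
The plan is to adapt Menshikov's argument for exponential decay below $\pc$, in the form of the proof of \cite[Theorem~(5.4)]{Grim} (reproduced here as Theorem~\ref{thm5.4}), to the whole family of percolation processes $\Phi\hR[\omega]\cap L$, $(h,R)\in\IOd$, keeping all constants uniform in $(h,R)$. The feature that makes the adaptation work is already built into the definition of $g_p$: since $g_p(r)$ is a supremum over $(h,R)$ while $L=L^1$ is a fixed slab, the scaling identity
\[
\Pr_p(\ho\conn S_r\ \text{in}\ \G\hR\cap L)=\Pr_p\big(o\conn S_{r/h}\ \text{in}\ \G^{(1,R)}\cap L^{1/h}\big)
\]
shows that $g_p$ already absorbs all rescaled copies; consequently the cluster grown freshly from the far endpoint of a pivotal edge, after translating and rescaling by the height of that endpoint, has $d_\hbd$-extent with tail bounded by $g_p(\cdot)$ --- precisely the closure property Menshikov's induction needs. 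I will also use that $\G$ has geodesic edges and is transitive under isometries, so that inside $L^1$ the Euclidean length of each edge, hence its $d_\hbd$-oscillation, is bounded by a constant $\ell_0=\ell_0(\G)$, and that the ``walls'' $S_r\cap L$ separate $\ho$ from $\{x\in L:d_\hbd(o,x)>r\}$.

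\emph{Step 1 (uniform tightness).} First I would show that $g_p(r)\to 0$ as $r\to\infty$, for $p<p_0$. As $\NP$ is an interval with $\sup\NP=p_0$, I may assume $p\in\NP$ (otherwise replace $p$ by some $p'\in(p,p_0)\cap\NP$ and use that connection probabilities are non-decreasing in $p$). Taking $x=\infty$ in \eqref{assmnull} and using Remark~\ref{rembdd}, the cluster $C^\Phi$ is $\Pr_p$-a.s.\ bounded in the Euclidean metric for every $\Phi\in\Isom(\Hd)$, hence of finite $d_\hbd$-extent a.s.; since $\{\ho\conn S_r\ \text{in}\ \G\hR\cap L\}$ is contained in $\{C\hR\ \text{has}\ d_\hbd\text{-extent}\ \ge r\}$, these probabilities tend to $0$ for each fixed $(h,R)$. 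Uniformity then follows from compactness of the factor $O(d)$ together with the scaling identity above, which reduces the non-compact range $h\in(0;1]$ (small $h$ amounting to connections to distant walls inside a thick slab) to the same statement.

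\emph{Step 2 (the differential inequality --- the main obstacle).} Fix $p<p_0$, $r>0$ and $(h,R)\in\IOd$. The portion of any realising open path up to its first meeting with $S_r$ lies in $\cl{B_r^1}$ and has positive minimal height, so $\{\ho\conn S_r\ \text{in}\ \G\hR\cap L\}$ is the increasing union, over $\e\downarrow0$, of the events ``$\ho\conn S_r$ using only edges of $\G\hR$ in the compact set $[-r;r]^{d-1}\times[\e;1]$'', each depending on finitely many edges. On such a finite event, Russo's formula gives $\tfrac{\ud}{\ud p}\Pr_p(\ho\conn S_r)\ge\tfrac1p\Est_p\big(N\,\ind_{\{\ho\conn S_r\}}\big)$, with $N$ the number of pivotal edges. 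The combinatorial heart, following \cite{Grim}: on $\{\ho\conn S_r\}$ the pivotal edges are totally ordered along any open path from $\ho$ to $S_r$ as $e_1,\dots,e_N$, and conditioning successively, the stretch from (the far end of) $e_i$ to $e_{i+1}$ advances $d_\hbd$ by at most $\ell_0$ plus the $d_\hbd$-extent of a freshly grown cluster, whose tail is bounded by $g_p(\cdot)$ by the rescaling property above (this is where transitivity under isometries enters). Summing, $\Est_p(N\mid\ho\conn S_r)\ge c_1 r/\Delta_p(r)-c_2$ where $\Delta_p(r)=\sum_{k\le r}g_p(k)$ and $c_1,c_2$ depend only on $\G$; Step~1 is used to absorb a factor $1-g_p(r)$ for large $r$. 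Letting $\e\downarrow0$ and taking the supremum over $(h,R)$ yields, for $r$ large,
\[
\frac{\ud}{\ud p}\log g_p(r)\ \ge\ \frac{c_1}{p}\Big(\frac{r}{\Delta_p(r)}-c_2\Big),
\]
uniformly in $(h,R)$.

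\emph{Step 3 (integration, bootstrap, and small $r$).} Choose $p<\beta<\gamma<p_0$. Integrating the inequality of Step~2 over $[\beta,\gamma]$ and using monotonicity of $p\mapsto\Delta_p(r)$ gives $g_\beta(r)\le\exp\big(-c_3(\gamma-\beta)\,r/\Delta_\gamma(r)+c_4\big)$ for large $r$. Now Menshikov's dichotomy on the growth of $\Delta_\gamma$, exactly as in \cite{Grim}: if $\Delta_\gamma(r)\le\sqrt r$ along a subsequence then, $r\mapsto g_\beta(r)$ being non-increasing, $g_\beta(r)\le\exp(-c_5\sqrt r+c_4)$ for all large $r$, whence $\sum_r g_p(r)<\infty$, so $\Delta_p(\cdot)$ is bounded, and re-inserting this bound into Step~2 with a parameter below $p$ produces $g_p(r)\le e^{-\psi r}$ for all large $r$; the complementary case is excluded as there. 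Finally, since $p<1$ and $\G$ has constant degree, $g_p(r)\le 1-(1-p)^{\deg\G}<1$ for every $r>0$, so after decreasing $\psi$ the bound $g_p(r)\le e^{-\psi r}$ holds for all $r>0$, as required. I expect Step~2 to be the main obstacle: one must rewrite Menshikov's pivotal-edge analysis with the $l^\infty$-projection pseudo-metric $d_\hbd$ --- not a genuine graph metric --- in the role of the box radius, verify rigorously that exploration beyond a pivotal edge is governed by $g_p$ through the built-in rescaling, and keep all constants uniform over the non-compact family $\{\G\hR\cap L:(h,R)\in\IOd\}$.
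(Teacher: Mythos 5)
Your proposal follows the same strategy as the paper's: restrict to a finite slab to apply Russo's formula, run the Menshikov/Grimmett pivotal-edge ("chain of sausages") analysis with $d_\hbd$ replacing the graph metric and with the rescaling property of $g_p$ closing the induction, establish $g_p(r)\to 0$ via Remark~\ref{rembdd} and compactness of $O(d)$ (the paper's Claim~\ref{Mtight}), and bootstrap the resulting functional inequality to exponential decay. The only notable divergence is cosmetic: in your Step~3 you describe a ``dichotomy'' on $\Delta_\gamma$, whereas the paper's Lemma~\ref{gt<=sqrt} (following Grimmett's Lemma~(5.24)) proves the intermediate bound $\gt_p(r)\le\delta(p)/\sqrt r$ unconditionally by a recursive construction of sequences $(p_i)$, $(r_i)$; since you explicitly defer to Grimmett here the difference is one of description rather than substance.
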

\begin{proof}
As mentioned earlier, this proof is an adaptation of the proof of Theorem (5.4) in \cite{Grim} based on the work \cite{Men}. Its structure and most of its notation are also borrowed from \cite{Grim}, so it is quite easy to compare both the proofs. (The differences are technical and they are summarised in Remark \ref{diffprfs}.) The longest part of this proof is devoted to show functional inequality \eqref{fctineq(gt)1} and it goes roughly linearly. Then follows Lemma \ref{gt<=sqrt}, whose proof, using that functional inequality, is deferred to the end of this section. Roughly speaking, that lemma provides a mild asymptotic estimate for $g_p$ (more precisely: for $\gt_p$ defined below), which is then sharpened to that desired in Theorem \ref{lemMen}, using repeatedly inequality \eqref{fctineq(gt)1}.
\par At some point, I would like to use random variables with left-continuous distribution function\footnote{By the \emd{left-continuous distribution function} of a probability distribution (measure) $\mu$ on $\R$, I mean the function $\R\ni x\mapsto\mu((-\infty,x))$.} $1-g_p$. Because $1-g_p$ does not need to be left-continuous, I replace $g_p$, when needed, by its left-continuous version $\gt_p$ defined as follows:
\begin{df}
Put $\gt_p(r)=\lim_{\rho\to r^-} g_p(\rho)$ for $r>0$.
\end{df}
As one of the cornerstones of this proof, I am going to prove the following functional inequality for $\gt_\cdot(\cdot)$: for any $\al,\b$ s.t.~$0\le\al<\b\le 1$ and for $r>0$,
\begin{equation}\label{fctineq(gt)1}
\gt_\al(r) \le \gt_\b(r)\exp\(-(\b-\al)\(\frac{r}{a+\int_0^r \gt_\b(m)\,\ud m} - 1\)\),
\end{equation}
where $a$ is a positive constant depending only on $G$. Note that it implies Theorem \ref{lemMen} provided that the integral in the denominator is a bounded function of $r$.
\par I am going to approach this inequality, considering the following events depending only on a finite fragment of the percolation configuration and proving functional inequality \eqref{fctineq(f)} (see below). Cf.\ Remark \ref{whyLd}.
\begin{df}
Fix arbitrary $(h,R)\in\IOd$. I am going to use events $A^\d(r)$ defined as follows: let $p\in[0,1]$, $r>0$ and $\d\in(0;h]$, and define $L_\d=\R^{d-1}\times[\d;1]\subseteq\Hd$ (not to be confused with $L^\d$). Let the event
$$A^\d(r)=\{\ho\conn S_r\textrm{ in }\G\hR\cap L_\d\}$$
and let
$$f_p^\d(r)=\Pr_p(A^\d(r)).$$
\end{df}
Now, I am going to show that the functions defined above satisfy a functional inequality:
\begin{equation}\label{fctineq(f)}
f_\al^\d(r)\le f_\b^\d(r)\exp\(-(\b-\al)\(\frac{r}{a+\int_0^r \gt_\b(m)\,\ud m} - 1\)\)
\end{equation}
for any $0\le\al<\b\le 1$, $r>0$\uwaga? and for $\d\in(0;h)$.
Having obtained this, I will pass to some limits and to supremum over $(h,R)$, obtaining the inequality \eqref{fctineq(gt)1}.
\par Note that, if there is no path joining $\ho$ to $S_r$ in $\G\hR\cap L_\d$ at all, then for any $p\in[0;1]$, $f_p^\d(r)=0$ and the inequality \eqref{fctineq(f)} is obvious. The same happens when $\al=0$. Because in the proof of that inequality I need $f_p^\d(r)>0$ and $\al>0$, now I make the following assumption (without loss of generality):
\begin{assm}\label{assmapath}
I assume that there is a path joining $\ho$ to $S_r$ in $\G\hR\cap L_\d$ and that $\al>0$. (Then for $p>0$, $f_p^\d(r)>0)$.)
\end{assm}
The first step in proving inequality \eqref{fctineq(f)} in using Russo's formula for the events $A^\d(r)$. Before I formulate it, I provide a couple of definition needed there.
\begin{df}
Next, for a random event $A$ in the percolation on $\G\hR$, call an edge \emd{pivotal} for a given configuration iff changing the state of that edge (and preserving other edges' states) causes $A$ to change its state as well (from occurring to not occurring or \emph{vice-versa}). Then, let $N(A)$ be the (random) number of all edges pivotal for $A$.
\end{df}
\begin{df}\label{incr}
We say that an event $A$ (being a set of configurations) is \emd{increasing} iff for any configurations $\omega\subseteq\omega'$, if $\omega\in A$, then $\omega'\in A$.
\end{df}
\begin{thm}[Russo's formula]
Consider \Bbp\ on any graph $G$ and let $A$ be an increasing event defined in terms of the states of only finitely many edges of $G$. Then
$$\frac{\ud}{\ud p}\Pr_p(A)=\Est_p(N(A)).$$
\qed
\end{thm}
This formula is proved as Theorem (2.25) in \cite{Grim} for $G$ being the classical lattice $\mathbb{Z}^d$, but the proof applies for any graph $G$.
\par Let $p>0$. The events $A^\d(r)$ depend on the states of only finitely many edges of $\G\hR$ (namely, those intersecting $L_\d\cap B_r$), so I am able to use Russo's formula for them, obtaining
$$\frac{\ud}{\ud p} f_p^\d(r) = \Est_p(N(A^\d(r))).$$
Now, for $e\in E(\G\hR)$, the event $\{e\textrm{ is pivotal for }A^\d(r)\}$ is independent of the state of $e$ (which is easily seen; it is the rule for any event), so
\begin{align}
\Pr_p(A^\d(r)\land e\textrm{ is pivotal for }A^\d(r)) &= \Pr_p(e\textrm{ is open and pivotal for }A^\d(r)) =\\
&=\textrm{(because $A^\d(r)$ is increasing)}\\
&= p\Pr_p(e\textrm{ is pivotal for }A^\d(r)),
\end{align}
hence
\begin{align}
\frac{\ud}{\ud p} f_p^\d(r) &= \sum_{e\in E(\G\hR)} \Pr_p(e\textrm{ is pivotal for }A^\d(r)) =\\
&= \frac 1p \sum_{e\in E(\G\hR)} \Pr_p(A^\d(r)\land e\textrm{ is pivotal for }A^\d(r)) =\\
&=\frac{f_p^\d(r)}{p} \sum_{e\in E(\G\hR)} \Pr_p(e\textrm{ is pivotal for }A^\d(r)|A^\d(r)) =\\
&= \frac{f_p^\d(r)}{p} \Est_p(N(A^\d(r))|A^\d(r)),
\end{align}
which can be written as
$$\frac{\ud}{\ud p} \ln(f_p^\d(r)) = \frac{1}{p} \Est_p(N(A^\d(r))|A^\d(r)).$$
For any $0<\al<\b\le 1$, integrating over $[\al,\b]$ and exponentiating the above equality gives
$$\frac{f_\al^\d(r)}{f_\b^\d(r)} = \exp\(-\int_\al^\b \frac 1p \Est_p(N(A^\d(r))|A^\d(r))\,\ud p\),$$
which implies
\begin{align}
f_\al^\d(r) %&= f_\b^\d(r) \exp\(-\int_\al^\b \frac 1p \Est_p(N(A^\d(r))|A^\d(r))\,\ud p\) \le\\
\le f_\b^\d(r) \exp\(-\int_\al^\b \Est_p(N(A^\d(r))|A^\d(r))\,\ud p\).\label{corineqRusso}
\end{align}
\uwaga{Czy ten $\updownarrow$ tekst nie jest zbyt luźny/potoczysty?}\\
At this point, our aim is to bound $\Est_p(N(A^\d(r))|A^\d(r))$ from below.
%The idea (as in the proof in \cite{Grim}) is to show that $\Est_p(N(A^\d(r))|A^\d(r))$ grows roughly linearly in $r$ for any $p\in\NP$.
\begin{df}
Let $\eta$ denote the percolation configuration in $\G\hR\cap L_\d$, i.e.
$$\eta = \Phi\hR[\omega]\cap L_\d.$$
\end{df}
Fix any $r>0$ and $\d\in(0;h]$ and assume for a while that $A^\d(r)$ occurs. Let us make a picture of the cluster of $\ho$ in $\G\hR\cap L_\d$ in the context of the pivotal edges for $A^\d(r)$ (the same picture as in \cite{Grim} and \cite{Men}). If $e\in E(\G\hR)$ is pivotal for $A^\d(r)$, then if we change the percolation configuration by closing $e$, we cause the cluster of $C\hR_{L_\d}(\ho)$ to be disjoint from $S_r$. So, in our situation, all the pivotal edges lie on any open path in $\G\hR\cap L_\d$ joining $\ho$ to $S_r$ and they are visited by the path in the same order and direction (regardless of the choice of the path).
\begin{df}\label{dfe_i}
Let $N=N(A^\d(r))$, and let $e_1,\ldots,e_N$ be this ordering, and denote by $x_i,y_i$ the endvertices of $e_i$, $x_i$ being the one closer to $\ho$ along a path as above. Also, let $y_0=\ho$.
\end{df}
Note that because for $i=1,\ldots,N$, there is no edge separating $y_{i-1}$ from $x_i$ in the open cluster in $\G\hR\cap L_\d$, by Menger's theorem (see e.g.~\cite[Thm.~3.3.1, Cor.~3.3.5(ii)]{Diest}), there exist two edge-disjoint open paths in that cluster joining $y_{i-1}$ to $x_i$. (One can say, following the discoverer of this proof idea, that that open cluster resembles a chain of sausages.)
\begin{df}
Now, for $i=1,\ldots,N$, let $\rho_i=d_\hbd(y_{i-1},x_i)$ (this way of defining $\rho_i$, of which one can think as ``projection length'' of the $i$-th ``sausage'', is an adaptation of that from \cite{Grim}).
\end{df}
\par Now I drop the assumption that $A^\d(r)$ occurs. The next lemma is used to compare $(\rho_1,\ldots,\rho_N)$ to some renewal process with inter-renewal times of roughly the same distribution as the size of $C_L\hR(\ho)$.
\begin{df}
Let $a$ denote the maximal projection distance (in the sense of $d_\hbd$) between the endpoints of a single edge $\G\hR$ crossing $L$.
\end{df}
\uwaga{Chyba powyżej bardziej poukładać wszystko w definicje itd. Może ulepszyć kolejność powyższych fragm.}
\begin{lem}[cf.~\protect{\cite[Lem.~(5.12)]{Grim}}]\label{saus}
Let $k\in\N_+$ and let $r_1,\ldots,r_k\ge 0$ be such that~$\sum_{i=1}^k r_i \le r - (k-1)a$. Then for $0<p<1$,
$$\Pr_p(\rho_k<r_k,\ \rho_i=r_i\textrm{ for }i<k|A^\d(r)) \ge (1-g_p(r_k))\Pr_p(\rho_i=r_i\textrm{ for }i<k|A^\d(r)).$$
\end{lem}
\begin{rem}
I use the convention that for $i\in\N$ such that~$i>N(A^\d(r))$ (i.e.~$e_i$, $\rho_i$ are undefined), $\rho_i=+\infty$ (being greater than any real number). On the other hand, whenever I mention $e_i$, $i\le N(A^\d(r))$.
\end{rem}
\begin{proof}[Proof of the lemma]
This proof mimics that of \cite[Lem.~(5.12)]{Grim}. Let $k\ge 2$ (I delay the case of $k=1$ to the end of the proof).
\begin{df}
Let for $e\in E(\G\hR\cap L^\d)$\uwaga{wydaje mi się, że tu chcę to traktować jak prawdziwy graf (z wierzchołkami powstałymi przez przekrój)}, $D_e$ be the connected component of $\ho$ in $\eta\sm\{e\}$. Let $B_e$ denote the event that the following conditions are satisfied:
\begin{itemize}
\item $e$ is open;
\item exactly one endvertex of $e$ lies in $D_e$---call it $x(e)$ and the other---$y(e)$;
\item $D_e$ is disjoint from $S_r$;
\item there are $k-1$ pivotal edges for the event $\{\ho\conn y(e)\textrm{ in }\eta\}$ (i.e. the edges each of which separates $\ho$ from $y(e)$ in $D_e\cup\{e\}$)---call them $e_1'=\{x_1',y_1'\},\ldots,e_{k-1}'=\{x_{k-1}',y_{k-1}'\}=e$, $x_i'$ being closer to $\ho$ than $y_i'$, in the order from $\ho$ to $y(e)$ (as in the Definition \ref{dfe_i});
\item $d_\hbd(y_{i-1}',x_i')=r_i$ for $i<k$, where $y_0'=\ho$.
\end{itemize}
Let $B=\bigcup_{e\in E(\G\hR\cap L^\d)} B_e$. When $B_e$ occurs, I say that $D_e\cup\{e\}$ with $y(e)$ marked, as a graph with distinguished vertex, is a \emd{witness for $B$}.
\end{df}
\par Note that it may happen that there are more than one such witnesses (which means that $B_e$ occurs for many different $e$). On the other hand, when $A^\d(r)$ occurs, then $B_e$ occurs for only one edge $e$, namely $e=e_{k-1}$ (in other words, $B\cap A^\d(r)=\bigcupdot_{e\in E(\G\hR\cap L^\d)} (B_e\cap A^\d(r))$), and there is only one witness for $B$. Hence,
$$\Pr_p(A^\d(r)\cap B) = \sum_{\Gamma} \Pr_p(\Gamma\textrm{ a witness for }B)\Pr_p(A^\d(r)|\Gamma\textrm{ a witness for }B),$$
where the sum is always over all $\Gamma$ being finite subgraphs of $\G\hR\cap L_\d$ with distinguished vertices such that~$\Pr_p(\Gamma\textrm{ a witness for }B)>0$.
\par For $\Gamma$ a graph with distinguished vertex, let $y(\Gamma)$ denote that vertex. Under the condition that $\Gamma$ is a witness for $B$, $A^\d(r)$ is equivalent the event that $y(\Gamma)$ is joined to $S_r$ by an open path in $\eta$ which is disjoint from $V(\Gamma)\sm \{y(\Gamma)\}$. I shortly write the latter event $\{y(\Gamma)\conn S_r\textrm{ in $\eta$ off }\Gamma\}$.
Now, the event $\{\Gamma\textrm{ a witness for }B\}$ depends only on the states of edges incident to vertices from $V(\Gamma)\sm\{y(\Gamma)\}$, so it is independent of the event $\{y(\Gamma)\conn S_r\textrm{ in $\eta$ off }\Gamma\}$. Hence,
\begin{equation}\label{P(AB)}
\Pr_p(A^\d(r)\cap B) = \sum_{\Gamma} \Pr_p(\Gamma\textrm{ a witness for }B)\Pr_p(y(\Gamma)\conn S_r\textrm{ in $\eta$ off }\Gamma).
\end{equation}
A similar reasoning, performed below, gives us the estimate of $\Pr_p(\{\rho_k\ge r_k\}\cap A^\d(r)\cap B)$. Here I use also the following fact: conditioned on the event $\{\Gamma\textrm{ a witness for }B\}$, the event $A^\d(r)\cap\{\rho_k\ge r_k\}$ is equivalent to each of the following:
\begin{align*}
&(A^\d(r)\land e_k\textrm{ does not exist}) \lor (A^\d(r)\land e_k\textrm{ exists }\land\rho_k\ge r_k) \iff\\
\iff &(\exists\textrm{ two edge-disjoint paths joining $y(\Gamma)$ to $S_r$ in $\eta$ off }\Gamma) \lor\\
\lor &(\exists\textrm{ two edge-disjoint paths in $\eta$ off $\Gamma$, joining $y(\Gamma)$ to $S_r$ and to $S_{r_k}(y(\Gamma))$, resp.}) \iff\\
\iff &(\exists\textrm{ two edge-disjoint paths in $\eta$ off $\Gamma$, joining $y(\Gamma)$ to $S_r$ and to $S_{r_k}(y(\Gamma))$, resp.}),
\end{align*}
because $S_{r_k}(y(\Gamma))\subseteq B_r$ from the assumption on $\sum_{i=1}^k r_i$.\uwaga{Tu uwaga: ,,obviously, $e_k$ (if exists) intersects $\sint B_r$'' -- czy potrzebna? Czy odnosi się ona do warunku na $\sum_{i=1}^k r_i$?}
So I estimate
\begin{multline}\label{P(rhoAB)}
\Pr_p(\{\rho_k\ge r_k\}\cap A^\d(r)\cap B) = \sum_{\Gamma} \Pr_p(\Gamma\textrm{ a witness for }B) \Pr_p(\{\rho_k\ge r_k\}\cap A^\d(r)|\Gamma\textrm{ a witness for }B) =\\
= \sum_{\Gamma} \Pr_p(\Gamma\textrm{ a witness for }B) \Pr_p((y(\Gamma)\conn S_r\textrm{ in $\eta$ off }\Gamma) \circ (y(\Gamma)\conn S_{r_k}(y(\Gamma))\textrm{ in $\eta$ off }\Gamma)),
\end{multline}
where the operation ``$\circ$'' is defined below:
\begin{df}
For increasing events $A$ and $B$ in a percolation on any graph $G$, the event $A\circ B$ means that ``$A$ and $B$ occur on disjoint sets of edges''. Formally,
$$A\circ B = \{\omega_A\cupdot\omega_B: \omega_A,\omega_B\subseteq E(G) \land \omega_A\in A\land\omega_B\in B\},$$
that is, $A\circ B$ is the set of configurations containing two disjoint set of open edges ($\omega_A,\omega_B$ above) which guarantee occurring of the events $A$ and $B$, respectively.
\end{df}
Now, I am going to use the following BK inequality (proved in \cite{Grim}):
\begin{thm}[BK inequality, \protect{\cite[Theorems (2.12) and (2.15)]{Grim}}]\label{BKineq}
For any graph $G$ and increasing events $A$ and $B$ depending on the states of only finitely many edges in $p$-\Bbp\ on $G$, we have
$$\Pr_p(A\circ B) \le \Pr_p(A)\Pr_p(B).$$
\qed
\end{thm}
I use this inequality for the last term (as the events involved are increasing (see def.~\ref{incr}) and defined in terms of only the edges from $E(\G\hR\cap(L_\d\cap B_r))$), obtaining
\begin{eqnarray*}
\Pr_p(\{\rho_k\ge r_k\}\cap A^\d(r)\cap B) &\le &\sum_{\Gamma} \Pr_p(\Gamma\textrm{ a witness for }B) \cdot \Pr_p(y(\Gamma)\conn S_r\textrm{ in $\eta$ off }\Gamma) \cdot\\
&& {}\cdot \Pr_p(y(\Gamma)\conn S_{r_k}(y(\Gamma))\textrm{ in $\eta$ off }\Gamma) \le\\
&\le &\(\sum_{\Gamma} \Pr_p(\Gamma\textrm{ a witness for }B) \Pr_p(y(\Gamma)\conn S_r\textrm{ in $\eta$ off }\Gamma)\)g_p(r_k) =\\
&= &\Pr_p(A^\d(r)\cap B) g_p(r_k)
\end{eqnarray*}
(by \eqref{P(AB)}). Dividing by $\Pr_p(A^\d(r))$ (which is positive by Assumption \ref{assmapath}) gives
\begin{align}\label{P(|A)}
\Pr_p(\{\rho_k\ge r_k\}\cap B|A^\d(r)) &\le \Pr_p(B|A^\d(r)) g_p(r_k) \quad|\quad\Pr_p(B|A^\d(r)) - \cdot\\
\Pr_p(\{\rho_k< r_k\}\cap B|A^\d(r)) &\ge \Pr_p(B|A^\d(r)) (1-g_p(r_k)).
\end{align}
Note that, conditioned on $A^\d(r)$, $B$ is equivalent to the event $\{\rho_i=r_i\textrm{ for }i<k\}$, so the above amounts to
\begin{equation}
\Pr_p(\rho_k<r_k, \rho_i=r_i\textrm{ for }i<k |A^\d(r)) \ge \Pr_p(\rho_i=r_i\textrm{ for }i<k|A^\d(r)) (1-g_p(r_k)),
\end{equation}
which is the desired conclusion.
\par Now, consider the case of $k=1$. In this case, similarly to \eqref{P(rhoAB)} and thanks to the assumption $r_1\le r$,
\begin{align}
\Pr_p(\{\rho_1\ge r_1\}\cap A^\d(r)) = \Pr_p((\ho\conn S_{r_1}\textrm{ in }\eta)\circ(\ho\conn S_r\textrm{ in }\eta)) \le
g_p(r_1) \Pr_p(A^\d(r)).
\end{align}
Further, similarly to \eqref{P(|A)},
\begin{equation}
\Pr_p(\rho_1<r_1|A^\d(r))\ge 1 - g_p(r_1),
\end{equation}
which is the lemma's conclusion for $k=1$.
\end{proof}

Now, I want to do some probabilistic reasoning using random variables with the left-continuous distribution function $1-\gt_p$.
The function $1-\gt_p$ is non-decreasing (because for $(h,R)\in\IOd$, $\Pr_p(\ho \conn S_r\textrm{ in }\G\hR\cap L)$ is non-increasing with respect to $r$, so $g_p$ and $\gt_p$ are non-increasing as well), left-continuous, with values in $[0;1]$ and such that~$1-\gt_p(0)=0$, so it is the left-continuous distribution function of a random variable with values in $[0;\infty]$.
\begin{denot}
Let $M_1, M_2,\ldots$ be an infinite sequence of independent random variables all distributed according to $1-\gt_p$ and all independent of the whole percolation process. Because their distribution depends on $p$, I will also denote them by $M_1^{(p)},M_2^{(p)},\ldots$. (Here, an abuse of notation is going to happen, as I am still writing $\Pr_p$ for the whole probability measure used also for defining the variables $M_1, M_2,\ldots$.)
\end{denot}
We can now state the following corollary of Lemma \ref{saus}:
\begin{cor}\label{corsaus}
For any $r>0$, positive integer $k$ and $0<p<1$,
$$\Pr_p(\rho_1+\cdots+\rho_k < r-(k-1)a |A^\d(r)) \ge \Pr_p(M_1+\cdots+M_k < r-(k-1)a ).$$
\end{cor}
\begin{proof}
I compose the proof of the intermediate inequalities:
\begin{align*}
&\Pr_p(\rho_1+\cdots+\rho_k < r-(k-1)a |A^\d(r)) \ge\\
\ge &\Pr_p(\rho_1+\cdots+\rho_{k-1}+M_k < r-(k-1)a |A^\d(r)) \ge\cdots\\
\cdots\ge &\Pr_p(\rho_1+M_2+\cdots+M_k < r-(k-1)a |A^\d(r)) \ge\\
\ge &\Pr_p(M_1+\cdots+M_k < r-(k-1)a |A^\d(r)) = \Pr_p(M_1+\cdots+M_k < r-(k-1)a)
\end{align*}
using the step:
\begin{align}
&\Pr_p(\rho_1+\cdots+\rho_j + M_{j+1}+\cdots+M_k < r-(k-1)a |A^\d(r)) \ge\\
\ge &\Pr_p(\rho_1+\cdots+\rho_{j-1} + M_j+\cdots+M_k < r-(k-1)a |A^\d(r)).
\end{align}
for $j=k,k-1,\ldots,2,1$. Now I prove this step: let $j\in\{1,2,\ldots k\}$.
\begin{df}
Put
$$\RhR = \{d_\hbd(x,y): x,y\in V(\G\hR)\}$$
(I need $\RhR$ as a countable set of all possible values of $\rho_i$ for $i=1,\ldots,N$).
\end{df}
I express the considered probability as an integral, thinking of the whole probability space as Cartesian product of the space on which the percolation processes are defined and the space used for defining $M_1,M_2,\ldots$, and using a version of Fubini theorem for events:
\begin{align*}
&\Pr_p(\rho_1+\cdots+\rho_j + M_{j+1}+\cdots+M_k < r-(k-1)a |A^\d(r)) =\\
= &\int \Pr_p(\rho_1+\cdots+\rho_j + S_M < r-(k-1)a |A^\d(r))\,\ud\L_{j+1}^k(S_M) =\\
\intertext{(here $\L_{j+1}^k$ denotes the distribution of the random variable $M_{j+1}+\cdots+M_k$)}
= &\int \rvoidindop{\sum}{(r_1,\ldots,r_{j-1})}\quad
 \Pr_p\left(\left.\rho_i=r_i\textrm{ for }i<j\land \rho_j < r-(k-1)a - \sum_{i=1}^{j-1} r_i - S_M \right|A^\d(r)\right)\,\ud\L_{j+1}^k(S_M) \ge\\
\intertext{(where the sum is taken over all $(r_1,\ldots,r_{j-1})\in(\RhR)^{j-1}:r_1+\cdots+r_{j-1}<r-(k-1)a-S_M$)}
\ge &\int \rvoidindop{\sum}{(r_1,\ldots,r_{j-1})}\quad
 \left(1-\gt_p\left(r-(k-1)a - \sum_{i=1}^{j-1} r_i - S_M\right)\right) \Pr_p(\rho_i=r_i\textrm{ for }i<j |A^\d(r))\,\ud\L_{j+1}^k(S_M) =\\
\intertext{(from Lemma \ref{saus} and because $g_p\le\gt_p$)}
= &\int \rvoidindop{\sum}{(r_1,\ldots,r_{j-1})}\quad
 \Pr_p \left(\left.M_j < r-(k-1)a - \sum_{i=1}^{j-1} r_i - S_M \land \rho_i=r_i\textrm{ for }i<j \right|A^\d(r)\right)\,\ud\L_{j+1}^k(S_M) =\\
= &\int \Pr_p(\rho_1+\cdots+\rho_{j-1} + M_j+S_M < r-(k-1)a |A^\d(r))\,\ud\L_{j+1}^k(S_M) =\\
= &\Pr_p(\rho_1+\cdots+\rho_{j-1} + M_j+M_{j+1}+\cdots+M_k < r-(k-1)a |A^\d(r)).
\end{align*}
That completes the proof.
\end{proof}

\begin{lem}[cf.~\protect{\cite[Lem.~(5.17)]{Grim}}]\label{lemEN>=frac}
For $0<p<1$, $r>0$,
$$\Est_p(N(A^\d(r))|A^\d(r)) \ge \frac{r}{a+\int_0^r \gt_p(m)\,\ud m}-1.$$
\end{lem}
\begin{proof}
For any $k\in\N_+$, if $\rho_1+\cdots+\rho_k < r-(k-1)a$, then $e_1,\ldots,e_k$ exist and $N(A^\d(r))\ge k$. So, from the corollary above,
\begin{align}\label{P(N>=k|A)>=}
\Pr_p(N(A^\d(r))\ge k|A^\d(r)) \ge \Pr_p(\sum_{i-1}^k \rho_i < r-(k-1)a) \ge \Pr_p (\sum_{i-1}^k M_i < r-(k-1)a).
\end{align}
Now, I use a calculation which relates $a+\int_0^r \gt_p(m)\,\ud m$ to the distribution of $M_1$. Namely, I replace the variables $M_i$ by
$$M_i' = a+\min(M_i,r)$$
for $i=1,2,\ldots$ (a kind of truncated version of $M_i$). In this setting,
\begin{align}
\sum_{i=1}^k M_i < r-(k-1)a \iff
\sum_{i=1}^k \min(M_i,r) < r-(k-1)a \iff
\sum_{i=1}^k M_i' < r+a,
\end{align}
so from \eqref{P(N>=k|A)>=},
\begin{align}
\Est_p(N(A^\d(r))|A^\d(r)) &= \sum_{k=1}^\infty \Pr_p(N(A^\d(r))\ge k |A^\d(r)) \ge\\
&\ge \sum_{k=1}^\infty \Pr_p(\sum_{i=1}^k M_i' < r+a) = \sum_{k=1}^\infty \Pr_p(K\ge k+1) =\\
&= \Est(K)-1,
\end{align}
where
$$K = \min\{k:M_1'+\cdots+M_k'\ge r+a\}.$$
Let for $k\in\N$,
$$S_k=M_1'+\cdots+M_k'.$$
By Wald's equation (see e.g.\ \cite[p.~396]{GrimSti}) for the random variable $S_K$,
$$r+a\le \Est(S_K)=\Est(K)\Est(M_1').$$
In order that Wald's equation were valid for $S_K$, the random variable $K$ has to satisfy $\Est(M_i'|K\ge i) = \Est(M_i')$ for $i\in\N_+$. But we have
$$K\ge i \iff M_1'+\cdots+M_{i-1}'<r+a,$$
so $M_i'$ is independent of the event $\{K\ge i\}$ for $i\in\N_+$, which allows us to use Wald's equation. (In fact, $K$ is a so-called stopping time for the sequence $(M_i')_{i=1}^\infty$.) Hence,
\begin{align}
\Est_p(N(A^\d(r))|A^\d(r)) &\ge \Est(K)-1 \ge \frac{r+a}{\Est(M_1')}-1 =\\
&= \frac{r+a}{a+\int_0^\infty \Pr_p(\min(M_1,r)\ge m)\,\ud m} -1 \ge \frac{r}{a+\int_0^r \gt_p(m)\,\ud m}-1,
\end{align}
which finishes the proof.
\end{proof}

Now, combining that with inequality \eqref{corineqRusso} for $0<\al<\b\le 1$, we have
\begin{align}
f_\al^\d(r) &\le f_\b^\d(r) \exp\(-\int_\al^\b \Est_p(N(A^\d(r))|A^\d(r))\,\ud p\) \le\\
&\le f_\b^\d(r) \exp\(-\int_\al^\b \(\frac{r}{a+\int_0^r \gt_p(m)\,\ud m}-1\)\,\ud p\) \le\\
&\le f_\b^\d(r) \exp\(-(\b-\al) \(\frac{r}{a+\int_0^r \gt_\b(m)\,\ud m}-1\)\)\label{fctineq(f)beflims}
\end{align}
(because $\gt_p\le \gt_\b$ for $p\le\b$), which completes the proof of inequality \ref{fctineq(f)}. (Let us now drop Assumption \ref{assmapath}.)

\par Now, note that for any $r>0$ and $p\in[0;1]$, the event $A^\d(r)$ increases as $\d$ decreases. Thus, taking the limit with $\d\to 0$, we have
$$\lim_{\d\to 0^+} f_p^\d(r) = \Pr_p(\bigcup_{\d> 0} A^\d(r)) = \Pr_p(\ho\conn S_r\textrm{ in }\G\hR\cap L).$$
So for any $r>0$ and $0\le\al<\b\le 1$, using this for inequality \ref{fctineq(f)} gives
\begin{multline}
\Pr_\al(\ho\conn S_r\textrm{ in }\G\hR\cap L) \le\\
\le \Pr_\b(\ho\conn S_r\textrm{ in }\G\hR\cap L) \exp\(-(\b-\al) \(\frac{r}{a+\int_0^r \gt_\b(m)\,\ud m}-1\)\).
\end{multline}
Further, I take the supremum over $(h,R)\in\IOd$, obtaining
$$g_\al(r) \le g_\b(r) \exp\(-(\b-\al) \(\frac{r}{a+\int_0^{r} \gt_\b(m)\,\ud m}-1\)\).$$
At last, taking the limits with $r$ from the left, I get the functional inequality \eqref{fctineq(gt)1} involving only $\gt_\cdot(\cdot)$:
\begin{equation}\label{fctineq(gt)2}
\gt_\al(r) \le \gt_\b(r) \exp\(-(\b-\al) \(\frac{r}{a+\int_0^{r} \gt_\b(m)\,\ud m}-1\)\).
\end{equation}
(Note that the exponent remains unchanged all the time from \eqref{fctineq(f)beflims} till now.)
\par Recall that once we have
$$\int_0^{\infty} \gt_\b(m)\,\ud m = \Est(M_1^{(\b)})<\infty,$$
then we obtain Theorem \ref{lemMen} for $\gt_\al(r)$, for $\al<\b$. This bound is going to be established by showing the rapid decay of $\gt_p$, using repeatedly \eqref{fctineq(gt)2}.  The next lemma is the first step of this procedure.
\begin{lem}[cf.~\protect{\cite[Lem.~(5.24)]{Grim}}]\label{gt<=sqrt}
For any $p<p_0$, there exists $\d(p)$ such that
$$\gt_p(r)\le \d(p)\cdot\frac{1}{\sqrt{r}}\quad\textrm{for }r>0.$$
\end{lem}
I defer proving the above lemma to the end of this section.
\par Obtaining Theorem \ref{lemMen} (being proved) from Lemma \ref{gt<=sqrt} is relatively easy. First, we deduce that for $r>0$ and $p<p_0$,
$$\int_0^{r} \gt_p(m)\,\ud m \le 2\d(p)\sqrt{r},$$
so if $r\ge a^2$, then
$$a+\int_0^{r} \gt_p(m)\,\ud m \le (2\d(p)+1)\sqrt{r}.$$
Then, using \eqref{fctineq(gt)2}, for $0\le\al<\b<p_0$, we have
\begin{align}
\int_{a^2}^{\infty} \gt_\al(r)\,\ud r &\le \int_{a^2}^{\infty} \exp\(-(\b-\al) \(\frac{r}{a+\int_0^{r} \gt_\b(m)\,\ud m}-1\)\) \,\ud r \le\\
&\le e \int_{a^2}^{\infty} \exp\biggl(-\underbrace{\frac{\b-\al}{2\d(\b)+1}}_{=C>0} \sqrt{r}\biggl) \,\ud r =\\
&= e \int_{a}^{\infty} e^{-Cx}\cdot 2x \,\ud x,
\end{align}
so
$$
\Est(M_1^{(\al)}) = \int_0^\infty \gt_\al(r)\,\ud r \le a^2 + e\int_{a}^{\infty} e^{-Cx}\cdot 2x \,\ud x < \infty,
$$
as desired. Finally, we use the finiteness of $\Est(M_1^{(\al)})$ as promised: for $r>0$ and $0\le\al<p_0$, if we take $\al<\b<p_0$, then, using \eqref{fctineq(gt)2} again,
\begin{align}
g_\al(r) \le \gt_\al(r) \le \exp\(-(\b-\al)\(\frac{r}{a+\Est(M_1^{(\b)})} -1\)\) \le e^{-\phi(\al,\b)r+\g(\al,\b)},
\end{align}
for some constants $\phi(\al,\b),\g(\al,\b)>0$.
\uwaga{Chyba za mało mówię, od czego zależą stałe -- skontrolować.}
\par Now I perform a standard estimation, aiming to rule out the additive constant $\g(\al,\b)$. For any $0<\psi_1<\phi(\al,\b)$, there exists $r_0>0$ such that for $r\ge r_0$,
$$-\phi(\al,\b)r+\g(\al,\b) \le -\psi_1 r,$$
so
$$g_\al(r)\le e^{-\psi_1 r}.$$
On the other hand, for any $r>0$, $g_p(r)$ is no greater than the probability of opening at least on edge adjacent to $o$, so $g_\al(r) \le 1-(1-\al)^{\deg(o)}<1$, where $\deg(o)$ is the degree of $o$ in the graph $G$. Hence,
$$g_\al(r)\le e^{-\psi_2(\al)r}$$
for $r\le r_0$, for some sufficiently small $\psi_2(\al)>0$. Taking $\psi = \min(\psi_1,\psi_2(\al))$ gives
$$g_\al(r) \le e^{-\psi r}$$
for any $r>0$, completing the proof of Theorem \ref{lemMen}.
\end{proof}

\par Now I am going to prove Lemma \ref{gt<=sqrt}.
\begin{proof}[Proof of Lemma \ref{gt<=sqrt}]
Assume without loss of generality that $\gt_p(r)>0$ for $r>0$.
I am going to construct sequences $(p_i)_{i=1}^\infty$ and $(r_i)_{i=1}^\infty$ such that
$$p_0>p_1>p_2>\cdots>p,\quad 0<r_1\le r_2\le\cdots$$
and such that the sequence $(\gt_{p_i}(r_i))_{i=1}^\infty$ decays rapidly. The construction is by recursion: for $i\ge 1$, having constructed $p_1,\ldots,p_i$ and $r_1,\ldots,r_i$, we put
\begin{equation}\label{recur}
r_{i+1} = r_i/g_i\quad\textrm{and}\quad p_{i+1} = p_i - 3g_i(1-\ln g_i),
\end{equation}
where $g_i = \gt_{p_i}(r_i)$. (Note that indeed, $r_{i+1}\le r_i$ and $p_{i+1}<p_i$.) The above formula may give an incorrect value of $p_{i+1}$, i.e.~not satisfying $p_{i+1}>p$ (this condition is needed because we want to bound values of $\gt_p$). In order to prevent that, we choose appropriate  values of $p_1,r_1$, using the following fact to bound the difference $p_1 - p_i$ by a small number independent of $i$. 
\begin{prop}
If we define sequence $(x_i)_{i=1}^\infty$ by $x_{i+1}=x_i^2$ for $i\ge 1$ (i.e.~$x_i=x_1^{2^{i-1}}$) with $0<x_1<1$, then
\begin{equation}\label{s(x)}
s(x_1) := \sum_{i=1}^\infty 3x_i(1-\ln x_i)
\end{equation}
is finite and $s(x_1)\tends{x_1\to 0}0$.
\end{prop}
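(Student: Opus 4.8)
The plan is to reduce everything to elementary monotonicity and boundedness properties of the function $\phi(x)=x(1-\ln x)$ on $(0,1]$, exploiting that with $x_i=x_1^{2^{i-1}}$ we have $x_{i+1}=x_i^{2}$ and $s(x_1)=3\sum_{i\ge 1}\phi(x_i)$. First I would record that $\phi'(x)=-\ln x>0$ on $(0,1)$, so $\phi$ is increasing there with $\phi(0^{+})=0$ and $\phi(1)=1$; in particular $0<\phi(x)\le 1$ for $x\in(0,1]$, and $\phi(x)\to 0$ as $x\to 0^{+}$. This already disposes of the first term $\phi(x_1)$ and shows that each individual term of the series tends to $0$ with $x_1$; the real work is controlling the whole sum uniformly.

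The key step is a contraction estimate for consecutive terms. Writing $u_i=-\ln x_i\ge 0$ (legitimate since $x_i\le x_1<1$), one computes
\[
\frac{\phi(x_{i+1})}{\phi(x_i)}=\frac{x_i^{2}(1-2\ln x_i)}{x_i(1-\ln x_i)}=x_i\,\frac{1+2u_i}{1+u_i}\le 2x_i,
\]
using $1+2u_i\le 2(1+u_i)$. Since $x_i\le x_1$ for every $i$, if we assume $x_1<1/2$ this gives $\phi(x_{i+1})\le 2x_1\,\phi(x_i)$, hence $\phi(x_i)\le (2x_1)^{i-1}\phi(x_1)$ for all $i$, and summing the geometric series,
\[
s(x_1)\le \frac{3\phi(x_1)}{1-2x_1}=\frac{3\,x_1(1-\ln x_1)}{1-2x_1}.
\]
The right-hand side is finite and tends to $0$ as $x_1\to 0^{+}$, which proves both assertions of the proposition in the regime $x_1<1/2$; since the limit statement only concerns small $x_1$, that regime suffices for it. For an arbitrary $x_1\in(0,1)$ the finiteness claim follows from the same ratio bound: $x_i=x_1^{2^{i-1}}\to 0$, so $2x_i<1$ for all large $i$ and the series converges by the ratio test (all terms being positive).

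I do not expect a genuine obstacle here; the only subtlety worth flagging is the mechanism by which convergence survives. Squaring in $x_{i+1}=x_i^{2}$ doubles the logarithm, so the ``weight'' $1-\ln x_i$ grows like $2^{i-1}|\ln x_1|$ — harmless next to the doubly-exponentially small $x_i$, but the clean way to see it is exactly the identity $\phi(x_{i+1})/\phi(x_i)=x_i(1+2u_i)/(1+u_i)$ above, in which the dangerous logarithmic factor is absorbed into a bounded ratio and the surviving factor $x_i\le x_1$ does all the work. If one prefers to avoid even assuming $x_1<1/2$ for the finiteness part, one can instead bound each term crudely by $3x_1^{i}(1+2^{i-1}|\ln x_1|)$, using $2^{i-1}\ge i$, and sum the resulting two geometric-type series in closed form; but the $\phi$-contraction argument is shorter and yields the decay in $x_1$ simultaneously.
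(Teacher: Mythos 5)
Your proof is correct and complete. The paper does not actually spell out a proof of this proposition; it only remarks that the idea is ``similar to that of estimating the sum in'' the earlier display where, writing $x_1=e^{-\lambda}$ so that $3x_i(1-\ln x_i)=3e^{-\lambda 2^{i-1}}(1+\lambda 2^{i-1})$, one finds a cutoff index $i_0$ past which the logarithm of the general term is dominated by, say, $-\lambda 2^{i-2}$, bounds the tail by a geometric series, and bounds the finite head term by term. Your argument is genuinely different and somewhat cleaner: instead of estimating the logarithm of the $i$-th term directly, you observe that $\phi(x)=x(1-\ln x)$ satisfies the multiplicative recursion bound $\phi(x_{i+1})\le 2x_i\,\phi(x_i)$, which at once absorbs the potentially dangerous factor $1-2\ln x_i$ and makes both finiteness (ratio test, since $x_i\to 0$) and the limit $s(x_1)\to 0$ (geometric-series bound once $2x_1<1$) immediate from the single estimate $s(x_1)\le 3\phi(x_1)/(1-2x_1)$. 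The cutoff approach the paper alludes to works too, but requires more bookkeeping to extract the decay in $x_1$ of the leading constant; your contraction identity gets both conclusions simultaneously with no case split.
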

(The idea of the proof of this fact is similar to that of estimating the sum in \eqref{smallinfsum}.)
To make use of it, we are going to bound $g_i$ by $x_i$ for any $i$ and to make $g_1$ small enough. It is done thanks to the two claims below, respectively.
\begin{clm}
If $p_1,\ldots,p_i>p$ and $r_1,\ldots,r_i>0$ are defined by \eqref{recur} with $r_1\ge a$, we have
$$g_{j+1}\le g_j^2$$
for $j=1,\ldots,i-1$.
\end{clm}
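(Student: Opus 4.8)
The claim to prove is that, with $r_{i+1}=r_i/g_i$ and $p_{i+1}=p_i-3g_i(1-\ln g_i)$ where $g_i=\gt_{p_i}(r_i)$, we have $g_{j+1}\le g_j^2$ provided $r_1\ge a$. The plan is to feed the recursion straight into the functional inequality \eqref{fctineq(gt)2} with $\al=p_{j+1}$ and $\b=p_j$. That inequality gives
$$
g_{j+1}=\gt_{p_{j+1}}(r_{j+1})\le \gt_{p_j}(r_{j+1})\exp\!\left(-(p_j-p_{j+1})\left(\frac{r_{j+1}}{a+\int_0^{r_{j+1}}\gt_{p_j}(m)\,\ud m}-1\right)\right).
$$
Since $\gt_{p_j}$ is non-increasing, $\gt_{p_j}(r_{j+1})\le\gt_{p_j}(r_j)=g_j$ (recall $r_{j+1}\ge r_j$ after noting $g_j\le1$; actually $r_{j+1}=r_j/g_j\ge r_j$). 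So it suffices to control the exponential factor by $g_j$, i.e.\ to show that the exponent is at most $\ln g_j$ (which is $\le0$).

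The main estimate is the denominator $a+\int_0^{r_{j+1}}\gt_{p_j}(m)\,\ud m$. I would bound it using monotonicity of $\gt_{p_j}$: split the integral at $r_j$, use $\int_0^{r_j}\gt_{p_j}\le r_j\gt_{p_j}(0)\le r_j$ is too crude; instead bound $\int_0^{r_{j+1}}\gt_{p_j}(m)\,\ud m\le r_{j+1}\cdot$ (something), but the honest move is the one mirrored from \cite[Lem.~(5.24)]{Grim}: use that $\gt_{p_j}(m)\le g_j$ for $m\ge r_j$ and $\gt_{p_j}(m)\le 1$ for $m<r_j$ is still too weak. The right bound is $\int_0^{r_{j+1}}\gt_{p_j}(m)\,\ud m\le \int_0^{r_{j+1}}\gt_{p_j}(r_j)\,\ud m$ on the tail plus $\int_0^{r_j}\gt_{p_j}$, but since we only need a clean inequality, I would argue: because $\gt_{p_j}$ is non-increasing, $\int_0^{r_{j+1}}\gt_{p_j}(m)\,\ud m \le r_{j+1}\gt_{p_j}(0)$ is useless; rather, use $\int_0^{r_{j+1}}\gt_{p_j}(m)\,\ud m\le r_{j+1}g_j$ would need $\gt_{p_j}\le g_j$ everywhere, which fails near $0$. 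So the correct bound, as in Grimmett, uses $r_{j+1}=r_j/g_j$ and the crude $\gt_{p_j}\le 1$ only on $[0,r_j]$: $\int_0^{r_{j+1}}\gt_{p_j}(m)\,\ud m\le r_j + (r_{j+1}-r_j)g_j = r_j + (r_j/g_j - r_j)g_j = 2r_j - r_jg_j\le 2r_j$. Hence with $r_1\ge a$ (so $r_j\ge a$, since the $r_i$ are non-decreasing) we get $a+\int_0^{r_{j+1}}\gt_{p_j}(m)\,\ud m\le a + 2r_j\le 3r_j$. Therefore
$$
\frac{r_{j+1}}{a+\int_0^{r_{j+1}}\gt_{p_j}(m)\,\ud m}-1\ \ge\ \frac{r_j/g_j}{3r_j}-1\ =\ \frac{1}{3g_j}-1.
$$
Plugging in $p_j-p_{j+1}=3g_j(1-\ln g_j)$, the exponent is $-3g_j(1-\ln g_j)\left(\frac{1}{3g_j}-1\right)=-(1-\ln g_j)(1-3g_j)$. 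Then
$$
g_{j+1}\le g_j\exp\!\big(-(1-\ln g_j)(1-3g_j)\big).
$$
It remains to check $\exp(-(1-\ln g_j)(1-3g_j))\le g_j$, i.e.\ $(1-\ln g_j)(1-3g_j)\ge -\ln g_j$. The obstacle is that $1-3g_j$ could be negative if $g_j$ is not small, so I would first observe that one may assume $g_1$ (hence all $g_j$, by induction) is small — say $g_j\le 1/6$ — by choosing $r_1$ large in the outer construction (this is exactly why the lemma's proof reserves the freedom to pick $p_1,r_1$). Under $g_j\le 1/6$ we have $1-3g_j\ge 1/2$ and $1-\ln g_j\ge 1$, and then $(1-\ln g_j)(1-3g_j)\ge\tfrac12(1-\ln g_j)\ge -\ln g_j$ is equivalent to $\tfrac12-\tfrac12\ln g_j+\ln g_j\ge0$, i.e.\ $\tfrac12+\tfrac12\ln g_j\ge0$, i.e.\ $g_j\ge e^{-1}$ — which is false for small $g_j$! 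So the constant bookkeeping has to be done more carefully: I would instead keep the full factor, writing $(1-\ln g_j)(1-3g_j)+\ln g_j = 1-3g_j-\ln g_j+3g_j\ln g_j = (1-3g_j)(1-\ln g_j) - (1-\ln g_j) + 1 = \ldots$; cleanly, $(1-\ln g_j)(1-3g_j)-(-\ln g_j) = 1 - 3g_j + (-\ln g_j)(1) - 3g_j(-\ln g_j)\cdot(-1)$ — I will just verify numerically that for $0<g\le g_*$ (some explicit threshold like $g_*=1/20$) the inequality $(1-\ln g)(1-3g)\ge -\ln g$ holds, since as $g\to0^+$ the left side $\sim -\ln g$ and the correction terms are $O(g\ln g)\to 0$ from the favorable side; this elementary one-variable inequality is the genuine content and is the only place a short calculation is needed.

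So the structure of the write-up is: (1) apply \eqref{fctineq(gt)2} with $\b=p_j$, $\al=p_{j+1}$; (2) use monotonicity of $\gt_{p_j}$ to replace $\gt_{p_j}(r_{j+1})$ by $g_j$ and to bound the integral by $2r_j$; (3) use $r_j\ge r_1\ge a$ to get the denominator $\le 3r_j$; (4) substitute the definitions of $r_{j+1}$ and $p_j-p_{j+1}$ to reduce to the scalar inequality $(1-\ln g)(1-3g)\ge -\ln g$; (5) note this holds once $g_j$ is below a fixed absolute constant, which is arranged in the ambient construction (the next claim, about making $g_1$ small) — so the present claim is stated and used under the standing assumption that $g_1$ is small enough, and the induction $g_{j+1}\le g_j^2$ then keeps all $g_j$ small. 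The main obstacle is purely that scalar inequality and making sure the constant "$3$" in the definition of $p_{i+1}$ is exactly what makes it go through with room to spare; everything else is monotonicity and substitution.
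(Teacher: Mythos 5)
Your approach is the same as the paper's right up to the final step: apply \eqref{fctineq(gt)2} with $\al=p_{j+1}$, $\b=p_j$, $r=r_{j+1}$; use monotonicity to replace $\gt_{p_j}(r_{j+1})$ by $g_j$; split the integral at $r_j$ to get $a+\int_0^{r_{j+1}}\gt_{p_j}(m)\,\ud m\le 3r_j=3g_jr_{j+1}$ (your bound and the paper's $\frac{a}{r_{j+1}}+2g_j\le 3g_j$ are the same after multiplying through by $r_{j+1}$). The problem is the last step. You reduce the claim to the scalar inequality $(1-\ln g_j)(1-3g_j)\ge-\ln g_j$, make an algebraic slip in trying to verify it (the crude lower bound $(1-\ln g)(1-3g)\ge\tfrac12(1-\ln g)$ discards too much), and then conclude — wrongly — that the inequality needs $g_1$ to be small, which you propose to import as an extra standing assumption not present in the claim's hypotheses. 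That is a genuine gap: as written, the proof does not close.

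It does close, and with no extra assumption, because your scalar inequality is in fact automatic. Expanding, $(1-\ln g_j)(1-3g_j)-(-\ln g_j)=1-3g_j(1-\ln g_j)$, so the inequality is precisely $3g_j(1-\ln g_j)\le 1$, i.e.\ $p_j-p_{j+1}\le 1$ by the definition of the recursion. This holds because $p_j\le 1$ and $p_{j+1}>p\ge 0$. The paper makes exactly this observation, only earlier and more cleanly: from the identity $-(p_j-p_{j+1})(X-1)=(p_j-p_{j+1})-(p_j-p_{j+1})X$ and $p_j-p_{j+1}\le 1$ it passes to $g_{j+1}\le g_j\exp\bigl(1-(p_j-p_{j+1})\frac{r_{j+1}}{a+\int_0^{r_{j+1}}\gt_{p_j}}\bigr)$, and then $\frac{r_{j+1}}{a+\int}\ge\frac{1}{3g_j}$ together with $p_j-p_{j+1}=3g_j(1-\ln g_j)$ gives $\exp\bigl(1-(1-\ln g_j)\bigr)=\exp(\ln g_j)=g_j$, hence $g_{j+1}\le g_j^2$. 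So your route is correct in substance; you just need to finish the one-variable check, which reduces to the hypothesis $p_j-p_{j+1}\le 1$ rather than to any smallness of $g_1$.
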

\begin{proof}
Let $j\in\{1,\ldots,i-1\}$. From \eqref{fctineq(gt)2},
\begin{align}
g_{j+1} &\le \gt_{p_j}(r_{j+1})\exp\(-(p_j-p_{j+1})\(\frac{r_{j+1}}{a+\int_0^{r_{j+1}} \gt_{p_j}(m)\,\ud m} -1\)\) \le\\
&\le g_j \exp\(1 - (p_j-p_{j+1}) \frac{r_{j+1}}{a+\int_0^{r_{j+1}} \gt_{p_j}(m)\,\ud m} \).
\end{align}
Inverse of the fraction above is estimated as follows
\begin{align}
\frac{1}{r_{j+1}} \(a+\int_0^{r_{j+1}} \gt_{p_j}(m)\,\ud m\) &\le \frac{a}{r_{j+1}} + \frac{r_j}{r_{j+1}} + \frac{1}{r_{j+1}} \int_{r_j}^{r_{j+1}} \gt_{p_j}(m)\,\ud m \le\\
&\le \frac{a}{r_{j+1}} + g_j + \frac{r_{j+1}-r_j}{r_{j+1}} \gt_{p_j}(r_j) \le
\intertext{(using $r_{j+1} = r_j/g_j$ and the monotonicity of $\gt_{p_j}(\cdot)$)}
&\le \frac{a}{r_{j+1}} + 2g_j.
\end{align}
Now, by the assumption, $r_j\ge r_1\ge a$, so $r_{j+1} = r_j/g_j \ge a/g_j$ and
$$\frac{a}{r_{j+1}} + 2g_j \le 3g_j.$$
That gives
$$
g_{j+1} \le g_j \exp\(1 - \frac{p_j-p_{j+1}}{3g_j}\) = g_j^2
$$
by the definition of $p_{j+1}$.
\end{proof}
\begin{denot1}%\label{dfMhR}
Put
$$M\hR=r(C\hR)$$
for $(h,R)\in\IOd$.
\end{denot1}
Note that, by Remark \ref{rembdd}, for any $p\in\NP$, $\Pr_p$-a.s.~$ M\hR<\infty$.
\begin{clm}\label{Mtight}
For any $p\in\NP$,
$$\gt_p(r)\tends{r\to\infty}0.$$
\end{clm}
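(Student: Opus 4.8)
The plan is to reduce, by monotonicity, to showing $g_p(r)\to0$ as $r\to\infty$, and then to bound $g_p(r)$ \emph{uniformly} in $(h,R)\in\IOd$ by a quantity that vanishes. Since $\Pr_p(\ho\conn S_r\textrm{ in }\G\hR\cap L)$ is non-increasing in $r$ for every $(h,R)$, so are $g_p$ and its left-continuous version $\gt_p$, and $\lim_{r\to\infty}\gt_p(r)=\lim_{r\to\infty}g_p(r)$; hence it suffices to handle $g_p$. First I would note that an open path from $\ho$ to $S_r$ in $\G\hR\cap L$ is made of pieces of open edges of $\G\hR$ which, together with $\ho$, span a connected open subgraph of $\G\hR$, so the point of $S_r$ it reaches lies in the cluster $C\hR=C\hR(\ho)$ of $\ho$ in $\G\hR$; thus $M\hR=r(C\hR)\ge r$, and the event is contained in $\{M\hR\ge r\}$. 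Therefore
$$g_p(r)\ \le\ \sup_{(h,R)\in\IOd}\Pr_p(M\hR\ge r).$$

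The heart of the argument is to estimate the right-hand side uniformly. The key identity is $\Phi\hR=(h\,\cdot)\circ\Phi^{(1,R)}$, where $\Phi^{(1,R)}\in\Isom(\Hd)$ is the hyperbolic ``rotation'' fixing $o$ with derivative $R$ there (this holds because both sides are isometries sharing value and derivative at $o$). Because scaling by $h$ commutes with the projection $\pbd$ and $\pbd(o)=0$, one has $d_\hbd(o,h\cdot z)=h\,d_\hbd(o,z)$, whence $M\hR=h\cdot r(\Phi^{(1,R)}[C(o)])\le r(\Phi^{(1,R)}[C(o)])$ as $h\le1$. Thus $\{M\hR\ge r\}$ forces $\Phi^{(1,R)}[C(o)]$ to contain a point $w$ with $d_\hbd(o,w)\ge r-1$, hence with Euclidean norm $\|w\|_2\ge\|\pbd(w)\|_\infty=d_\hbd(o,w)\ge r-1$ in the half-space model. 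Now I would fix a Poincar\'e disc model of $\Hd$ with $o$ at its centre, and let $d_{\mathbb B}$ denote the corresponding Euclidean metric of the disc model on $\cHd$ (Definition \ref{dfhHd}); since the change of coordinates is a M\"obius map, there is a universal constant $C_0$ with $d_{\mathbb B}(z,\infty)\le C_0/\|z\|_2$ whenever $\|z\|_2\ge C_0$, where $\infty\in\bdi\Hd$. As $\Phi^{(1,R)}$ fixes the centre of the disc it acts there as an orthogonal linear map, in particular as a $d_{\mathbb B}$-isometry; writing $w=\Phi^{(1,R)}(x)$ with $x\in C(o)$ gives $d_{\mathbb B}(x,\xi_R)\le C_0/(r-1)$ for the boundary point $\xi_R:=(\Phi^{(1,R)})^{-1}(\infty)\in\bdi\Hd$. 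Hence, for $r\ge 2C_0$,
$$\Pr_p(M\hR\ge r)\ \le\ \Pr_p\bigl(\c C(o)\cap \bar B_{\mathbb B}(\xi_R,C_0/(r-1))\ne\emptyset\bigr)\ \le\ \psi_p\bigl(C_0/(r-1)\bigr),$$
where $\bar B_{\mathbb B}$ is a closed $d_{\mathbb B}$-ball in $\cHd$ and $\psi_p(\e):=\sup_{\xi\in\bdi\Hd}\Pr_p(\c C(o)\cap \bar B_{\mathbb B}(\xi,\e)\ne\emptyset)$. This bound is the same for all $(h,R)$, so $g_p(r)\le\psi_p(C_0/(r-1))$ for $r$ large.

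It then remains to show $\psi_p(\e)\to0$ as $\e\to0^+$, which is where compactness of $\bdi\Hd$ is used. As $\psi_p$ is non-decreasing, $\psi_p(\e)\downarrow L\ge0$; assume $L>0$, pick $\xi_n\in\bdi\Hd$ with $\Pr_p(\c C(o)\cap \bar B_{\mathbb B}(\xi_n,1/n)\ne\emptyset)\ge L/2$, and pass to a subsequence $\xi_{n_k}\to\xi^*$. For each fixed $\e>0$ and all large $k$ one has $\bar B_{\mathbb B}(\xi_{n_k},1/n_k)\subseteq \bar B_{\mathbb B}(\xi^*,\e)$, so $\Pr_p(\c C(o)\cap \bar B_{\mathbb B}(\xi^*,\e)\ne\emptyset)\ge L/2$; letting $\e\downarrow0$, these events decrease to $\{\xi^*\in\c C(o)\}$ (since $\c C(o)$ is closed in $\cHd$), which equals $\{\xi^*\in\bdi C(o)\}$ as $\xi^*\in\bdi\Hd$, giving $\Pr_p(\xi^*\in\bdi C(o))\ge L/2>0$ and contradicting $p\in\NP$. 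Hence $L=0$, and combining the displays, $g_p(r)\le\psi_p(C_0/(r-1))\to0$, which yields the claim.

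I expect the one genuinely delicate point to be the uniformity in $(h,R)\in\IOd$ of the first display. For each \emph{fixed} $(h,R)$ the probability there tends to $0$ (the cluster $C\hR$ is a.s.\ Euclidean-, hence $d_\hbd$-, bounded by Remark \ref{rembdd}), and the crude uniform estimate, obtained by bounding $d_\hbd(o,\cdot)$ on $C\hR$ by an exponential of the hyperbolic distance, only reduces matters to the hyperbolic boundedness of $C(o)$, i.e.\ to $\bdi C(o)=\emptyset$ a.s., which does not obviously follow from the one-point facts $\Pr_p(\xi\in\bdi C(o))=0$. The disc-model detour above circumvents this by identifying, for each $(h,R)$, the single boundary point $\xi_R$ that $\c C(o)$ must approach when $M\hR$ is large, so that those one-point facts can be combined by the above compactness argument rather than by an insufficient union bound over a cover of $\bdi\Hd$.
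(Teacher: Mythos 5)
Your proof is correct, and it reaches the conclusion by a genuinely different implementation of the compactness step. After reducing (as the paper also does, by the $h\le 1$ scaling) to controlling $\sup_{R\in O(d)}\Pr_p(M^{(1,R)}\ge r)$, the paper proves upper semi-continuity of $R\mapsto\Pr_p(M^{(1,R)}\ge r)$ via a $\limsup$-of-events argument (using the continuous extension $\c\Phi$ of isometries to $\cHd$ and the a.s.\ compactness of $\hc C^{(1,R)}$ supplied by Remark \ref{rembdd}), and then finishes with a Dini-type covering argument on the compact group $O(d)$. You instead observe that the event $\{M^{(1,R)}\ge r\}$ forces the closure $\c C(o)$ to come within $C_0/(r-1)$ in the disc metric of the \emph{single} boundary point $\xi_R=(\c\Phi^{(1,R)})^{-1}(\infty)$, so that the whole family is controlled by the one modulus $\psi_p(\e)=\sup_{\xi\in\bdi\Hd}\Pr_p(\c C(o)\cap\bar B_{\mathbb B}(\xi,\e)\ne\emptyset)$; you then run the compactness argument on $\bdi\Hd$ rather than on $O(d)$, obtaining a contradiction directly with the definition of $\NP$. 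Both proofs exploit compactness against the pointwise $\NP$ hypothesis, but your transfer to boundary points replaces the paper's upper semi-continuity lemma with a more geometric containment of events, and (as a minor bonus) avoids invoking the a.s.\ Euclidean boundedness of $C(o)$ from Remark \ref{rembdd}: you only ever use that $\c C(o)$ is closed in $\cHd$ and that $\Pr_p(\xi\in\bdi C(o))=0$ for each $\xi$.
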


\begin{proof}
First, note that it is sufficient to prove
\begin{equation}\label{convMtight}
\sup_{R\in O(d)} \Pr_p( M^{(1,R)}\ge r)\tends{r\to\infty}0,
\end{equation}
because
\begin{align}
g_p(r) &= \sup_{(h,R)\in\IOd} \Pr_p(\ho\conn S_r\textrm{ in }\G\hR\cap L) \le\\
&\le \sup_{(h,R)\in\IOd} \Pr_p(\ho\conn S_{hr}\textrm{ in }\G\hR) =\quad\textrm{(because $hr\le r$)}\\
&= \sup_{R\in O(d)} \Pr_p(o\conn S_r\textrm{ in }\G^{(1,R)}) \le\quad\textrm{(by scaling the situation)}\\
&\le \sup_{R\in O(d)} \Pr_p(M^{(1,R)}\ge r),
\end{align}
so $g_p(r)\tends{r\to\infty}0$ and, equivalently, $\gt_p(r)\tends{r\to\infty}0$ will be implied.
To prove \eqref{convMtight}, I use upper semi-continuity of the function $O(d)\ni R\mapsto \Pr_p( M^{(1,R)}\ge r)$ for any $p\in\NP$ and $r>0$. Let us fix such $p$ and $r$ and let $(R_n)_n$ be a sequence of elements of $O(d)$ convergent to some $R$. Assume without loss of generality that the cluster $C^{(1,R)}$ is bounded in the Euclidean metric and, throughout this proof, condition on it all the events by default. I am going to show that
\begin{equation}\label{incllimsupM}
\limsup_{n\to\infty}\{M^{(1,R_n)}\ge r\}\subseteq\{ M^{(1,R)}\ge r\}.
\end{equation}

\begin{df}
For any isometry $\Phi$ of $\Hd$, let $\c\Phi$ denote the unique continuous extension of $\Phi$ to $\cHd$ (which is a homeomorphism of $\cHd$---see \cite[Corollary II.8.9]{BH}).
\end{df}
Put $\Phi_n=\Phi^{(1,R)}\circ(\Phi^{(1,R_n)})^{-1}$ and assume that the event $\limsup_{n\to\infty}\{M^{(1,R_n)}\ge r\}$ occurs. Then, for infinitely many values of $n$, all the following occur:
$$M^{(1,R_n)}\ge r \then \c C^{(1,R_n)}\textrm{ intersects }\c S_r \stackrel{\c\Phi_n(\cdot)}{\then} \c C^{(1,R)}=\hc C^{(1,R)}\textrm{ intersects }\c\Phi_n(\c S_r).$$
Let for any such $n$, $x_n$ be chosen from the set $\hc C^{(1,R)}\cap\c\Phi_n(\c S_r)$. Because $\hc C^{(1,R)}$ is compact, the sequence $(x_n)_n$ (indexed by a subset of $\N_+$) has an (infinite) subsequence $(x_{n_k})_{k=1}^\infty$ convergent to some point in $\hc C^{(1,R)}$. On the other hand, note that $\c\Phi_n\tends{n\to\infty}\id_{\cHd}$ uniformly in the Euclidean metric of the disc model (see Definition \ref{dfhHd}). Hence, the distance in that metric between $x_{n_k}\in\c\Phi_{n_k}(\c S_r)$ and $\c S_r$ tends to $0$ with $k\to\infty$, so
$$\lim_{k\to\infty} x_{n_k} \in \c S_r\cap \hc C^{(1,R)} = \hc S_r\cap \hc C^{(1,R)},$$
which shows that $ M^{(1,R)}\ge r$, as desired in \ref{incllimsupM}. Now,
\begin{align}
\limsup_{n\to\infty} \Pr_p(M^{(1,R_n)}\ge r) &\le \Pr_p(\limsup_{n\to\infty}\{M^{(1,R_n)}\ge r\}) \le\quad\textrm{(by an easy exercise)}\\
&\le \Pr_p( M^{(1,R)}\ge r),
\end{align}
which means exactly the upper semi-continuity of $R\mapsto \Pr_p( M^{(1,R)}\ge r)$.
\par Next, note that because for $p\in\NP$ and $R\in O(d)$, a.s.~$M^{(1,R)}<\infty$, we have
$$\Pr_p(M^{(1,R)}\ge r) \tends{r\to\infty} 0\quad\textrm{(decreasingly)}.$$

Hence, if for $r>0$ and $\e>0$ we put
$$U_\e(r)=\{R\in O(d): \Pr_p(M^{(1,R)}\ge r)<\e\},$$
then for any fixed $\e>0$,
\begin{equation}\label{Mtight.cupUe}
\bigcup_{r\nearrow\infty} U_\e(r) = O(d).
\end{equation}
$U_\e(r)$ is always an open subset of $O(d)$ by upper semi-continuity of $R\mapsto \Pr_p( M^{(1,R)}\ge r)$, so by the compactness of $O(d)$, the union \eqref{Mtight.cupUe} is indeed finite. Moreover, because $U_\e(r)$ increases as $r$ increases, it equals $O(d)$ for some $r>0$. It means that $\sup_{R\in O(d)}\Pr_p(M^{(1,R)}\ge r)\le\e$, whence $\sup_{R\in O(d)}\Pr_p(M^{(1,R)}\ge r)\tends{r\to\infty} 0$, as desired.
\end{proof}
%%%%%%%%%%%%%%%%%%%%%%%%%%%%%%%%%%%%%%%%

Now, taking any $p_1\in(p,p_0)$ and $1>x_1>0$ in \eqref{s(x)} s.t.~$s(x_1) \le p_1-p$ and taking $r_1\ge a$ so large that $\gt_{p_1}(r_1)<x_1$, we obtain for $i\ge 1$, $g_i<x_i$ (by induction). Then, in the setting of \eqref{recur},
\begin{align}
p_{i+1} = p_1 - \sum_{j=1}^i 3g_i(1-\ln g_i) > p_1 - \sum_{j=1}^i 3x_i(1-\ln x_i) \ge
\intertext{(because $x\mapsto 3x(1-\ln x)$ is increasing for $x\in(0;1]$)}
\ge p_1 - s(x_1) \ge p.
\end{align}
Once we know that the recursion \eqref{recur} is well-defined, we use the constructed sequences to prove the lemma. First, note that for $k\ge 1$,
$$r_k = r_1/(g_1g_2\cdots g_{k-1}).$$
Further, the above claim implies
\begin{align}\label{chain(g)}
g_{k-1}^2 \le g_{k-1} g_{k-2}^2 \le \cdots \le g_{k-1}g_{k-2}\cdots g_2 g_1^2 = \frac{r_1}{r_k}g_1 = \frac{\d^2}{r_k},
\end{align}
where $\d=\sqrt{r_1 g_1}$. Now, let $r\ge r_1$. We have $r_k\tends{k\to\infty}\infty$ because $\frac{r_k}{r_{k+1}}=g_k \tends{k\to\infty}0$, so for some $k$, $r_{k-1}\le r<r_k$. Then,
\begin{align}
\gt_p(r) \le \gt_{p_{k-1}}(r) \le \gt_{p_{k-1}}(r_{k-1}) = g_{k-1} \le \frac{\d}{\sqrt{r_k}} < \frac{\d}{\sqrt{r}}
\end{align}
(from \eqref{chain(g)} and the monotonicity of $\gt_p(r)$ with regard to each of $p$ and $r$), which finishes the proof.
\end{proof}

\begin{rem}\label{diffprfs}
As declared in Section \ref{prflemMen}, in this remark I summarise the differences between the proof of Theorem \ref{lemMen} and the proof of Theorem (5.4) in \cite{Grim}:
\begin{enumerate}
\item First, I recall that the skeleton structure and most of the notation of the proof here  is borrowed from \cite{Grim}. The major notation that is different here, is ``$A^\d(r)$'' and ``$S_r$'' (respectively $A_n$ and $\bd S(s)$ in \cite{Grim}).
\item To be strict, the proper line of the proof borrowed from \cite{Grim} starts by considering the functions $f_p$ instead of $g_p$ or $\gt_p$, although the functional inequality \eqref{fctineq(f)} involves both functions $f_\cdot(\cdot)$ and $\gt_\cdot(\cdot)$. In fact, each of the functions $f_\cdot(\cdot)$, $\gt_\cdot(\cdot)$ and $g_\cdot(\cdot)$ is a counterpart of the function $g_\cdot(\cdot)$ from \cite{Grim} at some stage of the proof. After proving inequality \eqref{fctineq(f)}, I pass to a couple of limits with it in order to obtain inequality \eqref{fctineq(gt)1} involving only $\gt_\cdot(\cdot)$ (the step not present in \cite{Grim}). This form is  needed to perform the repeated use of inequality \eqref{fctineq(gt)1} at the end of the proof of Theorem \ref{lemMen}.
\item Obviously, the geometry used here is much different from that in \cite{Grim}. In fact, I analyse the percolation cluster in $\G\hR\cap L_\d$ using the pseudometric $d_\hbd$ (in place of the graph metric $\d$ in \cite{Grim}). Consequently, the set $\RhR$ of possible values of the random variables $\rho$ in Lemma \ref{saus} is much richer than $\N$, the respective set for the graph $\Z^d$. Moreover, the functions $\gt_p$ arise from the percolation process on the whole $\G\hR\cap L$, so the distribution of the random variables $M_i$ is not necessarily discrete. That cause the need for using integrals instead of sums, when concerned with those random variables, especially in the proof of Corollary \ref{corsaus}. All that leads also to a few other minor technical differences between the proof here and the proof in \cite{Grim}.
\item I tried to clarify the use of the assumption on $\sum_{i=1}^k r_i$ in Lemma \ref{saus} and why Wald's equation can be used in the proof of Lemma \ref{lemEN>=frac}, which I found quite hidden in \cite{Grim}. I also replaced the random variable $G$ in the proof of \cite[Lem.\ (5.12)]{Grim} with the event ``$\Gamma$ is a witness for $B$'' (in the proof of Lemma \ref{saus}), as I find the latter more precise way to explain the calculations.
\item The proof of Lemma \ref{gt<=sqrt} itself has a little changed structure (compared to the proof of Lemma (5.24) in \cite{Grim}) and contains a proof of the convergence $\gt_p(r)\tends{r\to\infty}0$ (Claim \ref{Mtight}).
\end{enumerate}
\end{rem}

\begin{rem}\label{whyLd}
When I was working on the proof of Theorem \ref{lemMen}, I tried to consider the percolation processes on the whole graph $\G\hR\cap L$ (without restricting it to $L_\d$) in order to obtain functional inequality similar to \eqref{fctineq(gt)1}, involving only one function. That approach caused many difficulties some of which I have not overcome. Restricting the situation ti $L_\d$ makes the event $A^\d$ depend on the states of only finitely many edges. That allows e.g.\ to condition the event $A^\d(r)\cap B$ on the family of events $\{\Gamma\textrm{ a witness for }B\}$, where $\Gamma$ runs over a countable set (in the proof of Lemma \ref{saus}) or to use BK inequality and Russo's formula.
\end{rem}

\backmatter

\end{document}